\theoremstyle{plain}
\newtheorem{thm}{Theorem}[section]
\newtheorem{claim}[thm]{Claim}
\newtheorem{corollary}[thm]{Corollary}
\newtheorem{example}[thm]{Example}
\newtheorem{lemma}[thm]{Lemma}
\newtheorem{lem}[thm]{Lemma}
\newtheorem{proposition}[thm]{Proposition}
\newtheorem{remark}[thm]{Remark}
\newtheorem{theorem}[thm]{Theorem}
\newtheorem{fact}[thm]{Fact}
\newtheorem{rem}[thm]{Remark}
\newtheorem{defn}[thm]{Definition}
\numberwithin{equation}{section}
\newcommand{\N}{\mathbb{N}}
\newcommand{\R}{\mathbb{R}}
\begin{document}

\title[Smooth extractions and approximations without critical points]{Smooth approximations without critical points of continuous mappings between Banach spaces, and diffeomorphic extractions of sets}

\author{Daniel Azagra}
\address{ICMAT (CSIC-UAM-UC3-UCM), Departamento de An{\'a}lisis Matem{\'a}tico y Matem\'atica Aplicada,
Facultad Ciencias Matem{\'a}ticas, Universidad Complutense, 28040, Madrid, Spain.  {\bf DISCLAIMER:} The
first-named author is affiliated with Universidad Complutense de Madrid, but this does not mean this institution
has offered him all the support he expected. On the contrary, the Biblioteca Complutense has hampered his research
by restricting his access to books. } \email{azagra@mat.ucm.es}

\author{Tadeusz Dobrowolski}
\address{Department of Mathematics, Pittsburgh State Uuniversity,
1701 South Broadway Street, Pittsburg, KS 66762, United States of America} \email{tdobrowolski@pittstate.edu}

\author{Miguel Garc\'{\i}a-Bravo}
\address{ICMAT (CSIC-UAM-UC3-UCM), Calle Nicol\'as Cabrera 13-15.
28049 Madrid, Spain} \email{miguel.garcia@icmat.es}

\date{November 2018}

\keywords{Banach space, Morse-Sard theorem, approximation, critical point, diffeomorphic extraction}

\thanks{D. Azagra and M. Garc\'ia-Bravo were partially supported by grant
MTM2015-65825-P}

\subjclass[2010]{46B20, 46E50, 46B25, 46B28, 58B10}

\begin{abstract}
Let $E$, $F$ be separable Hilbert spaces, and assume that $E$ is infinite-dimensional. We show that for every
continuous mapping $f:E\to F$ and every continuous function $\varepsilon: E\to (0, \infty)$ there exists a
$C^{\infty}$ mapping $g:E\to F$ such that $\|f(x)-g(x)\|\leq\varepsilon(x)$ and $Dg(x):E\to F$ is a surjective
linear operator for every $x\in E$. We also provide a version of this result where $E$ can be replaced with a
Banach space from a large class (including all the classical spaces with smooth norms, such as $c_0$, $\ell_p$ or
$L^{p}$, $1<p<\infty$), and $F$ can be taken to be any Banach space such that there exists a bounded linear
operator from $E$ onto $F$. In particular, for such $E, F$, every continuous mapping $f:E\to F$ can be uniformly
approximated by smooth open mappings. Part of the proof provides results of independent interest that improve some
known theorems about diffeomorphic extractions of closed sets from Banach spaces or Hilbert manifolds.
\end{abstract}

\maketitle

\section{Introduction and main results}

The main purpose of this paper is to show the following two results.

\begin{thm}\label{Main result for separable Hilbert}
Let $E$, $F$ be separable Hilbert spaces, and assume that $E$ is infinite-dimensional. Then, for every continuous
mapping $f:E\to F$ and every continuous function $\varepsilon: E\to (0, \infty)$ there exists a $C^{\infty}$
mapping $g:E\to F$ such that $\|f(x)-g(x)\|\leq\varepsilon(x)$ and $Dg(x):E\to F$ is a surjective linear operator
for every $x\in E$.
\end{thm}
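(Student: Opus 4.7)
The plan is to combine a smooth approximation in Hilbert space with a diffeomorphic extraction of the critical set. First, since every separable Hilbert space admits $C^\infty$ partitions of unity subordinate to any open cover, I would approximate $f$ within $\varepsilon/2$ by a $C^\infty$ mapping $h\colon E\to F$. The problem then reduces to showing that any smooth $h$ can be further modified within $\varepsilon/2$ to a smooth $g$ with $Dg$ everywhere surjective.

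For this reduction, let $C_h:=\{x\in E : Dh(x)\ \text{is not surjective}\}$; this is closed in $E$ because the set of surjective bounded linear operators from $E$ to $F$ is open in $\mathcal{L}(E,F)$. The strategy is to construct a smooth diffeomorphism $\Phi\colon E\to E\setminus C_h$ that is close to the identity in the sense that $\|h(\Phi(x))-h(x)\|\le \varepsilon(x)/2$ for every $x$, and then to set $g:=h\circ\Phi$. The derivative $Dg(x)=Dh(\Phi(x))\circ D\Phi(x)$ is then surjective for all $x$, being the composition of a surjective operator $Dh(\Phi(x))$ (since $\Phi(x)\notin C_h$) with a linear isomorphism $D\Phi(x)$ (since $\Phi$ is a diffeomorphism onto the open set $E\setminus C_h$). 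The bound $\|f-g\|\le\varepsilon$ then follows from the triangle inequality.

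The main obstacle is producing $\Phi$. For an arbitrary closed subset $C\subset E$ one cannot expect a near-identity diffeomorphism $E\to E\setminus C$, so one must first arrange that $C_h$ has tractable geometric structure. I would do this by perturbing $h$ by a small term of the form $T\circ\sigma$, where $T\colon E\to F$ is a fixed bounded linear surjection (which exists since $E$ is an infinite-dimensional separable Hilbert space and $F$ is separable Hilbert) and $\sigma\colon E\to E$ is a suitable smooth perturbation exploiting the infinite-dimensional flexibility of $E$. This should force $C_h$ into a class of \emph{thin} sets (for instance, contained in a locally finite union of smooth finite-codimensional submanifolds of $E$) for which an extraction diffeomorphism is available. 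The improved diffeomorphic extraction theorems that the authors announce in the abstract are exactly what is needed to then build $\Phi\colon E\to E\setminus C_h$ close to the identity, and this extraction step, rather than the smooth approximation, is the hardest and most technical part of the argument.
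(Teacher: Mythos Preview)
Your overall architecture matches the paper: approximate $f$ by a smooth $\varphi$, arrange that the critical set $C_\varphi$ is geometrically tame enough to be diffeomorphically extracted by a near-identity $C^\infty$ diffeomorphism $\Phi:E\to E\setminus C_\varphi$, and set $g=\varphi\circ\Phi$. The reduction and the chain-rule argument for surjectivity are fine.

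The genuine gap is in the step where you control the geometry of $C_h$. Two points:

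\textbf{(i) Wrong notion of ``thin''.} You suggest forcing $C_h$ into a locally finite union of \emph{finite}-codimensional submanifolds. This cannot work: a finite-codimensional submanifold (e.g.\ a hyperplane) is not extractable by a diffeomorphism close to the identity, and in codimension one it even disconnects the space. What the extraction theorems require, and what the paper proves, is that $C_\varphi$ is locally contained in the graph of a continuous map defined on a subspace of \emph{infinite} codimension. That is the class of sets for which Theorem~\ref{final extractibility theorem rough version general form} applies.

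\textbf{(ii) The mechanism is missing.} Your proposed perturbation ``$T\circ\sigma$'' is not specified, and a single global surjection $T$ will not in general produce an infinite-codimensional critical set. The paper does not take an arbitrary smooth $h$ and then perturb it; it builds the approximation from scratch in the form
\[
\varphi(x)=\sum_{n\geq 1}\bigl(f(y_n)+T_n(x)\bigr)\psi_n(x),
\]
where $\{\psi_n\}$ is a partition of unity made of slices (so that each $D\psi_k$ is a finite combination of finitely many fixed functionals) and the surjective operators $T_n:E\to F$ are chosen with \emph{disjoint} supports in an infinite family of mutually complementary pieces $E_{n,1}$ of $E$ (using $E\cong E\oplus E$), with each $E_{n,1}$ lying in the common kernel of the relevant functionals. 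This coordination is exactly what forces $D\varphi(y)$ to be surjective off a set locally contained in an infinite-codimensional graph (Claim~\ref{Cvarphi is locally contained in Ax} and Lemma~\ref{Ax is a good graph}). Without an analogous construction your Step~1.5 is the whole difficulty, not a detail.
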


\begin{thm}\label{Main result for classical Banach}
Let $E$ be one of the classical Banach spaces $c_0$, $\ell_p$ or $L^{p}$, $1<p<\infty$. Let $F$ be a Banach space,
and assume that there exists a bounded linear operator from $E$ onto $F$. Then, for every continuous mapping
$f:E\to F$ and every continuous function $\varepsilon: E\to (0, \infty)$ there exists a $C^{k}$ mapping $g:E\to F$
such that $\|f(x)-g(x)\|\leq\varepsilon(x)$ and $Dg(x):E\to F$ is a surjective linear operator for every $x\in E$.
\end{thm}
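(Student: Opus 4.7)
My plan is to follow the same three-step architecture that should govern the proof of Theorem \ref{Main result for separable Hilbert}: first produce a $C^{k}$ uniform approximation $h$ of $f$, then eliminate the critical set of $h$ by post-composing with a near-identity diffeomorphism coming from a diffeomorphic extraction theorem. The role of the bounded linear surjection $T:E\to F$, which stands in for the Hilbert-space apparatus used in Theorem \ref{Main result for separable Hilbert}, is to furnish a supply of $C^{k}$ maps $E\to F$ with surjective derivative, out of which $h$ can be built and controlled.

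Concretely, since each of the spaces $c_{0}$, $\ell_{p}$, $L^{p}$ ($1<p<\infty$) admits $C^{k}$ smooth partitions of unity, a standard partition-of-unity interpolation of the values of $f$ on a locally finite $\varepsilon/3$-net produces a $C^{k}$ map $h_{0}:E\to F$ with $\|f-h_{0}\|\le\varepsilon/3$. Because $E$ is infinite-dimensional and has a $C^{k}$ smooth norm, there is a $C^{k}$ diffeomorphism $\psi:E\to E$ with bounded image (a Bessaga-type construction), whence $T\circ\psi:E\to F$ is a bounded $C^{k}$ map with surjective derivative at every point. I would then modify $h_{0}$ by adding a small multiple of this auxiliary map, glued in through a further smooth partition of unity, to obtain a $C^{k}$ map $h$ satisfying $\|f-h\|\le\varepsilon/2$ and whose critical set $C_{h}:=\{x\in E:Dh(x)\text{ is not surjective}\}$ is closed and satisfies the hypotheses of the diffeomorphic extraction theorem promised in the abstract.

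The decisive step is to apply that extraction theorem for classical Banach spaces: given $C_{h}$, produce a $C^{k}$ diffeomorphism $\Phi:E\to E\setminus C_{h}$ with $\|\Phi(x)-x\|$ controlled finely enough in terms of $\varepsilon(x)$ and a local Lipschitz constant of $h$ so that $\|h\circ\Phi-h\|\le\varepsilon/2$. Setting $g:=h\circ\Phi$ then yields $\|g-f\|\le\varepsilon$, and for every $x\in E$ the chain-rule identity $Dg(x)=Dh(\Phi(x))\circ D\Phi(x)$ exhibits $Dg(x)$ as the composition of a surjective operator (since $\Phi(x)\notin C_{h}$) with a linear isomorphism (since $\Phi$ is a $C^{k}$ diffeomorphism), hence surjective.

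The principal obstacle is precisely the middle step: certifying that the critical set of the adjusted $h$ falls inside the class of closed sets for which the extraction theorem applies. In infinite dimensions the critical set of a generic smooth map can be all of $E$, so some structural ingredient must be injected by hand. The open-mapping property of $T$ is what makes this possible: once enough of the auxiliary $T\circ\psi$ is mixed into $h$, the derivative is forced to be surjective except along a comparatively thin closed set whose geometry is determined by the partition-of-unity skeleton on $E$, and it is this thinness that must be matched against the hypotheses of the extraction theorem. A secondary technicality arises for $E=L^{p}$, where the norm is only finitely differentiable, so the smoothness class of the final $g$ is the largest $k$ compatible with the smoothness of both the norm and the partitions of unity on $E$.
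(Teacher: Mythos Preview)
Your three-step architecture (smooth approximation $\to$ controlled critical set $\to$ diffeomorphic extraction $\to$ compose) is exactly the backbone of the paper's argument, and your final paragraph correctly identifies where the difficulty lies. The gap is that your proposal does not actually close that difficulty, and the device you suggest for it does not work.

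A single perturbation by $T\circ\psi$ cannot tame the critical set. If you set $h=h_{0}+\lambda\,T\circ\psi$, the derivative is $Dh_{0}(x)+\lambda\,T\,D\psi(x)$, and $Dh_{0}(x)$ is an uncontrolled finite sum of terms of the form $f(y_{n})\,D\psi_{n}(x)$ with $D\psi_{n}(x)$ pointing in arbitrary directions of $E^{*}$. There is no mechanism forcing the locus where this sum fails to be surjective to lie in anything thin; in particular there is no reason it should be locally contained in the graph of a continuous map defined on a complemented subspace of infinite codimension, which is exactly the hypothesis the extraction theorem (Theorem~\ref{final extractibility theorem rough version general form}) requires. ``Gluing in through a further smooth partition of unity'' only makes the bookkeeping worse: every new bump function contributes its own derivative to $Dh$.

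The paper's solution is considerably more structured. One works on the upper hemisphere $S^{+}\subset E\times\mathbb{R}$ and builds a \emph{special} partition of unity $\{\psi_{n}\}$ out of slices by linear functionals $g_{1},g_{2},\ldots$, so that on any local neighbourhood the derivatives $D\psi_{k}$ lie in $\mathrm{span}\{g_{1},\ldots,g_{n}\}$ for some finite $n$. One then chooses not one surjection but an infinite sequence $S_{n}:E\to F$, supported on a nested family of complemented subspaces $E_{n,1}\subset\bigcap_{j\le n}\ker g_{j}^{1}$, and sets
\[
\varphi(x)=\sum_{n}\bigl(f(y_{n})+c_{n}S_{n}(x)\bigr)\psi_{n}(x).
\]
At a point $y$ with $m=m_{y}$ the last index where $\psi_{m}(y)\neq 0$, the term $\psi_{m}(y)c_{m}S_{m}$ survives in $D\varphi(y)$ and its support is, by construction, annihilated by $g_{1}^{1},\ldots,g_{m}^{1}$ and disjoint from the supports of $S_{1},\ldots,S_{m-1}$. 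This algebraic separation is what pins $C_{\varphi}\cap V_{x}$ inside an explicit set $A_{x}$; a further argument (reflexivity and nearest-point projection in Theorem~\ref{Main theorem for reflexive spaces}, or the suppression-unconditional basis condition in Theorem~\ref{Main theorem for spaces with unconditional bases}) then shows $A_{x}$ is the graph of a continuous function on a subspace of infinite codimension. None of this structure is available from a single $T\circ\psi$.

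A secondary issue: the paper does not obtain $C^{k}$ in one pass. The hemisphere $S^{+}$ is only a $C^{1}$ manifold when the norm of $E$ is only $C^{1}$ (e.g.\ $\ell_{p}$, $L^{p}$ for $p\notin 2\mathbb{N}$, or the LUR renorming of $c_{0}$), so Theorems~\ref{Main theorem for reflexive spaces} and~\ref{Main theorem for spaces with unconditional bases} yield only a $C^{1}$ approximation without critical points. The upgrade to $C^{k}$ is done afterwards via Proposition~\ref{C1 approximation is enough}: one uses Moulis-type $C^{1}$-fine approximation to replace the $C^{1}$ map by a $C^{k}$ map whose derivative is uniformly close, and the openness of the set of surjective operators ensures the new map still has no critical points.
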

Here $k$ denotes the order of smoothness of the space $E$, defined as follows: $k=\infty$ if $E\in
\{c_0\}\cup\{\ell_{2n} : n\in\N\}\cup\{L^{2n} : n\in\N\}$; $k=2n+1$ if $E\in\{\ell_{2n+1} : n\in\N\}\cup\{L^{2n+1}
: n\in\N\}$, and $k$ is equal to the integer part of $p$ if $E\in \{\ell_p\}\cup\{L^{p}\}$ and $p\notin\N$. The
Sobolev spaces $W^{k,p}(\R^n)$ with $1<p<\infty$ are also included in Theorem \ref{Main result for classical
Banach} since they are isomorphic to $L^p(\R^n)$ (see \cite[Theorem 11]{PelWoj}).

Notice that the assumption that there exists a bounded linear operator from $E$ onto $F$ is necessary, as otherwise
all points of $E$ are critical for all functions $g\in C^{1}(E, F)$.

Of course Theorem \ref{Main result for separable Hilbert} is a particular case of Theorem \ref{Main result for
classical Banach} (also note that if $E$ is a separable Hilbert space, $F$ is a Banach space, and there exists a
continuous linear surjection $T:E\to F$, then $F$ must be isomorphic to $\R^n$ or to $E$). In general, note that a
continuous linear surjection $T:E\to F$ between Banach spaces exists if and only if $F$ is isomorphic to a quotient
space of $E$.

We will also establish more technical results (see Theorems \ref{Main theorem for reflexive spaces} and \ref{Main
theorem for spaces with unconditional bases} below) that generalize the preceding theorems to much larger classes
of Banach spaces (especially in the case that $E$ is reflexive).

Part of the motivation for this kind of results is in their connection with the Morse-Sard theorem, a fundamental
result in Differential Geometry and Analysis. Throughout this paper, for a $C^k$ smooth mapping
$f:\mathbb{R}^{n}\longrightarrow \mathbb{R}^{m}$, $C_f$ stands for the set of critical points of $f$ (that is, the
points $x\in\R^n$ at which the differential $Df(x)$ is not surjective), and $f(C_{f})$ is thus the set of critical
values of $f$; the same terminology applies to smooth mappings between manifolds, both finite and
infinite-dimensional. The Morse-Sard theorem \cite{Morse, Sard} states that if $k\geq \max\{n-m+1, 1\}$ then
$f(C_{f})$ is of Lebesgue measure zero in $\mathbb{R}^{m}$. This result also holds true for $C^k$ smooth mappings
$f:N\longrightarrow M$ between two smooth manifolds of dimensions $n$ and $m$ respectively.

Given the crucial applications of the Morse-Sard theorem in several branches of mathematics, it is natural both to
try to extend this result for other classes of mappings, and also to ask what happens in the case that $M$ and $N$
are infinite-dimensional manifolds. Regarding the first issue, many refinements of the Morse-Sard theorem for other
classes of mappings (notably H\"older, Sobolev, and BV mappings) have appeared in the literature; see for instance
\cite{Whitney1, Yomdin, Norton, Bates, BatesMoreira, Moreira, DePascale, Figalli, BKK1, BKK2, KK1, HaZi, Hkk,
AzagraFerreraGomez, AzagraGarciabravo} and the references therein.

As for the second issue, which in this paper is of our concern, let us mention the results of several authors who
have studied the question as to what extent one can obtain results similar to the Morse-Sard theorem for mappings
between infinite-dimensional Banach spaces or manifolds modeled on such spaces.

S. Smale \cite{Smale} proved that if $X$ and $Y$ are separable connected smooth manifolds modeled on Banach spaces
and $f:X\longrightarrow Y$ is a $C^r$ Fredholm mapping (that is, every differential $Df(x)$ is a Fredholm operator
between the corresponding tangent spaces) then $f(C_{f})$ is meager, and in particular $f(C_{f})$ has no interior
points, provided that $r>\max\{\textrm{index}(Df(x)), 0\}$ for all $x\in X$; here index($Df(x)$) stands for the
index of the Fredholm operator $Df(x)$, that is, the difference between the dimension of the kernel of $Df(x)$ and
the codimension of the image of $Df(x)$, both of which are finite. Of course, these assumptions are very
restrictive as, for instance, if $X$ is infinite-dimensional then no function $f:X\longrightarrow\mathbb{R}$ is
Fredholm.

In general, every attempt to adapt the Morse-Sard theorem to infinite dimensions will have to impose vast
restrictions because, as shown by Kupka's counterexample \cite{Kupka}, there are $C^\infty$ smooth functions
$f:\ell_2\longrightarrow\mathbb{R}$ so that their sets of critical values $f(C_{f})$ contain intervals.
Furthermore, as shown by Bates and Moreira in \cite{BatesMoreira, Moreira}, one can take $f$ to be a polynomial of
degree $3$.

Nevertheless, for many applications of the Morse-Sard theorem, it is often enough to know that any given continuous
mapping can be uniformly approximated by a mapping whose set of critical values is small in some sense; therefore
it is natural to ask what mappings between infinite-dimensional manifolds will at least have such an approximation
property. Going in this direction, Eells and McAlpin established the following theorem \cite{EellsMcAlpin}: If $E$
is a separable Hilbert space, then every continuous function from $E$ into $\mathbb{R}$ can be uniformly
approximated by a smooth function $f$ whose set of critical values $f(C_{f})$ is of measure zero. This allowed them
to deduce a version of this theorem for mappings between smooth manifolds $M$ and $N$ modeled on $E$ and a Banach
space $F$ respectively, which they called an {\em approximate Morse-Sard theorem}: Every continuous mapping from
$M$ into $N$ can be uniformly approximated by a smooth mapping $f:M\longrightarrow N$ so that $f(C_{f})$ has empty
interior. However, as observed in \cite[Remark 3A]{EellsMcAlpin}, we have $C_{f}=M$ in the case that $F$ is
infinite-dimensional (so, even though the set of critical values of $f$ is relatively small, the set of critical
points of $f$ is huge, which is somewhat disappointing).

In \cite{AzagraCepedello}, a much stronger result was obtained by M. Cepedello-Boiso and the first-named author: if
$M$ is a $C^\infty$ smooth manifold modeled on a separable infinite-dimensional Hilbert space $X$, then every
continuous mapping from $M$ into $\mathbb{R}^{m}$ can be uniformly approximated by smooth mappings {\em with no
critical points}. P. H\'{a}jek and M. Johanis \cite{HJ} established a similar result for $m=1$ in the case that $X$
is a separable Banach space which contains $c_0$ and admits a $C^p$-smooth bump function. Finally, in the case that
$m=1$, these results were extended by M. Jim\'enez-Sevilla and the first-named author \cite{AzagraJimenez} for continuous
functions $f:X\to\R$, where $X$ is a separable Banach space admitting an equivalent smooth and locally uniformly
rotund norm.

In this paper, we will improve these results by showing that the pairs $(\ell_2, \R^m)$ or $(X, \R)$ can be
replaced with pairs of the form $(E, F)$, where $E$ is a Banach space from a large class (including all the
classical spaces with smooth norms such as $c_0$, $\ell_p$ or $L^{p}$, $1<p<\infty$), and $F$ can be taken to be
any quotient space of $E$. So we may say that even though an exact Morse-Sard theorem for mappings between
classical Banach spaces is false, a stronger approximate version of the Morse-Sard theorem is nonetheless true.

The general plan of the proof of Theorem \ref{Main result for classical Banach} consists in following these steps:
\begin{itemize}
\item Step 1:  We construct a smooth mapping $\varphi:E\to F$ such that
$\|\varphi(x)-f(x)\|\leq\varepsilon(x)/2$ and $C_{\varphi}$, the critical set of $\varphi$, is locally contained in
the graph of a continuous mapping defined on a complemented subspace of infinite codimension in $E$ and taking
values in its linear complement.
\item Step 2: We find a diffeomorphism $h:E\to E\setminus C_{\varphi}$ such
that $h$ is sufficiently close to the identity, in the sense that $\left\{ \{x, h(x) \} : x\in E \right\}$ refines
$\mathcal{G}$ (in other words, $h$ is {\em limited by} $\mathcal{G}$), where $\mathcal{G}$ is an open cover of $E$
by open balls $B(z, \delta_z)$ chosen in such a way that if $x, y\in B(z, \delta_z)$ then
$$
\|\varphi(y)-\varphi(x)\|\leq \frac{\varepsilon(z)}{4}\leq\frac{\varepsilon(x)}{2}.
$$
The existence of such a diffeomorphism $h$ follows by the results of Section 2.
\item Step 3: Then, the mapping $g(x):=\varphi(h(x))$ has no critical point and satisfies
$\|f(x)-g(x)\|\leq\varepsilon(x)$ for all $x\in E$.
\end{itemize}

The results of Section 2 are of independent interest, as they generalize important theorems on diffeomorphic {\em
extractions} of some kind of sets. Although it is well known (see \cite{BurKui, Moulis, EE, Elworthy} and the
references therein) that every two separable, homotopy equivalent, infinite-dimensional Hilbert manifolds $M$, $N$
are in fact diffeomorphic, a diffeomorphism $h:M\to N$ provided by this deep result has not been (and, in general,
cannot be) shown to be limited by an arbitrary open cover $\mathcal{G}$ of $M$, a property that is essential in
Step 3 above. The finest result we know of which provides a diffeomorphism $h:E\to E\setminus X$ limited by a given
open cover $\mathcal{G}$ of $E$, where $E$ is a separable infinite-dimensional Hilbert space $E$ and $X$ is a
closed subset of $E$, is a theorem of J.E. West \cite{West} in which $X$ is assumed to be locally compact. However,
in the proof of Theorem \ref{Main result for classical Banach},  we do not work necessarily with Hilbert spaces and
we need to diffeomorphically extract a closed set $X$ which is not necessarily locally compact but merely locally
contained in the graph of a continuous mapping defined on a complemented subspace of infinite codimension in $E$
and taking values in its linear complement (for a precise explanation of this terminology, see the statement of
Theorem \ref{final extractibility theorem rough version general form} below). In Section 2, we construct diffeomorphisms $h$ which extract such closed sets $X$.

The main result of Section 2 is the following.
\begin{thm}\label{final extractibility theorem rough version}
Let $E$ be an infinite-dimensional Hilbert space, $X$ a closed subset of $E$ which is locally contained in the
graph of a continuous function defined on a subspace of infinite codimension in $E$ and taking values in its
orthogonal complement, $\mathcal{G}$ an open cover of $E$, and $U$ an open subset of $E$. Then, there exists a
$C^{\infty}$ diffeomorphism $h$ of $E\setminus X$ onto $E\setminus (X\setminus U)$ which is the identity on
$(E\setminus U)\setminus X$ and is limited by $\mathcal{G}$.
\end{thm}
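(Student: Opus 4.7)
The plan is a local-to-global construction: I would build $h$ by composing countably many local diffeomorphisms, each of which extracts a small piece of $X\cap U$, and I would exploit the infinite codimension of the parameterizing subspaces to carry out the local extractions smoothly. First I would produce a locally finite open cover $\{W_n\}_{n\in\N}$ of $X\cap U$, refining the restriction of $\mathcal{G}$ to $U$, such that for each $n$: $\overline{W_n}\subset U\cap B_n$ for some $B_n\in\mathcal{G}$; the set $X\cap W_n$ is contained in the graph of a continuous function $\phi_n:V_n\to V_n^{\perp}$ with $\operatorname{codim} V_n=\infty$; and a closed shrinking $F_n\subset W_n$ still covers $X\cap U$. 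This is routine from Lindel\"ofness and the local graph hypothesis on $X$.

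The core of the argument is the local extractor. For each $n$, I would construct a $C^\infty$ diffeomorphism $h_n:E\setminus(X\cap F_n)\to E$ that is the identity outside $W_n$ and has displacement bounded by $\operatorname{diam}(B_n)$. To do this, I would split $E=V_n\oplus V_n^{\perp}$ and smoothly approximate $\phi_n$ by $\tilde\phi_n$ so that $X\cap F_n$ lies inside a thin $C^\infty$ tube $\Upsilon_n\subset W_n$ around $\mathrm{graph}(\tilde\phi_n)$. Because $V_n^{\perp}$ is infinite-dimensional, a standard Bessaga--Klee-type construction supplies a smooth diffeomorphism of $V_n^{\perp}\setminus\{0\}$ onto $V_n^{\perp}$ which is the identity outside a prescribed ball. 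Translating this diffeomorphism to be centered at $\tilde\phi_n(v)$ and applying it fiberwise over $v\in V_n$, together with a smooth cutoff supported in $W_n$, produces $h_n$; the radius of $\Upsilon_n$ is chosen large enough to engulf $X\cap F_n$ and small enough to control displacements within $B_n$.

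Globally I set $h=\lim_{N}h_N\circ\cdots\circ h_1$ on $E\setminus X$. Local finiteness of $\{W_n\}$ implies that at every point of $E\setminus X$ only finitely many factors act nontrivially, so $h$ is a well-defined $C^\infty$ map. By induction on $N$ one checks that $h_N\circ\cdots\circ h_1$ is a $C^\infty$ diffeomorphism of $E\setminus(X\setminus\bigcup_{n\le N}F_n)$ onto $E\setminus(X\setminus U)$; since $\bigcup_n F_n\supset X\cap U$, the limit is a diffeomorphism of $E\setminus X$ onto $E\setminus(X\setminus U)$. The identity property on $(E\setminus U)\setminus X$ is immediate from $\overline{W_n}\subset U$, and the $\mathcal{G}$-limitation follows because every factor $h_n$ that moves a given point $x$ has support in the same ball $B_{n(x)}\in\mathcal{G}$ that already contains $x$, so the whole trajectory of $x$ under the composition stays inside that one ball.

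The main obstacle I anticipate is the local smooth extraction. The approximation $\tilde\phi_n$ must satisfy simultaneously that $X\cap F_n\subset\Upsilon_n$, that $\Upsilon_n\subset W_n$, and that the fiberwise Bessaga--Klee push acts on a ball large enough to cover all of $\Upsilon_n$ in each fiber; reconciling these constraints is delicate because $\phi_n$ is only continuous. A related bookkeeping difficulty is that each $h_n$ must not reintroduce pieces of $X$ already extracted by $h_1,\dots,h_{n-1}$; this is ensured by shrinking the supports of later extractors so that, after being transported back by the earlier $h_j^{-1}$, they remain disjoint from the sets $X\cap F_j$ previously handled. Carrying out this inductive bookkeeping while preserving at every stage the graph structure needed to perform the next local construction is the principal technical work of the proof.
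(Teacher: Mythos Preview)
Your overall architecture---local extractions composed into a global diffeomorphism---matches the paper's, but two of your steps contain genuine gaps.

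\textbf{The local extractor removes the wrong set.} The fiberwise Bessaga--Klee map centered at $\tilde\phi_n(v)$ yields a $C^\infty$ diffeomorphism of $E\setminus\mathrm{graph}(\tilde\phi_n)$ onto $E$, not of $E\setminus(X\cap F_n)$ onto $E$. Since $X\cap F_n$ lies on $\mathrm{graph}(\phi_n)$ with $\phi_n$ merely continuous and $\tilde\phi_n\neq\phi_n$, these are different sets: your $h_n$ deletes the smooth approximate graph, not the piece of $X$ you need. Enlarging the tube $\Upsilon_n$ enlarges only the support of $h_n$, not the set it deletes. The paper handles this by Renz's flattening (Theorem~\ref{Renz's theorem 1 in our case, healed version}): a homeomorphism of $E$ which is a $C^\infty$ diffeomorphism \emph{off} $\mathrm{graph}(\phi_n)$ and carries the graph into $E_1\times\{0\}$. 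Only after the problem is reduced to extracting a closed subset of a flat infinite-codimensional subspace is a Bessaga-type deleting path applied (Theorems~\ref{general extracting scheme for closed subsets of a subspace of infinite codimension} and~\ref{Renz's Theorem 4 for smooth graphs on sets of zeros of smooth functions}). The flattening homeomorphism, which is \emph{not} smooth on the graph itself, is precisely the device that lets one smoothly extract a non-smooth set; a smooth approximation of $\phi_n$ cannot substitute for it.

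\textbf{The $\mathcal{G}$-limitation and stabilization arguments fail.} It is not true that every factor moving a given $x$ has support in one fixed $B_{n(x)}\in\mathcal{G}$: once $h_1$ moves $x$ within $W_1\subset B_1$, the image may fall into some $W_2\subset B_2$ and be moved again by $h_2$, with no reason for $x\in B_2$. The orbit can migrate through several $W_n$'s lying in distinct members of $\mathcal{G}$. For the same reason, local finiteness of $\{W_n\}$ at $x$ does not imply that only finitely many $h_n$ act nontrivially along the orbit of $x$. The paper resolves both issues by West's scheme (Section~2.4): one passes to a star-finite refinement, and each local step is constructed (Lemma~\ref{Statement A'_2}) so as to carry the not-yet-extracted pieces $X_l$ back into their designated $W_l$, with explicit displacement bounds $\varepsilon_k=2^{-k}2^{-(i_k+1)}$ whose sum is dominated by the separation of the discrete subfamilies $\{V_{j,r}\}_r$. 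This is exactly the ``principal technical work'' you anticipated, and it cannot be accomplished merely by shrinking supports.
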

Recall that $h$ is said to be limited by $\mathcal{G}$ provided that the set $\left\{ \{x, h(x)\} : x\in E\setminus
X \right\}$ refines $\mathcal{G}$; that is, for every $x\in E\setminus X$, we may find a $G_x\in\mathcal{G}$ such
that both $x$ and $h(x)$ are in $G_x$.

Theorem \ref{final extractibility theorem rough version} is a straightforward consequence of the following much
more general result, which is true for many Banach spaces not necessarily Hilbertian.

\begin{thm}\label{final extractibility theorem rough version general form}
Let $E$ be a Banach space, $p\in\N\cup\{\infty\}$, and $X\subset E$ be a closed set with the property that, for
each $x\in X$, there exist a neighborhood $U_x$ of $x$ in $E$, Banach spaces $E_{(1,x)}$ and $E_{(2,x)}$, and a
continuous  mapping $f_{x}:C_x\to E_{(2, x)}$, where $C_x$ is a closed subset of $E_{(1, x)}$, such that:
\begin{enumerate}
\item $E=E_{(1,x)}\oplus E_{(2,x)}$;
\item $E_{(1, x)}$ has $C^p$ smooth partitions of unity;
\item $E_{(2, x)}$ is infinite-dimensional and has a (not necessarily equivalent)
norm of class $C^{p}$;
\item $X\cap U_x\subset G(f_x)$, where
$$
G(f_x)=\{ y=(y_1, y_2)\in E_{(1, x)}\oplus E_{(2, x)} \, : \, y_2= f_{x}(y_1), y_{1}\in C_x\}.
$$
\end{enumerate}
Then, for every open cover $\mathcal{G}$ of $E$  and every open subset $U$ of $E$, there exists a $C^{p}$
diffeomorphism $h$ from $E\setminus X$ onto $E\setminus (X\setminus U)$ which is the identity on $(E\setminus
U)\setminus X$ and is limited by $\mathcal{G}$. Moreover, the same conclusion is true if we replace $E$ with an
open subset of $E$.
\end{thm}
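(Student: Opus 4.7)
The plan is to reduce Theorem \ref{final extractibility theorem rough version general form} to a countable family of purely local extractions, each removing one patch of $X$ lying inside a single graph $G(f_x)$, and then glue the resulting local diffeomorphisms by a locally finite composition, in the spirit of West's argument \cite{West} for the locally compact case. By paracompactness of $E$, after star-refining $\mathcal{G}$ if necessary, I would first choose a locally finite countable open cover $(V_n)$ of a neighborhood of $X \cap U$ in $U$ with $\overline{V_n} \subset U_{x_n} \cap G_n \cap U$ for some $x_n \in X$ and $G_n \in \mathcal{G}$, together with a closed shrinking $(F_n)$ still covering $X \cap U$. Each $X_n := X \cap F_n$ is then closed in $E$ and contained in the graph $G(f_{x_n}) \subset E_{(1,x_n)} \oplus E_{(2,x_n)}$, and the task reduces to producing, for each $n$, a $C^p$ diffeomorphism $h_n: E \to E \setminus X_n$ that equals the identity outside $V_n$.

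To build $h_n$ I would first flatten the graph. Extend $f_{x_n}$ continuously to $\tilde f: E_{(1,x_n)} \to E_{(2,x_n)}$ via Dugundji, then use the $C^p$ partitions of unity on $E_{(1,x_n)}$ to approximate $\tilde f$ uniformly by a $C^p$ mapping $g$ with $\|\tilde f(y_1) - g(y_1)\|_{(2,x_n)} < \eta$, where $\eta > 0$ is chosen small enough that all subsequent motions stay inside $V_n$. The map $\psi(y_1, y_2) := (y_1, y_2 - g(y_1))$ is a global $C^p$ diffeomorphism of $E$, and $\psi(X_n)$ is a closed set trapped in the thin tube $T_\eta := \{(y_1, y_2) : \|y_2\|_{(2,x_n)} < \eta\}$. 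It therefore suffices to produce a $C^p$ diffeomorphism $\Phi: E \to E \setminus \psi(X_n)$ that is the identity off $\psi(V_n)$, and then set $h_n := \psi^{-1} \circ \Phi \circ \psi$.

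Constructing $\Phi$ is the principal technical obstacle, since $\psi(X_n)$, though thin, need not be locally compact and thus falls outside the scope of West's theorem. Here I would exploit the infinite dimensionality of $E_{(2,x_n)}$ together with the $C^p$ bump functions provided by its $C^p$ smooth norm, in a descendant of Bessaga's classical point-deletion trick: exhibit a $C^p$ vector field on $E$ whose flow slides every point in a neighborhood of $\psi(X_n) \cap T_\eta$ along a prescribed direction in $E_{(2,x_n)}$ until it escapes $T_\eta$, with integral curves arranged so that each meets the closed set $\psi(X_n)$ at most once; the extraction $\Phi$ is then read off the time-one map of the flow. Infinite dimensionality of $E_{(2,x_n)}$ is indispensable at this step, as it is what permits the flow lines to dodge $\psi(X_n)$ without ever wrapping back across it, which would be impossible in finite codimension.

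Finally, I would glue the $h_n$'s into a global diffeomorphism by the infinite composition technique standard in infinite-dimensional topology. Local finiteness of $(V_n)$ ensures that every point of $E$ has a neighborhood on which all but finitely many $h_n$ restrict to the identity, so the composition $H$ of all the $h_n$'s (in any fixed order, with care taken at overlaps) is a well-defined $C^p$ bijection from $E$ onto $E \setminus (X \cap U)$. Since each $h_n$ equals the identity on $E \setminus U$, so does $H$, and a direct check using $X = (X \cap U) \sqcup (X \setminus U)$ then gives $H^{-1}(E \setminus X) = E \setminus (X \setminus U)$, so that $h := H^{-1}|_{E \setminus X}$ is the desired $C^p$ diffeomorphism from $E \setminus X$ onto $E \setminus (X \setminus U)$, equal to the identity on $(E \setminus U) \setminus X$. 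That $h$ is limited by $\mathcal{G}$ is automatic because every individual motion takes place inside some $V_n \subset G_n \in \mathcal{G}$, the initial star-refinement ensuring the cumulative motion across adjacent patches lies in a single element of the original cover; the variant with $E$ replaced by an open subset $\Omega$ follows by running the same construction inside $\Omega$.
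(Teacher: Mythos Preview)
Your overall architecture matches the paper's: flatten each local patch of $X$ using Renz-type ideas, extract the flattened piece via a Bessaga-style deleting path exploiting the infinite dimensionality of $E_{(2,x)}$, then glue following West. But the gluing step as you describe it does not work, and this is not a cosmetic omission. You build $h_n:E\to E\setminus X_n$ equal to the identity off $V_n$ and then claim that the locally finite composition $H$ is a bijection from $E$ onto $E\setminus(X\cap U)$. Consider just two steps: $h_2\circ h_1(E)=h_2(E\setminus X_1)=(E\setminus X_2)\setminus h_2(X_1)$, and whenever $X_1\cap V_2\neq\emptyset$ the set $h_2(X_1)$ need not equal $X_1\setminus X_2$; it is some displaced copy of $X_1$ inside $V_2$. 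So the image of the composition is $E$ minus the \emph{wrong} set, and inductively the error compounds. The paper's remedy is exactly West's Statement~A (Lemma~\ref{Statement A'_2} here): at the $j$-th stage one does not extract $X_j$ from $E$, but rather $g_{j-1}\circ\cdots\circ g_1(X_j)$ from the current image, and the extracting diffeomorphism is additionally required to carry every remaining piece $X_l\setminus\bigcup_{k\le j}X_k$ back into its own $W_l$, so that later stages can still find it. This careful shepherding of the not-yet-extracted pieces is the heart of the patching argument and cannot be replaced by ``care taken at overlaps''; it also forces one to work with the abstract $\varepsilon$-strong extraction property (Definition~\ref{definition of extraction property}) rather than with a single concrete extraction per patch.

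Two further points. First, the theorem does not assume $E$ separable, so you cannot pass to a countable subcover; the paper uses $\sigma$-discrete refinements (Lemma~\ref{sigma-discrete refinements}) to reduce to a countable family of \emph{discrete} unions $V_i=\bigcup_r V_{i,r}$, which is where the star-finite structure needed for West's scheme comes from in the nonseparable case. Second, your flattening $\psi(y_1,y_2)=(y_1,y_2-g(y_1))$ with $g$ a smooth approximation only puts $\psi(X_n)$ into a thin tube, not onto $E_{(1,x_n)}\times\{0\}$; the vague ``flow whose integral curves meet $\psi(X_n)$ at most once'' then has to extract a genuinely wiggly (though graphical) set. The paper instead follows Renz (Theorem~\ref{Renz's theorem 1 in our case, healed version}) to build a \emph{homeomorphism} of $E$ that is a $C^p$ diffeomorphism off $G(f_x)$ and flattens $G(f_x)$ exactly to $C_x\times\{0\}$, after which the extraction (Theorems~\ref{general extracting scheme for closed subsets of a subspace of infinite codimension} and~\ref{Renz's Theorem 4 for smooth graphs on sets of zeros of smooth functions}) uses the explicit deleting curve $\gamma$ and the fixed-point Lemma~\ref{fixed point lemma} rather than a vector-field flow.
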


The proof of Theorem \ref{final extractibility theorem rough version general form} combines ideas and techniques
from Peter Renz's Ph.D. thesis \cite{Renz}, James West's paper \cite{West}, and some previous work of the first and
second-named authors \cite{Azagra, AzagraDobrowolski}; see Section 2 for more information. It should be noted that (even in the case that $E$ is separable)
Theorem \ref{final extractibility theorem rough version} generalizes West's theorem \cite{West}, because a closed
locally compact subset of an infinite-dimensional Hilbert space $E$, locally, can be regarded as the graph of a
continuous mapping defined on a closed subset of an infinite-codimensional subspace of $E$; see, for instance,
\cite{Renz}. Furthermore, note in the above results we do not assume separability of the
Banach space $E$.

The proof of Theorem \ref{Main result for separable Hilbert} will show that $E$ and $F$ can be replaced with open
subsets $U$ and $V$ of $E$ and $F$ respectively. Then, by combining such an equivalent statement of Theorem
\ref{Main result for separable Hilbert} with the well known result \cite{EE, Kuiper} stating that every separable
infinite-dimensional Hilbert manifold is diffeomorphic to an open subset of $\ell_2$, one may easily deduce the
following.
\begin{thm}\label{Main result for separable Hilbert manifolds}
Let $M$, $N$ be separable infinite-dimensional Hilbert manifolds. For every continuous mapping $f:M\to N$ and every
open cover $\mathcal{U}$ of $N$, there exists a $C^{\infty}$ mapping $g:M\to N$ such that $g$ has no
critical point and $\left\{\{ f(x), g(x)\} : x\in M\right\}$ refines $\mathcal{U}$.
\end{thm}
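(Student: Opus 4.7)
The plan is to reduce the statement to the open-subsets version of Theorem~\ref{Main result for separable Hilbert} via the diffeomorphic classification of separable infinite-dimensional Hilbert manifolds. First, I would invoke the Eells--Elworthy--Kuiper theorem \cite{EE, Kuiper} to pick $C^{\infty}$ diffeomorphisms $\phi:M\to U$ and $\psi:N\to V$ onto open subsets $U,V\subset\ell_2$. Setting $\tilde f=\psi\circ f\circ\phi^{-1}:U\to V$ and $\tilde{\mathcal U}=\{\psi(W):W\in\mathcal U\}$, the task becomes: produce a $C^{\infty}$ mapping $\tilde g:U\to V$ with no critical points such that $\{\{\tilde f(u),\tilde g(u)\}:u\in U\}$ refines $\tilde{\mathcal U}$.

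Next I would manufacture a continuous bound $\varepsilon:U\to(0,\infty)$ adapted to $\tilde{\mathcal U}$. Since $V$ is a paracompact metric space and $\tilde{\mathcal U}$ is an open cover of $V$ by subsets of $\ell_2$, a standard partition-of-unity argument produces a continuous function $\delta:V\to(0,\infty)$ such that for every $v\in V$ the ball $B(v,\delta(v))\subset\ell_2$ is contained in some element of $\tilde{\mathcal U}$ (and hence in $V$). Define $\varepsilon(u)=\tfrac{1}{2}\delta(\tilde f(u))$. Then any mapping $\tilde g:U\to\ell_2$ satisfying $\|\tilde g(u)-\tilde f(u)\|\le\varepsilon(u)$ for all $u\in U$ automatically takes its values in $V$, and for every $u\in U$ the pair $\{\tilde f(u),\tilde g(u)\}$ lies in a common element of $\tilde{\mathcal U}$.

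Now I would apply the open-subsets version of Theorem~\ref{Main result for separable Hilbert} (the strengthening explicitly announced immediately before this statement) to $\tilde f:U\to V$ with error function $\varepsilon$. This yields a $C^{\infty}$ mapping $\tilde g:U\to V$ with $\|\tilde g(u)-\tilde f(u)\|\le\varepsilon(u)$ and $D\tilde g(u):\ell_2\to\ell_2$ surjective for every $u\in U$. Setting $g=\psi^{-1}\circ\tilde g\circ\phi:M\to N$, the chain rule gives
\[
Dg(x)=D\psi^{-1}(\tilde g(\phi(x)))\circ D\tilde g(\phi(x))\circ D\phi(x),
\]
which is the composition of two linear isomorphisms with the linear surjection $D\tilde g(\phi(x))$ and is therefore surjective, so $g$ has no critical points. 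Moreover, for every $x\in M$ the pair $\{\psi(f(x)),\psi(g(x))\}=\{\tilde f(\phi(x)),\tilde g(\phi(x))\}$ is contained in some $\psi(W)\in\tilde{\mathcal U}$, hence $\{f(x),g(x)\}\subset W\in\mathcal U$.

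The only genuine obstacle is that the reduction requires the open-subsets version of Theorem~\ref{Main result for separable Hilbert}: without the guarantee that the smooth approximation $\tilde g$ can be kept inside the target open set $V$, there is no way to define $g$ as a mapping into $N$. This is precisely the strengthening whose availability is asserted in the sentence preceding the theorem, and it is crucial that the approximation can be controlled by a continuous $\varepsilon$ rather than a constant, since $\delta$ may tend to $0$ near $\partial V$. Once that strengthening is in hand, the remaining ingredients—Kuiper's classification, the paracompactness construction of $\varepsilon$, and the chain rule—are routine.
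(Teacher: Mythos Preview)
Your proposal is correct and follows essentially the same approach as the paper: reduce to open subsets of $\ell_2$ via the Eells--Elworthy--Kuiper embedding theorem, manufacture a continuous $\varepsilon$ small enough to force the approximation to land inside $V$ and to refine the pushed-forward cover, apply the open-subsets version of Theorem~\ref{Main result for separable Hilbert}, and conjugate back. The paper's own argument differs only cosmetically, reducing first to the case $V=F$ by replacing $\varepsilon$ with $\tfrac{1}{2}\min\{\varepsilon(x),\mathrm{dist}(f(x),F\setminus V)\}$ and then invoking the version with open domain only; your handling of the cover refinement via a Lebesgue-number-type function $\delta$ is if anything more explicit than what the paper writes out.
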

Alternatively, one can also adjust the proof of Theorem \ref{Main result for separable Hilbert} to obtain a direct
proof of Theorem \ref{Main result for separable Hilbert manifolds}.

\medskip

It is worth noting that Theorems \ref{Main result for separable Hilbert} and \ref{Main result for classical Banach} are immediate consequences of the following more general (but also more technical) results. For spaces $E$ which are reflexive and
have a certain ``composite'' structure, we have the following.

\begin{thm}\label{Main theorem for reflexive spaces}
Let $E$ be a separable reflexive Banach space of infinite dimension, and $F$ be a Banach space. In the case that
$F$ is infinite-dimensional, let us assume furthermore that:
\begin{enumerate}
\item $E$ is isomorphic to $E\oplus E$.
\item There exists a linear
bounded operator from $E$ onto $F$ (equivalently, $F$ is a quotient space of $E$).
\end{enumerate}
Then, for every continuous mapping  $f:E\to F$ and every continuous function $\varepsilon: E\to (0, \infty)$ there
exists a $C^{1}$ mapping $g:E\to F$ such that $\|f(x)-g(x)\|\leq\varepsilon(x)$ and $Dg(x):E\to F$ is a surjective
linear operator for every $x\in E$.
\end{thm}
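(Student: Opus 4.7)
The plan is to follow the three-step scheme laid out just after Theorem~\ref{Main result for classical Banach}: smoothly approximate $f$ by a $C^1$ mapping $\varphi$ whose critical set has a very restricted shape, diffeomorphically extract that critical set using Theorem~\ref{final extractibility theorem rough version general form}, and compose. Using hypothesis~(1), I would split $E = E_1 \oplus E_2$ with $E_1 \cong E_2 \cong E$, and using hypothesis~(2) fix a bounded linear surjection $T : E_2 \to F$ (obtained by composing an $E \to F$ surjection with an $E_2 \cong E$ isomorphism). If $\dim F < \infty$ the assumptions~(1)--(2) are not needed, but one can still arrange an analogous splitting and surjection. Since $E$ is separable and reflexive, it admits an equivalent $C^1$ LUR renorming, and therefore $C^1$ partitions of unity subordinate to any open cover; the same holds for $E_1$ and $E_2$.

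In Step~1 I would produce a $C^1$ approximation $\varphi$ of $f$ with $\|\varphi - f\| \le \varepsilon/2$ whose critical set $C_\varphi$ is locally contained in the graph of a continuous mapping from a closed subset of $E_1$ into $E_2$. Starting from a locally finite open cover $\{B(z_j,\delta_j)\}$ of $E$ fine enough that $f$ oscillates by at most $\varepsilon(z_j)/8$ on $B(z_j,2\delta_j)$, a subordinated $C^1$ partition of unity $\{\psi_j\}$ gives a first smooth approximation $\varphi_0(x) = \sum_j \psi_j(x) f(z_j)$ with $\|\varphi_0 - f\| \le \varepsilon/4$. The trouble is that $\varphi_0$ has finite-rank differential at every point, so every point is critical. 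I would then perturb by a linear-surjection term of the form $\rho(x)\, T(x_2)$, or more delicately $T(\Phi(x))$ for a smooth auxiliary map $\Phi : E \to E_2$, with $\rho : E \to (0,\infty)$ chosen small enough to preserve the bound $\|\varphi - f\| \le \varepsilon/2$. The partial derivative $\partial_2 \varphi(x) : E_2 \to F$ then becomes the sum of $\rho(x) T$ and a finite-rank correction, and a parametric implicit-function analysis of when this sum fails to be surjective identifies $C_\varphi$ locally as the graph of a continuous map $f_x : C_x \subset E_1 \to E_2$. This places $C_\varphi$ in the hypotheses of Theorem~\ref{final extractibility theorem rough version general form} with $p=1$, $E_{(1,x)} = E_1$, $E_{(2,x)} = E_2$: indeed $E_1$ carries $C^1$ partitions of unity, and $E_2$ is infinite-dimensional with a $C^1$ norm.

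For Steps~2 and~3, I would choose an open cover $\mathcal{G}$ of $E$ by balls $B(z,\delta_z)$ such that $\|\varphi(x) - \varphi(y)\| \le \varepsilon(z)/4$ and $\varepsilon(z) \le 2\varepsilon(y)$ for all $x,y \in B(z,\delta_z)$, and apply Theorem~\ref{final extractibility theorem rough version general form} with $X = C_\varphi$ and $U = E$ to obtain a $C^1$ diffeomorphism $h : E \to E \setminus C_\varphi$ limited by $\mathcal{G}$. Setting $g := \varphi \circ h$ then produces a $C^1$ map whose differential $Dg(x) = D\varphi(h(x)) \circ Dh(x)$ is surjective, because $h(x) \notin C_\varphi$ makes $D\varphi(h(x))$ surjective while $Dh(x)$ is an isomorphism. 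The limitation of $h$ by $\mathcal{G}$ combined with the triangle inequality yields
\[
\|f(x) - g(x)\| \le \|f(x) - \varphi(x)\| + \|\varphi(x) - \varphi(h(x))\| \le \tfrac{\varepsilon(x)}{2} + \tfrac{\varepsilon(x)}{2} = \varepsilon(x).
\]

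The main obstacle is clearly Step~1: one has to engineer the correction so that $C_\varphi$ is \emph{literally} a continuous graph over an infinite-codimensional closed subspace, not merely a ``small'' set in some weaker sense. The graph structure rests on a careful parametric analysis of the finite-dimensional obstruction to surjectivity of $\partial_2 \varphi(x)$, and it is precisely to make this analysis work that the hypothesis $E \cong E \oplus E$ is invoked: it supplies the parameter factor $E_1$ with $C^1$ partitions of unity, and a complement $E_2$ carrying both a surjection onto $F$ and the infinite-dimensional $C^1$-normed structure demanded by Theorem~\ref{final extractibility theorem rough version general form}.
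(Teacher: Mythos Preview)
Your overall architecture (Steps~2 and~3) matches the paper exactly, and those parts are fine. The genuine gap is in Step~1, which is the heart of the argument.

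Your proposed correction---a single term of the form $\rho(x)\,T(x_2)$ added to a generic smooth approximation $\varphi_0$---does not give enough control on $C_\varphi$, and the appeal to a ``parametric implicit-function analysis'' is unjustified. The condition ``$D\varphi(x)$ is not surjective'' is a closed, non-smooth condition (not a single smooth equation when $\dim F=\infty$), so the implicit function theorem does not apply to it directly. Even granting that the obstruction is finite-dimensional (since $\rho(x)T$ plus a finite-rank operator has finite-codimensional range), there is no mechanism in your construction forcing the zero set of that obstruction to be a graph of a \emph{continuous} map $E_1\to E_2$. With a generic partition of unity the derivatives $D\psi_j$ are arbitrary functionals, uncoupled from $T$, and the critical set can be badly behaved.

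The paper's construction is considerably more delicate, and uses reflexivity in an essential way that your sketch omits entirely. It works on the upper sphere $S^+\subset E\times\mathbb R$ and builds a \emph{special} partition of unity $\{\psi_n\}$ from slices by norm-one functionals $g_j$, so that each $D\psi_k(y)$ is a linear combination of $g_1|_{T_yS^+},\dots,g_n|_{T_yS^+}$. It then constructs an \emph{infinite sequence} of decompositions $E=E_{1,1}\oplus(E_{2,1}\oplus E_{2,2})\oplus\cdots\oplus(E_{n,1}\oplus E_{n,2}\oplus E_{n,3})$ with $E_{k,1}\oplus E_{k,2}\subset\bigcap_{j\le k}\mathrm{Ker}\,g_j^1$ (this is where $E\cong E\oplus E$ is used repeatedly), and surjections $S_n:E\to F$ supported on $E_{n,1}$ alone. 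The approximation is $\varphi=\sum_n(f(y_n)+T_n)\psi_n$ with $T_n$ proportional to $S_n$. The point of this elaborate setup is that one can then \emph{prove} (not invoke an implicit function theorem) that $C_\varphi\cap V_x\subset A_x:=\{y\in S^+: E_{n,2}\subset\mathrm{Ker}\,L_y\}$, where $L_y$ is the derivative of $u\mapsto\sqrt{1-\|u\|^2}$. Finally---and this is the step where reflexivity does the real work---$A_x$ is identified as the graph of the \emph{metric projection} $w\mapsto\arg\min_{v\in E_{n,2}}\|w+v\|$: existence uses reflexivity and coercivity, uniqueness uses strict convexity of the LUR norm, and continuity uses the Kadec--Klee property of LUR norms. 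None of this structure is present in your proposal; in particular, reflexivity enters your sketch only through the existence of a $C^1$ LUR renorming, which is not where its force lies in this proof.
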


Note that there exists separable, reflexive Banach spaces $E$ such that $E$ is not isomorphic to $E\oplus E$. The
first example of such a space was given by Figiel in 1972 \cite{Fie}.

See also Theorem \ref{result for recursively reflexively composite spaces} and Theorem
\ref{result for reflexive complemented spaces} below for more general variants of this result.

For spaces which are not necessarily reflexive but have an appropriate Schauder basis we have the following.

\begin{thm}\label{Main theorem for spaces with unconditional bases}
Let $E$ be an infinite-dimensional Banach space, and $F$ be a Banach space such that:
\begin{enumerate}
\item $E$ has an equivalent locally uniformly convex norm $\|\cdot\|$ which is $C^1$ smooth.
\item $E=(E,\|\cdot\|)$ has a (normalized) Schauder basis $\{e_{n}\}_{n\in\N}$ such that
for every $x=\sum_{j=1}^{\infty}x_j e_j$ and every $j_0\in\N$ we have that
$$
\left\|\sum_{j\in\N, \, j\neq j_0} x_j e_j\right\|\leq \left\|\sum_{j\in\N}x_j e_j\right\|.
$$
\item In the case that $F$ is infinite-dimensional, there exists
a subset $\mathbb{P}$ of $\N$ such that both $\mathbb{P}$ and $\N\setminus\mathbb{P}$ are infinite and, for every
infinite subset $J$ of $\mathbb{P}$, there exists a linear bounded operator from $\overline{\textrm{span}}\{e_j :
j\in J\}$ onto $F$ (equivalently, $F$ is a quotient space of $\overline{\textrm{span}}\{e_j : j\in J\}$).
\end{enumerate}
Then, for every continuous mapping  $f:E\to F$ and every continuous function $\varepsilon: E\to (0, \infty)$ there
exists a $C^{1}$ mapping $g:E\to F$ such that $\|f(x)-g(x)\|\leq\varepsilon(x)$ and $Dg(x):E\to F$ is a surjective
linear operator for every $x\in E$.
\end{thm}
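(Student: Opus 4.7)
The plan is to follow the three-step strategy outlined in the introduction for Theorem~\ref{Main result for classical Banach}. First, construct a $C^1$ map $\varphi\colon E\to F$ approximating $f$ to within $\varepsilon/2$ whose critical set $C_\varphi$ is locally contained in the graph of a continuous function defined on an infinite-codimensional complemented subspace. Second, choose an open cover $\mathcal{G}=\{B(z,\delta_z)\}$ on which $\varphi$ oscillates by at most $\varepsilon/4$ (possible by continuity of $\varphi$ and $\varepsilon$), and invoke Theorem~\ref{final extractibility theorem rough version general form} to produce a $C^1$ diffeomorphism $h\colon E\to E\setminus C_\varphi$ limited by $\mathcal{G}$. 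Third, set $g:=\varphi\circ h$ and verify the conclusion.

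For Step~1, I would use~(3) to partition $\mathbb{P}$ into countably many pairwise disjoint infinite subsets $\{J_\alpha\}$, and for each $\alpha$ fix a bounded linear surjection $T_\alpha\colon E_{J_\alpha}\to F$, where $E_J:=\overline{\mathrm{span}}\{e_j:j\in J\}$. By~(2) the coordinate projection $P_{J_\alpha}\colon E\to E_{J_\alpha}$ has norm at most~$1$, and by~(1) $E$ admits $C^1$ partitions of unity (a standard consequence of LUR $C^1$-smooth renormings). Take a locally finite open cover $\{V_\alpha\}$ of $E$ on whose elements $f$ oscillates by at most $\varepsilon/8$, a subordinate $C^1$ partition of unity $\{\psi_\alpha\}$, and representatives $z_\alpha\in V_\alpha$ with $a_\alpha:=f(z_\alpha)$. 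Then define
\[
\varphi(x)\;:=\;\sum_\alpha\psi_\alpha(x)\bigl(a_\alpha+\eta_\alpha T_\alpha P_{J_\alpha}(x-z_\alpha)\bigr),
\]
with $\eta_\alpha>0$ chosen so small that the perturbation term contributes at most $\varepsilon/4$ on $V_\alpha$; then $\|\varphi-f\|\le\varepsilon/2$ follows.

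The critical-set analysis proceeds by computing
\[
D\varphi(x)[w]=\sum_\alpha\bigl(D\psi_\alpha(x)[w]\bigr)\,\bigl(a_\alpha+\eta_\alpha T_\alpha P_{J_\alpha}(x-z_\alpha)\bigr)+\sum_\alpha\psi_\alpha(x)\,\eta_\alpha\,T_\alpha P_{J_\alpha}(w)
\]
and, for any $k$ with $\psi_k(x)>0$, restricting $w$ to $E_{J_k}$: by disjointness of the $J_\alpha$'s the second sum collapses to $\psi_k(x)\eta_k T_k(w)$, while the first becomes a finite-rank operator $L_k(x)\colon E_{J_k}\to F$ (finite rank because only finitely many $\psi_\alpha$'s are active near~$x$). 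A Neumann-series estimate then gives surjectivity of $D\varphi(x)|_{E_{J_k}}$, hence of $D\varphi(x)$, whenever $\|L_k(x)\|$ is small compared with $\psi_k(x)\eta_k$. The critical locus is thus described locally by a finite system of equations in ``active'' coordinates, and the implicit function theorem (with continuity supplied by~(2)) writes it as the graph of a continuous function over a closed subset of the complemented subspace of untouched directions, a subspace of infinite codimension since $\mathbb{N}\setminus\mathbb{P}$ is infinite and only finitely many $J_\beta$'s are active near any given point.

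Steps~2 and~3 are then automatic: Theorem~\ref{final extractibility theorem rough version general form} applied to $X:=C_\varphi$ with the cover $\mathcal{G}$ yields a $C^1$ diffeomorphism $h\colon E\to E\setminus C_\varphi$ limited by $\mathcal{G}$, and $g:=\varphi\circ h\in C^1(E,F)$ has $Dg(x)=D\varphi(h(x))\circ Dh(x)$ surjective for all $x$ (since $h(x)\notin C_\varphi$ and $Dh(x)$ is a linear isomorphism), together with
\[
\|f(x)-g(x)\|\le\|f(x)-\varphi(x)\|+\|\varphi(x)-\varphi(h(x))\|\le\tfrac{\varepsilon(x)}{2}+\tfrac{\varepsilon(x)}{2}=\varepsilon(x).
\]
The principal obstacle is the critical-set analysis of Step~1: one must verify that the failure region of the Neumann-series estimate (roughly, the ``boundary'' region between supports of overlapping $\psi_\alpha$'s) is genuinely a continuous graph over an infinite-codimensional subspace and not merely a thin set. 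This is exactly where the coordinate-subadditivity property~(2) (to provide continuity of the implicit solutions) and the abundance hypothesis~(3) (to leave infinitely many coordinate directions uninvolved in any local finite computation) are both essential.
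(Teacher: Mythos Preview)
Your three-step strategy and your Steps~2 and~3 match the paper exactly. The genuine gap is in Step~1, precisely where you flag it.

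With a generic $C^1$ partition of unity $\{\psi_\alpha\}$ on $E$, the derivatives $D\psi_\alpha(x)$ are uncontrolled elements of $E^*$. Restricting $w$ to $E_{J_k}$ does collapse the second sum, but your first sum $L_k(x)[w]=\sum_\alpha D\psi_\alpha(x)[w]\,c_\alpha(x)$, while finite-rank, need not be small relative to $\psi_k(x)\eta_k$: near the boundary of $\operatorname{supp}\psi_k$ one has $\psi_k(x)\to 0$ while $D\psi_k(x)$ (hence $L_k(x)$) stays bounded away from zero. Thus your Neumann criterion can fail for \emph{every} active $k$ on a thick open region, and nothing identifies $C_\varphi$ with the zero set of a finite system of equations amenable to the implicit function theorem. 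The sentence ``the implicit function theorem writes it as a graph'' is not substantiated: you would need explicit functions whose partial derivatives in the complementary directions are invertible, and the generic setup does not supply them.

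The paper resolves this by abandoning generic partitions of unity. It passes to the upper hemisphere $S^+\subset E\times\R$ (diffeomorphic to $E$) and builds $\{\psi_n\}$ from \emph{slices} $\{y:g_n(y)>\gamma_n\}$, where each $g_n$ is a norm-one functional that is a \emph{finite} combination $\sum_{j=0}^{N_n}\beta_{n,j}e_j^*$; this is possible because hypotheses (1)--(2) force the basis to be shrinking. Then $D\psi_k(y)$ on $T_yS^+$ is a combination of $e_1^*,\dots,e_{N_n}^*$ together with a single extra functional $L_y$ coming from the sphere geometry. Choosing the sets $I_n\subset\mathbb{P}$ (your $J_\alpha$'s) disjoint from $\{1,\dots,N_n\}$, for $u$ supported in $I_m\cup\{j_0\}$ with $j_0>N_n$ the $e_j^*$-part of $D\psi_k(y)[u]$ vanishes \emph{exactly}, not merely approximately. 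The residual $L_y$-term is killed by the $e_{j_0}$-coefficient of $u$, and this is where~(2) is actually used: the suppression inequality implies that if $w_{j_0}\neq 0$ then $D\|\cdot\|(w)[e_{j_0}]\neq 0$, so $L_y(e_{j_0})\neq 0$ whenever $w=d^{-1}(y)$ has a nonzero coordinate at some $j_0\in\N\setminus\mathbb{P}$ with $j_0>N_n$. Consequently $C_\varphi$ is locally contained (after pulling back by $d$) in the complemented subspace $\overline{\mathrm{span}}\{e_j:j\in\mathbb{P}\text{ or }j\le N_n\}$, which has infinite codimension since $\N\setminus\mathbb{P}$ is infinite. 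This is the trivial instance (with $f_x\equiv 0$) of the graph hypothesis in Theorem~\ref{final extractibility theorem rough version general form}, and the rest proceeds as you describe.
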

Recall that a norm $\|\cdot\|$ in a Banach space $E$ is said to be locally uniformly convex (LUC) (or locally
uniformly rotund (LUR)) provided that, for every sequence $(x_n)\subset E$ and every point $x_0$ in $E$, we have
that
$$
\lim_{n\to\infty}2\left( \|x_0\|^2+\|x_n\|^2\right)-\|x_0+x_n\|^2=0 \implies \lim_{n\to\infty}\|x_n -x_0\|=0.
$$
Condition $(2)$ is equivalent to the fact that, for every (equivalently, finite) set $A\subset \N$, $\|P_A\|\le1$,
where $P_A$ stands for the projection $P_A(x)=\sum_{j\in A} x_je_j$. This, in particular, implies that
$\{e_n\}_{n\in N}$ is an unconditional basis; for more details see \cite[p. 53]{AlbKal} or \cite{AlbAns}.
\medskip

The proofs of these theorems will be provided in Sections 3 and 4. These results combine to yield Theorem \ref{Main
result for classical Banach} for $k=1$ (see also Remark \ref{the reason why c0 is OK} in Section 5 for an
explanation of why the space $c_0$ satisfies the assumptions of Theorem \ref{Main theorem for spaces with
unconditional bases}). In order to deduce Theorem \ref{Main result for classical Banach} in the cases of higher
order smoothness, we just have to use Nicole Moulis's results on $C^1$ fine approximation in Banach spaces
\cite{MoulisApproximation} or the more general results of \cite[Corollary 7.96]{HajJoh}, together with the
following fact.

\begin{proposition}\label{C1 approximation is enough}
Assume that the Banach spaces $E$, $F$ satisfy the following properties:
\begin{enumerate}
\item For every continuous mapping $f:E\to F$ and every continuous function $\delta:E\to (0, \infty)$ there exists a $C^1$ smooth mapping $\varphi:E\to F$ such that $\|f(x)-\varphi(x)\|
\leq\delta(x)$ and $D\varphi(x):E\to F$ is surjective for all $x\in E$.
\item For every $C^1$ mapping $\varphi:E\to F$ and every continuous function
$\eta:E\to (0, \infty)$ there exists a $C^k$ mapping $g:E\to F$ such that $\|f(x)-\varphi(x)\| \leq\eta(x)$ and
$\|D\varphi(x)-Dg(x)\|\leq\eta(x)$ for all $x\in E$.
\end{enumerate}
Then, for every continuous mapping $f:E\to F$ and every continuous function $\varepsilon:E\to (0, \infty)$ there
exists a $C^k$ smooth mapping $g:E\to F$ such that $\|f(x)-g(x)\| \leq\varepsilon(x)$ and $Dg(x):E\to F$ is
surjective for every $x\in E$.
\end{proposition}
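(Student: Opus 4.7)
The plan is to chain the two hypotheses in the obvious order: first use (1) to convert $f$ into a $C^1$ mapping $\varphi$ with surjective differentials and $\|f-\varphi\|\le\varepsilon/2$, then use (2) to smooth $\varphi$ to a $C^k$ mapping $g$ whose derivatives are close enough to those of $\varphi$ that they remain surjective. The whole proof reduces to choosing the tolerance function $\eta$ in (2) carefully enough that surjectivity is preserved pointwise.

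The key analytic ingredient I need is that surjectivity of bounded linear operators between Banach spaces is an \emph{open} condition: if $T\in\mathcal{L}(E,F)$ is surjective, then by the open mapping theorem there is a constant $c=c(T)>0$ such that for every $y\in F$ there is $x\in E$ with $Tx=y$ and $\|x\|\le c\|y\|$; a Neumann-series argument then shows that any $S\in\mathcal{L}(E,F)$ with $\|S-T\|<1/c$ is still surjective. Therefore the set $\mathcal{S}$ of surjective operators is open, and the set of non-surjective operators is closed. Defining
\[
d(T):=\dist\bigl(T,\,\mathcal{L}(E,F)\setminus\mathcal{S}\bigr),
\]
this function is $1$-Lipschitz on $\mathcal{L}(E,F)$ and strictly positive on $\mathcal{S}$. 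Applying it to $D\varphi$, the map $x\mapsto d(D\varphi(x))$ is continuous from $E$ into $(0,\infty)$, because $D\varphi:E\to\mathcal{L}(E,F)$ is continuous by hypothesis on $\varphi$ and $D\varphi(x)\in\mathcal{S}$ for every $x$.

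With this in hand the proof is a one-line combination. Given $f$ and $\varepsilon$, apply (1) with $\delta:=\varepsilon/2$ to obtain a $C^1$ mapping $\varphi:E\to F$ satisfying $\|f(x)-\varphi(x)\|\le\varepsilon(x)/2$ and $D\varphi(x)\in\mathcal{S}$ for all $x$. Define
\[
\eta(x):=\tfrac{1}{2}\min\bigl\{\varepsilon(x)/2,\; d(D\varphi(x))\bigr\},
\]
which is continuous and strictly positive. Apply (2) to $\varphi$ and $\eta$ to obtain $g\in C^k(E,F)$ with $\|\varphi(x)-g(x)\|\le\eta(x)$ and $\|D\varphi(x)-Dg(x)\|\le\eta(x)<d(D\varphi(x))$. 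The latter inequality puts $Dg(x)$ inside the open ball around $D\varphi(x)$ that lies in $\mathcal{S}$, so $Dg(x)$ is surjective. The triangle inequality gives $\|f(x)-g(x)\|\le\varepsilon(x)/2+\eta(x)\le\varepsilon(x)$.

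The only real obstacle is the continuity and positivity of the safety margin $x\mapsto d(D\varphi(x))$; once openness of $\mathcal{S}$ is established via the open mapping theorem, the rest is bookkeeping. A minor point of care: hypothesis (2) should be read with $\|\varphi(x)-g(x)\|\le\eta(x)$ in place of the printed $\|f(x)-\varphi(x)\|\le\eta(x)$, since otherwise it would not be a fine-approximation statement for $\varphi$ and the conclusion could not be derived.
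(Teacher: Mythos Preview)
Your proof is correct and follows the same two-step strategy as the paper: apply (1) with $\delta=\varepsilon/2$ to get $\varphi$, then apply (2) with a tolerance $\eta$ small enough to preserve surjectivity of the differential. You also correctly spot the typo in hypothesis (2).

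The difference lies in how the tolerance $\eta$ is manufactured. The paper fixes, for each $x$, a radius $r_x>0$ such that operators within $2r_x$ of $D\varphi(x)$ remain surjective, shrinks to $s_x<r_x$ so that $D\varphi$ oscillates by less than $r_x$ on $B(x,s_x)$, extracts a countable subcover $\{B(x_n,s_n)\}$ (using separability of $E$), and then sets $\eta(y)=\min\{\varepsilon(y)/2,\ \sum_n (s_n/2)\psi_n(y)\}$ via a partition of unity. Your route is more direct: you invoke the $1$-Lipschitz function $d(T)=\dist(T,\mathcal{L}(E,F)\setminus\mathcal{S})$ and set $\eta(x)=\tfrac12\min\{\varepsilon(x)/2,\,d(D\varphi(x))\}$, which is automatically continuous and positive because $D\varphi$ is continuous and takes values in the open set $\mathcal{S}$. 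This avoids both the covering argument and the (implicit) separability hypothesis, so your version is in fact slightly more general and considerably shorter.
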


Nevertheless, it should be noted that our proof of Theorem \ref{Main theorem for reflexive spaces} directly
provides $C^{\infty}$ approximations without critical points in the case that $E$ is a separable Hilbert space; see
Remark \ref{In the Hilbert case we directly get Cinfinity} in Section 5 below.  An easy proof of Proposition
\ref{C1 approximation is enough}, together with some examples, remarks and more technical variants of our results,
is given in Section 5.

Finally, let us mention that as a straightforward application of Theorem \ref{Main result for classical Banach}, we
obtain that, for all Banach spaces $E$ and $F$ appearing in Theorem \ref{Main result for classical Banach}, every
continuous mapping $f:E\to F$ can be uniformly approximated by {\em open} mappings of class $C^k$.  For a more
general statement, see Remark \ref{approximation by open mappings}. Obviously, the latter result is false in the
case that $E$ is finite-dimensional.

%%%%%%%%%%%%%%%%%%%%%%%

\medskip

\section{Extracting closed sets which are locally contained in graphs of infinite codimension}

In this section we will combine ideas and tools of \cite{Renz, West, AzagraDobrowolski} in order to prove Theorem
\ref{final extractibility theorem rough version general form}. We will split the proof into four subsections.
First, in Section 2.1, we will see that each piece of $X$ contained in the graph $G(f_x)$ as provided by condition
$(4)$ of the statement can be flattened by means of homeomorphisms $h_x,\varphi_x:E\to E$ which are sufficiently
close to each other, and whose restrictions to $E\setminus G(f_x)$ and $E\setminus (G(f_x)\setminus U)$ are
diffeomorphisms, respectively. Next, in Sections 2.2 and 2.3, we will show that there exists a diffeomorphism
$g_{x}: E\setminus (C_x\times\{0\}) \to E\setminus ((C_x\times\{0\})\setminus h_x(U))$ which is the identity on
$(E\setminus h_x(U))\setminus (C_x\times\{0\})$  and moves no point more than a fixed small number $\varepsilon_x$.
Then, the composition $\varphi_x^{-1}\circ g_x\circ h_x$ will extract the local chunk of graph $U_x\cap G(f_x)$ and
will move no point too much. Finally, in Section 2.4, we will see how one can patch a collection of diffeomorphisms
extracting pieces of $X$ into a diffeomorphism $h$ which extracts $X$ and is limited by $\mathcal{G}$.

In Section 2.1, we will closely follow Peter Renz's results from \cite{Renz, Renz1972}. In Sections 2.2 and 2.3, we
will combine ideas and techniques from \cite{Renz, Azagra, AzagraDobrowolski}. Finally, in Section 2.4, we will
borrow a technique of James West's \cite[p. 288-290]{West}.

\medskip

\subsection{Flattening graphs}

Here we will prove the following.

\begin{thm}\label{Renz's theorem 1 in our case, healed version}
Let $E_1$ be a Banach space with $C^p$ smooth partitions of unity and $E_2$ be a Banach space which admits a (not
necessarily equivalent) $C^p$ norm. Let $(E=E_1\times E_2,\|\cdot\|)$ and $\pi_1:E\to E_1$ be the natural
projection, i.e., $\pi_{1}(x_1, x_2)=x_1$, $(x_1, x_2)\in E$. Let $X_1\subset E_1$ be a closed set, $f:X_1\to E_2$
a continuous mapping, $U\subset E$ an open set, and $\varepsilon>0$. Write $G(f)=\{(x_1, x_2)\in E \, : \,
x_2=f(x_1),\ x_1\in X_1\}$. Then there exist a couple of homeomorphisms $h, \varphi: E\to E$ such that:
\begin{enumerate}
\item $h(G(f))\subset E_1\times\{0\}$ and $\varphi(G(f)\setminus U)\subset
(E_1\times\{0\})\setminus h(U)$;
\item $h=\varphi$ off of $U$;
\item $\pi_1\circ h=\pi_1=\pi_1\circ\varphi$;
\item $h$ restricted to $E\setminus G(f)$ is a $C^p$ diffeomorphism of
$E\setminus G(f)$ onto $E\setminus (X_1\times\{0\})$;
\item $\varphi$ restricted to $E\setminus (G(f)\setminus U)$ is a $C^p$ diffeomorphism of
$E\setminus (G(f)\setminus U)$ onto $E\setminus \left( (X_1\times\{0\})\setminus h(U)\right)$.
\item $\|h^{-1}(x)-\varphi^{-1}(x)\|\leq\varepsilon$ for every $x\in E$.
\item $h^{-1}(x_1,x_2)$ is uniformly continuous with respect to the second coordinate $x_2$ (meaning that for every $\varepsilon>0$ there exists $\delta>0$ such that if $\|x_{2}- x_{2}'\|<\delta$ then
$\|h^{-1}(x_1, x_2)- h^{-1}(x_1, x_{2}')\|<\varepsilon$ for all $x_1$).
\end{enumerate}
\end{thm}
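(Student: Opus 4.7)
The plan is to build $h$ and $\varphi$ as fiber-preserving self-homeomorphisms of $E = E_1 \times E_2$, of the form $(x_1,x_2) \mapsto (x_1, H(x_1,x_2))$, that straighten the graph $G(f)$ onto the zero section $X_1 \times \{0\}$ in the $E_2$-direction. The map $\varphi$ will coincide with $h$ outside $U$, and be further modified inside $U$ so as to be $C^p$ across $G(f) \cap U$.

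First I would use the $C^p$ smooth partitions of unity on $E_1$ to produce a Dugundji-Tietze-type extension $\tilde f \colon E_1 \to E_2$ of $f$ which is continuous on $E_1$ and of class $C^p$ on $E_1 \setminus X_1$ (take a $C^p$ partition of unity subordinate to a locally finite cover of $E_1 \setminus X_1$ by balls whose radii shrink to $0$ on approach to $X_1$, and glue the values of $f$ at nearby projections onto $X_1$). The naive candidate $h_0(x_1,x_2) = (x_1, x_2 - \tilde f(x_1))$ is already a homeomorphism with $\pi_1 \circ h_0 = \pi_1$ and $h_0(G(f)) = X_1 \times \{0\}$, and it is $C^p$ wherever $x_1 \notin X_1$; it fails to be $C^p$ only at the \emph{vertical bad points} $(x_1,x_2)$ with $x_1 \in X_1$ and $x_2 \ne f(x_1)$, where $\tilde f$ is only continuous in $x_1$. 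Following Renz \cite{Renz, Renz1972}, I would replace $h_0$ with a fiberwise map $h(x_1,x_2) = (x_1, T_{x_1}(x_2))$ in which $T_{x_1} \colon E_2 \to E_2$ is a $C^p$ diffeomorphism that still sends $f(x_1)$ to $0$ (for $x_1 \in X_1$) but is the identity outside a thin tube $\mathcal T_{x_1} \subset E_2$ containing both $\tilde f(x_1)$ and $0$. The tube and the deformation $T_{x_1}$ are built using the $C^p$ norm of $E_2$, with the tube radius controlled by a $C^p$ gauge function on $E_1$ that vanishes exactly on $X_1$ (obtained from the $C^p$ partitions of unity), and with the path inside the tube routed through a spare direction of $E_2$ in the spirit of Bessaga's sliding; these choices guarantee that every vertical bad point admits a whole neighborhood in $E$ on which $T_{x_1}$ is the identity. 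On such a neighborhood $h$ is the identity, hence trivially $C^p$, and elsewhere $h$ is $C^p$ because $\tilde f$ is, so altogether $h$ is a $C^p$ diffeomorphism of $E \setminus G(f)$ onto $E \setminus (X_1 \times \{0\})$.

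For $\varphi$, I would localize the construction inside $U$: choose a $C^p$ cutoff $\chi \colon E \to [0,1]$ with $\chi = 1$ outside $U$ and $\chi = 0$ on an open neighborhood of $G(f) \cap U$, and define $\varphi$ as the natural $C^p$ fiberwise interpolation between $h$ (where $\chi = 1$) and the identity (where $\chi = 0$). Then $\varphi = h$ off $U$, which gives property (2) and the $\varphi$-half of (1); moreover $\varphi$ coincides with the identity on a neighborhood of $G(f) \cap U$, which makes it $C^p$ across that set and therefore a $C^p$ diffeomorphism of $E \setminus (G(f) \setminus U)$ onto $E \setminus ((X_1 \times \{0\}) \setminus h(U))$. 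The $\varepsilon$-closeness of $h^{-1}$ and $\varphi^{-1}$ in (6) is obtained by taking both the tube diameter and the interpolation scale of $\chi$ smaller than $\varepsilon$, and the uniform continuity of $h^{-1}$ in the second coordinate (7) follows from the fact that $h^{-1}(x_1,x_2) - (x_1,x_2)$ is supported in a tube whose diameter in the $E_2$-direction is uniformly bounded in $x_1$.

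The principal obstacle, and the technical heart of the argument, is the simultaneous design of the family $T_{x_1}$: these maps must flatten $G(f)$ (a macroscopic move on $G(f)$), depend continuously on $x_1$ globally, and yet reduce to the identity on whole neighborhoods of every vertical bad point, so that the composite $h$ is globally $C^p$ off $G(f)$. The resolution is precisely Renz's tube construction, which combines the $C^p$ norm on $E_2$, a $C^p$ gauge on $E_1$ vanishing exactly on $X_1$, and a sliding deformation of the type used in \cite{Azagra, AzagraDobrowolski}; once this ingredient is in place, the remaining verifications (fiberwise bijectivity, global homeomorphism property, and all the quantitative estimates) are routine bookkeeping.
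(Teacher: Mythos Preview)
Your proposal has a genuine gap in the design of $T_{x_1}$. You require $T_{x_1}$ to (i) send $f(x_1)$ to $0$ for $x_1\in X_1$, (ii) be the identity outside a tube $\mathcal T_{x_1}$ containing both $\tilde f(x_1)$ and $0$, and (iii) have tube radius controlled by a gauge vanishing on $X_1$. These are incompatible: as $x_1\to X_1$ the tube shrinks to width zero while still having to support a motion of size $\|f(x_1)\|$, and for $x_1\in X_1$ no homeomorphism of $E_2$ can move $f(x_1)$ to $0$ while being the identity off a null-width set. Even interpreting the tube as a long thin neighborhood of a path from $\tilde f(x_1)$ to $0$, a vertical bad point $(x_1,x_2)$ with $x_2$ lying on that path (say $x_2=\tfrac12 f(x_1)$) will not admit a neighborhood on which $h$ is the identity, so your smoothness argument breaks there. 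Your invocation of Bessaga-type sliding through a ``spare direction of $E_2$'' is also misplaced: that device belongs to the \emph{extraction} step (the paper's Theorem~\ref{general extracting scheme for closed subsets of a subspace of infinite codimension}) and requires $E_2$ to be infinite-dimensional, which is not hypothesized in Theorem~\ref{Renz's theorem 1 in our case, healed version}. Finally, property~(6) cannot follow from ``making the tube diameter small'': the tube must contain both $0$ and $\tilde f(x_1)$, so its diameter is at least $\|\tilde f(x_1)\|$, over which you have no control.

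The paper's mechanism is quite different and does \emph{not} make $h$ locally the identity anywhere. One writes $h(x_1,x_2)=(x_1,x_2-d(x_1,x_2))$ with $d(x_1,x_2)=F(\phi(x_1,x_2),x_1)$, where $F:\mathbb R\times E_1\to E_2$ is a one-parameter smoothing of $\bar f$ with $F(1,\cdot)|_{X_1}=f$, $F|_{(-\infty,1)\times E_1}$ of class $C^p$, and $\|D_1F\|\le\tfrac12$ (Lemma~\ref{Variation-Renz's lemma 1}), and $\phi:E\to[0,1]$ is a graph-detector with $\phi^{-1}(1)=G(\bar f)$, $C^p$ off $G(\bar f)$, and $\|D_2\phi\|\le\tfrac12$ (Lemma~\ref{Modified Renz's lemma 2}). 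Then $d$ is a $\tfrac14$-contraction in $x_2$, so $h$ is a global homeomorphism (Lemma~\ref{Renz's lemma 3}); at a vertical bad point one has $\phi<1$, and $d$ is $C^p$ there not because it vanishes but because $F(r,\cdot)$ is $C^p$ for $r<1$ even across $X_1$. The companion $\varphi$ is obtained by replacing $\phi$ with a second function $\tilde\phi$ equal to $\phi$ off $U$ but with $\tilde\phi^{-1}(1)=G(\bar f)\setminus U$; property~(6) then follows from the uniform smallness of $\phi-\tilde\phi$ (both built from a series with $\sum c_n\le\varepsilon/2$) together with the $1$-Lipschitz dependence of $F$ on its first argument, and property~(7) from the explicit contractivity of $d$ in $x_2$.
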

We will assume without loss of generality that $\varepsilon\leq 1$.

In what follows, slightly abusing notation, we will indistinctly use the symbol $\|\cdot\|$ to denote the norms
$\|\cdot\|_{E_1}$, $\|\cdot\|_{E_2}$, and $\|\cdot\|$  with which the Banach spaces $E_1$, $E_2$ or
$E_{1}\times E_2$ are endowed. We may and do assume that $\|x_1\|_{E_1}=\|(x_1, 0)\|$ and
$\|x_2\|_{E_2}=\|(0, x_2)\|$ for all $(x_1, x_2)\in E_{1}\times E_2$.

Now, we state and prove a sequence of lemmas that will be employed in proving the above theorem. The most important
are Lemmas \ref{Variation-Renz's lemma 1} and \ref{Modified Renz's lemma 2}. Basically, we follow the ideas of
Renz's paper \cite{Renz1972} and Ph.D. thesis \cite{Renz}, with some minor but very important changes.

The proof of our first lemma is a consequence of the existence of $C^p$ smooth partitions of unity on $E_1$.
\begin{lemma}\label{smooth extension}
The function $f:X_1\to E_2$ extends to a continuous function $\bar f:E_1\to E_2$ such that $\bar f|E_1\setminus
X_1$ is $C^p$ smooth.
\end{lemma}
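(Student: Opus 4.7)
The plan is to carry out a smooth Dugundji-type extension of $f$ across $X_1$, using the $C^p$ partitions of unity provided by the hypothesis on $E_1$. The classical Dugundji construction produces a continuous extension by interpolating values of $f$ at carefully chosen base points in $X_1$ by means of a partition of unity on $E_1\setminus X_1$; replacing that partition with a $C^p$ one automatically yields $C^p$ smoothness off $X_1$, and the only real work is to check continuity across $X_1$.

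Concretely, set $U:=E_1\setminus X_1$ and, for each $x\in U$, put $r_x:=\tfrac12 d(x,X_1)>0$. The balls $\{B(x,r_x)\}_{x\in U}$ form an open cover of $U$. Since $U$ is open in $E_1$ and $E_1$ admits $C^p$ smooth partitions of unity, so does the open submanifold $U$. Hence there is a locally finite $C^p$ partition of unity $\{\psi_i\}_{i\in I}$ on $U$ subordinate to a refinement of this cover: for each $i$ there is $x_i\in U$ with $\operatorname{supp}\psi_i\subset B_i:=B(x_i,r_{x_i})$. For each $i$ pick $p_i\in X_1$ with $\|p_i-x_i\|\le 2d(x_i,X_1)$, and define
$$
\bar f(x):=\begin{cases} f(x), & x\in X_1,\\[2pt] \displaystyle\sum_{i\in I}\psi_i(x)\,f(p_i), & x\in U.\end{cases}
$$

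The sum defining $\bar f$ on $U$ is locally finite and each $\psi_i$ is $C^p$, so $\bar f|_U$ is $C^p$. For continuity at a point $x_0\in X_1$, suppose $x\in U$ and $\psi_i(x)>0$; then $x\in B_i$, so the triangle inequality combined with $x_0\in X_1$ gives
$$
d(x_i,X_1)\le\|x_i-x_0\|\le\|x_i-x\|+\|x-x_0\|<r_{x_i}+\|x-x_0\|.
$$
Since $r_{x_i}=d(x_i,X_1)/2$, this forces $r_{x_i}<\|x-x_0\|$, and therefore
$$
\|p_i-x_0\|\le\|p_i-x_i\|+\|x_i-x\|+\|x-x_0\|\le 4r_{x_i}+r_{x_i}+\|x-x_0\|\le 6\|x-x_0\|.
$$
Taking a convex combination,
$$
\|\bar f(x)-f(x_0)\|\le\sum_i\psi_i(x)\|f(p_i)-f(x_0)\|\le\sup\bigl\{\|f(p)-f(x_0)\|:p\in X_1,\ \|p-x_0\|\le 6\|x-x_0\|\bigr\},
$$
which tends to $0$ as $x\to x_0$ by continuity of $f$ on $X_1$.

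The essential obstacle is already packaged into the hypothesis that $E_1$ admits $C^p$ smooth partitions of unity; once this tool is available, the rest of the argument is a standard smooth upgrade of Dugundji's classical extension theorem. The only mildly delicate point is the geometric estimate above, which ensures that the partition pieces $\psi_i$ which are active near a boundary point $x_0\in X_1$ draw their values from base points $p_i\in X_1$ lying within a bounded multiple of $\|x-x_0\|$ from $x_0$, so that continuity of $f$ on $X_1$ transfers to continuity of $\bar f$ on all of $E_1$.
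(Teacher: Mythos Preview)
Your proof is correct and is precisely the standard smooth Dugundji extension that the paper has in mind; the paper's own proof consists of the single sentence that the lemma ``is a consequence of the existence of $C^p$ smooth partitions of unity on $E_1$,'' and you have simply spelled out that consequence in full detail. The geometric estimate $\|p_i-x_0\|\le 6\|x-x_0\|$ is the key point ensuring continuity across $X_1$, and your derivation of it is clean and correct.
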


\begin{lemma}\label{Variation-Renz's lemma 1}
Let $E_1$ be a Banach space with $C^p$ smooth partitions of unity, $E_2$ be a Banach space removed text, $X_1$ be a closed subset of $E_1$, and $f:X_1\to E_2$ be a continuous mapping.
For every $n\in\N$, write
$$
W_n=\left\lbrace x_1\in E_1:\,\operatorname{dist} (x_1,X_1)\le {1\over n}\right\rbrace.
$$
Assume $\bar f:E_1\rightarrow E_2$ is a continuous extension of $f$ such that $\bar f|E_1\setminus X_1$ is $C^p$ smooth.
Then, there is a continuous mapping
$$
F:\R\times E_1\rightarrow E_2
$$
such that
\begin{enumerate}
\item $F(r,x_1)=\bar f(x_1)$ for all $(r,x_1)\in(r_n,\infty)\times E_1\setminus W_n$ and some
$0<r_n<1$; in particular, $F(r,x_1)=\bar f(x_1)$ for all $(r,x_1)$ in some neighborhood of the set
$\{1\}\times(E_1\setminus X_1)$ in $\mathbb{R}\times(E_1\setminus X_1)$;
\item $F|\mathbb{R}\times (E_1\setminus X_1)\cup(-\infty,1)\times E_1$ is $C^p$ smooth;
\item $F(r,x_1)=f(x_1)$ for $r\geq 1$ and $x_1\in X_1$;
\item $\| D_1F(r,x_1)\|\leq {1\over 2}$ for all $r\in\R$, $x_1\in E_1$.
\end{enumerate}
\end{lemma}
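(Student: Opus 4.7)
The plan is to build $F$ as a time-weighted blend of $\bar f$ with a sequence of global $C^p$ smooth approximations of $\bar f$ that coincide with $\bar f$ outside shrinking neighborhoods of $X_1$, and to paste in $\bar f$ itself on the right of the seam $r=1$.

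\textbf{Spatial building blocks.} Using the $C^p$ partitions of unity on $E_1$, for each $n\ge 1$ I would select a $C^p$ cut-off $\chi_n:E_1\to[0,1]$ with $\chi_n\equiv 1$ off $W_n$ and $\chi_n\equiv 0$ on $W_{n+1}$, and apply the fine $C^p$ approximation theorem (taking $E_2$-valued convex combinations against a smooth partition of unity on $E_1$) to obtain a $C^p$ map $\tilde g_n:E_1\to E_2$ with $\|\tilde g_n-\bar f\|_\infty\le\epsilon_n$ for a positive constant $\epsilon_n$ to be chosen later. Then set
\[
g_n(x_1):=\chi_n(x_1)\,\bar f(x_1)+\bigl(1-\chi_n(x_1)\bigr)\,\tilde g_n(x_1).
\]
The term $\chi_n\bar f$ vanishes identically on the neighborhood $W_{n+1}$ of $X_1$ and equals $\bar f$ (which is $C^p$) off $X_1$, so it is globally $C^p$; $(1-\chi_n)\tilde g_n$ is a product of $C^p$ maps. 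Hence $g_n\in C^p(E_1,E_2)$, $g_n\equiv\bar f$ on $E_1\setminus W_n$ (exactly, not merely approximately), and $\|g_n-\bar f\|\le\epsilon_n$ globally.

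\textbf{Temporal blending.} Put $r_n:=1-2^{-n}$ ($n\ge 0$) and choose a smooth partition of unity $\{\alpha_n\}_{n\ge 0}$ of $(-\infty,1)$ with $\operatorname{supp}\alpha_n\subset[r_{n-1},r_{n+1}]$ (using $r_{-1}=-\infty$ and $\alpha_0\equiv 1$ on $(-\infty,r_0]$) and $|\alpha_n'|\le M_n\le K\cdot 2^n$; at most three of these are simultaneously nonzero. Finally, define
\[
F(r,x_1):=\begin{cases}\displaystyle\sum_{n\ge 0}\alpha_n(r)\,g_n(x_1), & r<1,\\[2pt] \bar f(x_1), & r\ge 1. \end{cases}
\]
If $x_1\notin W_n$ then $g_m(x_1)=\bar f(x_1)$ for every $m\ge n$, while all $\alpha_m$ with $m\le n$ vanish once $r>r_n$; hence $F(r,x_1)=\bar f(x_1)\sum_{m>n}\alpha_m(r)=\bar f(x_1)$ on $(r_n,\infty)\times(E_1\setminus W_n)$, which is (1). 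The same calculation shows $F\equiv\bar f\circ\pi_{E_1}$ on a two-sided $(r,x_1)$-neighborhood of each point of $\{1\}\times(E_1\setminus X_1)$, so combined with the locally-finite-sum structure on $(-\infty,1)\times E_1$ this gives (2). Condition (3) is immediate from $\bar f|_{X_1}=f$, and continuity of $F$ at the seam $\{1\}\times X_1$ follows from $\|F(r,\cdot)-\bar f\|\le\sum_n\alpha_n(r)\epsilon_n\to 0$ as $r\to 1^-$.

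\textbf{The derivative bound, which is the main hurdle.} Using $\sum_n\alpha_n'\equiv 0$ on $(-\infty,1)$, one computes
\[
D_1F(r,x_1)=\sum_n\alpha_n'(r)\bigl(g_n(x_1)-\bar f(x_1)\bigr),\qquad r<1,
\]
so $\|D_1F(r,x_1)\|\le\sum_n|\alpha_n'(r)|\,\epsilon_n$. I would therefore fix $\epsilon_n\le 1/(6M_n)$ at the outset: then at most three nonzero terms contribute, giving $\|D_1F\|\le 1/2$ throughout $(-\infty,1)\times E_1$, while $D_1F\equiv 0$ for $r>1$. At $(1,x_1)$ with $x_1\in X_1$ the right derivative is $0$, and the estimate $\|F(r,x_1)-\bar f(x_1)\|/(1-r)\le 2^n(\epsilon_{n-1}+\epsilon_n)$ for $r\in(r_{n-1},r_n)$ forces the left derivative to be $0$ as well, provided $\epsilon_n$ also decays strictly faster than $2^{-n}$; arranging $\epsilon_n\le\min\{1/(6M_n),4^{-n}\}$ achieves both. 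The delicate point is precisely this balance: $\epsilon_n$ must be chosen small enough to dominate $|\alpha_n'|$ on the interior and simultaneously to create a one-sided derivative equal to $0$ at the seam over $X_1$.
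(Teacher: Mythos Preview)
Your proof is correct and is essentially the paper's own argument in different notation: the paper builds the same spatial approximants (their $f_n$ is your $g_n$, via the identical cut-off trick) and writes the temporal blend as a telescoping sum $F(r,x_1)=f_1(x_1)+\sum_{n\ge1}h_{n+1}(r)\bigl(f_{n+1}(x_1)-f_n(x_1)\bigr)$ with nondecreasing $C^\infty$ step functions $h_n$ transitioning on $[r_{n-1},r_n]$, which by Abel summation is exactly your $\sum_n\alpha_n(r)g_n(x_1)$ with $\alpha_n=h_n-h_{n+1}$. The only harmless slip is that your sum starts at $n=0$ while $g_n$ is defined only for $n\ge1$; taking $g_0$ to be any global $C^p$ approximation of $\bar f$ (or simply $g_0:=g_1$) fills this in.
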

\begin{proof}
For every $n\in \N$ we can find a sequence of $C^p$ functions $\bar f_n:E_1\rightarrow E_2$ such that
$$
||\bar f(x_1)-\bar f_n(x_1)||\leq 2^{-2n-4}
$$
for every $x_1\in E_1$. The existence of such a sequence is again guaranteed by the existence of $C^p$ partitions
of unity in $E_1$ (see, for instance, \cite[Theorem VII.3.2]{DGZ}). We will now improve the sequence $\left\lbrace \bar
f_n\right\rbrace _{n\geq 1}$ to $\left\lbrace f_n\right\rbrace _{n\geq 1}$ so that the sequence $\{f_n|E_1\setminus
X_1\}$ locally stabilizes with respect to $n$. To achieve this, we use the existence of $C^p$ partitions of unity
to find a $C^p$ function $\lambda_n: E_1\rightarrow [0,1]$ which is $1$ on $E\setminus W_n$ and $0$ on $W_{n+1}$.
Define
$$f_n(x_1)=\lambda_n (x_1)\bar{f}(x_1)+(1-\lambda_n(x_1))\bar f_n(x_1)$$
for all $x_1\in E_1$. It follows that $\|f_n(x_1)-f_{n+1}(x_1)\|\le 2^{-2n-3}$ for all $x_1\in E_1$.

For every $n\in\mathbb{N}$, pick a nondecreasing $C^\infty$ function $h_n:\mathbb{R}\to[0,1]$ such that $h_n(r)=0$ for
$r\le 1-2^{1-n}$, $h_n(r)=1$ for $r\ge1-2^{-n}$, and $h'_n(r)\le 2^{n+1}$. One can check that
$$
F(r,x_1)=f_1(x_1)+\sum_{n=1}^\infty h_{n+1}(r)(f_{n+1}(x_1)-f_n(x_1))
$$
defines a required mapping.
\end{proof}
Observe that in fact $F(r,x_1)$ is Lipschitz with constant $1$ with respect to the first variable $r\in\R$. That
is,
\begin{align*}
||F(r,x_1)-F(r',x_1)||&\leq\sum^{\infty}_{n=1}|h_{n+1}(r)-h_{n+1}(r')|||f_{n+1}(x_1)-f_n(x_1)||\leq\\
&\leq \sum^{\infty}_{n=1}2^{n+1}|r-r'|2^{-2n-3}\leq |r-r'|
\end{align*}
for every $x_1\in E_1$.\\
We will write
$$U_0=\pi_1(G(\bar f)\cap U),
$$
which is an open set in $E_1$, and also
$$Y_1=X_1\setminus U_0=X_1\setminus \pi_1(G(\bar
f)\cap U)=\pi_1(G(f)\setminus U),
$$
which is a closed subset of $E_1$. By replacing $U$ with $U\cap\pi_1^{-1}(U_0)$, we can assume that
$$
U_0=\pi_1(U).
$$

\begin{lemma}\label{step-like open subset of $U$}
With the above notation, take a decreasing sequence of positive numbers $\{ \delta_n\}_{n\geq 1}$ converging to
zero. Then there exists an increasing sequence of open subsets in $E_1$
$$V_1\subseteq \overline{V_1}\subseteq V_2\subseteq \overline{V_2}\subseteq \cdots\subseteq \overline{V_n}\subseteq V_{n+1}\subseteq\cdots\subseteq U_0$$
such that $\bigcup^{\infty}_{n=1}V_n=U_0$ and the sets
$$U_n :=\{ (x_1,x_2)\in E:\,\|x_2-\bar f(x_1)\|<\delta_n,\, x_1\in V_n\}$$
are contained in $U$.
\end{lemma}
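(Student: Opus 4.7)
My plan is to build the sequence $(V_n)$ in two stages. First I introduce a \emph{gauge} $\rho:U_0\to(0,\infty]$ that measures how wide a tubular neighborhood of the graph of $\bar f$ at $x_1$ still fits inside $U$; its superlevel sets will form an increasing open cover $(W_n)$ of $U_0$ with $S(\delta_n,W_n)\subset U$, where
$$
S(r,V):=\{(y_1,y_2)\in E : y_1\in V,\ \|y_2-\bar f(y_1)\|<r\}.
$$
Second, I shrink each $W_n$ to a slightly smaller open set $V_n\subset W_n$ in order to obtain the strict nesting $\overline{V_n}\subset V_{n+1}$ while preserving $\bigcup_n V_n=U_0$.

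For the first stage, set
$$
\rho(x_1):=\sup\{r>0 : S(r,B(x_1,r))\subset U\},\qquad x_1\in U_0.
$$
After the preliminary reduction we have $U_0=\pi_1(U)$ and, because $U_0=\pi_1(G(\bar f)\cap U)$, also $(x_1,\bar f(x_1))\in U$ for every $x_1\in U_0$; thus openness of $U$ together with continuity of $\bar f$ gives $\rho(x_1)>0$. Moreover $\rho$ is lower semicontinuous: if $\rho(x_1)>\delta$, pick $r\in(\delta,\rho(x_1))$ with $S(r,B(x_1,r))\subset U$ and set $r'=(r+\delta)/2$; a direct triangle inequality shows $S(r',B(y_1,r'))\subset S(r,B(x_1,r))\subset U$ whenever $\|y_1-x_1\|<r-r'$, hence $\rho(y_1)\ge r'>\delta$. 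Defining
$$
W_n:=\{x_1\in U_0 : \rho(x_1)>\delta_n\},
$$
lower semicontinuity gives $W_n$ open, monotonicity of $(\delta_n)$ gives $W_n\subset W_{n+1}$, and since $\rho>0$ and $\delta_n\searrow 0$ we have $\bigcup_n W_n=U_0$. By construction $S(\delta_n,W_n)\subset U$.

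For the second stage, define
$$
V_n:=\bigl\{x_1\in W_n : \dist(x_1, E_1\setminus W_n)>1/n\bigr\}.
$$
Each $V_n$ is open. If $x_1\in \overline{V_n}$, continuity of the distance function gives $\dist(x_1,E_1\setminus W_n)\ge 1/n$, so in particular $x_1\in W_n\subset W_{n+1}$; since $E_1\setminus W_{n+1}\subset E_1\setminus W_n$ we get $\dist(x_1,E_1\setminus W_{n+1})\ge \dist(x_1,E_1\setminus W_n)\ge 1/n>1/(n+1)$, and therefore $x_1\in V_{n+1}$. Finally, for any $x_1\in U_0$ choose $m$ with $x_1\in W_m$; then $\dist(x_1,E_1\setminus W_m)>0$, and monotonicity yields $x_1\in V_n$ for all sufficiently large $n\ge m$, so $\bigcup_n V_n=U_0$. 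Because $V_n\subset W_n$, the sets $U_n=S(\delta_n,V_n)$ lie in $U$, as required.

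The only nontrivial point is the lower semicontinuity of $\rho$; everything else is a routine application of the standard shrinking trick that promotes an increasing open cover to a strictly nested one with the same union.
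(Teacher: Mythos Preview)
Your proof is correct. Both you and the paper proceed in two stages---first producing an increasing family of subsets of $U_0$ over which the $\delta_n$-tubes around the graph fit inside $U$, then arranging the strict nesting $\overline{V_n}\subset V_{n+1}$---but the implementations differ. The paper first fixes an arbitrary strictly nested open exhaustion $(W_n)$ of $U_0$, then defines
\[
V'_n=\bigl\{x_1\in U_0:\ \{(x_1,x_2):\|x_2-\bar f(x_1)\|<\delta_n\}\subset U\bigr\}
\]
and sets $V_n=W_n\cap V'_n$. You instead introduce the gauge $\rho(x_1)=\sup\{r>0:S(r,B(x_1,r))\subset U\}$, establish its lower semicontinuity, take the superlevel sets $W_n=\{\rho>\delta_n\}$, and only afterwards shrink via the distance to $E_1\setminus W_n$. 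The main technical payoff of your route is that, by measuring the tube width over a \emph{ball} $B(x_1,r)$ rather than over the single fiber $\{x_1\}$, lower semicontinuity of $\rho$ follows from a one-line triangle-inequality argument, so your $W_n$ are manifestly open; the paper's fiberwise sets $V'_n$ are not obviously open (indeed, a condition of the form ``the open $\delta$-slab over $x_1$ lies in $U$'' typically cuts out a closed rather than an open set), so your version is in fact a bit tidier on this point. Apart from that, the two arguments are close in spirit.
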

\begin{proof}
To be able to get the required inclusions between the sets $V_n$, we first take an auxiliary sequence of open sets
$W_n$ in $U_0$ such that $\overline{W_n}\subseteq W_{n+1}$ for every $n\in N$ and $\bigcup^{\infty}_{n=1}W_n=U_0$.

Then we define
$$V'_n=\left\lbrace x_1\in U_0:\, \{(x_1,x_2) \in E: \|x_2-\bar f(x_1)\|< \delta_n \}\subseteq U\right\rbrace $$
for every $n\in\N$. Observe that we have $V'_n\subseteq V'_{n+1}$ and $\bigcup^{\infty}_{n=1}V'_n=U_0$, but we
cannot assure that $\overline{V'_n}\subseteq V'_{n+1}$ for every $n\in N$. So now we mix these sets with the
previous $W_n$, that is, we let $V_n=W_n\cap V'_n$. Obviously, by definition, for every $n\in \N$ the set $U_n=\{
(x_1,x_2)\in E:\,\|x_2-\bar f(x_1)\|< \delta_n,\, x_1\in V_n\}$ is contained in $U$. Now, we have that
$\overline{V_n}\subseteq V_{n+1}$ for every $n\in N$; also $\bigcup^{\infty}_{n=1}V_n=U_0$.
\end{proof}

The following lemma resembles \cite[Lemma 2.2]{Renz1972} and \cite[Lemma 2]{Renz} (in which only one function
$\phi$ is considered). However, Theorem \ref{Renz's theorem 1 in our case, healed version} requires constructing
two homeomorphisms $h$ and $\varphi$ which are identical outside $U$. The building block in constructing those
homeomorphisms are two functions $\phi$ and $\tilde\phi$ whose existence is claimed in the lemma below. The
existence of $\tilde\phi$ is crucial. Incidentally, let us note that Renz's proof of \cite[Theorem 4]{Renz} is
flawed (and this is the reason why we must deal with two functions $\phi$ and $\tilde\phi$ instead of just the
function $\phi$), but can be corrected by using Theorem \ref{Renz's theorem 1 in our case, healed version}.

\begin{lemma}\label{Modified Renz's lemma 2}
Let $\bar f:E_1\rightarrow E_2$ be the uniform limit of $C^p$ functions, where $E_1$ has $C^p$ partitions of unity and $E_2$ has a (not necessarily equivalent) $C^p$ smooth norm. Then there are two continuous functions
$\phi$,  $ \tilde\phi:E\rightarrow[0,1]$ such that
\begin{enumerate}
\item $\phi^{-1}(1)=G(\bar f)$ and ${\tilde \phi}^{-1}(1)=G(\bar f)\setminus
U$;
\item $\phi|E\setminus G(\bar f)$ and $\tilde\phi|E\setminus(G(\bar f)\setminus U)$ are $C^p$
smooth;
\item $\| D_2\phi(x_1,x_2)\|\leq {1\over 2}$ for all $(x_1,x_2)\in E\setminus G(\bar f)$,
and $\|D_2\tilde \phi(x_1,x_2)\|\leq {1\over 2}$ for all $(x_1,x_2)\in E\setminus (G(\bar f)\setminus U)$;
\item $\phi=\tilde\phi$ outside $U$.
\end{enumerate}
\end{lemma}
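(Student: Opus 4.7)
The strategy is to construct both $\phi$ and $\tilde\phi$ as countable series whose summands are built from (i) a $C^p$ smooth, not necessarily equivalent, norm $|\cdot|$ on $E_2$, which exists by assumption, and (ii) a sequence $\{f_n\}_{n\ge 1}$ of $C^p$ mappings $E_1\to E_2$ with $\|\bar f(x_1)-f_n(x_1)\|\le 2^{-n}$ for all $x_1\in E_1$, which exists since $E_1$ has $C^p$ partitions of unity (cf.\ the proof of Lemma \ref{Variation-Renz's lemma 1}). The construction mirrors the telescoping sum defining $F$ in Lemma \ref{Variation-Renz's lemma 1}, but now in the $E_2$-variable, and the smooth norm $|\cdot|$ plays the role that the ambient Banach norm on $E_2$ could not play, namely providing smoothness away from the origin.

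For $\phi$, fix a $C^\infty$ nonincreasing function $\theta:\R\to[0,1]$ with $\theta=1$ on $(-\infty,1]$ and $\theta=0$ on $[2,\infty)$, together with summable positive weights $\{a_n\}$ normalized by $\sum a_n=1$ and scales $r_n>\sup_{x_1}|f_n(x_1)-\bar f(x_1)|$ chosen so that $\sum a_n/r_n$ is as small as we wish. Then define
$$
\phi(x_1,x_2)=\sum_{n=1}^{\infty}a_n\,\theta\!\left(\frac{|x_2-f_n(x_1)|}{r_n}\right).
$$
Each summand is $C^p$ on $E$: where $x_2=f_n(x_1)$ it is locally constant equal to $a_n$ (because $\theta\equiv 1$ near $0$), and elsewhere $|\cdot|$ is $C^p$. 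At a point of $G(\bar f)$ each summand equals $a_n$, so $\phi=1$. Off $G(\bar f)$ one has $|x_2-\bar f(x_1)|>0$, whence $|x_2-f_n(x_1)|>2r_n$ for all sufficiently large $n$, so the tail of the series vanishes on a neighborhood and $\phi$ is $C^p$ with $\phi<1$ there. Term-by-term differentiation together with a uniform bound $\|D|\cdot|\|\le L$ away from $0$ yields $\|D_2\phi\|\le 2L\sum_n a_n/r_n\le 1/2$ after a suitable choice of weights.

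For $\tilde\phi$, I would modify the sequence $\{f_n\}$ inside $U$ so that the perturbed graphs are pushed away from $G(\bar f)\cap U$. Fix $v\in E_2$ with $|v|=1$; using $C^p$ partitions of unity on $E_1$ and the nested open sets $V_n\subset U_0$ and tubes $U_n\subset U$ from Lemma \ref{step-like open subset of $U$}, build $C^p$ functions $\eta_n:E\to[0,1]$ with support contained in $U_n$ and equal to $1$ on a slightly smaller tube around $G(\bar f)$ over $V_n$; pick constants $c_n\ge 4r_n+2^{-n}$. Set $\tilde f_n(x_1,x_2)=f_n(x_1)+\eta_n(x_1,x_2)\,c_n\,v$ and
$$
\tilde\phi(x_1,x_2)=\sum_{n=1}^{\infty}a_n\,\theta\!\left(\frac{|x_2-\tilde f_n(x_1,x_2)|}{r_n}\right).
$$
Since $\eta_n=0$ off $U$, $\tilde f_n=f_n$ and hence $\tilde\phi=\phi$ there. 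On $G(\bar f)\setminus U$ every summand is still $a_n$, so $\tilde\phi=1$. At a point $(x_1,\bar f(x_1))\in G(\bar f)\cap U$, choose $n_0$ with $x_1\in V_{n_0}$; by construction $\eta_{n_0}(x_1,\bar f(x_1))=1$, and then $|\bar f(x_1)-\tilde f_{n_0}(x_1,\bar f(x_1))|\ge c_{n_0}-2^{-n_0}>2r_{n_0}$, forcing the $n_0$-th summand to be $0$ and $\tilde\phi<1$. The local-finiteness and smoothness argument for $\phi$ carries over, both at points of $G(\bar f)\cap U$ and elsewhere on $E\setminus(G(\bar f)\setminus U)$, and the derivative bound $\|D_2\tilde\phi\|\le 1/2$ is preserved by taking the cutoffs $\eta_n$ with Lipschitz constants that absorb the added $D_2$-contribution from the perturbation.

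The hard part will be reconciling item (4) with the $C^p$ smoothness requirement on $G(\bar f)\cap U$: pointwise equality $\phi=\tilde\phi$ off $U$ forces the modification to vanish $C^p$-continuously across $\partial U$, while the strict inequality $\tilde\phi<1$ on $G(\bar f)\cap U$ forces it to be nontrivial along that set. The nested structure $\overline{V_n}\subset V_{n+1}$ and the decreasing tubes $U_n\subset U$ from Lemma \ref{step-like open subset of $U$} are tailored precisely to enable such a telescoping $C^p$ modification, with the $n$-th perturbation active only on the $n$-th level; this is why the single-function scheme of Renz is insufficient and $\tilde\phi$ must be constructed in tandem with $\phi$.
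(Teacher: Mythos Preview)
Your construction of $\phi$ matches the paper's. The divergence is in $\tilde\phi$. The paper sets $\tilde\psi=\sum_n \lambda_n(x_1)\,\psi_n(x_1,x_2)$ with cutoffs $\lambda_n$ depending \emph{only on $x_1$} ($\lambda_n=0$ on $V_{n-1}$, $\lambda_n=1$ off $V_n$). This purchases two things at no cost: since $D_2\lambda_n=0$ the bound $\|D_2\tilde\psi\|\le\frac12$ is immediate; and at any $(x_1,\bar f(x_1))\in G(\bar f)\cap U$ one has $x_1\in V_{n_0}$ for some $n_0$, hence $\lambda_k\equiv 0$ for all $k>n_0$ on a whole neighborhood of $x_1$, so the series is locally a \emph{finite} sum and $C^p$ smoothness there is automatic.

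Your approach instead perturbs $f_n$ by $\eta_n(x_1,x_2)\,c_n v$ with $\eta_n$ supported in the shrinking tube $U_n$, and the claim that ``local-finiteness \dots\ carries over'' at $G(\bar f)\cap U$ is where the argument breaks. Fix $(x_1,\bar f(x_1))\in G(\bar f)\cap U$ and any ball $W$ about it. Since $\delta_n\to 0$, for every large $n$ the ball $W$ contains points with $\eta_n=1$ (near the graph) and points with $\eta_n=0$ (outside $U_n$), hence meets the transition shell $\{0<\eta_n<1\}$. On that shell the $n$-th summand is neither identically zero nor locally constant, so the series is \emph{not} locally finite on $W$. To rescue $C^p$ smoothness you would then need the series of $k$-th derivatives of the summands to converge locally uniformly for every $k\le p$; but already the $D_1$-derivative of the $n$-th summand carries a factor $Df_n(x_1)$, and there is no uniform control on $\|Df_n\|$ since the $f_n$ approximate a merely continuous map. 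A secondary obstruction: the tube $U_n$ is defined via the Banach norm $\|\cdot\|$, while the only $C^p$ bump you can write on $E_2$ uses the possibly non-equivalent norm $|\cdot|$; the available inequality $|\cdot|\le M\|\cdot\|$ goes the wrong way for producing a $C^p$ cutoff with support inside $U_n$.

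The cure is exactly the paper's idea: make the modification a function of $x_1$ alone. This works because each $\psi_n$ already has thin $x_2$-support (it vanishes where $\|x_2-\bar f(x_1)\|>b_n+\varepsilon_n$), so multiplying by a pure $x_1$-cutoff $\lambda_n$ simultaneously forces $\tilde\psi<1$ on $G(\bar f)\cap U$ (a tail of summands is killed) and, via the calibration $\delta_n=b_n+\varepsilon_n$ in Lemma~\ref{step-like open subset of $U$}, preserves $\tilde\psi=\psi$ outside $U$ (the killed summands were already zero there).
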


\begin{proof}
To construct $\phi$ we will follow \cite[Lemma 2.2]{Renz1972}. A similar argument will be used to construct
$\tilde\phi$; however, we have to make sure that $\tilde\phi|G(\bar f)\cap U<1$.

For $n\in \N$, let $a_n,b_n,c_n, d_n, \varepsilon_n$ be positive numbers with the following properties:
\begin{enumerate}
\item they tend to zero as $n$ tends to infinity;
\item $a_n<b_n$ for all $n$;
\item $\varepsilon_{n+1}+b_{n+1}<a_n-\varepsilon_n$ for all $n$;
\item $\sum^{\infty}_{n=1} c_n\leq {\epsilon\over 2}\leq 1$;
\item $\sum^{\infty}_{n=1}d_n\leq
{1\over 2}$
\end{enumerate}
(for instance, let us set $a_n=\epsilon 2^{-2n}$, $b_n=\epsilon 2a_n$, $c_n=\epsilon 2^{-4n}$, $d_n=\epsilon
2^{-2n}$ and $\varepsilon_n=\epsilon 2^{-4(n+1)}$). Let $h_n$ be a nonincreasing $C^p$ function from $\R$ to $\R$
satisfying
\begin{align*}
&c_n= h_n(r)=h_n(0)>0 &\text{whenever}\; r\leq a_n,\\
&h_n(r)=0 &\text{whenever}\; r\geq b_n,\\
&|h'_n(r)|\leq d_n &\text{for all}\; r \;\text{in}\;\R,
\end{align*}
and $g_n:E_1\to E_2$ be a $C^p$ mapping such that $\|g_n(x_1)-\bar f(x_1)\|\leq \varepsilon_n$ for every $x_1\in
E_1$. Then
$$\psi_n(x_1,x_2)=h_n\left( \|x_2-g_n(x_1)\|\right) $$
defines a nonnegative $C^p$ function on $E_1\times E_2=E$ satisfying
\begin{align*}
&c_n=\psi_n(x_1,x_2)=h_n(0)>0 & \textrm{if } \,\,\, \|x_2-\bar f(x_1)\|\leq a_n-\varepsilon_n ,\\
& \psi_n(x_1,x_2)=0 & \textrm{if } \,\,\, \|x_2-\bar f(x_1)\|\geq b_n+\varepsilon_n,\\
& \|D_2\psi_n(x_1,x_2)\|\leq d_n & \textrm{for all } \,\,\, (x_1,x_2)\in E_1\times E_2.
\end{align*}

The nonnegativity and first two properties of $\psi_n$ are evident, and it is easy to see that $\psi_n$ is $C^p$ on
$E$. The bound on the norm of the derivative $D_2\psi_n$ is established by using the chain rule and the fact that
the operator norm of the derivative of the norm of any Banach space is less than or equal to one.

Define
\begin{equation}
\psi(x_1,x_2)=\sum^{\infty}_{n=1}\psi_n(x_1,x_2)
\end{equation}
for all $(x_1,x_2)\in E$. \\
Similarly, define
\begin{equation}
\tilde\psi(x_1,x_2)=\sum^{\infty}_{n=1}\lambda_n(x_1)\psi_n(x_1,x_2)
\end{equation}
for all $(x_1,x_2)\in E$, where $\lambda_n:E_1\rightarrow [0,1]$ is a $C^p$ smooth function such that
$\lambda_n(x_1)=1$ if $x_1\notin V_n$ and $\lambda_n(x_1)=0$ if $x\in V_{n-1}$. Here, the sets $V_n$ are provided
by Lemma \ref{step-like open subset of $U$} for the sequence $\delta_n:=\varepsilon_n+b_n$ (let $V_0=\emptyset$ and
assume $V_1\not=\emptyset$). In particular, observe that since $\bigcup^{\infty}_{n=1}V_n=U_0$ then
$\lambda_n(x_1)=1$ for every $x_1\notin U_0$; hence, $\psi(x_1,x_2)=\tilde\psi(x_1,x_2)$ for $x_1\notin U_0$.

Since the functions $\psi$ and $\tilde\psi$ are defined via absolutely and uniformly convergent series of
continuous functions, they are continuous.

If $(x_1,x_2)\in E\setminus G(\bar f)$, then $\|x_2-\bar f(x_1)\|>b_n+\varepsilon_n$ for some $n\in \N$. By
continuity, the inequality holds in a neighborhood of $(x_1,x_2)$ so $\psi_k$ vanishes for $k\geq n$. Hence, $\psi$
is locally a finite sum of $C^p$ functions, and in particular is $C^p$ on $E\setminus G(\bar f)$.

Also, if $(x_1,x_2)\in G(\bar f)\cap U$, then $x_1\in U_0$ and $x_1\in V_n$ for some $n\in\N$. So
$\lambda_k(x_1)=0$ for all $k\geq n+1$. This means that $\tilde\psi$ is locally a finite sum of $C^p$ functions
and, thus, is of class $C^p$ on $E\setminus (G(\bar f)\setminus U)$.

The derived series for $D_2\psi$ and $D_2\tilde\psi$ are absolutely and uniformly convergent in view of the bounds
on $\|D_2\psi_n\|$ and the fact that $D_2\lambda_n=0$. Then differentiation term by term is justified and
$\|D_2\psi(x_1,x_2)\|\leq{1\over 2}$ for all $(x_1,x_2)\in E\setminus G(\bar f)$ and
$\|D_2\tilde\psi(x_1,x_2)\|\leq{1\over 2}$ for all $(x_1,x_2)\in E\setminus (G(\bar f)\setminus U)$.

Each point $(x_1,x_2)\in G(\bar f)$ satisfies $0=\|x_2-\bar f(x_1)\|<a_n-\varepsilon_n$ for all $n\in\N$,
consequently $\psi$ equals the constant
$$d^*=\sum^{\infty}_{n=1}\psi_n(x_1,\bar f(x_1))=\sum^{\infty}_{n=1}h_n(0)=\sum^{\infty}_{n=1}c_n\leq 1.$$
On the other hand, if $(x_1,x_2)\in G(\bar f)\setminus U$, then $0=\|x_2-\bar f(x_1)\|<a_n+\varepsilon_n$ and
$\lambda_{n}(x_1)=1$ for all $n\in\N$, so $\tilde\psi$ equals again the constant $d^*$. In fact $d^*$ is the
supremum of $\psi$ and of $\tilde\psi$, and is easily seen to be attained in $G(\bar f)$ and $G(\bar f)\setminus
U$, respectively.

To show that $\psi$ and $\tilde\psi$ are equal outside $U$ take $(x_1,x_2)\in E$. By a remark after the definition
of $\psi$ and $\tilde\psi$, we can assume $x_1\in U_0$.

\begin{claim}
For every $n\in\N$, if $x_1\in V_n\setminus V_{n-1}$ and if $x_2\in E_2$ is such that $\|x_2-\bar f(x_1)\|\geq
b_n+\varepsilon_n$, then $\psi(x_1,x_2)=\tilde\psi(x_1,x_2)$.
\end{claim}

\begin{proof}[Proof of Claim]

If $\|x_2-\bar f(x_1)\|\geq b_n+\varepsilon_n$ we have that $\psi_k(x_1,x_2)=0$ for all $k\geq n$. So we have to
see that $ \lambda_k(x_1)=1$ for $k=1,\dots,n-1$. But this is clear since $x\notin V_{n-1}$ and hence $x\notin V_k$
for any $k=1,\dots,n-1$.
\end{proof}

Now, we can conclude that for each $n\in\N$, $\psi=\tilde\psi$ on the set
$$((V_n\setminus V_{n-1})\times E_2)\setminus U_n\supseteq((V_n\setminus V_{n-1})\times E_2)\setminus U.$$
Since $\bigcup^{\infty}_{n=1}V_n\setminus V_{n-1}= U_0$ and $\bigcup_{n=1}^{\infty}U_n\subseteq U$, it follows that
$\psi$ is equal to $\tilde\psi$ outside $U$.

Finally, to obtain functions $\phi$ and $\tilde\phi$ with the desired properties it is sufficient to set
\begin{align*}
\phi(x_1,x_2)&=\psi(x_1,x_2)+1-d^*\\
\tilde\phi(x_1,x_2)&=\tilde\psi(x_1,x_2)+1-d^*
\end{align*}
for all $(x_1,x_2)\in E$. This ensures that the supremum, which is attained precisely on $G(\bar f)$ for $\phi$ and
precisely on $G(\bar f)\setminus U$ for $\tilde\phi$, is equal to $1$.

\end{proof}

In the proof of Theorem \ref{Renz's theorem 1 in our case, healed version}, we will employ a well known fact
stating that the identity mapping perturbed by a contracting mapping is a homeomorphism (even a diffeomorphism
provided that the contracting mapping is smooth). This fact is stated and proved in Lemma 3 of Renz's Ph.D. thesis
\cite{Renz}.

\begin{lemma}\label{Renz's lemma 3}
Let $E_1$ be a normed linear space and $E_2$ be a Banach space. Let $E=E_1\times E_2$ and let $d:E\to E_2$ be a
continuous mapping satisfying the following condition
$$\|d(x_1,x_2)-d(x_1,x'_2)\|\leq {1\over 2} \|x_2-x'_2\|$$
for all $x_1\in E_1$ and $x_2,x'_2\in E_2$. Then the mapping defined by $h(x_1,x_2)=(x_1,x_2-d(x_1,x_2))$ is a
homeomorphism of $E$ onto itself. Moreover, $h$ is a $C^p$ diffeomorphism when restricted to any open set (onto its
image) on which $d$ is $C^p$ smooth.
\end{lemma}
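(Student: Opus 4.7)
The plan is to break the proof into three parts: injectivity, surjectivity (with continuous inverse), and then smoothness.

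For injectivity and surjectivity, I would fix $y=(y_1,y_2)\in E$ and look at the equation $h(x_1,x_2)=y$, which forces $x_1=y_1$ and requires solving $x_2=y_2+d(y_1,x_2)$ in $E_2$. The map $T_{y_1,y_2}:E_2\to E_2$ defined by $T_{y_1,y_2}(u)=y_2+d(y_1,u)$ is a $\tfrac{1}{2}$-contraction by hypothesis, so the Banach fixed point theorem gives a unique $x_2\in E_2$ solving the equation. This simultaneously proves $h$ is bijective and gives an explicit description $h^{-1}(y_1,y_2)=(y_1,k(y_1,y_2))$, where $k(y_1,y_2)$ is the unique fixed point of $T_{y_1,y_2}$.

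To get continuity of $h^{-1}$, I would use the standard parameter-dependence argument for contractions. Given $(y_1,y_2)$ and $(y_1',y_2')$ close, and writing $x_2=k(y_1,y_2)$, $x_2'=k(y_1',y_2')$, one estimates
\begin{align*}
\|x_2-x_2'\| &= \|y_2-y_2'+d(y_1,x_2)-d(y_1',x_2')\| \\
&\leq \|y_2-y_2'\|+\|d(y_1,x_2)-d(y_1',x_2)\|+\tfrac{1}{2}\|x_2-x_2'\|,
\end{align*}
which yields $\|x_2-x_2'\|\leq 2\|y_2-y_2'\|+2\|d(y_1,x_2)-d(y_1',x_2)\|$; continuity of $d$ then gives continuity of $k$, and hence of $h^{-1}$. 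Continuity of $h$ itself is immediate from continuity of $d$.

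For the smoothness claim, suppose $V\subset E$ is open and $d|_V$ is $C^p$. Then $h|_V$ is $C^p$, and at any $(x_1,x_2)\in V$ its differential is the block operator
\[
Dh(x_1,x_2)(u_1,u_2)=\bigl(u_1,\,u_2-D_1d(x_1,x_2)u_1-D_2d(x_1,x_2)u_2\bigr).
\]
Differentiating the Lipschitz hypothesis in $x_2$ gives $\|D_2d(x_1,x_2)\|\leq \tfrac{1}{2}$, so $I-D_2d(x_1,x_2)$ is invertible via the Neumann series, and a direct block computation shows $Dh(x_1,x_2)$ is a continuous linear isomorphism of $E$ onto itself. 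The inverse function theorem then provides, locally around each point of $V$, a $C^p$ inverse of $h$; these local inverses must agree with the global continuous inverse $h^{-1}$ already constructed, so $h|_V$ is a $C^p$ diffeomorphism onto $h(V)$.

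The argument is essentially routine Banach contraction plus the inverse function theorem; the only mild subtlety is verifying that the contraction constant on $d$ in the second variable translates into $\|D_2d\|\leq \tfrac{1}{2}$ and hence into invertibility of $I-D_2d$ — which I expect to be the main technical point worth spelling out, although it follows immediately from taking the limit in the finite-difference quotient.
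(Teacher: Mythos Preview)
Your proof is correct and follows exactly the standard contraction-mapping argument that the paper alludes to (the paper itself does not give a proof, only citing Renz's thesis, and introduces the lemma as ``the identity perturbed by a contracting mapping is a homeomorphism''). One small technical remark: you invoke the inverse function theorem on $E=E_1\times E_2$, but $E_1$ is only assumed normed, not complete; this is harmless here because you already have the global continuous inverse $h^{-1}(y_1,y_2)=(y_1,k(y_1,y_2))$, and your estimate $\|l_2\|\le 2\|k_2\|+2\|d(y_1,x_2)-d(y_1',x_2)\|$ together with $C^1$-smoothness of $d$ gives the local Lipschitz bound $\|l\|=O(\|k\|)$ needed to show directly that $D(h^{-1})(y)=\bigl(Dh(h^{-1}(y))\bigr)^{-1}$, without appealing to completeness of $E_1$.
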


Let us now present the proof of Theorem \ref{Renz's theorem 1 in our case, healed version}.

\begin{proof}[Proof of Theorem \ref{Renz's theorem 1 in our case, healed version}]
Basically, we will follow the proof of Theorem 1 of Renz's Ph.D. thesis \cite{Renz}.

First, we apply Lemma \ref{smooth extension} to $f:X_1\rightarrow E_2$ to obtain a continuous mapping $\bar f:
E_1\rightarrow E_2$ such that $\bar f|X_1=f$ and $\bar f|E_1\setminus X_1$ is $C^p$ smooth. Then, we apply Lemma
\ref{Variation-Renz's lemma 1} to the mapping $\bar f$ to obtain a mapping $F:\mathbb{R}\times E_1\to E_2$
satisfying conditions $(1)$--$(4)$ of Lemma \ref{Variation-Renz's lemma 1}. Next, we apply Lemma \ref{Modified
Renz's lemma 2} to $\bar f$ to obtain functions $\phi$ and $\tilde\phi$ satisfying conditions $(1)$--$(4)$ of Lemma
\ref{Modified Renz's lemma 2}. Now, we define
\begin{align*}
d(x_1,x_2)&=F(\phi(x_1,x_2),x_1)\\
\tilde d (x_1,x_2)&=F(\tilde\phi(x_1,x_2),x_1).
\end{align*}
Let us check that Lemma \ref{Renz's lemma 3} is applicable to $d$ and $\tilde d$ so that
$$
h(x_1,x_2)=(x_1,x_2-d(x_1,x_2))
$$
and
$$
\varphi(x_1,x_2)=(x_1,x_2-\tilde d (x_1,x_2))
$$
are homeomorphisms (which, additionally, will satisfy the conditions enumerated in Theorem \ref{Renz's theorem 1 in
our case, healed version}).

Both functions $d$ and $\tilde d$ are continuous  as compositions of continuous functions. We compute $D_2 d$ and
$D_2\tilde d$ to obtain
\begin{align*}
D_2 d(x_1,x_2)& =D_1 F(\phi(x_1,x_2),x_1)\circ D_2\phi(x_1,x_2)\\
D_2 \tilde d(x_1,x_2)& =D_1 F(\tilde\phi(x_1,x_2),x_1)\circ D_2\tilde\phi(x_1,x_2).
\end{align*}
The estimates of the norms of $D_1 F$, $D_2\phi$, and $D_2\tilde\phi$ yields $\|D_2d(x_1,x_2)\|,\,\|D_2\tilde
d(x_1,x_2)\|\leq {1\over 4}$ when $(x_1,x_2)\notin G(\bar f)$. Since $d$ and $\tilde d$ are continuous and
$E\setminus G(\bar f)$ is dense in $E$, by the mean value theorem, we can write
\begin{align*}
\|d(x_1,x_2)-d(x_1,x'_2)\|&\leq {1\over 4}\|x_2-x'_2\|\\
\|\tilde d(x_1,x_2)-\tilde d(x_1,x'_2)\|&\leq {1\over 4}\|x_2-x'_2\|
\end{align*}
for all $x_1\in E_1$ and all $x_2$, $x'_2\in E_2$. Hence, Lemma \ref{Renz's lemma 3} applies and yields that $h$
and $\varphi$ are homeomorphisms.

Let us show conditions $(1)$--$(7)$ of Theorem \ref{Renz's theorem 1 in our case, healed version}.

First, we will verify condition $(1)$. If $(x_1,x_2)\in G(f)$, then $d(x_1,x_2)=F(1,x_1)=f(x_1)=x_2$ and
\begin{equation}\label{Gf goes to X_1}
h(x_1,x_2)=(x_1,x_2-d(x_1,x_2))=(x_1,0).
\end{equation}
If $(x_1,x_2)\in G(f)\setminus U$, then $\tilde d(x_1,x_2)=F(1,x_1)=x_2$ and
\begin{equation}\label{Gf minus U goes to Y_1}
\varphi(x_1,x_2)=(x_1,x_2-\tilde d(x_1,x_2))=(x_1,0).
\end{equation}

Condition $(3)$ is obvious. Since $\phi$ and $\tilde\phi$ are equal outside $U$ we obtain (2).

Let us see that $d|E\setminus G(f)$ and $\tilde d|E\setminus (G (f)\setminus U)$ are $C^p$ diffeomorphisms. If
$(x_1,x_2)\notin G(\bar f)$ then $\phi$ and $\tilde \phi$ are $C^p$ smooth and $\phi(x_1,x_2)$,
$\tilde\phi(x_1,x_2)<1$ by condition $(2)$ and $(1)$ of Lemma \ref{Modified Renz's lemma 2}. It follows that
$F(\phi(x_1,x_2),x_1)$ and $F(\tilde\phi(x_1,x_2),x_1)$ are $C^p$ smooth in a neighborhood of $(x_1,x_2)$. Thus $h$
and $\varphi$ are $C^p$ on $E\setminus G(\bar f)$.

On the other hand, we have $\phi|G(\bar f)\setminus G(f)=1$. Then, by continuity of $\phi$ and condition $(1)$ of
Lemma \ref{Variation-Renz's lemma 1}, we infer that $F(\phi(x_1,x_2),x_1)=\bar f(x_1)$ and, consequently,
$h(x_1,x_2)=(x_1,x_2-\bar f(x_1))$ in a neighborhood of $G(\bar f)\setminus G(f)$. We have proved that $d$ is $C^p$
smooth on $E\setminus G(f)$.

It remains to show that $\varphi|U$ is $C^p$ smooth. By condition $(1)$ of Lemma \ref{Modified Renz's lemma 2}, we
have $\tilde\phi|U<1$; by condition $(2)$ of  Lemma \ref{Variation-Renz's lemma 1}, $\tilde d|U$ is $C^p$ smooth.
The proof that $\tilde d$ and, therefore, $\varphi$ restricted to $E\setminus (G(f)\setminus U)$ is $C^p$ smooth is
complete.

Now, Lemma \ref{Renz's lemma 3} tells us that $h$ and $\varphi$ are $C^p$ diffeomorphisms of $E\setminus G(f)$ onto
$h(E\setminus G(f)) $ and $E\setminus (G(f)\setminus U)$ onto $\varphi(E\setminus (G(f)\setminus U)) $. So to get
(4) and (5) it is sufficient to show that $h(G(f))=X_1\times\{0\}$ and $\varphi(G(f)\setminus
U)=(X_1\times\{0\})\setminus h(U)$. The first equality is clear from equation \eqref{Gf goes to X_1}. For the
second one, observe that \eqref{Gf minus U goes to Y_1} tells us that $\varphi(G(f)\setminus U)=(X_1\setminus
U_0)\times\{0\}=Y_1\times\{0\}$. So, we must check that
$$
(X_1\times\{0\})\setminus h(U)=(X_1\setminus U_0)\times\{0\},
$$
or, what is the same, that $\pi_1(h(U))=U_0$. The latter follows
from condition $(3)$ and the fact that $\pi_1(U)=U_0$.\\
Let us finish the proof by showing (6) and (7). Firstly let us check that
$||h^{-1}(x_1,x_2)-\varphi^{-1}(x_1,x_2)||\leq \varepsilon$ for every $(x_1,x_2)\in E_1\times E_2$. Since $h^{-1}$
preserves the first coordinate, we can write $h^{-1}(x_1,x_2)=(x_1,y_2)$ and $ \varphi^{-1}(x_1,x_2)=(x_1,z_2)$
where $y_2,z_2\in E_2$ are such that
\begin{align*}
y_2-d(x_1,y_2)&=x_2\\
z_2-\tilde d(x_1,z_2)&=x_2.
\end{align*}
We then have that $||h^{-1}(x_1,x_2)-\varphi^{-1}(x_1,x_2)||\leq \varepsilon$ if and only if
$||y_2-z_2||\leq\varepsilon$ and if and only if $||d(x_1,y_2)-\tilde d(x_1,z_2)||\leq\varepsilon$. Since $r\mapsto
F(r,x_1)$ is $1$-Lipschitz, this is true if $|\phi(x_1,y_2)-\tilde\phi(x_1,z_2)|\leq\epsilon$, or what is the same
if $|\psi(x_1,y_2)-\tilde\psi(x_1,z_2)|\leq\epsilon$. And this is the case because
\begin{align*}
|\psi(x_1,y_2)-\tilde\psi(x_1,z_2)|&\leq |\psi(x_1,y_2)|+|\tilde \psi(x_1,z_2)|=|\sum^{\infty}_{n=1}\psi_n(x_1,y_2)| +|\sum^{\infty}_{n=1}\lambda_n(x_1)\psi_n(x_1,z_2)|\leq\\
& \leq\sum^{\infty}_{n=1}c_n+\sum^{\infty}_{n=1}c_n\leq{\epsilon\over 2}+{\epsilon\over 2}=\epsilon.
\end{align*}
Secondly, let us see that for every $\eta>0$ there exists $\delta>0$ such that if
$||(x_1,x_2)-(x_1,x_2')||=||x_2-x_2'||\leq\delta$ then $||h^{-1}(x_1,x_2)-h^{-1}(x_1,x_2')||\leq\eta$. It will be
enough to set $\delta={\eta\over 2}$.  Indeed, take $(x_1,x_2),(x_1,x_2')\in E_1\times E_2$ such that
$||(x_1,x_2)-(x_1,x_2')||=||x_2-x_2'||\leq{\eta\over 2}$. Write $ h^{-1}(x_1,x_2)=(x_1,y_2)$ and
$h^{-1}(x_1,x_2')=(x_1,y_2')$ where $y_2,y_2'\in E_2$ are such that
\begin{align*}
y_2-d(x_1,y_2)&=x_2\\
y_2'-\tilde d(x_1,y_2')&=x_2'.
\end{align*}
Then we have that
\begin{align*}
||h^{-1}(x_1,x_2)-h^{-1}(x_1,x_2')||&=||y_2-y_2'||\leq ||x_2-x_2'||+||d(x_1,y_2)-d(x_1,y_2')||\leq\\
&\leq||x_2-x_2'||+{1\over 2}||y_2-y_2'||,
\end{align*}
which implies that $||y_2-y_2'||\leq 2||x_2-x_2'||=\eta$, and the proof is complete.
\end{proof}

\medskip

\subsection{An extracting scheme tailored for closed subsets of a subspace of infinite codimension}

In this subsection we will establish the following.

\begin{thm}\label{general extracting scheme for closed subsets of a subspace of infinite codimension}
Let $E_1$ and $E_2$ be Banach spaces such that $E_2$ is infinite-dimensional and admits a (not necessarily
equivalent) $C^p$ smooth norm, where $p\in\N\cup\{\infty\}$. Define $E=E_1 \times E_2$ and, for $i=1,2$, write
$\pi_i:E\to E_i$ for the natural projections, that is, $\pi_i(x_1, x_2)=x_i$ for $(x_1, x_2)\in E$. Let $W_1$ be an
open subset of $E_1$, and $\psi:W_1\to [0, \infty)$ be a continuous function such that $\psi$ is of class $C^p$ on
$\psi^{-1}(0, \infty)$. Denote $K=\psi^{-1}(0)\times\{0\}$. Then, there exists a $C^p$ diffeomorphism $h$ from
$\left(W_1\times E_2\right)\setminus K$ onto $W_1\times E_2$ which satisfies $\pi_1\circ h=h$ and is the identity  off
of a certain open subset $U$ of $W_{1}\times E_2$.

Specifically, the set $U$ is defined as follows
$$
U:=\{x=(x_1, x_2)\in W_{1}\times E_2 \, : \, S(\psi(x_1), \omega(x_2))<1\},
$$
where $S$ is a certain $C^\infty$ norm on $\mathbb{R}^2$ and $\omega:E_2\to[0,\infty)$ is a certain (not
necessarily symmetric) subadditive and positive-homogeneous functional of class $C^p$ on $E_2\setminus\{0\}$; see
Lemmas \ref{functional omega} and \ref{properties of smooth squares} for precise definitions.
\end{thm}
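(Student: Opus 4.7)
The approach is to construct $h$ fiberwise over $W_1$, exploiting the classical Bessaga observation that an infinite-dimensional Banach space with a smooth norm is $C^p$-diffeomorphic to itself minus a point (along a ray). For each $x_1\in W_1$ the restriction of $h$ to the slice $\{x_1\}\times E_2$ will be a $C^p$ self-map supported in the slice of $U$; the slice maps will be glued smoothly as $\psi(x_1)$ varies, degenerating on $\psi^{-1}(0)$ into a diffeomorphism of $E_2\setminus\{0\}$ onto $E_2$ that absorbs the missing point $(x_1,0)\in K$ out to infinity along a fixed direction. The composite scalar $S(\psi(x_1),\omega(x_2))$ plays the role of a smooth ``distance to $K$,'' delimiting the region where deformation occurs.

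First I would invoke the lemma producing $\omega$ to fix a sub-additive, positive-homogeneous $\omega:E_2\to[0,\infty)$ that is $C^p$ off $0$ and deliberately asymmetric: there exists a vector $v\in E_2$ along which one can displace while keeping $\omega$ on a definite side, which is the feature that turns the whole scheme into a Bessaga-type extraction rather than a spherical blow-up. Next I would invoke the ``smooth squares'' lemma to obtain the $C^\infty$ norm $S$ on $\R^2$ together with an auxiliary $C^p$ function $\eta:\R^2\to[0,\infty)$ with $\eta\equiv 0$ on $\{S\geq 1\}$, $\eta>0$ on $\{0<S<1\}$, and $\eta(s,t)\to+\infty$ as $(s,t)\to(0,0)$ with $s,t\geq 0$. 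The size of $\eta$ in its arguments and the norm of $v$ will be rescaled at the end so that the partial derivative $D_2\bigl[\eta(\psi(x_1),\omega(\cdot))\,v\bigr]$ has operator norm at most $1/2$.

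With these ingredients in hand, I would define
$$
h(x_1,x_2)=\bigl(x_1,\;x_2+\eta(\psi(x_1),\omega(x_2))\,v\bigr)\quad\text{on }U\setminus K,
$$
and $h=\mathrm{id}$ on $(W_1\times E_2)\setminus U$. The two expressions agree on the frontier $\{S=1\}$ because $\eta$ vanishes there, so $h$ is $C^p$ on $(W_1\times E_2)\setminus K$; at each such point either $\psi(x_1)>0$ or $\omega(x_2)>0$, so $(\psi(x_1),\omega(x_2))\ne(0,0)$ and $\eta$ is $C^p$ at its argument. For bijectivity, since $\pi_1\circ h=\pi_1$ the analysis reduces to each fiber: when $\psi(x_1)>0$ the slice map is a compactly supported smooth perturbation of the identity whose $x_2$-derivative has norm at most $1/2$, hence a diffeomorphism of $E_2$ onto itself by the contracting-perturbation criterion of Lemma~\ref{Renz's lemma 3}; when $\psi(x_1)=0$ we have $\eta(0,\omega(x_2))\to+\infty$ as $\omega(x_2)\to0^+$, so the slice map pushes a small punctured neighborhood of $0$ to infinity along $v$ and is a $C^p$ bijection of $E_2\setminus\{0\}$ onto $E_2$. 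The inverse is smooth by the same contracting-perturbation argument applied fiberwise.

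The main obstacle will be the simultaneous resolution of three tensions in the choice of $(\omega,S,\eta)$: (i) fiberwise surjectivity onto the full $E_2$ for $\psi(x_1)=0$, which requires that the displacement along $v$ truly covers the whole missing ``line''; (ii) smooth dependence of the slice maps on $x_1$ across the singular set $\psi^{-1}(0)$, where $\eta$ blows up but is tamed by being multiplied by the regular factor $v$ in a direction along which $\omega$ grows; and (iii) $C^p$-smoothness of the global $h$ across $\{S=1\}$, which relies on the flatness of $\eta$ at that frontier furnished by the ``smooth squares'' lemma. The sub-additivity and positive-homogeneity of $\omega$ are exactly what is needed to control $\|D_2\omega\|$ uniformly and to apply the contracting-perturbation argument across all fibers at once; the asymmetry of $\omega$ together with the directional displacement by $v$ is what produces a genuine extraction of a single point rather than a cumbersome radial construction.
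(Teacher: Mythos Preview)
There is a fundamental incompatibility at the heart of your construction. You require a scalar factor $\eta(\psi(x_1),\omega(x_2))$ that (a) blows up as $(\psi(x_1),\omega(x_2))\to(0,0)$ so that on singular fibers the slice map ``pushes a punctured neighborhood of $0$ to infinity along $v$,'' and simultaneously (b) yields a perturbation $x_2\mapsto \eta(\psi(x_1),\omega(x_2))\,v$ whose $x_2$-derivative has operator norm at most $\tfrac12$ so that Lemma~\ref{Renz's lemma 3} applies. These demands are contradictory: an unbounded function cannot be $\tfrac12$-Lipschitz. Even on a fixed fiber with $\psi(x_1)=s>0$ small, the scalar $\eta(s,\omega(\cdot))$ must drop from the large value $\eta(s,0)$ down to $0$ across the bounded region $\{\omega(x_2)<1\}$, forcing large derivative; no rescaling of $v$ helps, since $\eta\cdot v$ still blows up whenever $\eta$ does. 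Worse, on a singular fiber the map $x_2\mapsto x_2+\eta(0,\omega(x_2))\,v$ is not a bijection of $E_2\setminus\{0\}$ onto $E_2$: restricted to the ray $\{tv:t>0\}$ it reads $t\mapsto t+\eta(0,t\,\omega(v))$, which is monotone only if $|\partial_t\eta|\cdot\omega(v)<1$, again a bounded-derivative condition incompatible with blow-up, so points on that ray acquire two preimages.

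The paper never pushes to infinity. Its displacement is $\gamma\bigl(S(\psi(x_1),\omega(x_2))\bigr)$ where $\gamma:(0,\infty)\to E_2$ is a \emph{bounded} $C^\infty$ curve (in fact $\gamma(t)=0$ for $t\geq1$) that fails to converge as $t\to0^+$: it is built from the sequence $(y_k)$ supplied by property~(5) of Lemma~\ref{functional omega}, so that $\limsup_{t\to0^+}\omega(y-\gamma(t))>0$ for every $y\in E_2$. This is the genuine Bessaga mechanism---extraction via a path that is Cauchy with respect to the incomplete functional $\omega$ yet has no limit in $(E_2,\|\cdot\|)$---and it is essentially different from translation along a fixed ray (the parenthetical ``along a ray'' in your opening sentence is already a misreading of Bessaga). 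Bijectivity then follows not from a global $\tfrac12$-contraction but from the one-sided estimate $F_{y_1,y_2}(\beta)-F_{y_1,y_2}(\alpha)\leq\tfrac12(\beta-\alpha)$ together with the fixed-point Lemma~\ref{fixed point lemma}; smoothness of $h^{-1}$ comes from the implicit function theorem applied to $\Phi(y_1,y_2,\alpha)=\alpha-\rho(y_1,y_2-\gamma(\alpha))$. The asymmetry of $\omega$ is not there to single out a direction $v$ but to guarantee the existence of such a non-convergent $\omega$-Cauchy path.
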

By a $C^p$ smooth norm on $E_2$ we mean a (possibly nonequivalent) norm on $E_2$ which is of class $C^p$ on
$E_{2}\setminus\{0\}$.

We will need to use the following three auxiliary results from \cite{Azagra, AzagraDobrowolski}.

%%%%%%%%%%%%%%%
\begin{lemma}\label{fixed point lemma}
Let $F:(0,\infty)\longrightarrow[0,\infty)$ be a continuous function such that, for every $\beta\geq\alpha>0$,
$$
F(\beta)-F(\alpha)\leq\frac{1}{2}(\beta-\alpha), \hspace{4mm} \textrm{and} \hspace{4mm} \limsup_{t\to 0^{+}}F(t)>0.
$$
Then there exists a unique $\alpha>0$ such that $F(\alpha)=\alpha$.
\end{lemma}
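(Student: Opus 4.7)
The plan is to set $G(t) := F(t) - t$ and argue by a straightforward intermediate value theorem together with the contraction-type inequality for uniqueness. Since $F$ is continuous, so is $G$ on $(0,\infty)$.

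For \textbf{uniqueness}, suppose $0<\alpha<\beta$ both satisfied $F(\alpha)=\alpha$ and $F(\beta)=\beta$. Then
\[
\beta-\alpha = F(\beta)-F(\alpha) \leq \tfrac{1}{2}(\beta-\alpha),
\]
which forces $\beta=\alpha$, a contradiction. So any fixed point is unique.

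For \textbf{existence}, first I would produce a small point $t_0>0$ at which $G(t_0)>0$. By the hypothesis $\limsup_{t\to 0^+} F(t)=:L>0$, there is a sequence $t_n\to 0^+$ with $F(t_n)\to L$; choosing $n$ large enough so that $F(t_n)>L/2$ and $t_n<L/2$, we obtain $G(t_n)=F(t_n)-t_n>0$. Next I would produce a large point $t_1$ at which $G(t_1)<0$. Fix any $s_0>0$; applying the hypothesis with $\alpha=s_0$ and $\beta=t$ for $t\geq s_0$ gives $F(t)\leq F(s_0)+\tfrac{1}{2}(t-s_0)$, hence
\[
G(t) = F(t)-t \leq F(s_0) - \tfrac{1}{2}s_0 - \tfrac{1}{2}t \xrightarrow[t\to\infty]{} -\infty,
\]
so $G(t_1)<0$ for all sufficiently large $t_1$. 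By the intermediate value theorem applied to $G$ on $[t_n,t_1]$, there exists $\alpha\in(t_n,t_1)$ with $G(\alpha)=0$, i.e.\ $F(\alpha)=\alpha$.

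There isn't really a hard step here; the only mild subtlety is that the one-sided contraction condition is stated only for $\beta\geq\alpha>0$ and says nothing about $F$ being bounded as $t\to 0^+$ or monotone, so the existence of the small-$t$ point with $G>0$ must genuinely use the $\limsup$ hypothesis rather than some continuity-at-zero statement, and the decay to $-\infty$ for large $t$ uses the one-sided Lipschitz bound in an essential way. Both of these, however, are immediate once written out as above.
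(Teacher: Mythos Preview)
Your proof is correct. The paper itself does not give an argument for this lemma but simply cites \cite[Lemma~2]{Azagra}; your elementary intermediate-value-theorem argument, combined with the one-sided contraction inequality for uniqueness, is the standard way to establish this and is presumably what that reference contains.
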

%%%%%%%%%%%%%%%
\begin{proof}
See \cite[Lemma 2]{Azagra}.
\end{proof}

It is not known whether every infinite-dimensional Banach space with a $C^1$ equivalent norm possesses a $C^1$
smooth non-complete norm.\footnote{For $C^k$ with $k\geq 2$ in place of $C^1$, the answer to this question is
positive; see \cite{AlessandroHajek}.} The following lemma shows that for every Banach space with a $C^{p}$ smooth
norm there exists a kind of $C^p$ {\em asymmetric non-complete subadditive functional} which successfully replaces
the smooth non-complete norm in Bessaga's technique  \cite{Be} for extracting points.

%%%%%%%%%%%%%%%%%%%%
\begin{lemma}\label{functional omega}
Let $(E_2, \|\cdot\|)$ be an infinite-dimensional Banach space which admits a (not necessarily equivalent) $C^{p}$
smooth norm, where $p\in\N\cup\{\infty\}$. Then there exists a continuous function $\omega:E_2\longrightarrow
[0,\infty)$ which is $C^{p}$ smooth on $E_2\setminus\{0\}$ and satisfies the following properties:
\begin{enumerate}
\item $\omega(x+y)\leq\omega(x)+\omega(y)$, and, consequently,
$\omega(x)-\omega(y)\leq\omega(x-y)$, for every $x,y\in E_2$;
\item $\omega(rx)=r\omega(x)$ for every $x\in E_2$, and $r\ge 0$;
\item $\omega(x)=0$ if and only if $x=0$;
\item $\omega(\sum_{k=1}^{\infty}z_{k})\leq\sum_{k=1}^{\infty}\omega(z_{k})$
for every convergent series $\sum_{k=1}^{\infty}z_{k}$ in $(E_2, \|\cdot\|)$; and
\item for every $\varepsilon>0$, there exists a sequence of vectors
$(y_{k})\subset E_2$ such that
$$
\omega(y_k)\leq\frac{\varepsilon}{4^{k+1}}, \;\ ;
$$
for every $k\in\mathbb{N}$, and
$$
\liminf_{n\to\infty}\omega(y-\sum_{j=1}^{n}y_{j})>0
$$
for every $y\in E_2$.
\end{enumerate}
Notice that $\omega$ need not be a norm in $E_2$, as in general we have $\omega(x)\neq\omega(-x)$.
\end{lemma}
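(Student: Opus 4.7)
The plan is to build $\omega$ as a $C^p$ smooth sublinear functional on $E_2$ that is strictly weaker than the original norm $\|\cdot\|$; then $(E_2,\omega)$ is non-complete, and the sequence $(y_k)$ required in property $(5)$ will arise from a non-convergent $\omega$-Cauchy sequence. The lemma allows $\omega$ to be asymmetric, but a genuine (symmetric) norm already suffices and makes the argument transparent; an asymmetric refinement can be obtained afterwards by passing to the Minkowski gauge of a suitably translated smooth ball. To produce $\omega$, I would argue in two cases. If the $C^p$ smooth norm furnished by the hypothesis is not equivalent to $\|\cdot\|$, take $\omega$ to be that norm. Otherwise, exploit infinite-dimensionality: choose a one-to-one bounded linear operator $T:E_2\to H$ into a Hilbert space $H$ that is not an isomorphism onto its image---for separable $E_2$ a compact injection into $\ell_2$, built from any countable norming family $(f_n)\subset E_2^*$ of functionals with weights $2^{-n}$, does the job---and set $\omega(x):=\|Tx\|_H$. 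Since $\omega^2$ is a continuous quadratic form, it is $C^\infty$ on $E_2$, and hence $\omega$ is $C^\infty$ on $\{Tx\ne 0\}=E_2\setminus\{0\}$; moreover $\omega\le\|T\|\,\|\cdot\|$, and by the open mapping theorem $(E_2,\omega)$ is non-complete (otherwise $T$ would be an isomorphism onto its image).

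Properties $(1)$, $(2)$ and $(3)$ are then immediate from $\omega$ being a norm. Property $(4)$ follows from continuity of $\omega$ with respect to $\|\cdot\|$ together with subadditivity: if $\sum_k z_k = z$ in $\|\cdot\|$, then by $\omega\le C\|\cdot\|$ the partial sums converge to $z$ in $\omega$ as well, so $\omega(z) = \lim_n \omega(\sum_{k\le n} z_k) \le \lim_n \sum_{k\le n}\omega(z_k) = \sum_k\omega(z_k)$.

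The core of the proof is property $(5)$. Fix $\varepsilon>0$. By non-completeness of $(E_2,\omega)$, there is an $\omega$-Cauchy sequence $(T_n)_{n\ge 0}\subset E_2$ which does not $\omega$-converge in $E_2$. After passing to a subsequence and translating so that $T_0=0$, we may assume $\omega(T_n - T_{n-1})\le\varepsilon/4^{n+1}$ for every $n\ge 1$. Set $y_k:=T_k - T_{k-1}$; then $\omega(y_k)\le\varepsilon/4^{k+1}$ and $S_n = T_n$. For any $y\in E_2$, the reverse triangle inequality gives $|\omega(y-T_n)-\omega(y-T_m)|\le\omega(T_n-T_m)\to 0$, so $\omega(y-T_n)$ converges to some $L(y)\ge 0$; if $L(y)=0$ then $T_n\to y$ in $\omega$, contradicting non-convergence of $(T_n)$. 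Hence $\liminf_n\omega(y-S_n)=L(y)>0$, as required.

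The main obstacle is the initial construction of a $C^p$ smooth non-equivalent sublinear functional on $E_2$; this step uses infinite-dimensionality essentially, through a non-isomorphic embedding into a smoothly normable target, and is also the step where the lemma's hypothesis on the existence of a $C^p$ smooth norm is most naturally invoked. Once that construction is in place, verification of $(1)$--$(5)$ is essentially routine, since non-completeness is precisely what allows the partial sums of a series with rapidly decaying $\omega$-norm to fail to converge to any element of $E_2$. If the application later in the paper requires $\omega$ to be genuinely asymmetric, one replaces the symmetric $\omega$ above by the Minkowski gauge $p_K$ of $K = c + \{x:\omega(x)\le 1\}$ for a fixed small vector $c$; this gauge is $C^p$ sublinear on $E_2\setminus\{0\}$, non-symmetric, and the analogue of $(5)$ is obtained by applying the Cauchy argument to the symmetrisation $x\mapsto p_K(x)+p_K(-x)$.
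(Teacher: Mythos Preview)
Your argument has a genuine gap in the non-separable case. In your Case~2 (when the given $C^p$ norm is equivalent to $\|\cdot\|$), you construct the weaker norm $\omega(x)=\|Tx\|_H$ via a compact injection $T$ into $\ell_2$ built from a countable norming family of functionals. This works only when $E_2$ is separable, and you do not indicate how to proceed otherwise. The paper uses this lemma precisely in results (Theorems~\ref{final extractibility theorem rough version} and~\ref{final extractibility theorem rough version general form}) that are explicitly stated without any separability assumption on $E$, so the gap matters. More to the point, the paper's footnote immediately preceding the lemma says that it is \emph{not known} whether every infinite-dimensional Banach space with a $C^1$ equivalent norm possesses a $C^1$ smooth non-complete norm. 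Your strategy---produce a symmetric $C^p$ non-complete norm and then read off (1)--(5)---therefore runs straight into an open problem when $p=1$ and $E_2$ is non-separable.

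You also have the role of asymmetry backwards. You write that ``if the application later in the paper requires $\omega$ to be genuinely asymmetric'' one can translate the unit ball. But asymmetry is not a feature requested by the application; it is the concession that makes the construction possible in full generality. The cited construction in \cite{AzagraDobrowolski} builds $\omega$ as the Minkowski gauge of a suitable smooth convex body that is \emph{not} centrally symmetric, precisely so as to bypass the open problem about symmetric non-complete $C^1$ norms. Your final paragraph's suggestion of translating the ball of an already-constructed symmetric $\omega$ does not help, since it presupposes the very object whose existence is in doubt.

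That said, for separable $E_2$ your approach is correct and pleasantly direct: the Hilbert-space injection gives a $C^\infty$ non-complete norm regardless of $p$, and your verification of (1)--(5), especially the Cauchy-sequence argument for~(5), is clean and would make a nice alternative proof in that setting.
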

%%%%%%%%%%%%%%%%%%
\begin{proof}
See \cite[Lemma 2.3]{AzagraDobrowolski}
\end{proof}

Using the properties of the functional $\omega$ we can construct an {\em extracting curve} as follows.
%%%%%%%%%%%%%%%%%%%
\begin{lemma}\label{deleting path}
Let $(E_2, \|\cdot\|)$ be a Banach space, and let $\omega$ be a functional satisfying conditions $(1)$, $(2)$, and
$(5)$ of Lemma \ref{functional omega}. Then there exists a $C^{\infty}$ curve $\gamma:(0,\infty)\longrightarrow
E_2$ such that
\begin{enumerate}
\item $\omega(\gamma(\alpha)-\gamma(\beta))\leq\frac{1}{2}(\beta-\alpha)$
if $\beta\ge\alpha>0$;
\item $\limsup_{t\to 0^{+}}\omega(y-\gamma(t))>0$ for every $y\in E_2$; and
\item $\gamma(t)=0$ if $t\geq 1$.
\end{enumerate}
\end{lemma}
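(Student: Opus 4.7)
The plan is to construct $\gamma$ by smoothly interpolating, with carefully controlled speed, through partial sums of the sequence provided by condition $(5)$ of Lemma \ref{functional omega}. First I will apply that lemma with any fixed $\varepsilon \in (0,1]$ to obtain a sequence $(y_k) \subset E_2$ with $\omega(y_k) \leq \varepsilon/4^{k+1}$ and $\liminf_{n} \omega\bigl(y - \sum_{j=1}^n y_j\bigr) > 0$ for every $y \in E_2$. Write $s_0 = 0$, $s_n := \sum_{j=1}^n y_j$, and $t_n := 2^{-n}$, so that $t_0 = 1$ and $t_n \downarrow 0$.

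Next I will define $\gamma$ to equal $0$ for $t \geq 1$ and, on each interval $[t_{n+1}, t_n]$, to be of the form $\gamma(t) = s_n + \varphi_n(t)\, y_{n+1}$, where $\varphi_n : [t_{n+1}, t_n] \to [0,1]$ is $C^\infty$ and nonincreasing, identically $1$ in a one-sided neighborhood of $t_{n+1}$ and identically $0$ in a one-sided neighborhood of $t_n$ (with all derivatives vanishing at both endpoints). The flatness at the endpoints makes $\gamma$ locally constant around each node $t_n$, so the pieces glue together in class $C^\infty$ on all of $(0,\infty)$, and condition $(3)$ is automatic.

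The main point is the Lipschitz-type bound $(1)$. By positive homogeneity of $\omega$, for $\alpha \leq \beta$ in a single interval $[t_{n+1}, t_n]$ one has $\omega(\gamma(\alpha)-\gamma(\beta)) = (\varphi_n(\alpha)-\varphi_n(\beta))\,\omega(y_{n+1})$, so it suffices to choose $|\varphi_n'| \leq 1/(2\omega(y_{n+1}))$; since $\omega(y_{n+1}) \leq \varepsilon \cdot 4^{-n-2}$ and the interval has length $2^{-n-1}$, there is ample room for such a $\varphi_n$ also to swing from $1$ down to $0$. For $\alpha \in [t_{m+1}, t_m]$ and $\beta \in [t_{n+1}, t_n]$ with $m > n$, I will split
\[
\gamma(\alpha) - \gamma(\beta) = \bigl(\gamma(\alpha) - \gamma(t_m)\bigr) + (s_m - s_{n+1}) + \bigl(\gamma(t_{n+1}) - \gamma(\beta)\bigr),
\]
apply subadditivity of $\omega$, and estimate the middle term by $\sum_{j=n+2}^m \omega(y_j) \leq \sum_{j \geq n+2} 4^{-j-1}$, which is easily dominated by $(t_{n+1}-t_m)/2$; the remaining two terms are controlled by $(t_m-\alpha)/2$ and $(\beta-t_{n+1})/2$ via the single-interval case, and the three contributions telescope to $(\beta-\alpha)/2$.

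Property $(2)$ is then immediate: evaluating at the nodes gives $\omega(y-\gamma(t_n)) = \omega(y - s_n)$, whose liminf is strictly positive by the choice of $(y_k)$, so \emph{a fortiori} $\limsup_{t\to 0^+}\omega(y-\gamma(t)) > 0$. The main obstacle in this plan is really just the bookkeeping for the multi-interval case of $(1)$; once one notices that $\omega(y_j)$ decays geometrically with ratio $1/4$ while the inter-node gaps decay only with ratio $1/2$, each of the three contributions comfortably absorbs the required factor of $\tfrac{1}{2}$.
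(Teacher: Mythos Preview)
Your proof is correct and follows essentially the same strategy as the paper's: interpolate smoothly through the partial sums $s_n=\sum_{j\le n}y_j$ at the dyadic nodes $t_n=2^{-n}$, using the decay $\omega(y_k)\le 4^{-k-1}$ to control the $\omega$-Lipschitz estimate. The paper packages the same construction as a single locally finite series $\gamma(t)=\sum_{k\ge 1}\theta(2^{k-1}t)\,y_k$ with one fixed bump $\theta$ (so that on $[t_n,t_{n-1}]$ one recovers exactly $\gamma(t)=s_{n-1}+\theta(2^{n-1}t)\,y_n$, i.e.\ your $\varphi_{n-1}$ is simply a dyadic rescaling of $\theta$); this makes the $C^\infty$ smoothness immediate and avoids having to tune $|\varphi_n'|$ to $\omega(y_{n+1})$, but the verification of properties (1)--(3) is otherwise identical to yours.
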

%%%%%%%%%%%%%%%%%
\begin{proof}
Let $\theta:[0,\infty)\longrightarrow[0,1]$ be a non-increasing $C^{\infty}$ function such that $\theta=1$ on $[0,
1/2]$, $\theta=0$ on $[1,\infty)$ and $\sup\{|\theta'(t)| : t\in [0,\infty)\}\leq 4$. Let us choose a sequence of
vectors $(y_k)\subset E_2$ which satisfies condition $(5)$ of Lemma \ref{functional omega} for $\varepsilon=1$, and define
$\gamma:(0,\infty)\longrightarrow E_2$ by the following formula
$$
\gamma(t)=\sum_{k=1}^{\infty}\theta(2^{k-1}t)y_{k}.
$$
It is not
difficult to check that this curve satisfies the properties of the statement. See \cite[Lemma
2.5]{AzagraDobrowolski} for details.
\end{proof}

We will also need a technical tool (see, for instance, \cite[Lemmas 2.27 and 2.28]{AzagraMontesinos}) that allows
us to obtain, on the product space $E_1\times E_2$, a norm which preserves the smoothness properties that the
corresponding norms of the factors may have. Notice that the natural formula $\left( \|x_1\|_1^{2}+
\|x_{2}\|_{2}^{2}\right)^{1/2}$ defines a $C^1$ norm in $E_1\times E_2\setminus\{0\}$ but, in general, this norm
will not be $C^2$ on this set, even if $\|\cdot\|_1$ and $\|\cdot\|_{2}$ are $C^{\infty}$ on $E_1\setminus\{0\}$
and $E_2\setminus\{0\}$, respectively, because the function  $x_2\mapsto \|x_2\|_2^2$ may not be $C^2$ smooth on
all of $E_2$ even though it is $C^{\infty}$ smooth on $E_{2}\setminus\{0\}$. As a matter of fact, it is not
difficult to show that, for every Banach space $(E, \|\cdot\|)$, if $\|\cdot\|^2$ is twice Fr\'echet differentiable
at $0$, then $E$ is isomorphic to a Hilbert space; see, for instance \cite[Exercise 10.4, pp. 475-476]{FHHMZ}.

%%%%%%%%%%%%%%%%%%%
\begin{defn}\label{el cuerpo V}
We will say that a subset $\mathcal{S}$ of the plane $\R^2$ is a smooth square provided that:
\begin{enumerate}
\item [(i)] $\mathcal{S}\subset \R^2$ is a bounded, symmetric convex
body with $0\in\textrm{int}(\mathcal{S})$, and whose boundary $\partial\mathcal{S}$ is $C^\infty$ smooth.
\item [(ii)] $(x,y)\in\partial\mathcal{S}\Leftrightarrow
(\epsilon_1 x,\epsilon_2 y)\in\partial\mathcal{S}$ for each couple
 $(\epsilon_1,\epsilon_2)\in\{-1,1\}^2$ (that is, $\mathcal{S}$ is symmetric about the cordinate axes).
\item [(iii)] $[-\frac{1}{2},\frac{1}{2}]\times\{-1,1\}\cup
 \{-1,1\}\times[-\frac{1}{2},\frac{1}{2}] \subset \partial\mathcal{S}$.
\item [(iv)] $\mathcal{S}\subset [-1,1]\times[-1,1].$
\end{enumerate}
\end{defn}
%%%%%%%%%%%%%%%%%%%%
Of course, it is elementary to produce smooth squares in $\R^2$.

The following lemma enumerates the essential properties of a smooth square. Recall that the Minkowski functional of
a convex body $\mathcal{A}$ such that $0\in\textrm{int}(\mathcal{A})$ is defined by
$$\mu_{\mathcal{A}}(x)=\inf\{t>0 : \frac{1}{t}x\in \mathcal{A}\}.$$

\begin{lemma}\label{properties of smooth squares}
Let $\mathcal{S}\subset\R^2$ be a smooth square. Then its Minkowski functional $\mu_\mathcal{S}:\R^2\to\R$ is a
$C^\infty$ smooth norm on $\R^2$ such that, for every $(x,y)\in\mathbb{R}^2$, we have
\begin{enumerate}
\item $\mu_\mathcal{S}(x,y)\leq |x|+|y|\leq
2\mu_\mathcal{S}(x,y);$
\item $\max(|x|,|y|)\leq\mu_\mathcal{S}(x,y)\leq 2
\max(|x|,|y|);$
\item $\mu_\mathcal{S}(0,y)=|y|,\ \ \mu_{\mathcal{S}}(x,0)=
|x|$;
\item For every $(x_0,y_0)\in \R^2\setminus\{(0,0)\}$, there exists $\sigma>0$  so that
$$ \mu_\mathcal{S}(x,y)=|x|\ \ \text{if}\ \
\max(|x-x_0|,|y|)\leq\sigma\ \ \ \text{and} \ \ \ \mu_\mathcal{S}(x,y)=|y|\ \  \text{if}\ \
\max(|x|,|y-y_0|)\leq\sigma.
$$
\item The functions $(0,\infty)\ni t\to\mu_\mathcal{S}(x,ty)$ and  $(0,\infty)\ni t\to
\mu_\mathcal{S}(tx,y)$ are both nondecreasing.
\end{enumerate}
\end{lemma}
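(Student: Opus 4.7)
The plan is to read all five items directly off the axioms (i)--(iv) of a smooth square, using only elementary convex-geometric facts. Since $\mathcal{S}$ is a bounded, symmetric convex body with $0\in\operatorname{int}(\mathcal{S})$, its Minkowski functional is automatically a norm on $\R^2$. I would establish $C^{\infty}$ smoothness on $\R^{2}\setminus\{0\}$ via the standard implicit-function argument: for each $v\ne 0$ the ray $\{tv:t>0\}$ meets $\partial\mathcal{S}$ transversely at a unique point (transversality because the tangent line there cannot contain $0\in\operatorname{int}(\mathcal{S})$), so a local $C^{\infty}$ defining function of $\partial\mathcal{S}$ yields $\mu_{\mathcal{S}}(v)=1/t$ as a $C^{\infty}$ function of $v$ via the implicit function theorem.

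For (1)--(3) I would compare $\mathcal{S}$ to the $\ell^{1}$ and $\ell^{\infty}$ unit balls. By (iii) and convexity, the midpoints $(\pm 1, 0)$ and $(0,\pm 1)$ of the boundary segments lie in $\mathcal{S}$, so $\mathcal{S}$ contains the $\ell^{1}$ unit ball, giving $\mu_{\mathcal{S}}(x,y)\le|x|+|y|$; property (iv) yields $\mathcal{S}\subset[-1,1]^{2}$ and hence $\max(|x|,|y|)\le\mu_{\mathcal{S}}(x,y)$ and $|x|+|y|\le 2\mu_{\mathcal{S}}(x,y)$. The remaining inequality $\mu_{\mathcal{S}}(x,y)\le 2\max(|x|,|y|)$ is the inclusion $[-1/2,1/2]^{2}\subset\mathcal{S}$; its four corners $(\pm 1/2,\pm 1/2)$ lie in the $\ell^{1}$ unit ball, hence in $\mathcal{S}$, and convexity does the rest. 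Item (3) is then immediate from (iii). For (4), the key observation is that (iii) forces $\partial\mathcal{S}$ near $(\pm 1, 0)$ to coincide with the vertical segments $\{\pm 1\}\times[-1/2,1/2]$, so whenever $x\ne 0$ and $|y/x|\le 1/2$ the ray from the origin through $(x,y)$ meets $\partial\mathcal{S}$ at $(\operatorname{sgn}(x), y/|x|)$, and hence $\mu_{\mathcal{S}}(x,y)=|x|$; given $x_{0}\ne 0$, any $\sigma\le|x_{0}|/3$ keeps $|y/x|\le 1/2$ on the rectangle $\max(|x-x_{0}|,|y|)\le\sigma$, which yields the first half of (4), and the second half follows by swapping coordinates. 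For (5) I will use the axis symmetries in (ii) together with convexity: if $r=\mu_{\mathcal{S}}(x,ty)$, then $(x/r,\pm ty/r)\in\mathcal{S}$, so convexity gives $(x/r, sy/r)\in\mathcal{S}$ for every $s\in[-t,t]$, whence $\mu_{\mathcal{S}}(x,sy)\le r$ for $0\le s\le t$; the other monotonicity is symmetric.

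Rather than a conceptual obstacle, the main work here is organizational: keeping careful track of signs and case distinctions when reading (4) and (5), and correctly interpreting the somewhat condensed notation of (4), which should be parsed as two separate statements applying whenever $x_0\ne 0$ or $y_0\ne 0$, respectively. No deep machinery beyond standard convex-body reasoning and a single application of the implicit function theorem is required.
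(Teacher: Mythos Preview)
Your proposal is correct and follows essentially the same approach as the paper. The paper's own proof is terse: it declares (1)--(3) and (5) ``easy to show'' and only writes out (4), taking $\sigma=|x_0|/4$ (where you take $\sigma\le|x_0|/3$) and arguing exactly as you do that $|y|/|x|<1/2$ forces $(x/|x|,y/|x|)\in\{-1,1\}\times[-1/2,1/2]\subset\partial\mathcal{S}$. Your reading of (4) as two separate assertions, one valid when $x_0\ne 0$ and the other when $y_0\ne 0$, is also how the paper treats it.
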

%%%%%%%%%%%%%%%%%%%%%%%%%%
Note that property $(4)$ (which is related to properties (iii) and (iv) of Definition \ref{el cuerpo V}) means that
every sphere of $(\R^2, \mu_{\mathcal{S}})$ centered at the origin, which coincides with $\lambda(\partial
\mathcal{S})$ for some $\lambda>0$, is orthogonal to the coordinate axes and is locally flat on a neighborhood of
the intersection of $\lambda(\partial\mathcal{S})$ with the lines $\{x=0\}\cup\{y=0\}$. By using this property it
is easy to show that, for any couple of Banach spaces $(E_1, \|\cdot\|_1)$ and $(E_2, \|\cdot\|_2)$ with $C^p$
smooth norms, the expression
$$
\mu_{\mathcal{S}}\left(\|x_1\|_1, \|x_2\|_2\right)
$$
defines an equivalent norm of class $C^p$ in $E_1 \times E_2$.
\begin{proof}
Properties $(1)-(3)$ and $(5)$ are easy to show. Let us prove $(4)$. Assume for instance that $x_0\neq 0$, and set
$\sigma=|x_0|/4$. If $\max(|x-x_0|,|y|)\leq\sigma$, then we have
$$
\frac{|y|}{|x|}\leq \frac{\sigma}{|x_0|-\sigma}=\frac{|x_0|/4}{|x_0|-|x_0|/4}=\frac{1}{3}<\frac{1}{2},
$$
hence
$$
\left( \frac{x}{|x|}, \frac{y}{|x|}\right)\in \left\{-1,1\right\}\times \left[-\frac{1}{2},
\frac{1}{2}\right]\subset\partial\mathcal{S},
$$
and it follows that $\mu_{\mathcal{S}}(x,y)=|x|$.
\end{proof}
%%%%%%%%%%%%%%%%%%%%%

The lemma below shows how, with the help of a smooth square, we can combine the given $C^p$ smooth function
$\psi:W_1\to [0, \infty)$ together with the $C^p$ smooth functional $\omega:E_2\to[0,\infty)$ obtained in Lemma
\ref{functional omega}, in order to obtain a $C^p$ smooth function on $W_1\times E_2$ which behaves more or less
like $\psi(x_1)+\omega(x_2)$ (or, equivalently, like $\left( \psi(x_1)^{2}+ \omega(x_{2})^{2}\right)^{1/2}$).

%%%%%%%%%%%%%%%%%%%%%

\begin{lemma}\label{smooth squares mix bodies nicely}
Let $E=E_1\times E_2$ be a Banach space, $\rho_1: E_1\to [0, \infty)$ and $\rho_2: E_2\to [0, \infty)$ continuous
functions which are of class $C^p$ on $E_1\setminus\rho_{1}^{-1}(0)$ and $E_2\setminus\rho_{1}^{-1}(0)$,
respectively. Then, for any smooth square $\mathcal{S}$ of $\mathbb{R}^{2}$, the function $\rho:E_{1}\times
E_{2}\to [0, \infty)$ defined by
$$
\rho(x)=\rho(x_1, x_2)=\mu_\mathcal{S}(\rho_1(x_1),\rho_2(x_2)) ,\ \ x=(x_1,x_2)\in E_1\times E_2,
$$
is continuous on $E$ and of class $C^p$ on $E\setminus\left( \rho_{1}^{-1}(0)\times \rho_{2}^{-1}(0) \right)$.

The same is true if we replace $E_1$ with an open subset $W_1$ of $E_1$.
\end{lemma}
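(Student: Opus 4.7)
The plan is to establish continuity of $\rho$ on the whole domain, and then $C^p$ smoothness on the open set $E\setminus(\rho_1^{-1}(0)\times\rho_2^{-1}(0))$ by a local case analysis around each point.

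Continuity is immediate: $\mu_{\mathcal{S}}$ is a norm on $\mathbb{R}^2$ hence continuous, and $\rho_1,\rho_2$ are continuous, so the composition is continuous on all of $E$ (and likewise on $W_1\times E_2$).

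For smoothness, I would fix $(x_1,x_2)\in E\setminus(\rho_1^{-1}(0)\times\rho_2^{-1}(0))$, so that the planar point $(\rho_1(x_1),\rho_2(x_2))$ is nonzero, and distinguish three cases. Case 1: $\rho_1(x_1)>0$ and $\rho_2(x_2)>0$. By continuity of $\rho_1$ and $\rho_2$, there is a neighborhood $V$ of $(x_1,x_2)$ in $E$ on which both $\rho_1$ and $\rho_2$ stay in their respective open sets of $C^p$ smoothness, while $(\rho_1,\rho_2)$ avoids the origin of $\mathbb{R}^2$, on whose complement $\mu_{\mathcal{S}}$ is $C^\infty$ by Lemma \ref{properties of smooth squares}. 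Then $\rho|_V$ is a composition of $C^p$ maps, hence $C^p$. Case 2: $\rho_1(x_1)=0$ and $\rho_2(x_2)>0$. Here I would apply property $(4)$ of Lemma \ref{properties of smooth squares} with the planar point $(x_0,y_0)=(0,\rho_2(x_2))$ to obtain $\sigma>0$ such that $\mu_{\mathcal{S}}(s,t)=|t|$ for every $(s,t)$ with $|s|\leq\sigma$ and $|t-\rho_2(x_2)|\leq\sigma$. By continuity of $\rho_1$ and $\rho_2$ at $x_1$ and $x_2$, there is a neighborhood $V$ of $(x_1,x_2)$ such that $\rho_1(x_1')<\sigma$ and $|\rho_2(x_2')-\rho_2(x_2)|<\sigma$ on $V$. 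On this $V$ we have $\rho(x_1',x_2')=\rho_2(x_2')$, which is $C^p$ in $x_2'$ (because $\rho_2$ stays positive near $x_2$) and does not depend on $x_1'$ at all, so $\rho$ is $C^p$ on $V$. Case 3: $\rho_1(x_1)>0$ and $\rho_2(x_2)=0$ is completely symmetric to Case 2 using the other half of property $(4)$.

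The only delicate point, and the reason Lemma \ref{properties of smooth squares}(4) was formulated with the locally flat condition near the coordinate axes, is the transition at points where exactly one of $\rho_1(x_1),\rho_2(x_2)$ vanishes: on such a neighborhood $\mu_{\mathcal{S}}$ cannot be handled via the chain rule alone because $(\rho_1(x_1'),\rho_2(x_2'))$ may touch the axis where $\mu_{\mathcal{S}}$ might otherwise lose smoothness. The local flatness reduces $\rho$ to the single coordinate that is safely away from zero, sidestepping this obstacle. The version with $W_1\subset E_1$ open in place of $E_1$ requires no change, since all arguments are local.
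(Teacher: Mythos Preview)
Your proof is correct and follows essentially the same approach as the paper's: continuity is immediate, and for smoothness you split into the same three cases, using property~(4) of Lemma~\ref{properties of smooth squares} to reduce $\rho$ locally to the single nonvanishing coordinate whenever exactly one of $\rho_1,\rho_2$ is zero. Your closing remark explaining why the local flatness near the axes is essential is a nice addition.
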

\begin{proof}
It is clear that $\rho$ is continuous on $E$, and that it is $C^p$ smooth on $\{(x_1, x_2)\in E : \rho_1(x_1)\neq
0\neq \rho_2(x_2)\}$. Let us see that $\rho$ is also $C^p$ smooth on a neighborhood of the set
$$
\left( \{(x_1, x_2)\in E : \rho_1(x_1)=0\}\cup \{(x_1, x_2)\in E : \rho_2(x_2)=0\}\right)\setminus
\left(\rho_{1}^{-1}(0)\times\rho_{2}^{-1}(0)\right).
$$
Suppose for instance that $\rho_{1}(x_1)\neq 0=\rho_{2}(x_2)$. Then, by continuity of $\rho_1, \rho_2$ and by
property $(4)$ of Lemma \ref{properties of smooth squares}, there exist a neighborhood $U$ of the point $(x_1,
x_2)$ such that $U\subset \{(y_1, y_2)\in E: \rho_1(y_1)\neq 0\}$ and
$$
\rho(y_1, y_2)= \rho_1(y_1)
$$
for all $(y_1, y_2)\in U$. It follows that $\rho$ is of class $C^p$ on $U$. The case $\rho_{1}(x_1)=0\neq
\rho_{2}(x_2)$ can be treated similarly.
\end{proof}

Now we are ready to prove Theorem \ref{general extracting scheme for closed subsets of a subspace of infinite
codimension}.

\begin{proof}[Proof of Theorem \ref{general extracting scheme for closed subsets of a subspace of infinite codimension}]
From now on we will fix a smooth square $\mathcal{S}$ on $\R^2$, and we will denote
$$
S=\mu_{\mathcal{S}}.
$$
Thus, by Lemma \ref{smooth squares mix bodies nicely}  applied to $\rho_1=\psi$ and $\rho_2=\omega$ (recall that
$\omega$ was constructed in Lemma \ref{functional omega}), the function
$$
\rho(x_1, x_2) :=S(\psi(x_1), \omega(x_2))
$$
is continuous on $W_1\times E_2$ and of class $C^p$ on $\left(W_1\times E_2\right) \setminus \left(
\psi^{-1}(0)\times\{0\}\right)$.

Let us define $h:(W_1\times E_2)\setminus K\to E$ by
$$
h(x_1, x_2)=\left(x_1, \, x_2+\gamma\circ\rho(x_1,x_2)\right)=\left( x_1, \, x_2 +\gamma\left(S(\psi(x_1),
\omega(x_2))\right) \right),\ \ (x_1,x_2)\in(W_1\times E_2)\setminus K,
$$
where $\gamma$ is provided by Lemma  \ref{deleting path}. Note that
$$
K= \psi^{-1}(0)\times\{0\}=\rho^{-1}(0). $$

Let $(y_1, y_2)$ be an arbitrary point of $W_1\times E_2$, and let $F_{y_1, y_2}:(0,\infty)\longrightarrow
[0,\infty)$ be defined by
\begin{equation}\label{definition of F}
F_{y_1, y_2}(\alpha) = \rho(y_1,y_2-\gamma(\alpha))=S\left(\psi(y_1), \omega(y_2-\gamma(\alpha)) \right)
\end{equation}
for $\alpha>0$. Let us see that $F_{y_1, y_2}(\alpha)$ satisfies the conditions of Lemma \ref{fixed point lemma}.
As for the first condition, we consider two cases: if $\omega(y_2-\gamma(\beta))\leq \omega(y_2-\gamma(\alpha))$
then, since the function $(0, \infty)\ni t\mapsto S(\psi(y_1), t)$ is increasing (see condition $(5)$ of Lemma
\ref{properties of smooth squares}), we have that
$$
S(\psi(y_1), \omega(y_2-\gamma(\beta))) \leq S(\psi(y_1), \omega(y_2-\gamma(\alpha))),
$$
and therefore
$$
F_{y_1, y_2}(\beta) - F_{y_1, y_2}(\alpha)\leq 0\leq\frac{1}{2}(\beta-\alpha)
$$
trivially for all $\beta\geq\alpha>0$. Otherwise, we have $\omega(y_2-\gamma(\beta))-
\omega(y_2-\gamma(\alpha))>0$, and therefore, using the fact that $S$ is a norm in $\R^2$, condition $(3)$ of Lemma
\ref{properties of smooth squares}, the properties of the functional $\omega$, and condition $(1)$ of Lemma
\ref{deleting path}, we obtain
\begin{eqnarray*}
& & S(\psi(y_1), \omega(y_2-\gamma(\beta)))- S(\psi(y_1), \omega(y_2-\gamma(\alpha))) \leq
S\left(0, \omega(y_2-\gamma(\beta))-\omega(y_2-\gamma(\alpha))\right)= \\
& & \omega(y_2-\gamma(\beta))- \omega(y_2-\gamma(\alpha))\leq
 \omega\big( y_2-\gamma(\beta)-(y_2-\gamma(\alpha))\big)
=\omega(\gamma(\alpha)-\gamma(\beta))\leq\frac{1}{2}(\beta-\alpha)
\end{eqnarray*}
for every $\beta\geq\alpha>0$. In either case we have that
\begin{equation}\label{F is semicontractive}
F_{y_1, y_2}(\beta) - F_{y_1, y_2}(\alpha)\leq\frac{1}{2}(\beta-\alpha)
\end{equation}
for all $\beta\geq\alpha>0$.

On the other hand, by condition $(2)$ of Lemma \ref{deleting path} we know that
$$
\limsup_{\alpha\to 0^{+}}\omega(y_2-\gamma(\alpha))>0,
$$
and therefore, by condition $(2)$ of Lemma \ref{properties of smooth squares}, we have
$$
\limsup_{\alpha\to 0^{+}}F_{y_1, y_2}(\alpha)= \limsup_{\alpha\to 0^{+}}S(\psi(y_1),
\omega(y_2-\gamma(\alpha)))\geq \limsup_{\alpha\to 0^{+}}\omega(y_2-\gamma(\alpha))>0,
$$
so that $F_{y_1, y_2}$ also satisfies the second condition of Lemma \ref{fixed point lemma}.

Then, applying Lemma \ref{fixed point lemma}, we deduce that the equation  $F_{y_1, y_2}(\alpha)=\alpha$ has a
unique solution. This means that, for each $(y_1, y_2)\in W_1\times E_2$, a number $\alpha(y_1, y_2)>0$ with the
property
\begin{equation}\label{alpha is a fixed point}
S\left(\psi(y_1), \omega\big( y_2-\gamma(\alpha(y_1, y_2))\big)\right)=\rho\left(y_1,y_2-\gamma(\alpha(y_1,
y_2))\right)=\alpha(y_1, y_2),
\end{equation}
is uniquely determined.

Let us see why these facts imply that $h$ is a $C^{p}$ diffeomorphism from $W_{1}\times E_2 \setminus K$ onto
$W_1\times E_2$. Assume first that $h(x_1, x_2)=(y_1, y_2)=h(z_1, z_2)$, that is to say $x_1=y_1=z_1$, and
\begin{equation}\label{one-to-one}
 x_2 +\gamma\left(\rho(y_1,x_2)\right)=y_2=z_2 + \gamma\left(
\rho(y_1,z_2)\right),
\end{equation}
or equivalently
$$
(y_1, x_2)=\left( y_1, y_2-\gamma(\rho(y_1,x_2)\right) \ \ \text{and}\ \ (y_1,z_2)=\left( y_1, y_2-\gamma(
\rho(y_1,z_2)\right).
$$
Applying $\rho$ to all sides of the above equations and using \eqref{definition of F}, we obtain $$ \rho
(y_1,x_2)=\rho\left( y_1, y_2-\gamma(\rho(y_1,x_2))\right)=F(\rho (y_1,x_2)) \ \text{and}\ \rho
(y_1,z_2)=\rho\left( y_1, y_2-\gamma(\rho(y_1,z_2))\right)=F(\rho (y_1,z_2)).
$$
 It follows that both $\rho (y_1,x_2)$ and $\rho (y_1,z_2)$ are
fixed points of $F_{y_1, y_2}$. By the uniqueness of the fixed point, we conclude that
$$
\alpha(y_1, y_2)=\rho (y_1,x_2)=\rho (y_1,z_2).
$$
Now applying \eqref{one-to-one}, we have
$$
x_2=z_2\ \ \text{and}\ \  x_2=y_2-\gamma(\alpha(y_1, y_2)).
$$
This shows that $h$ is one to one, and also that, given $(y_1, y_2)\in W_1\times E_2$ we have
$$
h\left(y_1, y_2-\gamma(\alpha(y_1, y_2))\right)=(y_1, y_2).
$$
Hence $h$ is also onto, and $h^{-1}:W_1\times E_2\to W_{1}\times E_2 \setminus K$ is given by
$$
h^{-1}(y_1, y_2)=\left(y_1, y_2-\gamma(\alpha(y_1, y_2))\right).
$$
It is clear that $h$ is of class $C^p$. In order to see that $h^{-1}$ is $C^p$ as well, let us define $\Phi:
W_1\times E_2\times (0,\infty)\longrightarrow\mathbb{R}$ by
$$
\Phi(y_1, y_2, \alpha)=\alpha -S(\psi(y_1), \omega(y_2-\gamma(\alpha))) =\alpha-\rho(y_1,y_2-\gamma(\alpha)).
$$
On the one hand, according to \eqref{alpha is a fixed point} and the fact that $S$ is a norm in $\R^2$, we have
$$
(\psi(y_1), \omega(y_2-\gamma(\alpha(y_1, y_2)))\neq (0,0)
$$
for every $(y_1, y_2)\in W_1\times E_2$. Since $S$ is $C^{\infty}$ smooth away from $(0,0)$, this implies that
$\Phi$ is $C^p$ smooth on a neighborhood of every  point $(y_1, y_{2},\alpha(y_1, y_{2}))$ in $W_1\times E_2\times
(0,\infty)$. On the other hand, we know that $F_{y_1, y_2}(\beta)-F_{y_1,
y_2}(\alpha)\leq\frac{1}{2}(\beta-\alpha)$ for $\beta\geq\alpha>0$, which implies that $F_{y_1,
y_2}'(\alpha)\leq\frac{1}{2}$ for every $\alpha$ in a neighborhood of $\alpha(y_1, y_2)$, and therefore
$$
\frac{\partial\Phi(y_1, y_2,\alpha)}{\partial\alpha}=1-F_{y_1, y_2}'(\alpha)\geq 1-1/2>0.
$$
Hence, by the implicit function theorem, the mapping $(y_1, y_2)\rightarrow\alpha(y_1, y_2)$ is of class $C^{p}$ on
$W_1\times E_2$, and, since $\gamma$ is $C^{p}$ smooth, so is $h^{-1}$.

Finally, it is obvious that $\pi_1\circ h=\pi_1$, and the fact that $\gamma(t)=0$ whenever $t\geq 1$ implies that
$h$ is the identity off of the set $\{x=(x_1, x_2)\in W_{1}\times E_2 \, : \, S(\psi(x_1), \omega(x_2))<1\}$.
\end{proof}

\medskip

\subsection{Extracting pieces of continuous graphs of infinite codimension}

Now we will prove the following extractibility result.

\begin{thm}\label{Renz's Theorem 4 for smooth graphs on sets of zeros of smooth functions}
Let $E=E_1\times E_2$ be a product of Banach spaces such that $E_1$ admits $C^p$ smooth partitions of unity and
$E_2$ admits a $C^p$ (not necessarily equivalent) norm. Assume that $X_1$ is a closed subset of $E_1$, that $f:
E_1\to E_2$ is a continuous mapping, and that $E_2$ is infinite-dimensional. Define
$$
X=\{ (x_1, x_2)\in E_{1}\times E_{2} \, : \, x_1\in X_1, x_2=f(x_1)\}.
$$
Let $U$ be an open subset of $E$ and $\varepsilon>0$. Then there exists a $C^p$ diffeomorphism $g$ from $E\setminus
X$ onto $E\setminus (X\setminus U)$ such that $g$ is the identity on $(E\setminus U)\setminus X$ and moves no point
more than $\varepsilon$.
\end{thm}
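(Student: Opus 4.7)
The plan is to execute the outline given at the start of Section 2: flatten the graph $X$ via Theorem \ref{Renz's theorem 1 in our case, healed version}, extract the resulting flat set via Theorem \ref{general extracting scheme for closed subsets of a subspace of infinite codimension}, and compose. Concretely, first I apply Theorem \ref{Renz's theorem 1 in our case, healed version} to the restriction $f|_{X_1}\colon X_1\to E_2$, with the given $U$ and a small auxiliary parameter $\varepsilon_0\in(0,\varepsilon)$ to be fixed at the end. This yields homeomorphisms $h,\varphi\colon E\to E$, both preserving the first coordinate, which agree off $U$ and restrict to $C^p$ diffeomorphisms
$$
h\colon E\setminus X\longrightarrow E\setminus(X_1\times\{0\}),\qquad \varphi\colon E\setminus(X\setminus U)\longrightarrow E\setminus\bigl((X_1\times\{0\})\setminus h(U)\bigr),
$$
with $\|h^{-1}(y)-\varphi^{-1}(y)\|\le\varepsilon_0$ for every $y$, and with $h^{-1}$ (and, by the same contracting-perturbation argument, $\varphi^{-1}$) uniformly continuous in the second coordinate.

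Next, I build a $C^p$ diffeomorphism
$$
g_1\colon E\setminus(X_1\times\{0\})\longrightarrow E\setminus\bigl((X_1\times\{0\})\setminus h(U)\bigr)
$$
that is the identity on $(E\setminus h(U))\setminus(X_1\times\{0\})$ and moves every point by at most a prescribed $\delta>0$. For this, I apply Theorem \ref{general extracting scheme for closed subsets of a subspace of infinite codimension} with $W_1:=\pi_1(U)$ and a continuous function $\psi\colon W_1\to[0,\infty)$ which is $C^p$ on $W_1\setminus X_1$ with $\psi^{-1}(0)=X_1\cap W_1$; such $\psi$ exists thanks to the $C^p$ partition-of-unity hypothesis on $E_1$. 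The parameters in $\psi$ and in the Lemma \ref{functional omega} functional $\omega$ are calibrated so that the non-identity set $\{(x_1,x_2)\colon S(\psi(x_1),\omega(x_2))<1\}$ of Theorem \ref{general extracting scheme for closed subsets of a subspace of infinite codimension} lies inside $h(U)$, so that the displacement $\gamma\circ S(\psi,\omega)$ has norm at most $\delta$, and so that the resulting diffeomorphism of $W_1\times E_2$ extends smoothly by the identity to all of $E$.

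Setting $g:=\varphi^{-1}\circ g_1\circ h$ then produces a $C^p$ diffeomorphism $E\setminus X\to E\setminus(X\setminus U)$ as a composition of three such. For $x\in(E\setminus U)\setminus X$, the equality $h=\varphi$ off $U$ forces $\varphi(x)\in E\setminus h(U)$, and the identity behaviour of $g_1$ on $(E\setminus h(U))\setminus(X_1\times\{0\})$ then gives $g(x)=\varphi^{-1}(\varphi(x))=x$. For the displacement bound, since $x=h^{-1}(h(x))$, the triangle inequality
$$
\|g(x)-x\|\le\|\varphi^{-1}(g_1(h(x)))-\varphi^{-1}(h(x))\|+\|\varphi^{-1}(h(x))-h^{-1}(h(x))\|
$$
bounds the first summand by the modulus of continuity of $\varphi^{-1}$ in its second coordinate evaluated at $\|g_1(h(x))-h(x)\|\le\delta$, and the second by $\varepsilon_0$ via condition $(6)$ of Theorem \ref{Renz's theorem 1 in our case, healed version}; choosing $\varepsilon_0$ and $\delta$ small enough makes the sum $\le\varepsilon$.

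The hard part is the calibration in the middle step: forcing $g_1$ to be supported inside the (possibly intricate) open set $h(U)$ while simultaneously keeping its displacement uniformly small. The level sets $\{\omega<1\}$ are $\|\cdot\|$-unbounded (the non-completeness of $\omega$ being essential for the extraction to work) and the deleting curve $\gamma$ of Lemma \ref{deleting path} grows without bound in norm as its argument tends to $0$, so Theorem \ref{general extracting scheme for closed subsets of a subspace of infinite codimension} cannot be invoked once globally. The calibration will proceed by a local partition-of-unity argument on $W_1$ subordinate to a cover of $X_1\cap W_1$ by open sets over which $h(U)$ has a uniform ``thickness'' in the $E_2$-direction, together with a suitable rescaling of $\omega$ on each piece.
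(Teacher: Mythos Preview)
Your overall architecture is correct and matches the paper's: flatten via Theorem \ref{Renz's theorem 1 in our case, healed version}, extract the flattened set $X_1\times\{0\}$ with control on support and displacement, and conjugate. Your displacement estimate for the composite $g=\varphi^{-1}\circ g_1\circ h$ is also essentially the paper's (the paper swaps the roles of $h^{-1}$ and $\varphi^{-1}$ in the two summands, invoking condition $(7)$ for $h^{-1}$ rather than for $\varphi^{-1}$, but your version is fine since $\varphi^{-1}$ is of the same contracting-perturbation form).

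The gap is in the middle step, exactly where you say it is. Your proposed fix---a partition of unity on $W_1$ and a piecewise rescaling of $\omega$---is not developed, and is unlikely to succeed in that form: one cannot patch local extracting diffeomorphisms on overlapping pieces by convex combination (the result need not be injective), and arranging exact agreement on overlaps so that one may compose instead is precisely the difficulty you are trying to avoid. The paper sidesteps patching entirely by a single global conjugation. First (Lemma \ref{existence of varphi in Renz's 4}) one picks a continuous $\varphi\colon E_1\to[0,\delta/2]$, $C^p$ off $\partial\varphi^{-1}(0)$, with $W\cap(X_1\times\{0\})\subset\{(x_1,x_2):\|x_2\|<\varphi(x_1)\}\subset W=h(U)$; this function encodes the variable $E_2$-thickness of $W$. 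Second (Lemma \ref{convex sets lemma}) one chooses a $C^p$ diffeomorphism $h_2$ of $E_2$ carrying the $\|\cdot\|$-unit ball onto the (unbounded) set $\{\omega\le 1\}$ and fixing $0$. The conjugating diffeomorphism
\[
\Phi(x_1,x_2)=\Bigl(x_1,\; h_2\bigl(x_2/\varphi(x_1)\bigr)\Bigr)
\]
of $A:=\varphi^{-1}((0,1])\times E_2$ then carries the thin tube $\{\|x_2\|<\varphi(x_1)\}$ onto the unbounded region $\{\omega(x_2)<1\}$. One applies Theorem \ref{general extracting scheme for closed subsets of a subspace of infinite codimension} \emph{once}, globally on $A$, with $\psi(x_1)=\eta(x_1)/\varphi(x_1)$ (where $\eta^{-1}(0)=X_1$, Lemma \ref{if the space has partitions of unity then psi exists}), obtaining $H$ supported in $\{S(\psi,\omega)<1\}\subset\{\omega<1\}$. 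Then $g_1:=\Phi^{-1}\circ H\circ\Phi$, extended by the identity off $A$, is a $C^p$ diffeomorphism of $E\setminus(X_1\times\{0\})$ onto $E\setminus((X_1\times\{0\})\setminus W)$, supported in $\{\|x_2\|<\varphi(x_1)\}\subset W$; since it preserves the first coordinate and keeps the second inside that tube, it moves no point more than $2\varphi(x_1)\le\delta$. This single conjugation is the missing idea: it converts the uncontrollable $\omega$-geometry back into controllable $\|\cdot\|$-geometry, replacing your local calibration by one global change of coordinates.
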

\begin{proof}
We may of course assume $U\cap X\neq\emptyset$ (as otherwise the result holds trivially with $g$ equal to the
identity map).
\begin{claim}\label{Claim that we may assume f equal to 0}
It is sufficient to prove the result for $f=0$ and such that the extracting diffeomorphism preserves the first
coordinate, that is $g(x_1,x_2)=(x_1,\pi_2(g(x_1,x_2)))$.
\end{claim}
\begin{proof}
Let $h$ and $\varphi$ be homeomorphisms given by Theorem \ref{Renz's theorem 1 in our case, healed version} such
that $||\varphi^{-1}(x)-h^{-1}(x)||\leq {\varepsilon\over 2}$ for every $x\in E$. By the uniform continuity of
$h^{-1}(x_1,x_2)$ with respect to the second variable $x_2\in E_2$, we may choose $\delta>0$ such that if
$||(x_1,x_2)-(x_1,x_2')||\leq\delta$ then $$||h^{-1}(x_1,x_2)-h^{-1}(x_1,x_2')||\leq{\varepsilon\over 2}.$$
Assuming the result is true for $f=0$ we can find a $C^p$ diffeomorphism $g: E\setminus (X_1 \times\{0\})\to
E\setminus ((X_1 \times\{0\})\setminus h(U))$ such that $g$ is the identity on $(E\setminus h(U))\setminus
(X_1\times\{0\})$,  moves no point more than $\delta$ and preserves the first coordinate. Then the composition
$$
\varphi^{-1}\circ g\circ h: E\setminus X\to (E\setminus (X\setminus U)
$$
defines a $C^p$ diffeomorphism with the required properties. Observe that
$$||g(h(x))-h(x)||=||(x_1,\pi_2(g(h(x))))-(x_1,\pi_2(h(x)))||\leq\delta,$$ hence
$$||\varphi^{-1}(g(h(x)))-x||\leq||\varphi^{-1}(g(h(x)))-h^{-1}(g(h(x)))||+||h^{-1}(g(h(x)))-x||\leq{\varepsilon\over 2}+{\varepsilon\over 2}=\varepsilon$$
for every $x\in E\setminus X$.
\end{proof}

So it will be enough to see that if $X_1$ is a closed subset of $E_1$ and $W$ is an open subset of $E$ such that
$W\cap X_{1}\times \{0\}\neq\emptyset$ then there exists a $C^p$ diffeomorphism $g$ from $E\setminus (X_1\times
\{0\})$ onto $E\setminus ((X_1\times\{0\})\setminus W)$ such that $g$ is the identity on $(E\setminus W)\setminus
(X_1\times\{0\})$ and moves no point more than $\delta$.

To this end we next construct some auxiliary functions following Renz's strategy \cite[pp. 54-59]{Renz}. In what
follows  $\omega$ will denote the smooth asymmetric subadditive functional on $E_2$ given by Lemma \ref{functional
omega}.
\begin{lemma}\label{existence of varphi in Renz's 4}
There exists a continuous function $\varphi:E_1\to [0,{\delta\over 2}]$ such that:
\begin{enumerate}
\item $\varphi$ is of class $C^p$ on $E_1\setminus \partial \varphi^{-1}(0)$.
\item $W\cap (X_1\times\{0\})\subset\{ (x_1, x_2)\in E \, : \, \|x_2\|<\varphi(x_1)\}
\subset W$.
\end{enumerate}
\end{lemma}
\begin{proof}
Let $\pi_1: E\to E_1$ denote the canonical projection defined by $\pi_1(x_1, x_2)=x_1$. The set
$$
W_1 :=\pi_{1}\left( W\cap (E_1 \times\{0\})\right)
$$
is open in $E_1$, and the function $G:E_1\to [0, \infty)$ defined by
$$
G(x_1)=\min\{{\delta\over 2}, \textrm{dist}\left( (x_1, 0), E\setminus W\right)\}
$$
is continuous and satisfies that $G>0$ on $W_1$ and $G=0$ on $\pi_1 \left( (E_1\times \{0\})\setminus W\right)$.
Since $E_1$ has $C^p$ smooth partitions of unity and $G$ is continuous and strictly positive on $W_1$, we can find
a $C^p$ smooth function $F$ on $W_1$ such that
$$
0<\frac{1}{4} G(x_1) < F(x_1)<\frac{1}{2} G(x_1)
$$
for every $x_1\in W_1$. Now let us define $\varphi: E_1\to [0, 1]$ by
$$
\varphi(x_1)=
\begin{cases}
F(x_1) & \mbox{ if } x_1\in W_1 \\
0  & \mbox{ if }  x_1\in E_1\setminus W_1.
\end{cases}
$$
It is immediately seen that $\varphi$ is continuous, and of course $\varphi$ is of class $C^p$ on
$E_1\setminus\partial \varphi^{-1}(0)=W_1\cup \textrm{int}(E_1\setminus W_1)$. Since $\varphi(x_1)=F(x_1)>0$ for
all $x_1\in W_1$, it is obvious that
$$
W\cap (X_1\times\{0\})\subset W_1\times\{0\}\subset \{ (x_1, x_2)\in E \, : \, \|x_2\|<\varphi(x_1)\}.
$$
On the other hand, if $\|x_2\|<\varphi(x_1)$ then observe first that $x_1\in W_1$ (because if $\varphi(x_1)=0$ the
inequality is impossible). We then must have $(x_1, x_2)\in W$, as otherwise we would get
\begin{eqnarray*}
& & \textrm{dist}\left( (x_1, 0), E\setminus W\right)\leq \textrm{dist}\left( (x_1, 0),(x_1, x_2) \right)+
\textrm{dist}\left( (x_1, x_2), E\setminus W\right)=\\
& & \|x_2\|+0=\|x_2\|<\varphi(x_1)<\frac{1}{2}G(x_1)\leq \frac{1}{2} \textrm{dist}\left( (x_1, 0), E\setminus
W\right),
\end{eqnarray*}
which is absurd. This shows that $\{ (x_1, x_2)\in E \, : \, \|x_2\|<\varphi(x_1)\} \subset W$ and concludes the
proof of the lemma.
\end{proof}

We will also need to use a diffeomorphism $h_2$ of $E_2$ onto itself which carries the unit ball of $E_2$ onto the
convex body $\{x_2\in E_2 \, : \, \omega(x)\leq 1\}$ and such that $h_2(0)=0$. The existence of $h_2$ is ensured by
the following lemma. We say that a convex body $U$ which contains $0$ as an interior point is {\em radially
bounded} provided that for every $x\in U$ the set $\{tx : t\in [0, \infty)\}\cap U$ is bounded.

%%%%%%%%%%%%%%%
\begin{lemma}\label{convex sets lemma}
Let $X$ be a Banach space, and let $U_{1}, U_{2}$ be radially bounded, $C^{p}$ smooth convex such that the origin
is an interior point of both $U_{1}$ and $U_{2}$. Then there exists a $C^{p}$ diffeomorphism $g:X\longrightarrow X$
such that $g(U_{1})=U_{2}$, $g(0)=0$, and $g(\partial{U_{1}})=\partial{U_{2}}$.
\end{lemma}
%%%%%%%%%%%%%%%
\begin{proof}
If $U$ and $V$ are $C^p$ smooth, radially bounded convex bodies such that the origin is an interior point of both
$U$ and $V$, and we additionally assume that $U\subseteq V$, such a diffeomorphism can be constructed as follows:
let $\theta(t)$ be a non-decreasing real function of class $C^{\infty}$ defined for $t>0$, such that $\theta(t)=0$
for $t\leq 1/2$ and $\theta(t)=1$ for $t\geq 1$, and define
$$
g(x)=\left(\theta(\mu_{U}(x))\frac{\mu_{U}(x)}{\mu_{V}(x)} +1-\theta(\mu_{U}(x))\right)x
$$
for $x\neq 0$, and $g(0)=0$. Here $\mu_A$ denotes the  Minkowski functional of $A$.

In the general case, let $U=\{x\in X \, : \, \mu_{U_{1}}(x)+\mu_{U_{2}}(x)\leq 1\}$, then $U\subseteq U_{j}$, for
$j=1, 2$, and there exist diffeomorphisms $g_{1}, g_{2}:X\to X$ such that $g_{j}(U)=U_{j}$ and
$g_{j}(\partial{U})=\partial{U_{j}}$, $j=1, 2$. Then $g=g_{2}\circ g_{1}^{-1}$ does the job. See
\cite{Dobrowolski1977} for details.
\end{proof}

The following lemma is an immediate consequence of the existence of partitions of unity in $E_1$.
\begin{lemma}\label{if the space has partitions of unity then psi exists}
Suppose that $E_1$ is a Banach space with $C^p$ smooth partitions of unity, and let $X_1$ be a closed subset of
$E_1$. Then there exists a continuous function $\eta:E_1\to [0, \infty)$ such that:
\begin{enumerate}
\item $X_1=\eta^{-1}(0)$;
\item $\eta$ is of class $C^p$ on $E_1\setminus X_1$.
\end{enumerate}
\end{lemma}

We are ready to proceed with the proof of Theorem \ref{Renz's Theorem 4 for smooth graphs on sets of zeros of
smooth functions}. Let $\eta$ be a function as in the statement of Lemma \ref{if the space has partitions of unity
then psi exists}, and pick a diffeomorphism $h_2:E_2\to E_2$ such that $h_2(0)=0$ and
$$
h_2 \left(\{x_2\in E_2 : \|x_2\|\leq 1\}\right)=\{x_2\in E_2 \, : \, \omega(x_2)\leq 1\}.
$$
Let us define
$$
A :=\varphi^{-1}\left((0,1]\right)\times E_2=W_1\times E_2,
$$
and
$$
\Phi(x_1, x_2)=\left( x_1, h_2\left(\frac{1}{\varphi (x_1)} x_2\right)\right), \,\,\, (x_1, x_2)\in A.
$$
It is clear that $\Phi:A\to A$ is a $C^p$ diffeomorphism, with inverse
$$
\Phi^{-1}(y_1, y_2)=\left( y_1, \varphi(y_1) h_{2}^{-1}(y_2)\right),
$$
and also that
$$
\Phi\left( (X_1\times\{0\})\cap A\right)=(X_1\times\{0\})\cap A.
$$
Next let us define $\psi: \varphi^{-1}\left( (0, 1]\right)=\pi_1(A)=W_1 \to [0, \infty)$ by
$$
\psi(x_1)=\frac{\eta(x_1)}{\varphi(x_1)},
$$
and notice that $\psi$ is continuous, that
$$
\psi^{-1}(0)=\pi_1 \left( (X_{1}\times \{0\})\cap A\right)=X_{1}\cap W_1,
$$
and that $\psi$ is of class $C^p$  outside $\psi^{-1}(0)$. By Theorem \ref{general extracting scheme for closed
subsets of a subspace of infinite codimension} we can find a $C^p$ diffeomorphism $H$ from
$A\setminus(X_1\times\{0\})$ onto $A$ such that $H$ is the identity outside $\{ (x_1, x_2)\in
A\setminus(X_1\times\{0\})\, : \, S(\psi(x_1), \omega(x_2))<1\}$, where $S$ is a smooth square. Since $\Phi:A\to A$
is a $C^p$ diffeomorphism which takes $(X_1\times\{0\})\cap A$ onto itself, we have that the composition
$\Phi^{-1}\circ H\circ \Phi$ defines a $C^p$ diffeomorphism from $A\setminus (X_1\times\{0\})$ onto $A$. Now we
extend this diffeomorphism outside $A\setminus (X_1\times \{0\})$ by defining $g:E\setminus (X_1\times \{0\})\to E$
by
$$
g(x)=
\begin{cases}
\Phi^{-1}\circ H\circ \Phi(x) & \mbox{ if } x\in A\setminus (X_1\times \{0\}) \\
x  & \mbox{ if }  x\in E\setminus (A\cup (X_1\times\{0\})).
\end{cases}
$$
This mapping is clearly a bijection. Thus, in order to see that $g$ is a $C^p$ diffeomorphism, it is enough to see
that $g$ is locally a $C^p$ diffeomorphism. We already know this is so for all points of $E\setminus \big(\partial
(A \setminus (X_1\times\{0\}))\cup(X_1\times\{0\})\big)$. Let us show that this is also true for every point $(x_1,
x_2)$ of $\partial (A \setminus (X_1\times\{0\})) \setminus (X_1\times\{0\})$. We have $\varphi(x_1)=0$, and also
either $\eta(x_1)>0$ or $\|x_2\|>0$. Then
$$
\lim_{A\ni (y_1, y_2)\to (x_1, x_2)}\max\left\{\frac{\eta(y_1)}{\varphi(y_1)},
\frac{\|y_2\|}{\varphi(y_1)}\right\}=\infty,
$$
hence there exists a neighborhood $V$ of $(x_1, x_2)$ in $E\setminus (X_1\times\{0\})$ such that
$$
\max\left\{\frac{\eta(y_1)}{\varphi(y_1)}, \frac{\|y_2\|}{\varphi(y_1)}\right\}>1 \,\,\, \textrm{ for all } (y_1,
y_2)\in V\cap A.
$$
By Lemma \ref{properties of smooth squares}(2) it follows that
$$
S\left( \psi\circ\pi_{1}\circ\Phi(y), \omega\circ\pi_2\circ\Phi(y)\right)>1 \,\,\, \textrm{ for all } y\in V\cap A,
$$
hence that $H$ is the identity on $\Phi(V\cap A)$, and consequently that $g$ is the identity on $V$, and in
particular a $C^p$ diffeomorphism locally at $(x_1, x_2)$. Thus $g:E\setminus (X_1\times \{0\})\to E$ is a $C^p$
diffeomorphism.

Furthermore, if $\|x_2\|\geq \varphi(x_1)$, $(x_1, x_2)\in A\setminus (X_1\times\{0\})$, then we have
$\omega(\pi_2\circ\Phi(x_1, x_2))\geq 1$. Hence, as above by Lemma \ref{properties of smooth squares}(2), we
conclude that $H(\Phi(x_1, x_2))=\Phi(x_1, x_2)$, and it follows that $g(x_1, x_2)=(x_1, x_2)$. Thus $g$ is the
identity off of the set $\{(x_1, x_2)\in E\setminus(X_1\times\{0\}) \, : \, \|x_2\|<\varphi(x_1)\}$. Since
$$
\{(x_1, x_2)\in E\setminus(X_1\times\{0\}) \, : \, \|x_2\|<\varphi(x_1)\} \subset (E\setminus W)\setminus
(X_1\times\{0\}),
$$
$g$ is the identity off of the set $(E\setminus W)\setminus (X_1\times\{0\})$ as well.

Finally let us check that $g$ does not move any point more than $\delta$. We know that if $g$ moves a point
$(x_1,x_2)\in E\setminus (X_1\times\{0\})$ then $||x_2||<\varphi(x_1)$, and also that $g_2$ only moves the second
coordinate $x_2$, that is $g(x_1,x_2)=(x_1,\pi_2(g(x_1,x_2)))$. Hence
$$||g(x_1,x_2)-(x_1,x_2)||\leq ||\pi_2(g(x_1,x_2))-x_2||\leq ||\pi_2(g(x_1,x_2))||+||x_2||\leq \varphi(x_1)+\varphi(x_1)\leq\delta.$$
The proof of Theorem \ref{Renz's Theorem 4 for smooth graphs on sets of zeros of smooth functions} is complete.
\end{proof}

\medskip
\subsection{Patching local diffeomorphisms together}

Throughout this section $E$ will be a Banach space. Our goal in this section is to complete the proof of Theorem
\ref{final extractibility theorem rough version general form}.

First let us introduce the following definitions.

\begin{defn}\label{definition of extraction property}
{\em We will say that a subset $X$ of $E$ has the strong $C^p$ extraction property  with respect to an open set $U$
if $X\subseteq U$, $X$ is relatively closed in $U$, and for every open set $V\subseteq U$ and every subset $Y\subseteq
X$ relatively closed in $U$ there exists a $C^{p}$ diffeomorphism $\varphi$ from $U\setminus Y$ onto
$U\setminus(Y\setminus V)$ which is the identity
on $(U\setminus V)\setminus Y$. If in addition for any $\varepsilon>0$ we can require the diffeomorphism $\varphi$ not to move any point more than $\varepsilon$, we will say that $X$ has the $\varepsilon$-strong $C^p$ extraction property with respect to $U$.\\
We will also say that such a closed set $X$ has locally the strong (or the $\varepsilon$-strong) local $C^p$ extraction
property if for every point $x\in X$ there exists an open neighborhood $U_x$ of $x$ such that $X\cap
\overline{U_x}$ has the  strong ($\varepsilon$-strong, respectively) $C^p$ extraction property with respect to every
open set $U$ with $X\cap \overline{U_x}\subseteq U$. (Equivalently, there exists an open neighborhood $U_x$ of $x$
such that $X\cap {U_x}$ has the strong ($\varepsilon$-strong respectively) $C^p$ extraction property with respect
to every open set $U$ for which $X\cap{U_x}$ is a relatively closed subset of $U$.)}
\end{defn}

\begin{rem}
Let $(U,W)$, $W\subset U$, be a pair of open sets in a Banach space $E$. We say that $(U,W)$ has the strong $C^p$
expansion property if, for every open subsets $V$ and $U'$ of $U$, $W\subset U'$, there exists a $C^{p}$
diffeomorphism $U'\cap V\to V$ which, by letting $\varphi(x)=x$ for $x\in U'\setminus V$, extends to $C^p$
diffeomorphism $\varphi:U'\to U'\cup V$.

In particular, letting $U'=W$, there exists a $C^p$ diffeomorphism $W\cap V\to V$ which extends to a $C^p$
diffeomorphism of $W$ onto $W\cup V$ via the identity off $W\cap V$. Hence, $W$ is smoothly expanded to $W\cup V$;
this justifies the term of $C^p$ expansion. Should this expansion be valid for all open sets $U'$, $W\subset U'$,
then we would have the strong $C^p$ expansion property.

Notice that a relatively closed subset $X$ has the strong $C^p$ extraction property with respect to $U$ if and only
if $(U,W)=(U,U\setminus X)$ has the strong $C^p$ expansion property.
\medskip

We say that an open subset $W$ of $E$ has locally the strong $C^p$ expansion property if every $x\in E\setminus W$
has an open neighborhood $U_x$ such that $(U,U_x\cap W)$ has the strong expansion property for every open set
$U\supset U_x\cap W$.

Notice that a closed set $X$ has locally the strong $C^p$ extraction property if and only if $W=E\setminus X$ has
locally the strong $C^p$ expansion property.
\end{rem}

Some basic properties that can be derived from Definition \ref{definition of extraction property} are listed in the
following lemma.

\begin{lem}\label{Properties of C^p extractibility}
Let us suppose that $X\subset E$ has the $\varepsilon$-strong $C^p$ extraction property with respect to an open set
$U$ of $E$. Then:
\begin{enumerate}
\item For every closed set $Y\subseteq X$, $Y$ has the $\varepsilon$-strong
$C^p$ extraction property with respect to $U$;
\item For every open subset $U'\subseteq U$, $X\cap U'$ has
the $\varepsilon$-strong $C^p$ extraction property with respect to $U'$.
\item If $h$ is a $C^p$ diffeomorphism defined on $U$ and such that $h(U)$ is open,
then $h(X)$ has the strong $C^p$ extraction property with respect to $h(U)$.
\end{enumerate}
\end{lem}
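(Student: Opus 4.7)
The plan is to prove each of the three parts directly from the definition of the $\varepsilon$-strong $C^p$ extraction property, using in each case a natural construction that reduces the problem to an application of the hypothesis on $X$.

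For part (1), the argument is essentially immediate. To verify that $Y$ has the $\varepsilon$-strong $C^p$ extraction property with respect to $U$, I would fix an open $V\subseteq U$ and a subset $Z\subseteq Y$ relatively closed in $U$. Since $Y\subseteq X$ we also have $Z\subseteq X$, so $Z$ is a subset of $X$ which is relatively closed in $U$. Applying the $\varepsilon$-strong extraction property of $X$ with respect to $U$ to the pair $(V,Z)$ yields directly the required diffeomorphism $\varphi\colon U\setminus Z\to U\setminus(Z\setminus V)$ that is the identity on $(U\setminus V)\setminus Z$ and moves no point by more than $\varepsilon$.

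Part (2) is the only nontrivial step and is where I would expect the main (albeit modest) obstacle, namely that a set $Y'\subseteq X\cap U'$ which is relatively closed in $U'$ need not be relatively closed in $U$. The fix is to pass to $Y:=\overline{Y'}^{\,U}$, the closure of $Y'$ in $U$. Because $X$ is relatively closed in $U$ we get $Y\subseteq X$, and because $Y'$ is relatively closed in $U'$ and $U'$ is open in $U$, a routine check gives $Y\cap U'=Y'$. Now, given an open $V'\subseteq U'$, the set $V'$ is also open in $U$, and applying the $\varepsilon$-strong extraction property of $X$ in $U$ to the pair $(V',Y)$ produces a diffeomorphism $\varphi\colon U\setminus Y\to U\setminus(Y\setminus V')$ that is the identity on $(U\setminus V')\setminus Y$ and moves no point more than $\varepsilon$. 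Since $\varphi$ is the identity outside $V'\subseteq U'$, it maps $U'\setminus Y=U'\setminus Y'$ into $U'$, and a direct set-theoretic computation (using $Y\cap U'=Y'$) shows that the image of this restriction is exactly $U'\setminus(Y'\setminus V')$. Thus $\varphi|_{U'\setminus Y'}$ is the desired $C^p$ diffeomorphism, and clearly still moves no point more than $\varepsilon$ and is the identity on $(U'\setminus V')\setminus Y'$.

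For part (3), I would argue by conjugation. The set $h(X)$ is relatively closed in $h(U)$ because $h$ is a homeomorphism of $U$ onto the open set $h(U)$. Given any open $V\subseteq h(U)$ and any $Y\subseteq h(X)$ relatively closed in $h(U)$, set $V':=h^{-1}(V)\subseteq U$ and $Y':=h^{-1}(Y)\subseteq X$; then $V'$ is open in $U$ and $Y'$ is relatively closed in $U$. Applying the strong extraction property of $X$ with respect to $U$ to $(V',Y')$ gives a $C^p$ diffeomorphism $\varphi\colon U\setminus Y'\to U\setminus(Y'\setminus V')$ which is the identity on $(U\setminus V')\setminus Y'$. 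Then $\psi:=h\circ\varphi\circ h^{-1}$ is a $C^p$ diffeomorphism from $h(U)\setminus Y$ onto $h(U)\setminus(Y\setminus V)$ that restricts to the identity on $(h(U)\setminus V)\setminus Y$, which is exactly what is required. Note that the metric estimate is not preserved under this conjugation because $h$ need not be an isometry, which is why the statement only asserts the strong (not $\varepsilon$-strong) extraction property in this case.
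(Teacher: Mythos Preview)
Your proposal is correct and follows essentially the same route as the paper: part~(1) is immediate, part~(2) passes to the closure $\overline{Y'}$ in $U$ and restricts the resulting diffeomorphism to $U'$, and part~(3) is the conjugation $h\circ\varphi\circ h^{-1}$. Your write-up is in fact more explicit than the paper's on the set-theoretic verification in~(2) (that $\overline{Y'}\cap U'=Y'$ and that the restriction lands in and onto the right set), and your remark on why the $\varepsilon$-bound is lost in~(3) is exactly the content of the paper's subsequent Remark.
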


\begin{proof}\hspace{6cm}

(1) This follows directly from the definition.\\

(2) Take an open subset $V'\subseteq U'$, a subset $Y\subseteq X\cap U'$ relatively closed in $U'$. Since $X$ has
the strong $C^p$ extraction property  with respect to $U$ there exists a $C^{p}$ diffeomorphism $\varphi$ from
$U\setminus \overline{Y}$ onto $U\setminus(\overline{Y}\setminus V')$ which is the identity on $(U\setminus
V')\setminus \overline{Y}$.  When restricting $\varphi$ to $U'\setminus Y$ we actually get a $C^{p}$ diffeomorphism
from $U'\setminus Y$ onto $U'\setminus (Y\setminus V')$ which is the
identity on $(U'\setminus V')\setminus Y$. \\

(3) Take an open subset $V$ of $h(U)$, a subset $Y\subseteq h(X)$ relatively closed in $h(U)$.  Since $X$ has the
strong $C^p$ extraction property  with respect to $U$ and $h^{-1}(Y)$ is relatively closed in $U$, there exists a
$C^{p}$ diffeomorphism $\varphi$ from $U\setminus h^{-1}(Y)$ onto $U\setminus(h^{-1}(Y)\setminus h^{-1}(V))$ which
is the identity on $(U\setminus h^{-1}(V))\setminus h^{-1}(Y)$. Then the mapping
$$g:=h\circ\varphi\circ h^{-1}:h(U)\setminus Y\longrightarrow h(U)\setminus (Y\setminus V)$$
is a surjective $C^{p}$ diffeomorphism which restricts to the identity on $(h(U)\setminus V)\setminus Y$.
\end{proof}
\begin{rem}\label{composition of diffeos do not move the points too much}
{\em In Lemma \ref{Properties of C^p extractibility} (3) we do not have in general the $\varepsilon$-strong $C^p$
extraction property of $h(X)$ with respect to $h(U)$, but we still have the following: suppose $h$ does not move any points more than some $\varepsilon>0$. For every $\eta>0$ in the proof of Lemma \ref{Properties of C^p
extractibility} (3) we can assume that $\varphi$ does not move any point more than $\eta$. Hence $g\circ
h=(h\circ \varphi\circ h^{-1})\circ h=h\circ\varphi$ does not move any point more than $\varepsilon+\eta$.}
\end{rem}
Let us state the main result of this section, which is crucial in the proof of Theorem \ref{final extractibility
theorem rough version general form} provided underneath.
\begin{thm}\label{abstract genereal extractibility}
Let $E$ be a Banach space and $X$ be a closed subset of $E$ which has locally the $\varepsilon$-strong $C^p$
extraction property. Let $U$ be an open subset of $E$ and $\mathcal{G}=\left\lbrace G_r\right\rbrace _{r\in\Omega}$
be an open cover of $E$. Then there exists a $C^p$ diffeomorphism $g$ from $E\setminus X$ onto $E\setminus
(X\setminus U)$ which is the identity on $(E\setminus U)\setminus X$ and is limited by $\mathcal{G}$.
\end{thm}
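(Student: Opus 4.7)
My plan is to build $g$ as a locally finite composition of local extracting diffeomorphisms supplied by the hypothesis. The three subtasks are (a) producing a suitable locally finite cover, (b) performing the local extractions consistently, and (c) ensuring that the composition is limited by $\mathcal{G}$. For (a) I would use that $E$, being metrizable, is paracompact. For each $x\in X$, pick a neighborhood $U_x$ as in the local $\varepsilon$-strong extraction hypothesis, shrunk so that $U_x\subset G_{r(x)}\in\mathcal{G}$; for each $p\in E\setminus X$, pick $U_p\subset G_{r(p)}\in\mathcal{G}$ with $\overline{U_p}\cap X=\emptyset$. From this open cover of $E$ extract a locally finite open refinement $\{W_\alpha\}_{\alpha\in\Lambda}$ together with a shrinking $\{V_\alpha\}$ satisfying $\overline{V_\alpha}\subset W_\alpha$ and still covering $E$; enumerate (or well-order) $\Lambda$.

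For (b), set $X_0:=X$ and $X_\alpha:=X\setminus\bigl(U\cap\bigcup_{\beta\le\alpha}V_\beta\bigr)$, so that $\bigcap_\alpha X_\alpha=X\setminus U$ because $\{V_\alpha\}$ covers $X$. The set $Y_\alpha:=X_{\alpha-1}\cap\overline{W_\alpha}$ is closed in $E$ and contained in $X\cap\overline{U_{x_\alpha}}$, so by Lemma \ref{Properties of C^p extractibility}(1) it inherits the $\varepsilon_\alpha$-strong $C^p$ extraction property. Applying this property with the open set $V:=V_\alpha\cap U$ yields a $C^p$ diffeomorphism
\[
\varphi_\alpha\colon E\setminus Y_\alpha\longrightarrow E\setminus\bigl(Y_\alpha\setminus(V_\alpha\cap U)\bigr),
\]
which is the identity on $E\setminus\bigl((V_\alpha\cap U)\cup Y_\alpha\bigr)$ and moves no point by more than $\varepsilon_\alpha$. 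A direct computation, using $\overline{V_\alpha}\subset W_\alpha$, shows that $\varphi_\alpha$ restricts to a $C^p$ diffeomorphism $g_\alpha\colon E\setminus X_{\alpha-1}\to E\setminus X_\alpha$ that is the identity off $V_\alpha\cap U$, hence also off both $U$ and $W_\alpha$.

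Setting $g:=\lim_\alpha g_\alpha\circ\cdots\circ g_1$, local finiteness of $\{W_\alpha\}$ makes this limit locally a finite composition of $C^p$ diffeomorphisms, hence a $C^p$ diffeomorphism of $E\setminus X$ onto $E\setminus(X\setminus U)$; that $g$ is the identity on $(E\setminus U)\setminus X$ follows from the corresponding property of each $g_\alpha$. Task (c) is what I expect to be the main obstacle: although each $g_\alpha$ is confined to $W_\alpha\subset G_{r_\alpha}$, the orbit $x\mapsto g_1(x)\mapsto g_2(g_1(x))\mapsto\cdots$ can pass through several $W_\alpha$'s with different associated members of $\mathcal{G}$. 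To handle this I would, already at the refinement stage, arrange $\{W_\alpha\}$ to be an iterated star refinement of $\mathcal{G}$ and shrink each $\varepsilon_\alpha$ so that $g_\alpha(W_\alpha)\subset W_\alpha$ (invoking Remark~\ref{composition of diffeos do not move the points too much} as needed). Then the orbit of any $x$ is a chain of successively overlapping $W_\alpha$'s, which by the iterated-star condition lies in a single $G_r\in\mathcal{G}$, so in particular $\{x,g(x)\}\subset G_r$. This final adjustment is essentially the patching device of James West cited at the opening of Section~2.
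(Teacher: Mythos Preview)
Your steps (a) and the restriction computation in (b) are sound. The gap is in the patching, and it undermines not only (c) but even the well-definedness of $g$ as a $C^p$ diffeomorphism.

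The claim that ``local finiteness of $\{W_\alpha\}$ makes this limit locally a finite composition'' does not follow. Local finiteness bounds how many $W_\alpha$ meet a neighborhood of a \emph{fixed} point, not how many $g_\alpha$ act along the \emph{orbit} of $x$. Since $g_\alpha$ is only the identity outside $V_\alpha\cap U$, step $\alpha$ may move the current orbit point even when the original $x\notin V_\alpha$: the orbit can drift through an overlapping chain $V_{\alpha_1},V_{\alpha_2},\ldots$ of unbounded length (closed balls in $E$ are not compact, so a bound on total displacement does not force the orbit into finitely many members of the cover). The iterated-star-refinement proposal for (c) fails for exactly the same reason: an $n$-fold star refinement controls chains of length at most $n$, but no fixed $n$ suffices, and shrinking the $\varepsilon_\alpha$ bounds the size of each displacement, not the number of non-trivial factors.

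The paper closes this gap with West's device, Lemma~\ref{Statement A'_2}. Rather than excluding all of the remaining $X$ from the domain of $g_j$, one keeps the future pieces $X_l$, $l>j$, in play and builds $g_j$ so that (i) its support lies in $g_{j-1}\circ\cdots\circ g_1(V_j\setminus\cdots)\cap V_j$, and (ii) it carries each remaining $g_{j-1}\circ\cdots\circ g_1(X_l\setminus\bigcup_{k\le j}X_k)\cap V_j$ back into $W_l$. Point (i) yields the crucial implication ``$g_j$ moves the orbit of $x\Rightarrow x\in V_j$'': combined with star-finiteness of $\{V_j\}$ this makes the composition locally finite, and combined with the $\sigma$-discrete decomposition $V_j=\bigcup_r V_{j,r}$ and the carefully summable $\varepsilon_j$'s it forces $\{x,g(x)\}\subset V_{j,r}$ for the last acting $j$, hence into a single member of $\mathcal{G}$. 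Point (ii) is what lets the induction proceed to the next step. Your closing sentence invokes West's patching, but that patching \emph{is} Lemma~\ref{Statement A'_2} together with the ensuing induction; it is not a star-refinement trick that can be layered on top of the naive scheme.
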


\begin{proof}[Proof of Theorem \ref{final extractibility
theorem rough version general form}] Let us show that $X$ from the statement of Theorem \ref{final extractibility
theorem rough version general form} has locally the strong $C^p$ extraction property.

To this end, fix $x\in X$ and choose a neighborhood $U_x$ such that $X\cap U_x\subset G(f_x)$; we can assume that
$U_x=\overline{U_x}$. Further, we can assume that $f_x$ is defined and continuous on the whole $E_{(1,x)}$. We will
show that $X':=X\cap \overline{U_x}$ has the $\varepsilon$-strong $C^p$ extraction property with respect to every
open set $U$ with $X'\subseteq U$. Notice that $X'=G(f_x|X'_1)$ for a certain closed $X'_1\subset E_{(1,x)}$.
Furthermore, if $Y'\subset X'$ is relatively closed in $U$, then $Y'$ is closed in $X'$. Hence, $Y'=G(f_x|Y'_1)$
for a certain closed $Y'_1\subset X'_1\subset E_{(1,x)}$. Let $V$ be an open subset of $U$. Take now
$\varepsilon>0$ and  apply Theorem \ref{Renz's Theorem 4 for smooth graphs on sets of zeros of smooth functions} to
$E_1:=E_{(1,x)}$, $E_2:=E_{(2,x)}$, $f:=f_x$, $X:=Y'$, and $V$ (in place of $U$) to obtain a $C^p$ diffeomorphism
$g$ from $E\setminus Y'$ onto $E\setminus (Y'\setminus V)$ such that $g$ is the identity on $(E\setminus
V)\setminus Y'$ and moves no point more than $\varepsilon$. Then $\varphi=g|U$ is as required in the definition of
the $\varepsilon$-strong $C^p$ extraction of $X'$ with respect to $U$.

Now, an application of Theorem \ref{abstract genereal extractibility} concludes our proof.
\end{proof}

The remaining part of this section is devoted to proving Theorem \ref{abstract genereal extractibility}. Firstly,
the fact that we are working with a set $X$ that has locally the strong $C^p$ extraction property, and the
requirement that our final $C^p$ diffeomorphism must be limited by a given open cover $\mathcal{G}$ forces us to
employ {\em good} refinements of covers of the Banach space $E$. In the separable case star-finite refinements
provide an adequate tool to face the problem (see West \cite{West}). Recall that a cover is said to be {\em
star-finite} provided that each element of the cover intersects at most finitely many others. However, in the
nonseparable case, getting a star-finite refinement of an open cover, in general, is not possible. We will use
sigma-discrete refinements as shown in the following.

\begin{lem}\label{sigma-discrete refinements}
Let $E$ be a Banach space and $X$ be a closed subset of $E$ which has locally the $\varepsilon$-strong $C^p$
extraction property. Let $\mathcal{G}=\left\lbrace G_r\right\rbrace_{r\in\Omega}$ be an open cover of $E$, where
the cardinality of the indexing set $\Omega$ is the density of $E$. Then there exist collections $\left\lbrace
X_i\right\rbrace _{i\geq 1}$, $\left\lbrace W_i\right\rbrace _{i\geq 1}$, $\left\lbrace V_i\right\rbrace _{i\geq
1}$, such that:

\begin{enumerate}
\item $X_i\subseteq W_i\subseteq \overline{W_i}\subseteq V_i$ for all $i\in\mathbb{N}$
;
\item $\left\lbrace V_i\right\rbrace _{i\geq 1}$ and $\left\lbrace W_i\right\rbrace _{i\geq 1}$
are star-finite open covers of $E$;
\item $\left\lbrace X_i\right\rbrace _{i\geq 1}$ is a cover of $X$ by closed subsets of
$X$;
\item  Each $W_i$ and $V_i$ admits an open discrete cover $\{W_{i,r}\}_{r\in \Omega}$
and $\{V_{i,r}\}_{r\in \Omega}$, repectively; more precisely,
$$
W_i=\bigcup_{r\in \Omega}W_{i,r}\ \ \text{and}\ \ V_i=\bigcup_{r\in\Omega}V_{i,r},
$$
$$
\overline{W_{i,r}}\subseteq V_{i,r}\ \ \text{for\  every}\ \ r\in\Omega,
$$ and
$$
\operatorname{dist}(V_{i,r},V_{i,r'})\geq {1\over 2^{i+1}}\ \ \text{for\ every}\ \ r,r'\in\Omega, r\neq r';
$$
\item $\left\lbrace W_{i,r}\right\rbrace _{i\geq 1,r\in \Omega}$ and
$\left\lbrace V_{i,r}\right\rbrace _{i\geq 1,r\in\Omega}$ are open refinements of $\mathcal{G}$;
\item Each $X_i$ can be written as $X_i=\bigcup_{r\in \Omega}X_{i,r}$, where $X_{i,r}$ is a closed subset of
$X$ satisfying the following requirements
$$
X_{i,r}\subseteq W_{i,r}\subseteq \overline{W_{i,r}}\subseteq V_{i,r}
$$
and
$$
X_{i,r}\ \  \text{has\  the}\ \varepsilon-\text{strong}\ \ C^p\ \ \text{extraction\ property\ with\ respect\ to}\ \
V_{i,r}.
$$
\end{enumerate}
\end{lem}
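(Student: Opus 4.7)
The plan is to build the required sequence of refinements in three conceptual stages: first, adapt the given cover $\mathcal{G}$ to the local structure of $X$ via small balls supplied by the local $\varepsilon$-strong $C^p$ extraction property; then, apply a scale-stratification followed by A.H. Stone's $\sigma$-discrete refinement theorem to produce the layers $\mathcal{H}_i$; finally, shrink and reindex by $\Omega$ so as to match the numerical and combinatorial requirements. The main obstacle will be arranging, simultaneously, the star-finiteness of the countable family $\{V_i\}$ and the uniform separation $\operatorname{dist}(V_{i,r},V_{i,r'})\ge 2^{-i-1}$ within each layer.

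Concretely, for each $x\in E$ I would choose a radius $0<\rho_x\le 1$ such that $B(x,4\rho_x)\subset G_{\lambda(x)}$ for some $\lambda(x)\in\Omega$, and, when $x\in X$, such that $\overline{B(x,4\rho_x)}\subset U_x$ for an open neighborhood $U_x$ on which the local $\varepsilon$-strong $C^p$ extraction property holds. This produces an open cover $\mathcal{B}=\{B(x,\rho_x)\}_{x\in E}$ of $E$ refining $\mathcal{G}$ and remembering the local extraction structure of $X$. Stratifying by scale, set $E_n:=\{x\in E:2^{-n}\le\rho_x<2^{1-n}\}$, and apply A.H. Stone's theorem to the cover of each $E_n$ by the smaller balls $B(x,\rho_x/4)$ to obtain a $\sigma$-discrete open refinement. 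Collecting all the resulting discrete subfamilies across all $n$ into one sequence and reindexing, I would obtain discrete families $\mathcal{H}_i=\{V_{i,r}\}_{r\in A_i}$ whose union still covers $E$ and whose members have diameters controlled by the scale they descend from. Since $|A_i|\le\operatorname{dens}(E)=|\Omega|$, I inject $A_i\hookrightarrow\Omega$ and pad with empty sets to reach the required index set $\Omega$.

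The separation $\operatorname{dist}(V_{i,r},V_{i,r'})\ge 2^{-i-1}$ is achieved by initially over-covering with slightly larger balls and then shrinking each $V_{i,r}$ by the amount $2^{-i-1}$, which is possible precisely because the family is already discrete (so the shrinking is automatic and still yields an open cover of the previously covered portion of $E$). I would then let $W_{i,r}$ be a further concentric open shrinking of $V_{i,r}$ with $\overline{W_{i,r}}\subset V_{i,r}$, arranged so that $\{W_{i,r}\}$ still covers $E$, and set $W_i:=\bigcup_r W_{i,r}$, $V_i:=\bigcup_r V_{i,r}$, $X_{i,r}:=X\cap\overline{W_{i,r}}$ and $X_i:=\bigcup_r X_{i,r}$. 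Because $V_{i,r}\subset B(x,\rho_x)\subset U_x$ for the point $x$ from Step 1, the closed set $X_{i,r}$ inherits the $\varepsilon$-strong $C^p$ extraction property with respect to $V_{i,r}$ from the local extraction hypothesis, via the monotonicity properties stated in the preceding lemma on $C^p$ extractibility. The covering $\bigcup_i X_i\supset X$ is automatic since $\bigcup_i W_i=E$.

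The hardest part, on which I expect most of the bookkeeping to be spent, is showing that the countable family $\{V_i\}_{i\ge 1}$ is star-finite, i.e., each $V_i$ meets only finitely many $V_j$'s. The scale-stratification is the key ingredient here: diameters of members of $\mathcal{H}_i$ are bounded by a constant $c_i\to 0$, so a $V_i$-element meeting a $V_j$-element forces a geometric compatibility between the two scales from which they descend. Combining this with an initial locally finite refinement of $\mathcal{B}$ (which exists by paracompactness of metric spaces) and allocating strata carefully among the indices $i$, each point of $E$ belongs to only boundedly many strata, and within each stratum only finitely many layers contribute through that point; pushing this point-wise finiteness up to sets by a diameter bound yields the desired star-finiteness of $\{V_i\}$ and $\{W_i\}$.
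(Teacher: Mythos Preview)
Your outline has two genuine gaps, and the second one is the essential missing idea.

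\textbf{Gap 1 (uniform separation).} Stone's theorem gives you, for each layer, a \emph{discrete} family: every point has a neighborhood meeting at most one member. That is strictly weaker than the uniform separation $\operatorname{dist}(V_{i,r},V_{i,r'})\ge 2^{-i-1}$ you need. Your fix --- ``shrink each $V_{i,r}$ by $2^{-i-1}$'' --- does not work: the members of a discrete family can approach one another arbitrarily closely (think of intervals in $\mathbb R$ with gaps of size $1/n$), so a fixed shrink may destroy the covering or produce empty sets. The paper avoids this by invoking Rudin's explicit $\sigma$-discrete refinement, which comes with quantitative separation built in: one obtains families $\{A_{j,r}\}$, $\{B_{j,r}\}$ with $\operatorname{dist}(A_{j,r},E\setminus B_{j,r})\ge 2^{-j}$ and $\operatorname{dist}(B_{j,r},B_{j,r'})\ge 2^{-j-1}$ for $r\neq r'$.

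\textbf{Gap 2 (star-finiteness).} This is the real hole. Your argument in the last paragraph establishes at best \emph{local finiteness} (each point lies in finitely many $V_i$), not \emph{star-finiteness} (each $V_i$ meets finitely many $V_j$). Controlling diameters plus local finiteness does not upgrade to star-finiteness: a single set of fixed diameter can meet infinitely many smaller sets accumulating at its boundary, even in a locally finite family. Moreover, your claim that ``diameters are bounded by $c_i\to 0$'' is not a consequence of your construction, since Stone's theorem produces countably many discrete subfamilies at \emph{each} scale $n$, and after reindexing there is no reason the $i$-th family corresponds to a large scale index. The paper's mechanism is entirely different: having the Rudin layers $A_j\subset B_j$ in hand, it builds an exhaustion $Y_n=\bigcup_{j\le n}B^n_j$ with $\overline{Y_n}\subset Y_{n+1}$, then sets $H_n=Y_n\setminus\overline{Y_{n-3}}$, so that $H_m\cap H_n=\emptyset$ whenever $|m-n|\ge 3$. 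The $V_i$'s are taken of the form $H_{n+1}\cap B^{n+1}_j$ with $1\le j\le n$; since $V_i\cap V_{i'}\neq\emptyset$ forces $|n-n'|\le 2$ and there are at most $n'+1\le n+3$ choices of $j'$ for each such $n'$, star-finiteness follows. This ``annulus'' trick is the key step you are missing, and nothing in your scale-stratification can substitute for it.
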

\begin{proof}
For each $x\in E$, let $U_x$ be an open neighborhood of $x$ such that $\overline{U_x}\subseteq G$ for some $G\in
\mathcal{G}$ and also satisfying that
\begin{enumerate}
\item if $x\in X$ then $X\cap \overline{U_x}$ has the $\varepsilon$-strong $C^p$ extraction property
with respect to every open set $U$ with $X\cap \overline{U_x}\subseteq U$, and
\item if $x\notin X$ then $X\cap\overline{U_x}=\emptyset$.
\end{enumerate}
Since the cardinality of $\Omega$ is the density of $E$, we can extract a subcover $\mathcal{U}=\left\lbrace
U_r:\,r\in\Omega\right\rbrace $ from $\left\lbrace U_x:\,x\in E\right\rbrace$. Now we use a result of Rudin
\cite{Rudin} (see also \cite[p. 390]{HajJoh}) to obtain two open refinements $\left\lbrace A_{j,r}\right\rbrace
_{j\geq 1,r\in\Omega}$ and $\left\lbrace B_{j,r}\right\rbrace _{j\geq 1,r\in\Omega}$ of $\mathcal{U}$ such that
\begin{enumerate}
\item $A_{j,r}\subseteq B_{j,r}\subseteq U_r$ for all $j\in\mathbb{N}$ and
$r\in\Omega$;
\item $\operatorname{dist}(A_{j,r},E\setminus B_{j,r})\geq{1\over 2^{j}}$ for all $j\in\mathbb{N}$ and
$r\in\Omega$;
\item $\textrm{dist}(B_{j,r},B_{j,r'})\geq{1\over 2^{j+1}}$ for all $j\in\mathbb{N}$ and  $r,r'\in\Omega$, $r\neq
r'$;
\item Letting $A_j=\bigcup_{r\in\Omega}A_{j,r}$ and $B_j=\bigcup_{j\in\Omega}B_{j,r}$
each collection $\left\lbrace A_{j}\right\rbrace _{j\geq 1}$, $\left\lbrace B_{j}\right\rbrace _{j\geq 1}$  forms a
locally finite open cover of $E$.
\end{enumerate}
Observe that $\overline{A_j}\subseteq B_j$ for every $j\in\N$.

For every $j$, there exists a sequence of open sets $B^{n}_{j}$, $n\ge j$, so that
$$
\overline{A_j}\subset B^{j}_{j}\subset \overline{B^{j}_{j}}\subset B^{j+1}_{j}\subset
\overline{B^{j+1}_{j}}\subset\cdots \subset {B^{n}_{j}}\subset \overline{B^{n}_{j}}\subset
B^{n+1}_{j}\subset\cdots\subset B_j.
$$

 For each $j$, write $\mathcal{B}_j=\left\lbrace B^{n}_{j}: \, n\ge
j\right\rbrace $. Clearly, $\mathcal{B}=\bigcup_{j=1}^\infty \mathcal{B}_j$ is an open cover of $E$; likewise, the
family $\left\lbrace \overline{B}\cap X: \, B\in\mathcal{B}\right\rbrace $ is a closed cover of $X$.

Defining for each $n\in\mathbb{N}$: $Y_n:=\bigcup^{n}_{j=1}B^{n}_{j}$, $H_n:=Y_n\setminus \overline{Y_{n-3}}$ and
$K_n:=\overline{Y_n} \setminus Y_{n-1}$ (let $Y_{-2}=Y_{-1}=Y_0=\emptyset$), we have the following properties:
\begin{itemize}
\item $E=\bigcup^{\infty}_{n=1}Y_n$;
\item $\overline{Y_n}\subseteq Y_{n+1}$ for all
$n\in\mathbb{N}$;
\item  $K_n\subseteq H_{n+1}$ for all $n\in\mathbb{N}$;
\item $E=\bigcup^{\infty}_{n=1}K_n$;
\item $H_m\cap H_n=\emptyset$ for all $m, n$ with $|m-n|\geq 3$.
\end{itemize}

Hence, the collection
$$
\bigcup_{n=1}^\infty\{ K_n\cap \overline{B^{n}_{j}}:\,j=1,\dots,n\}
$$
is a closed cover of $E$ and therefore
$$
\bigcup_{n=1}^\infty\{ H_{n+1}\cap B^{n+1}_{j}:\,j=1,\dots,n\}
$$
is an open cover of $E$. Both covers are countable and star-finite, and they are refinements of $\left\lbrace
B_j\right\rbrace _{j\geq 1}$. We call the first one $\left\lbrace T_i\right\rbrace _{i \geq 1}$ and the second one
$\left\lbrace V_i\right\rbrace _{i\geq 1}$, that is, for every $i$ there corresponds a unique pair $(j,n)$, $n\ge
j$, with $T_i=K_n\cap\overline{B^{n}_{j}}$ and $V_i=H_{n+1}\cap B^{n+1}_{j}$. Consequently, we have $T_i\subseteq
V_i$ for every $i\in\N$.

Now for each $i\in\N$ we take $j=j(i)\in\N$ such that $T_{i}\subseteq V_i\subseteq B_j$. Let us assume without loss
of generality that $j(i)\leq i$. We can write
$$T_i=\bigcup_{r\in\Omega}T_i\cap B_{j,r}\;\; \text{and}\;\; V_i=\bigcup_{r\in\Omega}V_i\cap B_{j,r}$$
and we define $T_{i,r}=T_i\cap B_{j,r}$ and $V_{i,r}=V_i\cap B_{j,r}$ for every $i\in\N$ and $r\in\Omega$. Clearly
we have that $T_{i,r}\subseteq V_{i,r}$ for all $i\in\N$ and $r\in\Omega$. Also $\textrm{dist}
(V_{i,r},V_{i,r'})\geq {1\over 2^{j+1}}\geq {1\over 2^{i+1}}$ for all $i\in\N$ and $r,r'\in\Omega$, $r\neq r'$.

\medskip
Finally let us define $X_{i,r}=X\cap T_{i,r}$. Bearing in mind that $T_{i,r}\subset V_{i,r}\subset B_{j,r}\subset
U_x$ for some $x\in X$ and that $T_{i,r}$ is closed, we obtain $X_{i,r}=X\cap T_{i,r}\subset X\cap\overline{U_x}$.
Since $X\cap\overline{U_x}$ has the $\varepsilon$-strong $C^p$ extraction property with respect to every open set
$U$ with $X\cap\overline{U_x}\subset U$, applying Lemma \ref{Properties of C^p extractibility}(1), we get that
$X_{i,r}$ has the $\varepsilon$-strong $C^p$ extraction property with respect to every such an open set $U$.
Finally, applying Lemma \ref{Properties of C^p extractibility}(2), $X_{i,r}$ has the strong $C^p$ extraction
property with respect to every open set $U'$ with $X_{i,r}\subset U'$. In particular, $X_{i,r}$ has the
$\varepsilon$-strong $C^p$ extraction property with respect to $V_{i,r}$.
\medskip

Let us consider now for each $i\in\N$ and $r\in\Omega$ an open set $W_{i,r}$ with $T_{i,r}\subseteq
W_{i,r}\subseteq \overline{{W_{i,r}}}\subseteq V_{i,r}$ and call $W_i=\bigcup_{r\in\Omega}W_{i,r}$.  We still have
that $\left\lbrace W_{i,r}\right\rbrace_{i\geq 1, r\in\Omega}$ is a refinement of $\mathcal{G}$, and that
$\left\lbrace W_{i}\right\rbrace_{i\geq 1}$ is a star-finite open cover of $E$.

 Then the collections
$\left\lbrace X_{i,r}\right\rbrace_{i\geq 1, r\in\Omega}$ , $\left\lbrace W_{i,r}\right\rbrace_{i\geq 1,
r\in\Omega}$ , $\left\lbrace V_{i,r}\right\rbrace_{i\geq 1, r\in\Omega}$ have the required properties. Note that
each $X_{i,r}$ has the strong $C^p$ extraction property with respect to every open set containing it.
\end{proof}

\begin{lem}\label{tool 1}
Let  $X_i$, $V_i$, and $V_{i,r}$ be as in  Lemma \ref{sigma-discrete refinements}. Then, for every
$i,j\in\mathbb{N}$, $X_i\cap V_j$, has the strong $C^p$ extraction property with respect to $V_j$. Moreover, if
$\varphi_{i,j}$ is a $C^p$ diffeomorphism satisfying the definition of the $\varepsilon$-strong extraction property
for these sets, then $\varphi(V_{i,r})\subset V_{i,r}$ and $\varphi(V_{j,r})\subset V_{j,r}$ for every
$r\in\Omega$.
\end{lem}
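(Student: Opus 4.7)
The plan is to exploit the discreteness of the families $\{V_{i,r}\}_{r\in\Omega}$ and $\{V_{j,s}\}_{s\in\Omega}$ -- pairwise distances $\geq 2^{-(i+1)}$ and $\geq 2^{-(j+1)}$ respectively -- to reduce the global extraction from $V_j$ to a disjoint union of non-interfering local extractions on the open pieces $V_{i,r}\cap V_{j,s}$, where the extraction property is already available via Lemma \ref{Properties of C^p extractibility}. A preliminary observation is that each $V_{i,r}$ is clopen in $V_i$: any limit point $x\in V_i$ of $V_{i,r}$ must lie in some $V_{i,r'}$, but $r'\neq r$ would force $\operatorname{dist}(x,V_{i,r})\geq 2^{-(i+1)}>0$, contradicting $x$ being a limit. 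Likewise each $V_{j,s}$ is clopen in $V_j$. In particular the pieces $V_{i,r}\cap V_{j,s}$ are pairwise separated by at least $\min\{2^{-(i+1)},2^{-(j+1)}\}$, and since the $V_{i,r}$'s are disjoint one has $X_i\cap V_{i,r}=X_{i,r}$.

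Given open $V\subseteq V_j$ and $Y\subseteq X_i\cap V_j$ closed in $V_j$, I would decompose
\[
Y=\bigsqcup_{r,s\in\Omega}Y_{r,s}, \qquad Y_{r,s}:=Y\cap V_{i,r}\cap V_{j,s},
\]
where each $Y_{r,s}$ is closed in the open set $V_{i,r}\cap V_{j,s}$ and contained in $X_{i,r}$. By Lemma \ref{Properties of C^p extractibility}(1)--(2), $X_{i,r}\cap V_{j,s}$ inherits the $\varepsilon$-strong $C^p$ extraction property with respect to the piece $V_{i,r}\cap V_{j,s}$. Fixing $\varepsilon>0$, for each nonempty $Y_{r,s}$ I apply this extraction to $Y_{r,s}$ using the restricted open set $V\cap W_{i,r}\cap V_{j,s}$ in place of $V$; this is permissible because $Y_{r,s}\subseteq X_{i,r}\subseteq W_{i,r}$ implies $Y_{r,s}\cap V=Y_{r,s}\cap V\cap W_{i,r}$. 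I obtain a $C^p$ diffeomorphism
\[
\varphi_{r,s}\colon (V_{i,r}\cap V_{j,s})\setminus Y_{r,s}\longrightarrow (V_{i,r}\cap V_{j,s})\setminus (Y_{r,s}\setminus V)
\]
that moves no point more than $\varepsilon$ and is the identity off $V\cap W_{i,r}\cap V_{j,s}$; in particular it is the identity on the collar $(V_{i,r}\setminus W_{i,r})\cap V_{j,s}$.

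Next I glue the family $\{\varphi_{r,s}\}$ into a single map $\varphi:V_j\to V_j$ by declaring $\varphi:=\varphi_{r,s}$ on each piece $V_{i,r}\cap V_{j,s}$ and $\varphi:=\mathrm{id}$ on $V_j\setminus V_i$. Because the pieces are pairwise at positive distance, every point of $V_j$ has an open neighborhood meeting at most one piece, so this definition is locally unambiguous. The collar property is the engine of smoothness at the gluing: if $x_0\in V_j$ lies in the closure of $V_{i,r}\cap V_{j,s}$ but not in it, then $x_0\in V_{j,s}\cap\partial V_{i,r}$ (using that $V_{j,s}$ is clopen in $V_j$), and since $\overline{W_{i,r}}\subseteq V_{i,r}$ there is an $E$-neighborhood of $x_0$ disjoint from $\overline{W_{i,r}}$; on that neighborhood intersected with $V_{i,r}\cap V_{j,s}$ the map $\varphi_{r,s}$ equals the identity, so the extension by the identity is locally $C^p$. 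The same reasoning applies to $\varphi^{-1}$. Hence $\varphi$ is a $C^p$ diffeomorphism of $V_j\setminus Y$ onto $V_j\setminus(Y\setminus V)$ which is the identity on $(V_j\setminus V)\setminus Y$ and moves no point more than $\varepsilon$, establishing the (strong, and in fact $\varepsilon$-strong) $C^p$ extraction property of $X_i\cap V_j$ with respect to $V_j$.

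The ``moreover'' statement is then automatic from the construction: each $\varphi_{r,s}$ stays inside $V_{i,r}\cap V_{j,s}$, and $\varphi$ is the identity outside $\bigsqcup_{r,s}(V_{i,r}\cap V_{j,s})$, so $\varphi(V_{i,r})\subseteq V_{i,r}$ and $\varphi(V_{j,s})\subseteq V_{j,s}$ for all $r,s\in\Omega$. The main technical point I anticipate will be justifying the collar-based smoothness across $\partial(V_{i,r}\cap V_{j,s})$ in $V_j$; this ultimately reduces to the pointwise observation that $\overline{W_{i,r}}$ is disjoint from $\partial V_{i,r}$, so locally near each boundary point the diffeomorphism $\varphi_{r,s}$ has already become the identity and extends smoothly.
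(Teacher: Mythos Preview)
Your proof is correct and follows essentially the same route as the paper: decompose $V_j$ into the mutually separated pieces $V_{i,r}\cap V_{j,s}$, apply the local $\varepsilon$-strong extraction of $X_{i,r}$ on each piece using the open set $V\cap W_{i,r}\cap V_{j,s}$ so that the resulting diffeomorphism is the identity on the collar $(V_{i,r}\setminus W_{i,r})\cap V_{j,s}$, and glue by the identity on $V_j\setminus V_i$. The paper organizes the same construction slightly differently---first fixing $r$ and writing $h_r=\bigcirc_{s}h_{r,s}$ as an ``infinite composition'' on $V_{j,r}$, then assembling the $h_r$---but since the supports $V_{i,s}\cap V_{j,r}$ are pairwise disjoint this composition is exactly your piecewise definition. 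You are in fact more explicit than the paper on two points: you treat a general relatively closed $Y\subseteq X_i\cap V_j$ (the paper writes out only $Y=X_i\cap V_j$, tacitly relying on Lemma~\ref{Properties of C^p extractibility}(1) for the general case), and you spell out why the glued map is $C^p$ across $\partial(V_{i,r}\cap V_{j,s})$ via $\overline{W_{i,r}}\subset V_{i,r}$ together with $\overline{W_i}\subset V_i$.
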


\begin{proof}
Take $V\subseteq V_j$ an open set. Let $r\in\Omega$ and consider the open set $V_{j,r}$. For each $s\in \Omega$ the
set $X_{i,s}\cap V_{j,r}$ has the $\varepsilon$-strong $C^p$ extraction property with respect to the open set
$V_{i,s}\cap V_{j,r}$. We have
$$X_{i,s}\cap V_{j,r}\subseteq W_{i,s}\cap V_{j,r}\subseteq \overline{W_{i,s}}\cap V_{j,r}\subseteq V_{i,s}\cap V_{j,r}.$$
There exists a $C^{p}$ diffeomorphism $h_{r,s}:(V_{i,s}\cap V_{j,r})\setminus X_{i,s}\rightarrow V_{i,s}\cap
V_{j,r}\setminus(X_{i,s}\setminus(W_{i,s}\cap V))$ which is the identity on $((V_{i,s}\cap V_{j,r}\setminus
(W_{i,s}\cap V))\setminus X_{i,s}.$ Outside $V_{i,s}\cap V_{j,r}$ we define $h_{r,s}$ to be the identity. Since
$\overline{W_{i,s}}\subseteq V_{i,s}$ we have a well-defined $C^p$ diffeomorphism
$$h_{r,s}:V_{j,r}\setminus X_{i,s}\rightarrow V_{j,r}\setminus (X_{i,s}\setminus (W_{i,s}\cap V))$$
which is the identity on $(V_{j,r}\setminus(W_{i,s}\cap V))\setminus
 X_{i,s}$. In particular $h_{r,s}$ is the identity on $V_{i,s'}\cap
V_{j,r}$ for every $s'\in\Omega, s\neq s'$.

Having defined $h_{r,s}$ for each $s\in\Omega$ in the way described above, we finally  define
$$
h_r=\bigcirc_{s\in\Omega}h_{r,s}
$$
as an infinite composition of $h_{r,s}$, $s\in\Omega$. It is easy to see that $h_r$ is well defined and provides a $C^p$
diffeomorphism of $V_{j,r}\setminus X_i$ onto $V_{j,r}\setminus (X_i\setminus V)$
 which is the identity on $(V_{j,r}\setminus V)\setminus X_i$.

By the discreteness of the family $\{V_{j,r}\}_{r\in\Omega}$, the formula $h(x)=h_r(x)$, $x\in V_{j,r}\setminus
X_i$, defines a $C^p$ diffeomorphism of $\bigcup_{r\in\Omega}V_{j,r}\setminus X_i=V_j\setminus X_i$ onto
$\bigcup_{r\in\Omega} \left(V_{j,r}\setminus (X_i\setminus V)\right)=V_j\setminus(X_i\setminus V)$ which is the
identity on $\bigcup_{r\in\Omega}\left((V_{j,r}\setminus V)\setminus X_i\right)=(V_j\setminus V)\setminus X_i$.

To end the proof observe that each $h_{r,s}$ is the identity outside $V_{i,s}\cap V_{j,r}$, so $h$ sends $V_{i,r}$
into $V_{i,r}$ and $V_{j,r}$ into $V_{j,r}$ for every $r\in\Omega$.
\end{proof}

Notice that in the previous two Lemmas \ref{tool 1} and \ref{sigma-discrete refinements}  one can replace the
$\varepsilon$-strong $C^p$ extraction property with just the strong $C^p$ extraction property.

The last tool that we need to introduce before going into the proof of Theorem \ref{abstract genereal
extractibility} is the next lemma (see Statement A in West's paper \cite[pp. 289]{West}).

\begin{lem}\label{Statement A'_2}
Let $V_0,\dots, V_n$ be open sets of $E$, and $X_0,\dots,X_n$ be subsets of $V_0,\dots,V_n$. Take also an open set
$U$. Suppose each $X_i$ is relatively closed in $V=\bigcup^{n}_{i=0}V_i$ and has the $\varepsilon$-strong $C^p$
extraction property with respect to $V$. Then for every $\varepsilon>0$ there exists a $C^{p}$ diffeomorphism of
$V\setminus X_0$ onto $V\setminus(X_0\setminus U)$ which is the identity outside $V_0\cap U$, carries $X_i\setminus
X_0$ into $V_i$ for each $i=1,\dots, n$, and moves no point more than $\varepsilon$.
\end{lem}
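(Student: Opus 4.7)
I would prove the lemma by iterating the $\varepsilon$-strong $C^p$ extraction property of $X_0$ over an exhausting sequence of open safety zones inside $V_0\cap U$ whose scale is tuned to the distances $d_i(x)=\operatorname{dist}(x,E\setminus V_i)$, so that points of $X_i\setminus X_0$ are never pushed out of $V_i$. Setting
$$d(x)=\min\{d_i(x):1\le i\le n,\ x\in X_i\},$$
with the convention $d(x)=+\infty$ if no such $i$ exists, the hypothesis that each $X_i$ is relatively closed in $V$ and contained in the open set $V_i$ makes $d$ lower semicontinuous and strictly positive on $V_0\cap U$. Then the sets $A_k=\{x\in V_0\cap U:d(x)>2^{-k}\}$ are open in $V$, nested, and satisfy $\bigcup_k A_k=V_0\cap U$.

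Setting $Y_k=X_0\setminus A_{k-1}$ with $A_0=\emptyset$, each $Y_k$ is relatively closed in $V$ and contained in $X_0$, so by Lemma~\ref{Properties of C^p extractibility}(1) it inherits the $\varepsilon$-strong $C^p$ extraction property of $X_0$ with respect to $V$. Applying this property to the open set $A_k$ with parameter $\varepsilon_k=\min(\varepsilon,1)/2^{k+1}$ produces $C^p$ diffeomorphisms $\varphi_k:V\setminus Y_k\to V\setminus(X_0\setminus A_k)$ that are the identity outside $A_k$ and move no point more than $\varepsilon_k$. The telescoping composition $\Phi_N=\varphi_N\circ\cdots\circ\varphi_1$ is a $C^p$ diffeomorphism of $V\setminus X_0$ onto $V\setminus(X_0\setminus A_N)$, and the desired map $\Phi$ is to be obtained as its limit as $N\to\infty$.

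The key estimate is that for $x\in X_i\setminus X_0$, letting $K_x$ denote the least $k$ with $x\in A_k$, the map $\varphi_k$ fixes $x$ for every $k<K_x$ (since $x\notin A_k$ there), and the cumulative displacement of $x$ satisfies
$$\|\Phi(x)-x\|\le\sum_{k\ge K_x}\varepsilon_k=\min(\varepsilon,1)\cdot 2^{-K_x}\le 2^{-K_x}<d(x)\le d_i(x),$$
so that the entire orbit of $x$ remains in the open ball $B(x,d_i(x))\subseteq V_i$ and hence $\Phi(x)\in V_i$. An analogous bound shows the total displacement at any point is at most $\varepsilon/2<\varepsilon$, and $\Phi$ is the identity outside $\bigcup_k A_k=V_0\cap U$ by construction.

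The main obstacle will be to verify that $\Phi$ is actually a $C^p$ diffeomorphism and not merely a homeomorphism, because $\{A_k\}$ is not a locally finite cover of $V_0\cap U$ and an infinite composition of $C^p$ diffeomorphisms need not be $C^p$. I plan to circumvent this by refining the construction so that $\varphi_k$ is the identity on $\overline{A_{k-1}}$ as well --- for instance, by applying the $\varepsilon$-strong extraction property to the open set $A_k\setminus\overline{A_{k-1}}$ and then cleaning up the residual boundary part $X_0\cap\partial A_{k-1}\cap A_k$ at a later stage via an auxiliary $\sigma$-discrete extraction in the style of Lemmas~\ref{sigma-discrete refinements}--\ref{tool 1} --- so that the supports $\{\operatorname{supp}\varphi_k\}$ form a locally finite family. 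With this refinement, only finitely many $\varphi_k$ act near any given point of $V\setminus X_0$, and $\Phi$ becomes a $C^p$ diffeomorphism by a finite composition in a neighborhood of each point.
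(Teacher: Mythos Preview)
Your approach contains a genuine gap that you yourself identify but do not close. The infinite composition $\Phi=\lim_N\varphi_N\circ\cdots\circ\varphi_1$ is not shown to be a $C^p$ diffeomorphism, and your proposed repair---restricting each $\varphi_k$ to act only on $A_k\setminus\overline{A_{k-1}}$ and then ``cleaning up'' the residual sets $X_0\cap\partial A_{k-1}$ by an unspecified auxiliary extraction---does not constitute an argument. Those residual pieces form a new countably infinite family whose extraction must again be controlled relative to the functions $d_i$, and you give no mechanism for doing this with locally finite supports. The reference to Lemmas~\ref{sigma-discrete refinements}--\ref{tool 1} is not apposite: those lemmas produce coverings, not extractions of boundary residues, and in any case they rely on the present lemma as input. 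You also have not verified that the limit map is surjective onto $V\setminus(X_0\setminus U)$ or that its inverse is $C^p$ near points of $X_0\cap U$, where infinitely many $\varphi_k$ necessarily act.

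The paper avoids all of this by a \emph{finite} procedure that you have missed. The key observation is that $X_0$ decomposes into at most $2^n$ pieces
\[
Z_m=\bigcap_{i\in S}X_i\setminus\bigcup_{i\notin S}X_i,\qquad 0\in S\subseteq\{0,\dots,n\},
\]
each of which lies in the open intersection $N_m=\bigcap_{i\in S}V_i$. One then extracts the $Z_m$ one at a time in $2^n$ steps, ordered so that pieces lying in more of the $X_i$'s are handled first; at each step the extraction is confined to (an image of) the corresponding $N_m$, which automatically keeps $X_i\setminus X_0$ inside $V_i$ for every $i\in S$. Each step moves points by at most $\varepsilon/2^n$, so the total displacement is at most $\varepsilon$, and since the composition is finite the result is trivially a $C^p$ diffeomorphism. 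This combinatorial decomposition is the idea you are missing; it replaces your analytic control via the distance function $d$ by a purely finite bookkeeping that makes the smoothness issue disappear.
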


\begin{proof}
We will divide the set $X_0$ that we want to extract as follows. For each $j=0,\dots,n$, let
$$Q_j :=\left\lbrace Z:\,Z=\bigcap^{n-j}_{i=0}X_{p(i)}\setminus \bigcup^{n}_{i=n-j+1}
X_{p(i)}\;\mbox{for some permutation}\;p \;\mbox{of}\; \left\lbrace 0,\dots,n \right\rbrace \;\mbox{carrying} \; 0
\;\mbox{to}\;0\right\rbrace;
$$
these are families of subsets of $X_0$. Let $Q=\bigcup^{n}_{j=0}Q_j$. The family $Q$ is a pairwise disjoint cover of
$X_0$ with cardinality $\le 2^n$. Order $Q$ in such a manner that if $j<k$ then all elements of $Q_j$ precede those
of $Q_k$. We then will list the elements of $Q$ as $Z_1,Z_2,\dots,Z_{2^n}$ (bearing in mind that some $Z_m$'s may
repeat in that listing); that is, $Q=\left\lbrace Z_m\right\rbrace ^{2^n}_{m=1}$ and  $\bigcup^{2^n}_{i=1}Z_i=X_0$.
Note that $Q_0=\{Z_1\}$ and $Q_n=\{Z_{2^n}\}$, where
$$
Z_1=\bigcap^{n}_{i=0} X_i\ \ \text{ and}\ \ Z_{2^n}=X_0\setminus \bigcup^{n}_{i=1}X_i.
$$
Likewise, for each $m=1,\dots,2^n$, if $Z_m\in Q_j$ and $p$ is a permutation for which
$Z_m=\bigcap^{n-j}_{i=0}X_{p(i)}\setminus \bigcup^{n}_{i=n-j+1} X_{p(i)}$, we define
$$
N_m=\bigcap^{n-j}_{i=0} V_{p(i)}\subset V_0.
$$
The family $\{N_m\}_{m=1}^{2^n}$ is an open cover of $V_0$; we also have $N_1=\bigcap^{n}_{i=0}V_i$ and $N_{2^n}=V_0$. Denote by
$Q^{*}_{k}$ the union of all the elements of $Q_k$, $k=0,1,\dots,n$; notice that $Q^{*}_{0}=Z_1$ and $Q_n^*=Z_{2^n}$.
For each $j>0$, the elements of $Q_j$ form a pairwise disjoint family of relatively closed subsets of the open set
$V\setminus\bigcup^{j-1}_{k=0}Q^{*}_{k}$.  Also each $Z_m$ in $Q_j$ lies in $N_m$. Therefore, for each $j>0$, there
exists a collection of pairwise disjoint open sets $M_m$ in $V\setminus\bigcup^{j-1}_{k=0}Q^{*}_{k}$, one for each
$Z_m$ in $Q_j$ (that is, if $Z_m=Z_{m'}\in Q_j$, $m\not=m'$, then $M_m=M _{m'}$), such that
$$Z_m\subseteq M_m\subseteq N_m\setminus \bigcup^{n}_{i=n-j+1}X_{p(i)},$$
where $j$ is such that $Z_m$ is in $Q_j$ and $p$ is a permutation defining $Z_m$ as above.

The set $Z_1=\bigcap^{n}_{i=0}X_i\subseteq N_1=\bigcap^{n}_{i=0}V_i$ is relatively closed in $V$ and has the
$\varepsilon$-strong  $C^p$ extraction property with respect to $V$,  so there is a $C^{p}$ diffeomorphism
$$h_1: V\setminus Z_1\rightarrow V\setminus (Z_1\setminus U)$$
which is the identity outside $(N_1\cap U)\cup Z_1$ and moves no point more than ${\varepsilon\over 2^n}$; in
particular, $h_1(Z_2)\setminus U=Z_2\setminus U$ should $Z_2\not=Z_1$.

We will apply induction to prove that for $1<m\leq 2^n$ there exists a $C^{p}$ diffeomorphism
$$h_m:(V\setminus(\bigcup^{m-1}_{i=1}Z_i\setminus U))\setminus
h_{m-1}\circ\cdots\circ h_1(Z_m)\rightarrow V\setminus (\bigcup^{m}_{i=1}Z_i\setminus U)$$ which is the identity
outside $\left(h_{m-1}\circ\cdots\circ h_1(M_m)\cap N_m\cap U\right)\cup h_{m-1}\circ\cdots\circ h_1(Z_m)$,
satisfies

$$
h_m\circ h_{m-1}\circ\cdots\circ h_1(V\setminus\bigcup^{m}_{i=1}Z_i)=V\setminus(\bigcup^{m}_{i=1}Z_i\setminus U)\ \
\text{and}\ \ h_m\circ h_{m-1}\circ\cdots\circ h_1(Z_{m+1})\setminus U=Z_{m+1}\setminus U,
$$
and such that $h_m\circ\cdots\circ h_1$ moves no point more than ${m\varepsilon\over 2^n}$.

Suppose this is true for every  $1\le k\le m-1$, and let us check so it is for $m$. Assume $Z_m\in Q_j$;
additionally, we can assume that $Z_{m-1}\not=Z_m$, otherwise, the identity in place of $h_m$ will do. By the
definition of the $\varepsilon$-strong $C^p$ extractibility and Lemma \ref{Properties of C^p extractibility}(2),
$Z_m$ has the $\varepsilon$-strong $C^p$ extraction property with respect to
$V\setminus\bigcup^{j-1}_{k=0}Q^{*}_{k}$. Furthermore, one more application of Lemma \ref{Properties of C^p
extractibility}(2) yields that $Z_m\subset V\setminus \bigcup^{m-1}_{i=1}Z_i$ is relatively closed and has the
$\varepsilon$-strong $C^p$ extraction property with respect to $V\setminus \bigcup^{m-1}_{i=1}Z_i$ . By Lemma
\ref{Properties of C^p extractibility} (3), $h_{m-1}\circ\cdots\circ h_1(Z_m)\subset h_{m-1}\circ\cdots\circ
h_1(V\setminus \bigcup^{m-1}_{i=1}Z_i)$ is relatively closed and has the strong $C^p$ extraction property with
respect to the open set
$$h_{m-1}\circ\cdots\circ h_1(V\setminus \bigcup^{m-1}_{i=1}Z_i)=V\setminus(\bigcup^{m-1}_{i=1}Z_i\setminus U).$$

Considering the open set $h_{m-1}\circ\cdots\circ h_1(M_m)\cap N_m\cap U$, there exists a $C^p$ diffeomorphism
$h_m$ from
$$(V\setminus(\bigcup^{m-1}_{i=1}Z_i\setminus U))\setminus
h_{m-1}\circ\cdots\circ h_1(Z_m)$$ onto
\begin{equation}\label{Complicated expresion pof Lemma 2' of Renz}
(V\setminus (\bigcup^{m-1}_{i=1}Z_i\setminus U))\setminus (h_{m-1}\circ\cdots\circ
h_1(Z_m)\setminus(h_{m-1}\circ\cdots\circ h_1(M_m)\cap N_m\cap U)
\end{equation}
which is the identity outside  $\left(h_{m-1}\circ\cdots\circ h_1(M_m)\cap N_m\cap U\right)\cup
h_{m-1}\circ\cdots\circ h_1(Z_m)$. Furthermore by Remark \ref{composition of diffeos do not move the points too
much} and the fact that $h_{m-1}\circ\cdots\circ h_1$ moves no point more than ${(m-1)\varepsilon\over 2^n}$
(induction hypothesis), we have that $h_m\circ (h_{m-1}\circ\cdots\circ h_1)$ moves no point more than
${\varepsilon\over 2^n}+{(m-1)\varepsilon\over 2^n}={m\varepsilon\over 2^n}$.

Finally note that the expression \eqref{Complicated expresion pof Lemma 2' of Renz} is equal to $V\setminus
(\bigcup^{m}_{i=1}Z_i\setminus U)$ because $h_{m-1}\circ\cdots\circ h_1(Z_m)\subseteq h_{m-1}\circ\cdots\circ
h_1(M_m) \cap N_m$ and the fact that $h_{m-1}\circ\cdots\circ
h_1(Z_m)\setminus U=Z_m\setminus U$.\\

To conclude define
$$h=h_{2^n}\circ\cdots\circ  h_1:V\setminus X_0\rightarrow V\setminus (X_0\setminus U).$$
This is a $C^{p}$ diffeomorphism of $V\setminus X_0$ onto $V\setminus (X_0\setminus U)$ which is the identity
outside $(V_0\cap U)\cup X_0$ and moves no point more than $\varepsilon$.

To end the proof we will show that $h$ carries $X_i\setminus X_0$ into $V_i$ for each $i=1,\dots, n$. Take $x\in
X_i\setminus X_0$ for some $i=1,\dots,n$ and let us see that $h(x)\in V_i$. If $h_1(x)\neq x$ then $h_1(x)\in N_1$,
so because $N_1\subseteq V_i$ we have that $h_1(x)\in V_i$. Suppose  now that $h_{m-1}\circ\cdots\circ h_1(x)\in
V_i$. If $h_m\circ\cdots\circ h_1(x)\neq h_{m-1}\circ\cdots\circ h_1(x)$ then we have that $x\in M_m$ and
$h_m\circ\cdots\circ h_1(x)\in N_m$. We have that $x\in X_i\cap M_m$ and $M_m\subseteq N_m\setminus
\bigcup^{n}_{i=n-j+1} X_{p(i)}$ where $j$ is such that $Z_m$ is in $Q_j$ and $p$ is a permutation that defines
$Z_m$. Obviously we must have that $i=p(i_0)$ where $i_0=\{1,\dots,n-j\}$. Also $N_m=\bigcap^{n-j}_{k=0}
V_{p(k)}\subseteq V_{i}$, hence $h_{m}\circ\cdots\circ h_1(x)$  must also lie in $V_i$. Applying induction we have
that $h(x)\in V_i$.

\end{proof}

\begin{rem}\label{weak version of statement A_2'}
{\em Notice that if we only have the strong $C^p$ extraction property of the sets $X_i$ with respect to $V$, we get the same result except that for any $\varepsilon>0$ we can not assure that the final extracting diffeomorphism moves all points less than $\varepsilon$. In such a case we will say that we have a {\em weak version} of Lemma \ref{Statement A'_2}.\\
However, suppose we have that $X_0\subseteq V_0, X_1\subseteq V_1,\dots,X_n\subseteq V_n$ are of the form
$g(X_i)\subseteq g(V_i)$, $i=0,1,\dots,n$, where $g$ is a $C^p$ diffeomorphism that moves points less than some
$\delta_1>0$. In such a case, using Remark \ref{composition of diffeos do not move the points too much}, for any
$\delta_2>0$ we can make the final extracting diffeomorphism $h$ of the proof of Lemma \ref{Statement A'_2} satisfy
that $h\circ g$ moves no point more than $\delta_1+\delta_2$. In particular if $g(x)=x$ then we have that
$||h(g(x))-x||\leq \delta_2$.}
\end{rem}

\begin{rem}\label{discrete}
{\em Assume, additionally, that the set $V$ of Lemma \ref{Statement A'_2} is of the form
$V=\bigcup_{r\in\Omega}V_r$, where $\Omega$ is a set of indexes, each $V_r$ is an open set , and $V_r\cap
V_{r'}=\emptyset$ for every $r,r'\in\Omega$, $r\neq r'$. Then we can also require that the extracting $C^{p}$
diffeomorphism of $V\setminus X_0$ onto $V\setminus(X_0\setminus U)$ sends each set $V_r\setminus X_0$ into $V_r$
for every $r\in\Omega$. To prove this, fix $r\in\Omega$ and replace the sequences $V_0,V_1,\dots,V_n$ and
$X_0,X_1,\dots X_n$ with $V_0\cap V_r,V_1\cap V_r\dots,V_n\cap V_r$ and $X_0\cap V_r,X_1\cap V_r, \dots, V_n\cap
V_r$, respectively. Further, observe that $\bigcup^{n}_{i=0}V_i\cap V_r=V_r$ and that each $X_i\cap V_r$ has the
strong $C^p$ extraction property with respect to $V_r$ by Lemma \ref{Properties of C^p extractibility} (2).
According to the assertion of Lemma \ref{Statement A'_2}, we conclude that there exists a $C^p$ diffeomorphism
$h_r:V_r\setminus X_0\to V_r\setminus(X_0\setminus U)$ satisfying the required conditions. Finally, it is enough to
set $h:V\setminus X_0 \to V\setminus(X_0\setminus U)$ by letting $h(x)=h_r(x)$ for $x\in V_r\setminus X_0$. }
\end{rem}

The rest of the proof of Theorem \ref{abstract genereal extractibility} goes as in \cite[Theorem 1]{West}, with
some modifications due to the facts that we work here with an open set $U$ not necessarily containing $X$, and that
$E$ is not necessarily separable.

\begin{proof}[Proof of Theorem \ref{abstract genereal extractibility}]
Apply Lemma \ref{sigma-discrete refinements} to the given cover $\mathcal{G}$ to find collections $\left\lbrace
X_i\right\rbrace _{i\geq 1}$, $\left\lbrace W_i\right\rbrace_{i\geq 1}$ and $\left\lbrace V_i\right\rbrace _{i\geq
1}$ of subsets of $E$ satisfying conditions $(1)$--$(6)$ of that lemma. By Lemma \ref{tool 1}, for all
$i,j\in\mathbb{N}$, $X_i\cap V_j$ has the  $\varepsilon$-strong $C^p$ extraction property with respect to $V_j$.
Moreover, if $\varphi_{i,j}$ is a $C^p$ diffeomorphism with this property, then $\varphi(V_{i,r})\subset V_{i,r}$
and $\varphi(V_{j,r})\subset V_{j,r}$ for every $r\in\Omega$.

Let us now define the required $C^p$ diffeomorphism $g: E\setminus X\to E\setminus (X\setminus U)$ which is the
identity on $(E\setminus U)\setminus X$ and is limited by $\mathcal{G}$.

\begin{enumerate}
\item For a given $V_1$, define $I_1=\{1_1=1,1_2,\dots, 1_{n(1)}\}\subset \N$ to be the finite set of positive integers such that  $W_{1_1},W_{1_2},\dots,W_{1_{n(1)}}$ are
the only $W_i's$ sets for which $V_1\cap W_i\not=\emptyset$ (if there were infinitely many such $W_i's$, then we would have $V_1\cap V_i's\not=\emptyset$ for infinitely many $i$'s, which would contradict the star-finiteness of $\left\lbrace
V_i\right\rbrace _{i\geq 1}$; obviously, we assume that $V_i\not=V_{i'}$ for $i\not=i'$). Since $\left\lbrace
W_i\right\rbrace _{i\geq 1}$ is a cover we have $\bigcup_{i\in I_1}W_{i}\cap V_1=V_1$. (A priori $V_1$ can be
covered by a proper subfamily of $\{V_2,\dots,V_n\})$. Assuming that $i_1$ is the greatest number in $I_1$ (in
particular $i_1\geq 1$) we set
$$\varepsilon_1={1\over 2}\cdot{1\over 2^{i_1+1}}>0.$$
We want to apply Lemma \ref{Statement A'_2} for the sets
$$ X_{1_1}=X_1=X_1\cap V_1,X_{1_2}\cap V_1,\dots, X_{1_{n(1)}}\cap V_1$$
which play the role of $X_0,\dots,X_n$ in the statement of the Lemma \ref{Statement A'_2}, and for the sets
$$W_{1_1}=W_1=W_1\cap V_1,W_{1_2}\cap V_1,\dots,W_{1_{n(1)}}\cap V_1,$$
 which  play the role of $V_0,\dots,V_n$ respectively, and for the positive number $\varepsilon_1>0$. Observe that
each $X_i\cap V_1$, $i\in I_1$, has the  strong $C^p$ extraction property with respect to $V_1$. Hence, applying
Lemma \ref{Statement A'_2}, we find a $C^{p}$ diffeomorphism $g_1$ of $V_1\setminus X_1$ onto $V_1\setminus
(X_1\setminus U)$ which is the identity outside $(W_1\cap U)\cup X_1$, carries
$(X_l\setminus X_1)\cap V_1$ into $W_l\cap V_1$ for each $l> 1$ and moves no point more than $\varepsilon_1$.  \\
By Remark \ref{discrete} we may also assume that $g_1$ sends each set $V_{1,r}\setminus X_1$ into $V_{1,r}$. This
means in particular that $g_1$ refines
$\mathcal{G}$. Also, since $g_1$ moves no point more than $\varepsilon_1$ we cannot have that $x\in V_{i,r}$ and $g_1(x)\in V_{i,r'}$ for some $i\in I_1$ and different $r,r'\in\Omega$ (recall that $\textrm{dist} (V_{i,r},V_{i,r'})\geq {1\over 2^{i+1}}>\varepsilon_1$).\\
Since $W_1\cap U\subseteq \overline{W_1}\cap U\subseteq V_1\cap U$, by making $g_1$ be the identity outside
$V_1\setminus X_1$, there exists a well-defined natural extension of $g_1$ from  $V_1\setminus X_1$ to $E\setminus
X_1$. Now we have a $C^p$ diffeomorphism $g_1$ such that
\begin{enumerate}
\item $g_1$ acts from $E\setminus X_1$ onto $E\setminus (X_1\setminus U)$.
\item $g_1$ is the identity on $E\setminus\left[ (W_1\cap U)\cup X_1\right] .$ In particular $g_1$ is the identity on $(E\setminus U)\setminus X_1$.
\item $g_1$ carries
$(X_l\setminus X_1)\cap V_1$ into $W_l\cap V_1$ for each $l> 1$.
\item We require that if $X_1=\emptyset$ or $X_{1,r}=\emptyset$ then $g_1$ is the identity
on $V_1$ or $V_{1,r}$, respectively.
\item  $g_1$ moves no point more than $\varepsilon_1={1\over 2}\cdot{1\over 2^{i_1+1}}$.
\item $g_1$
sends each set $V_{1,r}\setminus X_1$ into $V_{1,r}$ for every $r\in\Omega$, so $g_1$ refines $\mathcal{G}$.
\end{enumerate}

\item Consider now the set $V_2$ and define $I_2=\{2_1=2,2_2,\dots,2_{n(2)}\}\subset \N$ to be the finite set of natural numbers such that $W_{2_1},W_{2_2},\dots,W_{2_{n(2)}}$ are
the only $W_i's$ sets for which $V_2\cap W_i\not=\emptyset$ (if there were infinitely many such $W_i's$ then $V_2\cap V_i's\not=\emptyset$ for infinitely many $i$'s, which would contradict the star-finiteness of
$\left\lbrace V_i\right\rbrace _{i\geq 1}$). Assume that $i_2$ is the greatest number in $ I_2$ (in particular
$i_2\geq 2$), and set
$$\varepsilon_2={1\over 2^2}\cdot{1\over 2^{i_2+1}}>0.$$ Since $\left\lbrace W_{i}\right\rbrace _{i\geq 1}$ is a cover we
have
$$\bigcup_{i\in I_2}g_1(V_2\setminus X_1)\cap W_i\cap V_2 =g_1(V_2\setminus X_1)\cap V_2.$$
Again, we want to apply Lemma 2.26 for the sets

$$\{g_1((X_i\setminus X_1)\cap V_2)\cap V_2:\,i\in I_2\}$$
playing the role of $X_0,\dots, X_n$ in the statement of the lemma, for
$$\{g_1(V_2\setminus X_1)\cap W_i\cap V_2:\,i\in I_2\}$$
playing the role of the sets $V_0\dots,V_n$ respectively. Here we should recall that $g_1(X_i\setminus X_1)\subseteq W_i$.\\
Observe that by Lemma \ref{Properties of C^p extractibility} (3), each $g_1((X_i\setminus X_1)\cap V_2)\cap V_2$
has the strong $C^p$ extraction property with respect to the open set $g_1(V_2\setminus X_1)\cap V_2$. Applying the
weak version of Lemma \ref{Statement A'_2} to these sets we get a $C^{p}$ diffeomorphism  $g_2$ of
$$ \left[ g_1(V_2\setminus X_1)\cap V_2\right] \setminus \left[ g_1(X_2\setminus X_1)\cap V_2\right]  =g_1(V_2\setminus (X_1\cup X_2))\cap V_2$$
onto
$$\left[ g_1(V_2\setminus X_1)\cap V_2\right] \setminus \left[ (g_1(X_2\setminus X_1)\cap V_2)\setminus U\right] $$
which is the identity outside
$$ \left(g_1(V_2\setminus (X_1\cup X_2))\cap W_2\cap V_2\cap U\right)\cup  \left( g_1(X_2\setminus X_1)\cap V_2\right)$$
and carries
$$g_1(X_k\setminus (X_1\cup X_2)\cap V_2)\cap V_2$$
into
$$g_1(V_2\setminus X_1)\cap W_k\cap V_2$$
for each $k>2$. Moreover using Remark \ref{weak version of statement A_2'} one can also assume that $g_2\circ g_1$ moves no point more than $\varepsilon_1+\varepsilon_2$.\\
Because $\overline{g_1(V_2\setminus (X_1\cup X_2))\cap W_2}\cap U\subseteq g_1(V_2\setminus (X_1\cup X_2))\cap
V_2\cap U$,  by letting $g_2$ be the identity outside $g_1(V_2\setminus (X_1\cup X_2))\cap V_2$ there exists a
well-defined natural extension of $g_2$ to $g_1(E\setminus (X_1\cup X_2))$. To sum up we have the following
properties:
\begin{enumerate}
\item $g_2$ acts from
$$g_1(E\setminus (X_1\cup X_2))$$
onto
\begin{align*}
g_1(E\setminus X_1)\setminus\left[ g_1(X_2\setminus X_1)\setminus U\right] &=E\setminus (X_1\setminus U)\setminus\left[ (g_1((X_2\setminus X_1)\cap U)\cup g_1((X_2\setminus X_1))\setminus U)\setminus U\right] =\\
&=E\setminus (X_1\setminus U)\setminus \left[ (X_2\setminus X_1)\setminus U\right] =E\setminus ((X_1\cup
X_2)\setminus U).
\end{align*}
(Here we are using the fact that $g_1((X_2\setminus X_1)\cap U)\subseteq U$ and that $g_1$ is the identity outside
$U$).
\item $g_2$ is the identity on $E\setminus\left[\left(g_1(W_2\setminus (X_1\cup X_2))\cap W_2\cap U\right)\cup  \left( g_1(X_2\setminus X_1)\cap V_2\right) \right] $. Since
$$\left(g_1(W_2\setminus (X_1\cup X_2))\cap W_2\cap U\right)\cup  \left( g_1(X_2\setminus X_1)\cap V_2\right) \subseteq U\cup X_1\cup X_2,$$ in particular $g_2$ is the identity on $E\setminus (U\cup (X_1\cup X_2))$. Because $g_1$ is the identity outside $U$ then $g_2$ is the identity on $g_1(E\setminus (U\cup(X_1\cup X_2))=g_1((E\setminus U)\setminus (X_1\cup X_2))$.
\item $g_2$ carries
$$g_1(X_l\setminus (X_1\cup X_2))\cap V_2$$
into
$$ W_l\cap V_2$$
for each $l>2$.
\item If $X_{2,r}=\emptyset$ then we require that $g_2$ is the
identity on $g_1(V_{2,r}\setminus X_1)\cap V_{2}$ and in $g_1(V_2\setminus X_1)\cap V_{2,r}$.
\item We have
\begin{equation}\label{g_2 does not move too much the points}
\left\{
           \begin{array}{ll}
         ||g_2(g_1(x))-x||\leq\varepsilon_1+\varepsilon_2 ={1\over 2}\cdot{1\over 2^{i_1+1}}+{1\over 4}\cdot{1\over 2^{i_2+1}} & \text{for every } x \in E\setminus (X_1\cup X_2) \\
        & \\
         ||g_2(g_1(x))-x||<{1\over 2^{2+1}}    & \text{for every } x \in V_2\setminus (X_1\cup X_2).
           \end{array}
         \right.
\end{equation}
The first inequality is clear. For the second one, when $x\notin V_2\setminus V_1$ we have $g_1(x)=x$, hence
$$||g_2(g_1(x))-x||=||g_2(x)-x||\leq \varepsilon_2={1\over 4}\cdot{1\over 2^{i_2+1}}<{1\over 2^{2+1}},$$
and if $x\in V_1\cap V_2$ then $V_1\cap V_2\neq\emptyset$, so $i_1\geq 2$ and
\begin{align*}
||g_2(g_1(x))-x||\leq {1\over 2}\cdot{1\over 2^{i_1+1}}+{1\over 4}\cdot{1\over 2^{i_2+1}}<\left( {1\over 2}+{1\over
4}\right) \cdot {1\over 2^{2+1}}.
\end{align*}

\item The composition $g_2\circ g_1$ is a mapping from $E\setminus (X_1\cup X_2)$ onto $E\setminus ((X_1\cup X_2)\setminus U)$ and it is the identity outside $(E\setminus U)\setminus (X_1\cup X_2)$. Let us check that it refines $\mathcal{G}$. Take $x\in E\setminus (X_1\cup X_2)$. If $g_2(g_1(x))=g_1(x)$, since $g_1$ refines $\mathcal{G}$ we are done. Otherwise we have $g_2(g_1(x))\neq g_1(x)$, and then $g_1(x),g_2(g_1(x))\in g_1(V_2\setminus (X_1\cup X_2))\cap V_2$. We have that $x,g_2(g_1(x))\in V_2$. We must have $x,g_2(g_1(x))\in V_{2,r}$ for some $r\in \Omega$ (as otherwise $x\in V_{2,r}$ and $g_2(g_1(x))\in V_{2,r'}$ a contradiction with \eqref{g_2 does not move too much the points} since $\textrm{dist} (V_{2,r},V_{2,r'})\geq {1\over 2^{2+1}}$).

\end{enumerate}
\item We go on doing this process by successive applications of Lemma \ref{Statement A'_2}. We want to apply induction to prove that for  each $j\geq 3$ we can find a $C^{p}$ diffeomorphism $g_j$ such that
\begin{enumerate}
\item $g_j$ acts from
$$g_{j-1}\circ\cdots\circ g_1\left( E\setminus
\bigcup_{k\leq j}X_k\right)$$ onto
\begin{align*}
E\setminus \left( \bigcup_{k\leq j}X_k\setminus U\right).
\end{align*}
\item $g_j$ is the identity on $g_{j-1}\circ\cdots\circ g_1\left( (E\setminus U)\setminus (\bigcup_{k\leq j}X_k)\right)$.
\item $g_j$ carries $$g_{j-1}\circ\cdots\circ g_1(X_l\setminus \bigcup_{k\leq j}X_k)\cap V_j$$
into
$$W_l\cap V_j$$
for eack $l>j$.
\item If
$X_{j,r}=\emptyset$ then $g_j$ is the identity on $g_{j-1}\circ\cdots\circ g_1(V_{j,r}\setminus \bigcup_{k<
j}X_k)\cap V_j$ and on $g_{j-1}\circ\cdots\circ g_1(V_j\setminus \bigcup_{k< j}X_k)\cap V_{j,r}$.
\item We have
\begin{equation}\label{g_j does not move the points too much}
\left\{
           \begin{array}{ll}
         ||g_j\circ\cdots\circ g_1(x)-x||\leq \sum^{j}_{k=1} {1\over 2^k}\cdot {1\over 2^{{i_k}+1}    } & \text{for every } x \in E\setminus (\bigcup_{k\leq j}X_k) \\
        & \\
         ||g_j\circ\cdots\circ g_1(x)-x||<{1\over 2^{j+1}}    & \text{for every } x \in V_j\setminus (\bigcup_{k\leq j}X_k),
           \end{array}
         \right.
\end{equation}
where $i_k$ is the greatest number such that $V_{i_k}\cap V_k\neq\emptyset.$ Let us call for every $k=1,\dots,j$,
 $$\varepsilon_k={1\over 2^k}\cdot{1\over 2^{i_k+1}}>0.$$
\item $g_j\circ\cdots\circ g_1$ refines $\mathcal{G}$.
\end{enumerate}
Suppose this is true for $j-1$ and let us check this is so for $j$.\\

The idea is the same as in steps (1) and (2). We first find the set $I_j=\{j_1=j,j_2,\dots,j_{n(j)}\}\subset \N$
such that $W_{j_1},W_{j_2},\dots,W_{j_{n(j)}}$ are the only $W_i's$ sets for which $V_j\cap W_i\not=\emptyset$.
Assume $i_j$ is the greatest number in $ I_j$ (in particular $i_j\geq j$) and set
$$\varepsilon_j={1\over 2^j}\cdot{1\over 2^{i_j+1}}>0.$$ We have that
$$\bigcup_{i\in I_j}g_{j-1}\circ\cdots\circ g_1(V_j\setminus \bigcup_{k<j}X_k)\cap W_i\cap V_j =g_{j-1}\circ\cdots\circ g_1(V_j\setminus \bigcup_{k<j}X_k)\cap V_j.$$ We want to apply Lemma 2.26 for the sets

$$\{g_{j-1}\circ\cdots\circ g_1((X_i\setminus \bigcup_{k<j} X_k)\cap V_j)\cap V_j:\,i\in I_j\}$$
playing the role of $X_0,\dots, X_n$ in the statement of the lemma, for
$$\{g_{j-1}\circ\cdots\circ g_1(V_j\setminus \bigcup_{k<j} X_k)\cap W_i\cap V_j:\,i\in I_j\}$$
playing the role of the sets $V_0\dots,V_n$ respectively. \\
Observe that by Lemma \ref{Properties of C^p extractibility} (3), each $g_{j-1}\circ\cdots\circ g_1((X_i\setminus
\bigcup_{k<j}X_k)\cap V_j)\cap V_j$ has the strong $C^p$ extraction property with respect to the open set
$g_{j-1}\circ\cdots\circ g_1(V_j\setminus\bigcup_{k<j} X_k)\cap V_j$. Applying the weak version of Lemma
\ref{Statement A'_2} to these sets we get a $C^p$ diffeomorphism $g_j$ from

$$\left[ g_{i-1}\circ\cdots\circ g_1(V_i\setminus \bigcup_{j<i}X_j)\cap V_i\right] \setminus
\left[g_{j-1}\circ\cdots\circ g_1(X_j\setminus \bigcup_{k< j}X_k)\cap V_j\right] =g_{j-1}\circ\cdots\circ
g_1(V_j\setminus \bigcup_{k\leq j}X_k)\cap V_j$$ onto
$$\left[ g_{j-1}\circ\cdots\circ g_1(V_j\setminus \bigcup_{k<j}X_k)\cap V_j\right] \setminus \left[g_{j-1}\circ\cdots\circ g_1(X_j\setminus \bigcup_{k< j}X_k)\cap V_j\setminus U\right] $$
which is the identity outside
$$ \left((g_{j-1}\circ\cdots\circ g_1(V_j\setminus \bigcup_{k\leq j}X_k)\cap W_j\cap U\right)\cup \left(g_{j-1}\circ\cdots
\circ g_1(X_j\setminus \bigcup_{k< j}X_k)\cap V_j\right)$$ and carries
$$g_{j-1}\circ\cdots\circ g_1(X_l\setminus \bigcup_{k\leq j}X_k)\cap V_j$$
into
$$W_l\cap V_j$$
for each $l>j$. This last property establishes (c). \\
Define $g_j$ to be the natural extension of $g_{j-1}\circ\cdots\circ g_1(V_j\setminus \bigcup_{k\leq j}X_k)\cap
V_j$ to $g_{j-1}\circ\cdots\circ g_1(E\setminus \bigcup_{k\leq j}X_k)$,  so now $g_j$ is defined from
$$g_{j-1}\circ\cdots\circ g_1\left( E\setminus
\bigcup_{k\leq j}X_k)\right)=$$ onto
\begin{align*}
&\left[ g_{j-1}\circ\cdots\circ g_1(E\setminus \bigcup_{k<j}X_k)\right] \setminus \left[g_{j-1}\circ\cdots\circ g_1(X_j\setminus \bigcup_{k< j}X_k)\setminus U\right] =\\
&E\setminus (\bigcup_{k<j}X_j\setminus U) \setminus \left[\left( g_{j-1}\circ\cdots\circ g_1((X_j\setminus \bigcup_{k< j}X_k)\cap U)\cup g_{j-1}\circ\cdots\circ g_1((X_j\setminus \bigcup_{k< j}X_k)\setminus U)\right) \setminus U\right] =\\
&=E\setminus (\bigcup_{k<j}X_k\setminus U) \setminus \left[(X_j\setminus \bigcup_{k<j}X_k)\setminus U
\right]=E\setminus \left( \bigcup_{k\leq j}X_k\setminus U\right).
\end{align*}
Here we are using that $g_{j-1}\circ\cdots\circ g_1((X_j\setminus \bigcup_{k<j}X_k)\cap U\subseteq U $ and also (a) from the induction hypothesis. This establishes (a).\\
We also have that $g_j$ is the identity on $(E\setminus U)\setminus (\bigcup_{k\leq j}X_k)=g_{j-1}\circ\cdots\circ g_1\left( (E\setminus U)\setminus (\bigcup_{k\leq j}X_k)\right)$, which establishes (b).\\
If $X_{j,r}=\emptyset$ then we let $g_j$ be the identity on  $g_{j-1}\circ\cdots\circ g_1(V_{j,r}\setminus
\bigcup_{k< j}X_k)\cap V_j$ and on $g_{j-1}\circ\cdots\circ g_1(V_j\setminus \bigcup_{k< j}X_k)\cap
V_{j,r}$. This last property implies (d).\\
For the property (e), using Remark \ref{weak version of statement A_2'} one can assume that $g_j\circ\cdots\circ
g_1$ moves no point more than $\sum^{j}_{k=1}\varepsilon_k$, because by the induction hypothesis
$g_{j-1}\circ\cdots g_1$ moves no point more than $\sum^{j-1}_{k=1}\varepsilon_k$. Let us check then that the
second property in \eqref{g_j does not move the points too much} is satisfied. Take $x\in V_j\setminus
(\bigcup_{k\leq j}X_j)$ and observe that if $g_k\circ\cdots\circ g_1(x)\neq g_{k-1}\circ\cdots\circ g_1(x)$ for
some $k=1,\cdots ,j$ means that $x\in V_k$, hence $k\in I_j$ and $i_k\geq j$. We can write
\begin{equation*}
||g_j\circ\cdots \circ g_1(x)-x||\leq \sum_{k\in I_j, k\leq j}\left({1\over 2^{k}} \cdot{1\over
2^{i_k+1}}\right)\leq \sum^{j}_{k=1}{1\over 2^k}\cdot{1\over 2^{j+1}}<{1\over 2^{j+1}}  .
\end{equation*}
\\
Now, using our induction hypothesis (f) that $g_{j-1}\circ\cdots\circ g_1$ refines $\mathcal{G}$, we can prove that
$g_j\circ\cdots\circ g_1$ still refines $\mathcal{G}$.  If we take an $x\in E\setminus (\bigcup_{k\leq j} X_k)$ and
$g_{j}\circ\cdots\circ g_1(x)=g_{j-1}\circ\cdots\circ g_1(x)$, since $g_{j-1}\circ\cdots g_1$ refines $\mathcal{G}$
we are done. Otherwise  $g_{j-1}\circ\cdots g_1(x),g_j\circ\cdots g_1(x)\in g_{j-1}\circ\cdots\circ
g_1(V_j\setminus \bigcup_{k<j}X_k)\cap V_j$, so $x,g_j\circ\cdots \circ g_1(x)\in
V_j$. We must have $x,g_j\circ\cdots\circ g_1(x)\in V_{j,r}$ for some $r\in \Omega$. Indeed, otherwise $x\in V_{j,r}$ and $g_j\circ\cdots\circ g_1(x)\in V_{j,r'}$ for different $r,r'\in\Omega$, a contradiction with \eqref{g_j does not move the points too much} since $\textrm{dist} (V_{j,r},V_{j,r'})\geq {1\over 2^{j+1}}$. \\
Hence we have finished our induction process.
\end{enumerate}
To conclude, note that $g_j\circ\cdots\circ g_1(x)\neq g_{j-1}\circ\cdots\circ g_1(x)$ implies $x\in V_j$. This
fact ensures the existence of a well-defined $C^{p}$ diffeomorphism $g(x)=\lim_{j\to\infty}g_j\circ\cdots\circ
g_1(x)$ from $E\setminus X$ onto $E\setminus (X\setminus U)$. The mapping $g$ is the identity on $(E\setminus
U)\setminus X$ and because $\left\lbrace V_{j,r}\right\rbrace _{j\geq 1,r\in\Omega}$ is a refinement of
$\mathcal{G}$, $g$ is limited by $\mathcal{G}$.
\end{proof}

\bigskip

%%%%%%%%%%%%%%%%%%%%%%%

\section{Proof of Theorem \ref{Main theorem for reflexive spaces}}

First of all notice that since the result we want to establish is invariant by diffeomorphisms, it is enough to
prove it for a $C^1$ manifold $M$ diffeomorphic to $E$ in place of $E$. It will be very convenient for us to do so
with $M=S^{+}$, the upper sphere of $E\times\R$.

Let $\|\cdot\|$ denote an equivalent norm in $E$ which is LUR and $C^1$ (we will also denote by $\|\cdot\|$ the
norm of $F$; this will do not do any harm because there will be no risk of confusion). Since $E^{*}$ is separable
there always exists such a norm; see \cite[Corollary II.4.3]{DGZ} for instance.

Let us define $Y=E\times\R$, with norm
$$
|(u,t)|=\left( \|u\|^2 +t^2 \right)^{1/2},
$$
and let us denote the upper sphere of $Y$ by
$$
S^{+}:=\{ (u,t) : u\in E, t>0, \|u\|^{2}+t^2=1\}.
$$
Observe that $S^{+}$ is the graph of
$$
s(u)=\sqrt{1-\|u\|^{2}},
$$
which is a $C^1$ function\footnote{Smoothness of $s(u)$ at $u=0$ is a consequence of the facts that $\|\cdot\|^2$
is trivially differentiable at $0$, and that an everywhere differentiable convex function is always of class $C^1$;
see for instance \cite[Corollary 4.2.12]{BorweinVanderwerff}.} defined on the open unit ball $B_E$ of $E$; hence,
$$
d(u)=(u,s(u))=(u,\sqrt{1-\|u\|^{2}}),
$$
$u\in B_E$, defines a $C^1$ diffeomorphism of $B_E$ onto $S^+$. Since $B_E$ is obviously $C^1$ diffeomorphic with
$E$, the upper sphere $S^+$, which is a $C^1$ submanifold of codimension $1$ of $Y$, is also diffeomorphic to $E$.

Therefore it will be enough to prove that every continuous mapping $f:S^{+}\to F$ can be $\varepsilon$-approximated
by a mapping $\varphi:S^{+}\to F$ which is of class $C^1$ and has no critical points. As explained in the
introduction, this will be done in three steps, the first of which consists in finding a smooth approximation of
$f$ whose critical set is a set that we can extract with the help of Theorem \ref{final extractibility theorem
rough version general form}.

In order to find such a smooth approximation, as in \cite{AzagraJimenez} we will have to use a partition of unity
$\{\psi_n : n\in\N\}$ in $S^{+}$ made out of slices of the unit ball of $Y$ by linear functionals $g_k\in Y^{*}$,
so that the derivative at $y\in S^{+}$ of a local sum $\sum \psi_k$ will belong to the span of the restrictions to
$T_{y}S^{+}$ (the tangent space to $S^{+}$ at $y$) of a finite collection of $g_k$. However, the construction of
the partition of unity, the technical properties that we will require, and the use that we will make of it, will be
much simpler than in that paper.

To construct our partition of unity $\{\psi_n\}_{n\in\N}$ we next translate an old standard argument (going back to
Eells in the Hilbert space case, and probably first appearing in \cite[p. 28-30]{Lang1962}, later generalized by
Bonic and Frampton \cite{BonicFrampton} for separable Banach spaces with smooth bump functions; we follow
\cite[Theorem 8.25]{FHHMZ}) to the upper sphere, as in \cite{Vanderwerff, Frontisi, AzagraJimenez}.

Let us denote\ $S:=S_{|\cdot|}$, the unit sphere of $(Y,|\cdot|)$, and $S^*:=S_{|\cdot|^*}$, the unit sphere of
$(Y^*,|\cdot|^*)$. The duality mapping of the norm $|\cdot|$, defined as
\begin{align*}
 D&: S \longrightarrow S^* \\ D&(x)=|\cdot|'(x),
\end{align*}
is $|\cdot|-|\cdot|^*$ continuous since the norm $|\cdot|$ is of class $C^1$.

Since the norm  $|\cdot|$ is locally uniformly convex we can find, for every $x\in S^+$, open slices $R_x=\{y\in S:
\ g_x(y)>\delta_x\}\subset S^+$  and $P_x=\{y\in S: \ g_x(y)>\delta_x^2\}\subset S^+$, where $g_x=D(x)\in Y^{*}$,
$0<\delta_x<1$,  and $|g_x|^*=1=g_x(x)$, so that the oscillation of the functions $f$ and $\varepsilon$ on every
$P_x$ is less than $\varepsilon(x)/16$. We also assume, with no loss of generality, that $\textrm{dist}(P_x,\,
E\times\{0\}\,)>0$.

Since $Y$ is separable we can select a countable subfamily of $\{R_x\}_{x\in S^+}$ which covers $S^+$.  Let us
denote this countable subfamily by $\{R_n\}_n$, where $R_n=R_{x_n}=\{y\in S: g_n(y)>\delta_n\}$ and $g_{n}(x_n)=1$.
Recall that the oscillation of the functions $f$ and $\varepsilon$ on every $ P_n=P_{x_n}=\{y\in S:
g_n(y)>\delta_n^2\}$ is less than $\varepsilon(x_n)/16$; this implies that
$$
\frac{15}{16}\, \varepsilon(x_n)\leq \varepsilon(x)\leq \frac{17}{16}\,\varepsilon(x_n) \textrm{ and }
\|f(x)-f(y)\|\leq\frac{\varepsilon(x_n)}{16}
$$
for every $x, y\in P_n$. Note also that $\{P_n\}_{n\in\N}$ is an open cover of $S^{+}$.

For each $k\in\N$, let $\theta_k :\R\to [0, 1]$ be a $C^{\infty}$ function such that $\theta_k(t)=1$ if and only if
$t\geq \delta_k$, and $\theta(t)=0$ if and only if $t\leq \delta_{k}^{2}$. Next, for each $k\in\N$ we define
$\varphi_{k}:S^{+}\to [0,1]$ by
$$
\varphi_{k}(x)=\theta_k( g_{k}(x)),
$$
and note that the interior of the support of $\varphi_k$, which coincides with $\varphi_k^{-1}((0,1])$, is the open
slice $P_k=\{y\in S: g_{k}(x)>\delta_{k}^{2}\}$.

Now, for $k=1$, define $h_1:S^{+}\to\R$ by
$$
h_1(x)=\varphi_1(x).
$$
Notice that the interior of the support of $h_1$ is the open set $U_1:=P_1$.

For $k\geq 2$ let us define $h_k:S^{+}\to\R$ by
$$
h_k(x)=\varphi_k(x)\prod_{j<k}\left(1-\varphi_{j}(x)\right),
$$
and notice that the interior of the support of $h_k$ is the set
$$
U_k :=\{y\in S^{+}: g_{k}(y)>\delta_{k}^2 \textrm{ and } g_{j}(y) <\delta_j \textrm{ for all } j<k\}.
$$
\begin{claim}\label{locally finiteness of the partition of unity}
The family $\{U_k\}_{k\in \mathbb N}$ is a locally finite open covering of $S^{+}$ that refines
$\{P_{k}\}_{k\in\N}$. Therefore the functions
$$
 \psi_{n}:=\frac{h_n}{\sum_{k=1}^{\infty}h_k}, \,\,\, n\in\N,
$$
define a $C^{1}$ partition of unity in $S^{+}$ subordinate to $\{P_{k}\}_{k\in\N}$.
\end{claim}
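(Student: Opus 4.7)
The plan is to verify the three assertions about $\{U_k\}$ — that it refines $\{P_k\}$, that it covers $S^{+}$, and that it is locally finite — and then to read off the partition-of-unity statement from them. The refinement is immediate, since $y \in U_k$ forces $g_k(y) > \delta_k^{2}$, i.e.\ $y \in P_k$. The remaining two assertions I would handle by attaching to each point $x \in S^{+}$ two separate indices produced from the fact that $\{R_n\}$ covers $S^{+}$.

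For the covering property of $\{U_k\}$, I would set $m = m(x) := \min\{n : g_n(x) \geq \delta_n\}$, which is well defined because some $R_n$ contains $x$. Since $\delta_m \in (0,1)$, one has $g_m(x) \geq \delta_m > \delta_m^{2}$; by minimality, $g_j(x) < \delta_j$ for every $j < m$. Both conditions in the definition of $U_m$ are thus met, so $x \in U_m$.

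For local finiteness, I would instead take $n_1 = n_1(x) := \min\{n : g_n(x) > \delta_n\}$, again well defined by the same covering, and invoke continuity of $g_{n_1}$ to produce an open neighborhood $V$ of $x$ in $S^{+}$ on which $g_{n_1}(\cdot) > \delta_{n_1}$, so that $\varphi_{n_1} \equiv 1$ on $V$. Then for every $k > n_1$ and every $y \in V$, the product defining $h_k(y)$ contains the vanishing factor $1 - \varphi_{n_1}(y)$, so $h_k \equiv 0$ on $V$; for the same reason $U_k \cap V = \emptyset$ for $k > n_1$, since $U_k \subseteq \{g_{n_1} < \delta_{n_1}\}$.

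To conclude, the partition-of-unity claim. The one remaining technical point is that $\sum_k h_k > 0$ pointwise. This follows from the covering argument: at $x$ the single term $h_m(x) = \varphi_m(x)\prod_{j<m}(1-\varphi_j(x))$ is a product of strictly positive factors, because $\varphi_m(x) = 1$ while for $j < m$ one has $\varphi_j(x) = \theta_j(g_j(x)) < 1$ (since $\theta_j(t) = 1$ iff $t \geq \delta_j$). Combined with local finiteness, this makes $\sum_k h_k$ a strictly positive $C^{1}$ function on a neighborhood of every point, so each $\psi_n$ is $C^{1}$; the equality $\sum_n \psi_n \equiv 1$ and the inclusion $\{h_n > 0\} = U_n \subseteq P_n$ are then straightforward. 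The one subtlety worth emphasizing is the need to choose $m$ and $n_1$ separately: $n_1$ requires the strict inequality in order to open up a neighborhood via continuity, whereas $m$ must allow the equality $g_m(x) = \delta_m$ so that no gap appears in the cover when $g_j(x) = \delta_j$ for some $j < n_1$.
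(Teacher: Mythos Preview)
Your proof is correct and follows essentially the same approach as the paper: refinement is immediate from $U_k \subseteq P_k$, local finiteness comes from the observation that $\varphi_j \equiv 1$ on $\{g_j > \delta_j\}$ kills all $h_k$ with $k > j$, and covering uses that $\{R_n\}$ covers $S^{+}$. The only minor difference is that the paper argues the covering property by contradiction (showing $h_k(x)=0$ for all $k$ forces $\varphi_n(x)=0$ for all $n$ by induction, hence $x \notin P_n$ for any $n$), whereas you give a direct constructive argument identifying the index $m$; your careful distinction between the indices $m$ (with $\geq$) and $n_1$ (with $>$) is a nice touch that the paper's contradiction argument sidesteps.
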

\begin{proof}
Given $j\in\N$, if $x, y\in \{y: g_{j}(y)>\delta_j\}$ and $k>j$ then $\varphi_{j}(y)=1$, hence $h_k(y)=0$. Since
$\{y: g_{j}(y)>\delta_j\}$ is a neighborhood of $x$ in $S^{+}$ this implies that the family of supports of the
$h_k$ is locally finite. On the other hand, if $h_k(x)>0$ for some $x, k$ then $\varphi_k(x)>0$, hence $x\in P_k$,
and this shows that the family of the open supports of the functions $h_k$, which coincides with
$\{U_k\}_{k\in\mathbb{N}}$, refines $\{P_{k}\}_{k\in\N}$. It only remains to prove that the family of the open
supports of the functions $h_k$ is indeed a cover, that is, for every $x\in S^{+}$ there exists some $n_x$ such
that $h_{n_x}(x)>0$. We argue by contradiction: assume we had $h_k(x)=0$ for all $k\in\N$, then we can show by
induction that  $\varphi_{n}(x)=0$ for all $n\in\N$, which implies that $x\notin P_n$ for all $n$ and contradicts
the fact that $\{P_n\}_{n\in\N}$ covers $S^{+}$. Indeed, for $n=1$ we have $0=h_1(x)=\varphi_1(x)$. Now suppose
that we have $\varphi_1(x)=\varphi_2(x)=...=\varphi_n(x)=0$. Then
$$
0=h_{n+1}(x)=\varphi_{n+1}(x)\prod_{j<n+1}\left(1 -\varphi_j(x)\right)=\varphi_{n+1}(x)=0,
$$
so it is also true that $\varphi_{n+1}(x)=0$.
\end{proof}

We will employ the following remarkable fact.
\begin{claim}\label{Dh_j vanishes where h_j=0}
For every $k\in \N$ and every $y\in S^+$, we have
$$
h_k(y)=0 \Longrightarrow Dh_k(y)=0.
$$
\end{claim}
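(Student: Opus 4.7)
The plan is to prove the claim by a direct case analysis using the product rule, exploiting one key property of the smoothing functions $\theta_k$: since $\theta_k$ is $C^\infty$ with $\theta_k(t)=0$ \emph{iff} $t\le \delta_k^2$ and $\theta_k(t)=1$ \emph{iff} $t\ge \delta_k$, the function $\theta_k$ is locally constant on $(-\infty,\delta_k^2]$ and on $[\delta_k,\infty)$, and therefore $\theta_k'(\delta_k^2)=\theta_k'(\delta_k)=0$. Consequently, whenever $\varphi_j(y)\in\{0,1\}$ we have $D\varphi_j(y)=\theta_j'(g_j(y))\,g_j=0$, either because $\varphi_j$ is locally constant near $y$ (when $g_j(y)\ne \delta_j^2,\delta_j$) or because $\theta_j'$ vanishes at these two boundary values.

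Now suppose $h_k(y)=0$. From the formula $h_k=\varphi_k\prod_{j<k}(1-\varphi_j)$, this happens in one of two (possibly overlapping) cases: either (a) $\varphi_k(y)=0$, or (b) there exists $j_0<k$ with $\varphi_{j_0}(y)=1$. Write
\[
Dh_k(y)=D\varphi_k(y)\prod_{j<k}(1-\varphi_j(y)) \,-\, \varphi_k(y)\sum_{i<k}D\varphi_i(y)\prod_{\substack{j<k\\ j\ne i}}(1-\varphi_j(y)).
\]
In case (a), the second sum vanishes identically because of the overall factor $\varphi_k(y)=0$. For the first term, either the product $\prod_{j<k}(1-\varphi_j(y))$ is zero (and we are done), or it is nonzero, in which case $\varphi_k(y)=0$ forces $D\varphi_k(y)=0$ by the observation above.

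In case (b), the first term vanishes because the factor $1-\varphi_{j_0}(y)=0$ appears in the product. For the second term, each summand with index $i\ne j_0$ still contains $1-\varphi_{j_0}(y)=0$ in its product, hence vanishes; and the summand with $i=j_0$ contains the factor $D\varphi_{j_0}(y)$, which is zero by the same observation, since $\varphi_{j_0}(y)=1$. Thus $Dh_k(y)=0$ in every case.

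I do not expect any serious obstacle: the whole content is the remark that the prescribed transition endpoints $\delta_k^2$ and $\delta_k$ are stationary points of $\theta_k$, combined with a routine product-rule bookkeeping. The case $k=1$ is subsumed in case (a) with the empty product interpreted as $1$.
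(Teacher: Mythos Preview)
Your proof is correct and follows essentially the same approach as the paper's: both argue by the product rule and the case split ``$\varphi_k(y)=0$'' versus ``$\varphi_{j_0}(y)=1$ for some $j_0<k$'', using that $\varphi_j(y)\in\{0,1\}$ forces $D\varphi_j(y)=0$. Your explicit remark that $\theta_k'(\delta_k^2)=\theta_k'(\delta_k)=0$ (because $\theta_k$ is $C^\infty$ and constant on one side of each of these points) is a nice bit of care that the paper leaves implicit; the subcase distinction in your case~(a) on whether the product $\prod_{j<k}(1-\varphi_j(y))$ vanishes is harmless but unnecessary, since $D\varphi_k(y)=0$ already kills that term regardless.
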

\begin{proof}
Let us assume that $h_k(y)=0$. Suppose first that $\varphi_k(y)=0$. Computing the derivative of $h_k$ at $y$ we get
that
$$Dh_k(y)=D\varphi_k(y)\prod_{j<k}\left(1-\varphi_{j}(y)\right)+\varphi_k(y)D\left( \prod_{j<k}\left(1-\varphi_{j}(y)\right)\right) =D\varphi_k(y)\prod_{j<k}\left(1-\varphi_{j}(y)\right).$$
But $\varphi_k(y)=\theta_k(g_k(y))=0$ implies that $D\varphi_k(y)=0$, hence $Dh_k(y)=0$.\\
If $\varphi_k(y)\neq 0$ we must have $\varphi_{j}(y)=1$ for some $j<k$. But again it follows that
$D\varphi_j(y)=0$, so one can check that also
$$Dh_k(y)=\varphi_k(y)D\left( \prod_{j<k}\left(1-\varphi_{j}(y)\right)\right) =0.$$
\end{proof}
Notice that, according to Claim \ref{locally finiteness of the partition of unity}, for every $x\in S^{+}$ there
exist a number $n=n_x$ and an open neighborhood $V_x$ of $x$ in $S^{+}$ such that $\sum_{j\leq n}h_j(y)>0$ for
every $y\in V_x$, and $h_{k}(y)=0$ for every $k> n$. This means that $\psi_k(y)=0$ for every $k>n$ and every $y\in
V_x$. More precisely, $n_x$ can be chosen as the first such $j$ so that $x\in \{y\in S^+: g_{j}(y)>\delta_j\}= R_j$
and $V_x=R_j$.

Let us also call $m=m_y$ the largest $j$ such that $h_j(y)\neq 0$. Note that $m_y$ is also the largest $j$ for
which $\psi_j(y)\neq 0$. Thus, for every $y\in V_x$, we have
$$
m_y=\max\{j: y\in\psi^{-1}_j((0,1])\}\leq n_x.$$

In order to calculate the derivatives of this partition of unity in this neighborhood $V_x$ of $x$, let us
introduce the functions
$$
H_k(y)=\frac{h_k (y)}{\sum_{j=1}^{n}h_j (y)}, \,\,\,\,\,\,\,\,\,\, y\in V_x, \,\,\, k=1, ..., n,
$$
which are well defined on $V_x$ and in fact can be extended as $C^1$ smooth functions to an open subset
$\mathcal{S}_n$ of $Y$ containing $V_x$. Specifically, noting that each $h_k$ is well-defined and $C^\infty$ smooth
on the whole $Y$, one can choose $\mathcal{S}_n=\bigcup_{i=1}^n h_i^{-1}((0,1])$. Slightly abusing notation, we
will keep denoting these extensions by $H_k$, and we will also think of the functions $h_j$, $\psi_j$, $g_j$,
$j\leq n$, as being $C^1$ smooth functions defined on this open set $\mathcal{S}_n$. Therefore, to calculate the
derivative of $\psi_k$ on $V_x$ for $k=1, ..., n$, we only have to calculate the derivative of $H_k$ at each $y\in
V_x$ for $k=1, ..., n$ and then restrict it to the tangent spaces $T_{y}S^{+}$. The exact expression for the
derivative of the functions $H_1, ..., H_n$ on $V_x$ will not be particularly interesting or useful to us. The only
thing we need to know is that there are $C^{1}$ smooth functions $\sigma_{k, j}$, $1\leq k, j\leq n$, (actually,
$\sigma_{k, j}$ will be $C^\infty$ smooth) defined on $\mathcal{S}_n$ so that
$$
DH_k(y)=\sum_{j=1}^{n}\sigma_{k,j}(y)\, g_{j}
$$
for $k=1, ..., n$, $y\in V_x$, and that, in fact, as an immediate consequence of  Claim \ref{Dh_j vanishes where
h_j=0} and the definition of $m_y$ we have
$$
DH_k(y)=\sum_{j=1}^{m_y}\sigma_{k,j}(y)\, g_{j}.
$$
for $k=1,\dots,m_y$, $y\in V_x$.

Note that even though each $H_k$ is $C^{\infty}$ smooth on an open subset $\mathcal{S}_n$ of $Y$, we cannot say
that $\psi_k$ is $C^{\infty}$ smooth too, because $S^{+}$ has not a $C^{\infty}$ smooth submanifold structure. The
functions $\psi_k$ are just $C^{1}$ because $S^{+}$ is just a $C^1$ manifold modeled on $E$. Now, if we want to
know what the derivative of the functions $\psi_k$, $k=1, ..., n$, on $V_x$ looks like, because $H_k=\psi_k$ on
$V_x$ and $V_x$ is open in $S^{+}$, we only have to restrict $DH_k$ to the tangent spaces $T_{y}S^{+}$ for each
$y\in V_{x}$. Hence we have
$$
D\psi_k(y)(v)=\sum_{j=1}^{n}\sigma_{k,j}(y) \, g_{j}(v)=\sum_{j=1}^{m_y}\sigma_{k,j}(y) \, g_{j}(v)
$$
for each $v\in T_{y}S^{+}$.

This expression can be somewhat misleading at first sight, because one might think that, for $k=1, ..., n$,
$D\psi_k(y)$ is just a linear combination of the functionals $g_{1}, ..., g_{n}$, and this is not exactly so. It is
a linear combination of the restrictions of $g_{1}, ..., g_{n}$ to $T_{y}S^{+}$, and therefore for every $y\in V_x$
it is a different linear combination of different linear functionals ${g_{1}}_{|_{T_{y}S^{+}}}$, ...,
${g_{1}}_{|_{T_{y}S^{+}}}$, each of them defined on a space depending on $y$.

In order to fully clarify this important point, let us calculate the tangent space $T_{y}S^{+}$ at $y=(u_y,t_y)$.
Since $S^{+}$ is the graph of the function $s(u)=\sqrt{1-\|u\|^{2}}$, the most natural representation of
$T_{y}S^{+}$ is given by
\begin{equation}\label{expression for the tangent hyperplane to S}
T_{y}S^{+}=\{ (u, t)\in Y= E\times \R \, : \, t=L_{y}(u),  u\in E\}=\{ (u, L_{y}(u)) : u\in E\} \subset Y,
\end{equation}
where $L_y$ is the derivative $Ds(u_y)$ of the function $s$ evaluated at the point $u_y=d^{-1}(y)\in B_E$ (recall
that $d(u)=\left(u,s(u)\right)$); in other words, if $y\neq (0,1)$,
$$
L_{y}(w)= - \frac{\|u_y\|}{\sqrt{1- \|u_y\|^{2}}} D\|\cdot\|(u_y)(w) =- \frac{\sqrt{1-t^2_y}}{{t_y}}
D\|\cdot\|(u_y)(w)
$$
for each $w\in E$. Of course we have $L_{(0,1)}(w)=Ds(0)(w)=0$ for each $w\in E$, and $T_{(0,1)}S^+=E\times \{0\}$.

This is the vectorial tangent hyperplane to $S^{+}$ at $y$, as opposed to the affine tangent hyperplane to $S^{+}$,
which is just $y+T_{y}S^{+}$. Since derivatives of mappings act on vectorial tangent hyperplanes we may forget the
affine hyperplanes $y+T_{y}S^{+}$ in what follows.

Now, recall that $g_{j}\in Y^{*}$ and therefore these functionals are of the form
$$
g_{j}(u,t)=g_{j}^{1}(u) +g_{j}^{2}t, \,\,\,\,\,\,\,\,\,\, (u,t)\in E\times\R=Y,
$$
where $g_{j}^{1}\in E^{*}$ and $g_{j}^{2}\in\R$.

Therefore the derivative of ${g_{j}}_{|_{S^{+}}}$ at $y\in S^{+}$ is given by
\begin{equation}\label{formula for Dgn restricted to TyS}
Dg_{j}(y)(u, L_{y}(u))= g_{j}(u, L_{y}(u))=g_{j}^{1}(u)+ g_{j}^{2}L_{y}(u)
\end{equation}
for every $v=(u, L_y(u))\in T_{y}S^{+}$. Thus, for every $y\in V_x$ and every $k=1, ..., n$, we have
$$
D\psi_k(y)(v)=D\psi_k(y)(u, L_{y}(u))=\sum_{j=1}^{n}\sigma_{k,j}(y) \, (g_{j}^{1}(u)+ g_{j}^{2}L_{y}(u))
=\sum_{j=1}^{m_y}\sigma_{k,j}(y) \, (g_{j}^{1}(u)+ g_{j}^{2}L_{y}(u))
$$
for every $v=(u, L_{y}(u))\in T_{y}S^{+}$.

Finally, let us note that the points $x_n\in R_n$ satisfy that
$$
\frac{15}{16}\, \varepsilon(x_n)\leq \varepsilon(y)\leq \frac{17}{16}\,\varepsilon(x_n) \,\,\, \textrm{ for every }
y\in P_n, \,\,\, \textrm{ and } \,\,\, \sup_{x, y\in P_n}\|f(x)-f(y)\|\leq \frac{\varepsilon(x_n)}{16}.
$$

The following lemma summarizes the properties of the partition of unity $\{\psi_n\}_{n\in\N}$ which will be most
useful to us.
\begin{lemma}\label{properties of the partition of unity on the upper sphere reflexive case}
Given two continuous functions $f:S^{+}\to F$ and $\varepsilon:S^{+}\to (0, \infty)$, there exists a collection of
norm-one linear functionals $\{g_k\}_{k\in\N}\subset Y^{*}$, an open covering $\{P_n\}_{n\in\N}$ of $S^{+}$, and a
$C^1$ partition of unity $\{\psi_{n}\}_{n\in\N}$ on $S^{+}$ such that:
\begin{enumerate}
\item $\{\psi_n\}_{n\in\N}$ is subordinate to $\{P_n\}_{n\in\N}$.
\item For every $x\in S^{+}$ there exist a neighborhood $V_x$ of $x$ in $S^{+}$ and a number $n=n_{x}\in\N$ such that $\psi_m=0$ on $V_x$ for all $m>n$, and the derivatives of the functions $\psi_1, ..., \psi_{n}$ on $V_x$ are of the form
$$
D\psi_{k}(y)(v)=\sum_{j=1}^{n}\sigma_{k, j}(y)\, g_{j}(v)=\sum_{j=1}^{m_y}\sigma_{k, j}(y)\, g_{j}(v),
$$
for $v\in T_{y}S^{+}$, the tangent hyperplane to $S^{+}$ at $y\in S^{+}\cap V_x$, and where $m_y\leq n$ is the
largest number such that $\psi_{m_y}(y)\neq 0$. More precisely, if $L_y$ denotes the derivative of the function
$s(u)=\sqrt{1-\|u\|^2}$ evaluated at the point $d^{-1}(y)$, where $d(u)=(u,s(u))$, we have
$$
D\psi_k(y)(v)=D\psi_k(y)(u, L_{y}(u))=\sum_{j=1}^{n}\sigma_{k,j}(y) \, (g_{j}^{1}(u)+ g_{j}^{2}L_{y}(u))
=\sum_{j=1}^{m_y}\sigma_{k,j}(y) \, (g_{j}^{1}(u)+ g_{j}^{2}L_{y}(u))
$$
for every $k=1, ..., n$, and for every $v=(u, L_{y}(u))\in T_{y}S^{+}$, where the functions $\sigma_{k,
j}:V_x\to\R$ are of class $C^{1}$, and $g_{j}^{1}\in E^{*}$, $g_{j}^{2}\in\R$, $j=1, ..., n$.
\item For every $n\in\N$ there exist a point $y_n:=x_n\in P_n$ such that
$$
\frac{15}{16}\, \varepsilon(y_n)\leq \varepsilon(y)\leq \frac{17}{16}\,\varepsilon(y_n) \,\,\, \textrm{ for every }
y\in P_n, \,\,\, \textrm{ and } \,\,\, \sup_{x, y\in P_n}\|f(x)-f(y)\|\leq \frac{\varepsilon(y_n)}{16}.
$$
\end{enumerate}
\end{lemma}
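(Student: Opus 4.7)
The strategy is to build the partition of unity from slices of the unit ball of $Y = E \times \R$ cut out by the duality mapping of an equivalent LUR, $C^1$-smooth norm $|\cdot|$ on $Y$, exploiting the fact that such a norm exists because $E^*$ is separable. For each $x \in S^+$, let $g_x = D(x) \in Y^*$ be the dual functional, and choose $\delta_x \in (0,1)$ small enough that the slice $P_x = \{y \in S : g_x(y) > \delta_x^2\}$ lies inside $S^+$ and $f$, $\varepsilon$ oscillate by at most $\varepsilon(x)/16$ on $P_x$; then take $R_x = \{y \in S : g_x(y) > \delta_x\} \subset P_x$. By LUR these slices form a basis of neighborhoods, and since $Y$ is separable we extract a countable subcover $\{R_n\}$ of $S^+$, writing $R_n = R_{x_n}$, $P_n = P_{x_n}$, $g_n = g_{x_n}$, $\delta_n = \delta_{x_n}$, and $y_n := x_n$. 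Property (3) of the lemma then follows immediately from the choice of $\delta_{x_n}$.

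The $C^1$ partition of unity is built in the standard Bonic--Frampton fashion: pick $C^\infty$ cutoffs $\theta_k : \R \to [0,1]$ with $\theta_k = 1$ on $[\delta_k, \infty)$ and $\theta_k = 0$ on $(-\infty, \delta_k^2]$, set $\varphi_k = \theta_k \circ g_k$, and define
\[
h_k = \varphi_k \prod_{j<k}(1 - \varphi_j), \qquad \psi_n = \frac{h_n}{\sum_{k \ge 1} h_k}.
\]
To establish (1), one must show local finiteness and that the supports refine $\{P_n\}$: if $y \in R_j$ then in a neighborhood of $y$ we have $\varphi_j \equiv 1$, which kills every $h_k$ with $k > j$; the fact that the $\{R_n\}$ cover $S^+$ together with an inductive argument guarantees that $\sum h_k$ is strictly positive pointwise, so the $\psi_n$ are well defined and $C^1$. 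Subordination follows from $\operatorname{supp}\, h_k \subset P_k$.

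For (2), fix $x \in S^+$, let $n = n_x$ be the least index with $x \in R_{n_x}$, and take $V_x = R_{n_x}$. On $V_x$ we have $\varphi_{n_x} \equiv 1$, so $h_k \equiv 0$ and hence $\psi_k \equiv 0$ on $V_x$ for $k > n_x$. The derivative formula is obtained by extending each $H_k = h_k / \sum_{j \le n} h_j$ to the open set $\mathcal{S}_n = \bigcup_{i \le n} h_i^{-1}((0,1])$ of $Y$; since each $h_k$ is $C^\infty$ as a function of $y \in Y$ through $g_1, \dots, g_n$, the quotient rule yields $DH_k(y) = \sum_{j=1}^{n} \sigma_{k,j}(y)\, g_j$ on $\mathcal{S}_n$, and restriction to $T_y S^+$ produces the stated formula after substituting the explicit tangent hyperplane parametrization $v = (u, L_y(u))$ with $L_y$ the derivative of $s(u) = \sqrt{1 - \|u\|^2}$.

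The main subtle point, and what I expect to be the hardest step, is reducing the sum $\sum_{j=1}^{n}$ to $\sum_{j=1}^{m_y}$ where $m_y$ is the largest index with $\psi_{m_y}(y) \neq 0$. This is not automatic from the construction and requires the observation that $h_k(y) = 0$ forces $Dh_k(y) = 0$: indeed, if $\varphi_k(y) = 0$ then $g_k(y) \le \delta_k^2$, so $\theta_k'(g_k(y)) = 0$ and $D\varphi_k(y) = 0$, killing the only surviving term in the product rule for $Dh_k$; whereas if $\varphi_k(y) \neq 0$ but $h_k(y) = 0$, then some $\varphi_j(y) = 1$ with $j < k$, and the same argument applied to that factor in the product kills $D h_k(y)$. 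Propagating this vanishing through the quotient defining $H_k$ shows that indices $j > m_y$ contribute trivially to $DH_k(y)$, which truncates the formula to the desired length $m_y$ and completes the verification of (2).
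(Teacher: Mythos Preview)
Your proposal is correct and follows essentially the same approach as the paper: the same LUR $C^1$ norm on $Y$, the same duality-map slices $R_x \subset P_x$, the same Bonic--Frampton construction $h_k = \varphi_k\prod_{j<k}(1-\varphi_j)$, the same extension of $H_k$ to an open set of $Y$ to compute derivatives, and the same key claim $h_k(y)=0 \Rightarrow Dh_k(y)=0$ to truncate the sum at $m_y$. You have also correctly identified this last point as the crucial ingredient for property~(2); the paper isolates it as a separate claim and proves it exactly as you sketch (using that $0$ and $1$ are extreme values of the cutoff $\theta_k$).
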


Now we are ready to start the construction of our approximating function $\varphi:S^{+}\to F$, which will be of the
form
$$
\varphi(x)=\sum_{n=1}^{\infty}\left( f(y_n) +T_{n}(x) \right) \psi_{n}(x),
$$
where the $y_n$ are the points given by condition $(4)$ of the preceding lemma, and the operators $T_{n}:Y\to F$
will be carefully defined below.

\medskip

{\bf Case 1: Assume that $F$ is infinite-dimensional.}

We will have to make repeated use of the following fact, whose proof is elementary and can be left to the interested reader.
\begin{lemma}\label{decomposition trick}
If $E$ is a Banach space which is isomorphic to $E\oplus E$ then for every finite-codimensional closed subspace $V$
of $E$, there exists a decomposition
$$
E=E_{1}\oplus E_{2}\oplus E_{3},$$ with factors $E_{1}, E_{2}, E_{3}$ isomorphic to $E$, such that $E_{1}\oplus
E_{2}\subset V$.
\end{lemma}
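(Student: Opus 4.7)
The plan is to produce the desired decomposition in two stages: first, I would construct a single complemented subspace $W\subset V$ with $W\cong E$ and whose complement in $E$ is also isomorphic to $E$; and then I would split $W$ further using the hypothesis $E\cong E\oplus E$.

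A preliminary fact that will do most of the work is the following: \emph{every closed subspace $U$ of $E$ of finite codimension $k$ is isomorphic to $E$} (and, in particular, $E\cong E\oplus\R^{k}$ for every $k\geq 0$). The argument is structural. Write $E\cong U\oplus \R^{k}$. Using $E\cong E\oplus E$, doubling gives $U\oplus\R^{k}\cong U\oplus U\oplus\R^{2k}$, and cancellation of finite-dimensional summands (a standard fact, proved by an explicit $2\times 2$ matrix computation on an isomorphism $X\oplus\R\to Y\oplus\R$) yields $U\cong U\oplus U\oplus\R^{k}\cong U\oplus E$. Hence $E$ is complemented in $U$ and $U$ is complemented in $E$ (with finite-dimensional complement), and Pelczynski's decomposition method, applied with $E\cong E\oplus E$, gives $U\cong E$.

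For the main construction I would use $E\cong E\oplus E$ to write $E=A\oplus B$ with $A,B\cong E$, and set $W:=A\cap V$. Since $W$ is closed of codimension $\leq n$ in $A\cong E$, there is a finite-dimensional $F'\subset A$ with $A=W\oplus F'$. By the preliminary fact $W\cong E$, while the complement $F'\oplus B\cong\R^{\dim F'}\oplus E\cong E$. Using $W\cong E\cong E\oplus E$ once more, I would split $W=E_{1}\oplus E_{2}$ with $E_{1},E_{2}\cong E$, and then set $E_{3}:=F'\oplus B$. The resulting decomposition $E=E_{1}\oplus E_{2}\oplus E_{3}$ has each factor isomorphic to $E$ and satisfies $E_{1}\oplus E_{2}=W\subset V$, as required.

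The main obstacle is the preliminary fact; the rest of the argument is essentially bookkeeping. Its two technical ingredients---cancellation of finite-dimensional summands and Pelczynski's decomposition method---are classical, so I expect the proof to be short once that fact is isolated.
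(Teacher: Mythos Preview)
Your proof is correct. The paper itself does not supply a proof of this lemma, stating only that ``the proof is elementary and can be left to the interested reader,'' so there is nothing to compare against directly.

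Your two-stage plan (intersect one $E$-summand with $V$, then split again) is a natural reading of what the authors probably had in mind. The preliminary fact---that every finite-codimensional closed subspace of $E$ is again isomorphic to $E$---is the real content, and your derivation via $U\cong U\oplus E$ together with Pe\l czy\'nski's decomposition method is clean. The cancellation step $X\oplus\R\cong Y\oplus\R\Rightarrow X\cong Y$ does indeed follow from a $2\times 2$ block computation (after a shear to make the scalar corner nonzero, the operator becomes block-triangular), or equivalently from the classical fact that any two closed hyperplanes of a Banach space are isomorphic; either justification is fine here. Once the preliminary fact is in hand, the rest is, as you say, bookkeeping: $W=A\cap V$ has finite codimension in $A\cong E$, hence $W\cong E$, its complement $F'\oplus B\cong \R^{\dim F'}\oplus E\cong E$, and a final split $W=E_1\oplus E_2$ finishes the job.
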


We start considering a decomposition
$$
E=E_{1,1}\oplus E_{1,2},
$$
with infinite-dimensional factors isomorphic to $E$, and we define a continuous linear surjection $S_1:E\to F$ such
that $S_1=0$ on $E_{1,2}$. This can be done by taking a continuous linear surjection $R_1:E_{1,1}\to F$ (which
exists because by assumption there exists such an operator from $E$ onto $F$ and $E_{1,1}$ is isomorphic to $E$),
and setting $S_1=R_1\circ P_{1,1}$, where $P_{1,1}:E_{1,1}\oplus E_{1,2}\to E_{1,1}$ is the projection onto the
first factor associated to this decomposition of $E$. Next, recall that the linear functionals $g_{n}\in
Y^{*}=(E\times\R)^{*}$ are of the form
$$
g_{j}(u,t)=g_{j}^{1}(u)+ g_{j}^{2}t,
$$
where $g_{j}^{1}\in E^{*}$ and $g_{j}^{2}\in\R$. Of course, $\bigcap_{j=1}^{2} \textrm{Ker}\, g_{j}^{1}$ is a
finite-codimensional subspace of $E$, so by using Lemma \ref{decomposition trick} (2) with
$V=E_{1,2}\cap\bigcap_{j=1}^{2}\textrm{Ker}(g_{j}^{1})$, and with $E_{1,2}$ in place of $E$, we may find a
decomposition of the second factor $E_{1,2}$,
$$
E_{1,2}= E_{2,1}\oplus E_{2,2} \oplus  E_{2,3},
$$
with factors $E_{2,1}$, $E_{2,2}$  and $E_{2,3}$ isomorphic to $E$, and
$$
E_{2,1} \oplus E_{2,2}\subset \bigcap_{j=1}^{2} \textrm{Ker}\, g_{j}^{1},
$$
and we may easily define a bounded linear operator $S_{2}$ from $E$ onto $F$ such that $S_2=0$ on $E_{1,1}\oplus
E_{2,2}\oplus E_{2,3}$ (this can be done by taking a surjective operator $R_{2}: E_{2,1}\to F$ and defining
$S_2=R_2\circ P_{2,1}\circ P_{1,2}$, where $P_{1,2}:E_{1,1}\oplus E_{1,2}\to E_{1,2}$ and $P_{2,1}: E_{2,1}\oplus
E_{2,2}\oplus E_{2,3}\to E_{2,1}$ are the projections associated to the corresponding decompositions).

We continue this process by induction: assuming that we have already defined decompositions $E=E_{1,1}\oplus
E_{1,2}$, \, $E_{1,2}=E_{2,1}\oplus E_{2,2}\oplus E_{2,3}$, \, $E_{2,3}=E_{3,1}\oplus E_{3,2}\oplus E_{3,3}$, ...,
$E_{n-1,3}=E_{n,1}\oplus E_{n,2}\oplus E_{n,3}$, and surjective operators
$$
S_{k}:E=E_{1,1}\oplus( E_{2,1}\oplus E_{2,2})\oplus ... \oplus (E_{k-1,1}\oplus E_{k-1,2})\oplus( E_{k,1}\oplus
E_{k,2}\oplus E_{k,3})\to F, \,\,\, k=2, ..., n,
$$
so that $S_k$ is zero on all the factors of this decomposition except $E_{k,1}$, and
$$
E_{k,1}\oplus E_{k,2}\subset\bigcap_{j=1}^{k}\textrm{Ker}(g_{j}^{1}),
$$
we again apply Lemma \ref{decomposition trick} (2) to write
$$
E_{n,3}=E_{n+1, 1}\oplus E_{n+1, 2}\oplus E_{n+1,3 },
$$
with factors isomorphic to $E$ and
$$
E_{n+1,1}\oplus E_{n+1,2}\subset\bigcap_{j=1}^{n+1}\textrm{Ker}(g_{j}^{1}),
$$
and we define a continuous linear surjection
$$
S_{n+1}:E=E_{1,1}\oplus (E_{2,1}\oplus E_{2,2})\oplus ... \oplus (E_{n,1}\oplus E_{n,2})\oplus (E_{n+1,1}\oplus
E_{n+1,2}\oplus E_{n+1,3})\to F,
$$
by setting it equal to $0$ on all the factors of this decomposition except $E_{n+1,1}$, which is mapped onto $F$.

Having this collection of surjective operators $S_{n}:E\to F$ at our disposal, we finally define $T_{n}:Y\to F$ by
$$
T_{n}(u,t)= \frac{\varepsilon(y_n)}{4\|S_{n}\|}\, S_{n}(u),
$$
and $\varphi:S^{+}\to F$ by
$$
\varphi(x)=\sum_{n=1}^{\infty}\left( f(y_n) +T_{n}(x) \right) \psi_{n}(x).
$$
It is clear that $\varphi$ is well defined and of class $C^1$.

\medskip

In the rest of the proof we will check that this mapping $\varepsilon$-approximates $f$ on $S^{+}$, and that the
set of critical points of $\varphi$ is a set which can be diffeomorphically extracted by using Theorem \ref{final
extractibility theorem rough version general form}. Then the proof will be completed by setting $g=\varphi\circ h$,
where $h:S^+\to S^+\setminus C_{\varphi}$ is a $C^{1}$ diffeomorphism which is close enough to the identity.
\medskip

\begin{claim}\label{varphi epsilon approximates f}
The mapping $\varphi$ approximates $f$.
\end{claim}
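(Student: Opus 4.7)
The plan is to fix $x\in S^+$, use the partition of unity to write $f(x)-\varphi(x)$ as a convex combination indexed by the (locally finitely many) $n$ with $\psi_n(x)>0$, and then bound each term by controlling $\|f(x)-f(y_n)\|$ via property (3) of Lemma \ref{properties of the partition of unity on the upper sphere reflexive case} and $\|T_n(x)\|$ via the explicit normalization in the definition of $T_n$.

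First I would write
\[
\varphi(x)-f(x)\;=\;\sum_{n=1}^{\infty}\bigl(f(y_n)-f(x)+T_n(x)\bigr)\,\psi_n(x),
\]
using $\sum_n\psi_n(x)=1$. The sum is in fact finite since the family $\{\psi_n\}$ is locally finite. For each index $n$ with $\psi_n(x)>0$, condition (1) of Lemma \ref{properties of the partition of unity on the upper sphere reflexive case} gives $x\in P_n$, and condition (3) then yields both
\[
\|f(y_n)-f(x)\|\le\frac{\varepsilon(y_n)}{16}\qquad\text{and}\qquad \varepsilon(y_n)\le\tfrac{16}{15}\,\varepsilon(x).
\]

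Next I would bound $\|T_n(x)\|$. Writing $x=(u,t)\in S^+\subset E\times\mathbb{R}$, we have $\|u\|\le 1$ because $\|u\|^2+t^2=1$. By the definition
\[
T_n(u,t)=\frac{\varepsilon(y_n)}{4\,\|S_n\|}\,S_n(u),
\]
it follows immediately that $\|T_n(x)\|\le \varepsilon(y_n)/4$ for every $x\in S^+$. Combining the two estimates, for each $n$ with $\psi_n(x)>0$ we obtain
\[
\|f(y_n)-f(x)+T_n(x)\|\;\le\;\frac{\varepsilon(y_n)}{16}+\frac{\varepsilon(y_n)}{4}\;=\;\frac{5\varepsilon(y_n)}{16}\;\le\;\frac{5}{16}\cdot\frac{16}{15}\,\varepsilon(x)\;=\;\frac{\varepsilon(x)}{3}.
\]

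Finally I would sum up using $\sum_n\psi_n(x)=1$ to conclude
\[
\|\varphi(x)-f(x)\|\;\le\;\sum_{n=1}^{\infty}\|f(y_n)-f(x)+T_n(x)\|\,\psi_n(x)\;\le\;\frac{\varepsilon(x)}{3}\,\sum_{n=1}^{\infty}\psi_n(x)\;=\;\frac{\varepsilon(x)}{3}\;\le\;\frac{\varepsilon(x)}{2},
\]
which is the bound required for Step 1 of the outline of the proof of Theorem \ref{Main result for classical Banach}. There is no real obstacle here: the only point one must keep track of is the passage from $\varepsilon(y_n)$ to $\varepsilon(x)$ via the factor $16/15$, which is exactly why the partition of unity in Lemma \ref{properties of the partition of unity on the upper sphere reflexive case} was chosen with slices $P_n$ on which the oscillation of $\varepsilon$ is so tightly controlled.
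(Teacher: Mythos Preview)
Your proof is correct and follows essentially the same approach as the paper's: write $\varphi(x)-f(x)$ as a convex combination via $\sum_n\psi_n(x)=1$, bound each term using the oscillation estimate on $P_n$ and the normalization $\|T_n(x)\|\le\varepsilon(y_n)/4$, and then pass from $\varepsilon(y_n)$ to $\varepsilon(x)$ using the $15/16$--$17/16$ inequalities. The only cosmetic difference is in the final constant (you get $\varepsilon(x)/3$, the paper writes $\varepsilon(x)/2$ via a slightly different intermediate step), which is immaterial.
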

\begin{proof}
By condition $(3)$ of Lemma \ref{properties of the partition of unity on the upper sphere reflexive case}, we know
that the oscillation of $f$ in $P_n$ is less that $\varepsilon(y_n)/16$, and by definition of $T_n$, we have
$\|T_n\|\leq \varepsilon(y_n)/4$, hence $\|T_n(x)\|\leq \varepsilon(y_n)/4$ for all $x\in S^{+}$ too, because
$\|x\|=1$. Now, if $\psi_n(x)\not=0$, then $x\in P_n$ and
\begin{align}\label{akrk-F}
\|f(y_n)+T_n(x)-f(x)\|& \leq\|f(y_n)-f(x)\|+\|T_n(x)\| \\
\notag &\leq \varepsilon(y_n)/16 +\varepsilon(y_n)/4=\frac{5}{16}\varepsilon(y_n)<\frac{15}{32}\varepsilon(y_n)\leq
\varepsilon(x)/2.
\end{align}
Therefore
\begin{align*}
\|\varphi(x)-f(x)\|=\bigl\|\sum_{n=1}^\infty(f(y_n)+T_n(x)-f(x))\,\psi_n(x) \bigr\|\leq \sum_{n=1}^\infty
\|f(y_n)+T_n(x)-f(x)\|\,\psi_n(x)\leq \varepsilon(x)/2.
\end{align*}
\end{proof}

\medskip

Let us now consider the question as to how big the critical set $C_{\varphi}$ can be. We need to calculate the
derivative of our function $\varphi$. To this end we first have to examine the expressions for the derivatives of
the operators $T_n$ restricted to $S^{+}$. These are simpler than those of the $g_n$'s on $S^{+}$, because of the
way the operators $T_n :Y\to F$ have been defined. Indeed, for every $v=(u, L_y(u))\in T_{y}S^{+}$ we have that
\begin{equation}\label{formula for Tn restricted to TyS reflexive case}
T_{n}(v)=T_{n} (u, L_{y}(u))=\frac{\varepsilon(y_n)}{4\|S_n\|} \, S_{n}(u),
\end{equation}
and we have that $D({T_{n}}_{|_{S^{+}}})(y)$ is the restriction of $DT_n(y)=T_n$ to $T_{y}S^{+}$, that is to say,
if $v=(u, L_y(u))\in T_{y}S^{+}$ then
\begin{equation}\label{formula for DTn restricted to TyS reflexive case}
DT_{n}(y)(u, L_{y}(u))=\frac{\varepsilon(y_n)}{4\|S_n\|} \, S_{n}(u).
\end{equation}
Now, recall that, by condition $(2)$ of Lemma \ref{properties of the partition of unity on the upper sphere
reflexive case}, for every $x\in S^{+}$ there is a neighborhood $V_x$ of $x$ in $S^{+}$ and a number $n=n_{x}\in\N$
such that
$$
\varphi(y)=\sum_{j=1}^{n}\left( f(y_j) +T_{j}(y) \right) \psi_{j}(y)
$$
for every $y\in V_x$. Fix $y\in V_x$ and recall that for $m=m_y$, the largest number $j$ for which $h_j(y)\neq 0$
(or $\psi_j(y)\neq 0$), we have
$$
\varphi(y)=\sum_{j=1}^{m}\left( f(y_j) +T_{j}(y) \right) \psi_{j}(y).
$$
By using \eqref{formula for DTn restricted to TyS reflexive case} and the expression for $D\psi_j(y)$ given in
Lemma \ref{properties of the partition of unity on the upper sphere reflexive case}, we easily see that
\begin{align}\label{expression for the derivative of varphi reflexive case}
D\varphi(y)(u, L_{y}(u))&=\sum_{j=1}^{n}\psi_{j}(y)\, \frac{\varepsilon(y_j)}{4\|S_j\|}\, S_{j}(u) +
\sum_{j=1}^{n}\left(g_{j}^{1}(u)+ g_{j}^{2}L_{y}(u)\right)
\alpha_{n, j}(y)= \nonumber \\
&=\sum_{j=1}^{m}\psi_{j}(y)\, \frac{\varepsilon(y_j)}{4\|S_j\|}\, S_{j}(u) + \sum_{j=1}^{m}\left(g_{j}^{1}(u)+
g_{j}^{2}L_{y}(u)\right) \alpha_{m, j}(y)
\end{align}
for every $(u, L_{y}(u))\in T_{y}S^{+}$, $y\in V_x$, where the functions $\alpha_{n,j}:V_x\subset S^{+}\to F$  are
of class $C^1$ because $\alpha_{n,j}(y)=\sum_{i=1}^n(f(y_i) +T_{i}(y))\sigma_{i,j}(y)$.

\medskip

Let us now show that the critical set of $\varphi$ is relatively small.

When $n_x=1$ we have $\varphi(y)=f(y_1)+T_{1}(y)$ on $V_x$, so $D\varphi(y)$ is the restriction of $T_1$ to
$T_{y}S^{+}$, and equation \eqref{formula for Tn restricted to TyS reflexive case} for $n=1$ implies that this
restriction is a surjective operator. So it is clear that $\varphi$ has no critical point on $V_x$.

\begin{claim}\label{Cvarphi is locally contained in Ax}
If $n_x\geq 2$ then $C_{\varphi}\cap V_{x}$ is contained in the set
$$
A_{x}:= \left\{ y\in S^{+} : E_{n,2}\subset \textrm{Ker}\, L_{y}\right\}.
$$
Recall that $L_y=Ds(u_y)$, where $s(u)=\sqrt{1-\|u\|^2}$, $d(u)=(u,s(u))$, and $u_y=d^{-1}(y)$.
\end{claim}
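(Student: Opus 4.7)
The plan is to prove the contrapositive: fix $y \in V_x$ with $y \notin A_x$ (so there exists $u_0 \in E_{n,2}$ satisfying $L_y(u_0) \neq 0$) and show that $D\varphi(y):T_y S^+ \to F$ is surjective, hence $y \notin C_\varphi$. The main tools are the derivative formula \eqref{expression for the derivative of varphi reflexive case} together with the algebraic annihilation identities built into the inductive construction of the subspaces $E_{k,i}$ and the operators $S_k$.

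First I will record the vanishing identities that the construction guarantees. By construction one has $E_{k,1}\oplus E_{k,2}\subset\bigcap_{j=1}^{k}\mathrm{Ker}(g_j^1)$, and each $S_k$ annihilates every factor of the running decomposition except $E_{k,1}$, on which it restricts to a continuous linear surjection onto $F$. Combined with the nested inclusions $E_{n,2}\subset E_{n-1,3}\subset\cdots\subset E_{m,3}$ (where $m := m_y \le n_x = n$), this yields $g_j^1(u_0) = 0$ and $S_j(u_0) = 0$ for every $j \le m$. Similarly, every $u_1 \in E_{m,1}$ satisfies $g_j^1(u_1) = 0$ for $j\le m$ and $S_j(u_1) = 0$ for $j < m$, while $S_m|_{E_{m,1}}$ is a surjection onto $F$.

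Next I will evaluate $D\varphi(y)$ on test vectors of the form $u = u_1 + t u_0$ with $u_1 \in E_{m,1}$ and $t \in \R$ (the decomposition is unambiguous because $E_{m,1}\cap E_{n,2} = \{0\}$). Substituting into \eqref{expression for the derivative of varphi reflexive case} and invoking the vanishing identities above, the entire $S_j$-sum collapses to the single contribution $\psi_m(y)\frac{\varepsilon(y_m)}{4\|S_m\|}S_m(u_1)$, and the sum involving $g_j^1(u)+g_j^2 L_y(u)$ reduces to $L_y(u)\,\beta(y)$, where $\beta(y):=\sum_{j=1}^{m}g_j^2\,\alpha_{m,j}(y)\in F$. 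Writing $c:=\psi_m(y)\varepsilon(y_m)/(4\|S_m\|)$, which is strictly positive since $\psi_m(y) > 0$ by the very definition of $m = m_y$, we obtain
$$D\varphi(y)(u, L_y(u)) \;=\; c\, S_m(u_1) + \bigl(L_y(u_1) + t\, L_y(u_0)\bigr)\,\beta(y).$$

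To realize an arbitrary $w \in F$, I will first use surjectivity of $c\,S_m|_{E_{m,1}}$ to pick $u_1 \in E_{m,1}$ with $c\, S_m(u_1) = w$, and then set $t := -L_y(u_1)/L_y(u_0)$, which is well-defined by the choice of $u_0$. The scalar contributions in the bracket cancel, giving $D\varphi(y)(u, L_y(u)) = w$; thus $D\varphi(y)$ is surjective and $y$ is a regular point of $\varphi$, completing the contrapositive. The only delicate part of the argument is the combinatorial bookkeeping: verifying the chain of inclusions $E_{n,2}\subset E_{j,3}$ and $E_{m,1}\subset E_{j,3}$ so that the vanishing identities for both $u_0$ and $u_1$ remain valid at every index $j\leq m$, even when $m$ is strictly smaller than $n$.
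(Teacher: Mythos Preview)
Your proof is correct and follows essentially the same route as the paper's: the contrapositive, the test vector $u=u_1+t\,u_0$ with $u_1\in E_{m,1}$ and $u_0\in E_{n,2}$, and the cancellation of all terms in \eqref{expression for the derivative of varphi reflexive case} except $c\,S_m(u_1)$ via the kernel inclusions $E_{k,1}\oplus E_{k,2}\subset\bigcap_{j\le k}\mathrm{Ker}\,g_j^1$ and the support properties of the $S_k$. Your packaging of the residual second sum as $L_y(u)\,\beta(y)$ before solving for $t$ is a clean touch, but the underlying argument is identical to the paper's.
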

\begin{proof}
Let us see that, if $y\in V_x\setminus A_{x}$ then $D\varphi(y):T_yS^+\to F$ is surjective, that is, for every
$w\in F$ there exists $v\in T_{y}S^{+}$ such that $D\varphi(y)(v)=w$. Let $m=m_y$ be the largest number such that
$\psi_m(y)\neq 0$. Recall that $m\leq n$.  Since the operator
$$
S_{m}:E=E_{1,1}(\oplus E_{2,1}\oplus E_{2,2})\oplus ... \oplus (E_{m-1,1}\oplus E_{m-1,2})\oplus( E_{m,1}\oplus
E_{m,2}\oplus E_{m,3})\to F
$$
is surjective and equal to zero on all the factors of this decomposition except $E_{m,1}$ (which is mapped onto
$F$), we may find $u_{m,1}\in E_{m,1}$ so that
$$
S_{m}(u_{m,1})=4\varepsilon(y_m)^{-1}\psi_{m}(y)^{-1}\|S_{m}\| \, w.
$$
Now, since $y\notin A_x$ there exists some $e_{n,2}\in E_{n,2}\setminus \textrm{Ker }L_y$, that is to say,
$L_{y}(e_{n,2})\neq 0$, and this implies that, if we put
$$
t_{0}:=-\frac{L_{y}(u_{m,1})}{L_{y}(e_{n,2})},
$$ then the vector
$$
u:= u_{m,1}+t_0 e_{n,2},
$$
satisfies that
$$
L_{y}(u)=0.
$$
But recall that, for every $k\le n$, we have  $E_{k,1}\oplus E_{k,2}\subset
\bigcap_{j=1}^{k}\textrm{Ker}(g_{j}^1)$; in particular, $E_{m,1}\oplus E_{m,2}\subset
\bigcap_{j=1}^{m}\textrm{Ker}(g_{j}^1)$ because $m\leq n$. Hence, $g_j^1(u)=0$ for every $1\le j\le m$. It follows
that $$\sum_{j=1}^{m}\left(g_{j}^{1}(u)+ g_{j}^{2}L_{y}(u)\right) \alpha_{m, j}(y)=0.$$

The rest of the operators $S_1, ..., S_{m-1}$ are zero on $E_{m, 1}\oplus E_{n,2}$, so we have
$$
S_{j}(u)=0 \textrm{ for every } j=1, ..., m-1,
$$
and since $S_m$ is zero on $E_{n,2}\subset E_{m,3}$ we also have $S_{m}(t_{0}e_{n,2})=0$. Therefore, by combining
these equalities with equation \ref{expression for the derivative of varphi reflexive case}, we obtain that
$$
D\varphi(y)(u, L_{y}(u))=w,
$$
and the proof of the claim is complete.
\end{proof}

\medskip

\begin{lemma}\label{Ax is a good graph}
For $x\in S^+$ with $n:=n_x\ge2$, the set $A_x$ of Claim \ref{Cvarphi is locally contained in Ax} is of the form
$$
A_x=d(G(f_x)\cap B_E),
$$
where $G(f_x)$ is the graph of a continuous mapping $f_x:E_{1,1}\oplus(E_{2,1}\oplus E_{2,2})\oplus \cdots \oplus
(E_{n,1}\oplus E_{n,3}) \to E_{n,2}$.
\end{lemma}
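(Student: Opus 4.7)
The plan is to parameterize $S^+$ via the diffeomorphism $d:B_E\to S^+$, translate the condition defining $A_x$ into a condition on $u_y:=d^{-1}(y)\in B_E$, and then use reflexivity together with the LUR and $C^1$ properties of $\|\cdot\|$ to exhibit the relevant subset of $B_E$ as the graph of a continuous function.

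First I would write $E=E_1'\oplus E_{n,2}$ with $E_1':=E_{1,1}\oplus(E_{2,1}\oplus E_{2,2})\oplus\cdots\oplus(E_{n,1}\oplus E_{n,3})$, and use the explicit formula for $L_y$ recalled in the statement,
\[
L_y(w)=-\frac{\|u_y\|}{\sqrt{1-\|u_y\|^2}}\,D\|\cdot\|(u_y)(w)\quad\text{if }u_y\neq 0,\qquad L_{(0,1)}\equiv 0,
\]
to observe that $E_{n,2}\subset\ker L_y$ is equivalent to either $u_y=0$ or $D\|\cdot\|(u_y)|_{E_{n,2}}\equiv 0$. So $A_x=d(\{u\in B_E : u=0\text{ or }D\|\cdot\|(u)|_{E_{n,2}}\equiv 0\})$, and the task reduces to showing that this subset of $B_E$ is the graph of a continuous function $f_x:E_1'\to E_{n,2}$ intersected with $B_E$.

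Next, for each $u_1\in E_1'$ consider the function $\Phi_{u_1}:E_{n,2}\to[0,\infty)$, $\Phi_{u_1}(u_2):=\|u_1+u_2\|^2$. Since $\|\cdot\|$ is LUR it is in particular strictly convex, so $\Phi_{u_1}$ is strictly convex; it is coercive on $E_{n,2}$ because $\Phi_{u_1}(u_2)\ge(\|u_2\|-\|u_1\|)^2$, and $E_{n,2}$ is reflexive as a closed subspace of the reflexive space $E$. Hence $\Phi_{u_1}$ attains a unique minimum, and I define $f_x(u_1)$ to be this minimizer. By strict convexity the minimizer is the unique critical point, i.e.\ the unique $u_2\in E_{n,2}$ with $D\Phi_{u_1}(u_2)=0$, which, since $\|\cdot\|$ is $C^1$ away from $0$ and $u_1+u_2\ne 0$ except when $u_1=0=u_2$, is equivalent to $D\|\cdot\|(u_1+u_2)|_{E_{n,2}}\equiv 0$. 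Moreover $f_x(0)=0$ trivially, so the two cases $u_y=0$ and $D\|\cdot\|(u_y)|_{E_{n,2}}\equiv 0$ collapse into the single condition $u_y=(\pi_1 u_y,\,f_x(\pi_1 u_y))\in G(f_x)$. This yields the set-theoretic equality $A_x=d(G(f_x)\cap B_E)$.

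The main obstacle is the continuity of $f_x$. I would argue as follows: given $u_1^{(k)}\to u_1$ in $E_1'$, the minimality of $f_x(u_1^{(k)})$ gives $\|u_1^{(k)}+f_x(u_1^{(k)})\|\le\|u_1^{(k)}\|$, so $\{f_x(u_1^{(k)})\}$ is bounded, and by reflexivity some subsequence satisfies $f_x(u_1^{(k_j)})\rightharpoonup v\in E_{n,2}$ (the subspace is weakly closed). From weak lower semicontinuity of the norm and the comparison $\|u_1^{(k_j)}+f_x(u_1^{(k_j)})\|\le\|u_1^{(k_j)}+f_x(u_1)\|\to\|u_1+f_x(u_1)\|$, one obtains $\|u_1+v\|\le\|u_1+f_x(u_1)\|$, whence $v=f_x(u_1)$ by uniqueness. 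Combining the two inequalities, $\|u_1^{(k_j)}+f_x(u_1^{(k_j)})\|\to\|u_1+f_x(u_1)\|$; since LUR norms are Kadec (weak convergence together with convergence of norms forces strong convergence), we conclude $u_1^{(k_j)}+f_x(u_1^{(k_j)})\to u_1+f_x(u_1)$, hence $f_x(u_1^{(k_j)})\to f_x(u_1)$. A standard subsequence-of-a-subsequence argument upgrades this to convergence of the whole sequence, proving that $f_x$ is continuous and completing the lemma.
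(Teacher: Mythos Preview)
Your proposal is correct and follows essentially the same route as the paper: translate $A_x$ via $d^{-1}$ into a condition on $D\|\cdot\|$, define $f_x(u_1)$ as the unique minimizer of $u_2\mapsto\|u_1+u_2\|^2$ on the reflexive subspace $E_{n,2}$ (existence by coercivity plus reflexivity, uniqueness by strict convexity), and prove continuity via boundedness, weak compactness, weak lower semicontinuity, and the Kadec--Klee property of the LUR norm. The only cosmetic difference is that the paper phrases the continuity argument as a proof by contradiction whereas you use a direct subsequence-of-a-subsequence argument; the substance is identical.
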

\begin{proof}
Note that, by Claim \ref{Cvarphi is locally contained in Ax},
$$
A_x=\{d(u)=\left(u, \sqrt{1-\|u\|^2}\right) : \|u\|<1, u\in\mathcal{A}_x\},
$$
where
$$
\mathcal{A}_x := \bigcap_{e\in E_{n,2}} \left\{u\in E\setminus \{0\} \, : \, \langle D\|\cdot\| (u), e\rangle
=0\right\}\cup\{0\}.
$$

Let us denote $E_{n,2}' =E_{1,1}\oplus (E_{2,1}\oplus E_{2,2 })\oplus\cdots \oplus (E_{n,1}\oplus E_{n,3})$, and
let us see that there exists a mapping $f_x: E_{n,2}'\to E_{n,2}$ such that $\mathcal{A}_x=G(f_x)=\{ w+f_x(w) :
w\in E_{n,2}'\}$.

Pick a point $w\in E_{n,2}'$. Note that the function $E_{n,2}\ni v\mapsto\psi_{w}(v) := \|w+v\|^2$ is convex and
continuous, and satisfies $\lim_{\|v\|\to\infty}\psi_{w}(v)=\infty$, hence, since $E_{n,2}$ is reflexive,
$\psi_{w}$ attains a minimum at some point $v_{w}\in E_{n,2}$; in fact this minimum point $v_w$ is unique because
the norm $\|\cdot\|$ is strictly convex. Let us denote
$$
f_x(w):=v_{w}.
$$
Note that the critical points of $\psi_{w}$, with $w\neq 0$, are exactly the points $v\in E_{n,2}$ such that
$$
{\frac{d}{dt} \|w+v+te\|^{2} \,}{|_{t=0}}=0 \textrm{ for every } e\in E_{n,2}
$$
or equivalently
$$
\|w+v\| \, \langle D\|\cdot\|(w+v), e\rangle=0 \textrm{ for every } e\in E_{n,2},
$$
which in turn is equivalent to saying that $w+v\in \mathcal{A}_x$; we let $f_x(0)=v_{0}=0$.

Therefore the unique point $v\in E_{n,2}$ so that $w+v\in \mathcal{A}_x$ is the point $v=f_x(w)$. This shows that
$\mathcal{A}_x$ is the graph of the function $f_x$.

Now let us see that the function $f_x: E_{n,2}'\to E_{n,2}$ is continuous. Suppose $f_x$ is discontinuous at $w_0$
and let $v_0:=f_x(w_0)$. Then there exist sequences $w_k\to w_0$ in $E_{n,2}'$ and $v_{k}:=f_x(w_k)$ in $E_{n,2}$
and a number $\varepsilon_0>0$ so that
\begin{equation}\label{vk does not converge to v0}
\| v_k -v_0 \| \geq \varepsilon_0 \textrm{ for all } k\in\N.
\end{equation}

From the previous argument we know that the point $v_k$ is characterized as being the unique point $v_k \in
E_{n,2}$ for which we have
\begin{equation}\label{characterization of vk}
\| w_k +v_k\|\leq \|w_k +v_k +e\| \textrm{ for all } e\in E_{n,2},
\end{equation}
and similarly $v_0$ is the unique point $v_0\in E_{n,2}$ for which
\begin{equation}\label{characterization of v0}
\| w_0 +v_0\|\leq \|w_0 +v_0 +e\| \textrm{ for all } e\in E_{n,2}.
\end{equation}
By taking $e=-v_k$ in \eqref{characterization of vk} we learn that
$$
\|v_k\|-\|w_k\|\leq \|w_k +v_k\|\leq \|w_k\|,
$$
hence $\|v_k\|\leq 2\|w_k\|$, and because $\|w_k\|$ converges to $\|w_0\|$ we deduce that $(v_k)$ is bounded. Since
$E_{n,2}$ is reflexive, this implies that $(v_k)$ has a subsequence that weakly converges to a point $\xi_0\in
E_{n,2}$. We keep denoting this subsequence by $(v_k)$.

Now, if we take $e=- v_k+e'$ in \eqref{characterization of vk}, with $e'\in E_{n,2}$, we obtain
$$
\|w_k+ v_k\|\leq  \|w_k +e'\| \textrm{ for all } e'\in E_{n,2}.
$$
This implies (using the facts that $v_k \rightharpoonup \xi_0$ and $w_k\to w_0$, and the weak lower semicontinuity
of the norm) that
\begin{equation}\label{liminf and vk}
\|w_0 +\xi_0\| \leq \liminf_{k\to \infty}\|w_k+ v_k\|\leq \liminf_{k\to\infty} \|w_k +e'\|=\|w_0+e'\| \textrm{ for
all } e'\in E_{n,2}.
\end{equation}
That is, we have shown that
\begin{equation}
\|w_0 +\xi_0\|\leq \|w_0+e'\| \textrm{ for all } e'\in E_{n,2}.
\end{equation}
By taking $e'=\xi_0 +\xi$ with $\xi\in E_{n,2}$ we conclude that
$$
\|w_0+\xi_0\|\leq \|w_0+\xi_0 +\xi\| \textrm{ for all } \xi\in E_{n,2}.
$$
According to \eqref{characterization of v0}, $v_0$ is the only point which can satisfy this inequality. Hence
$\xi_0=v_0$.

But \eqref{liminf and vk} tells us even more: by taking $e'=\xi_0$ we also learn that there exists a subsequence
$(w_{k_j})$ of $(w_k)$ such that
$$
\|w_{k_j}+v_{k_j}\|\to \|w_0 + \xi_0\|.
$$
Since we also know that $w_{k_j}+v_{k_j}$ converges to $w_0+\xi_0$ weakly and the norm $\|\cdot\|$ is locally
uniformly convex (hence $\|\cdot\|$ has the Kadec-Klee property), this implies that $w_{k_j}+v_{k_j}$ converges to
$w_0+\xi_0 =w_0+v_0$ in the norm topology as well. As we also have $\lim_{j\to\infty} w_{k_j}=w_0$ in norm, we
deduce that $\lim_{j\to\infty}\|v_{k_j}-v_0\|=0$, which contradicts \eqref{vk does not converge to v0}.
\end{proof}

\medskip

Now we can easily finish the proof of Theorem \ref{Main theorem for reflexive spaces}. By Claim \ref{Cvarphi is
locally contained in Ax} and Lemma \ref{Ax is a good graph}, we see that $C_{\varphi}$ is a diffeomorphic image in
$S^{+}$ of a relatively closed set $Z$ of the open unit ball $B_E$ of $E$ which has the property of being locally
contained in the graph of a continuous function defined on a complemented subspace of infinite codimension in $E$.
Indeed, let $Z:=d^{-1}(C_{\varphi})\subset B_E$. Since $C_{\varphi}$ is closed in $S^{+}$,  $Z$ is relatively
closed in $B_{E}$. Also if we take $z\in Z$ then, according to  Lemma \ref{Ax is a good graph} applied to
$x=d(z)\in S^+$, $n=n_x$, and $V_x$, for a neighborhood $U_z:=d^{-1}(V_x)$ of $z$,  we have $Z\cap U_z\subseteq
G(f_x)$, where
$$G(f_x)=\{u=(w,v)\in E'_{n,2}\oplus E_{n,2}=E:\,  v=f_x(w)\}.$$

Observe that $E$ has $C^1$ smooth partitions of unity since $E$ has a separable dual. Therefore we may apply
Theorem \ref{final extractibility theorem rough version general form} to find a $C^1$ diffeomorphism which extracts
$C_{\varphi}$ from $S^+$; more precisely, there exists a diffeomorphism $h:S^{+}\to S^+\setminus C_{\varphi}$
which, in addition, is limited by the open cover $\mathcal{G}$ that we next define. Recall that we have
\begin{equation}\label{varphi approximates up to varepsilon halves}
\|\varphi(x)-f(x)\|\leq \varepsilon(x)/2
\end{equation}
for all $x\in S^+$. Since $\varphi$ and $\varepsilon$ are continuous, for every $z\in S^+$ there exists
$\delta_{z}>0$ so that if $x,y\in B(z,\delta_{z})$ then $\|\varphi(y)-\varphi(x)\|\leq
\varepsilon(z)/4\leq\varepsilon(x)/2$. We set $\mathcal{G}=\{B(x,\delta_{x})\, :\, x\in S^{+}\}$.

Finally, let us define $$g=\varphi\circ h.$$

Since $h$ is limited by $\mathcal{G}$ we have that, for any given $x\in S^+$, there exists $z\in S^+$ such that $x,
h(x)\in B(z,\delta_{z})$, and therefore $\|\varphi(h(x))-\varphi(x)\|\leq\varepsilon(z)/4,$ that is, we have that
      $$
      \|g(x)-\varphi(x)\|\leq\varepsilon(z)/4\leq\varepsilon(x)/2.
      $$
By combining this inequality with \eqref{varphi approximates up to varepsilon halves}, we obtain that
$$
\|g(x)-f(x)\|\leq \varepsilon(x)
$$ for all $x\in S^{+}$.
Besides, it is clear that $g$ does not have any critical point: since $h(x)\notin C_{\varphi}$, we have that the
linear map $D\varphi(h(x)) :T_{h(x)}S^{+}\to F$ is surjective, and $Dh(x): T_{x}S^{+}\to T_{h(x)}S^{+}$ is a linear
isomorphism, so $Dg(x)=D\varphi(h(x))\circ D h(x)$ is a linear surjection from $T_{x}S^{+}$ onto $F$\, for every
$x\in S^{+}$.

\medskip

{\bf Case 2: Assume that $F=\R^m$.} The main idea of the proof is very similar to that of Case 1. The fact
that $F$ is finite dimensional will allow us dispense with the hypothesis that $E=E\oplus E$. We will use the same partition of unity $\{\psi_{n}\}_{n\in\N}$ provided by Lemma \ref{properties of the partition of
unity on the upper sphere reflexive case}. We will decompose $E$ inductively as follows. Since $Ker g^{1}_{1}$ has
infinite dimension we can write
$$E=E_1\oplus G_1,$$
where $E_1=\R^m$ and $G_1\subseteq \textrm{Ker} g^{1}_{1}$. Then $G_1\cap\bigcap^{2}_{j=1} \textrm{Ker} g^{1}_{j}$
has codimension $0$ or $1$ in $G_1$, which is infinite-dimensional, and we can write
$$E=E_1\oplus (E_{2,1}\oplus E_{2,2}\oplus G_2),$$
where $E_{2,1}=\R^m$, $E_{2,2}=\{0\}$ or $E_{2,2}=\R$,  $G_1=E_{2,1}\oplus E_{2,2}\oplus G_2$ for some $G_2$
with $\dim G_2=\infty$, and
$$E_{2,1}\oplus G_2=\bigcap^{2}_{j=1} \textrm{Ker}g^{1}_{j}\cap G_1 \subseteq \bigcap^{2}_{j=1}\textrm{Ker}
g^{1}_{j}.$$ Inductively, we can write
\begin{equation}\label{decomposition in the case F = Rm}
E=E_1\oplus (E_{2,1}\oplus E_{2,2})\oplus\cdots\oplus (E_{n-1,1}\oplus E_{n-1,2})\oplus(E_{n,1}\oplus E_{n,2}\oplus
G_n),
\end{equation}
where $E_1,E_{2,1}\dots, E_{n,1}=\R^m$, $E_{2,2},\dots, E_{n,2}$ are subspaces of dimension $0$ or $1$,
$$E_{n,2}\oplus G_n\subseteq \bigcap^{n}_{j=1}\textrm{Ker} g^{1}_{j},$$ and $$G_k=(E_{k+1,1}\oplus E_{k+1,2}\oplus
G_{k+1})$$ for every $k=1,\dots,n$.

Now, for each $n\in\N$, we define a continuous linear surjection $S_n:E\to F$ by setting it to be $0$ on all the
factors of the decomposition \eqref{decomposition in the case F = Rm} except on $E_{n,1}$, which is mapped onto
$F=\R^m$, and we construct our approximating function $\varphi$ exactly as in the proof of Theorem 1.6. At this
point, we only need to show the following variant of Claim \ref{Cvarphi is locally contained in Ax} (in which $G_n$
replaces the subspace $E_{n,2}$ of the previous proof).

\begin{claim}\label{Cvarphi is locally contained in Ax case 2}
If $n_x\geq 2$ then $C_{\varphi}\cap V_{x}$ is contained in the set
$$
A_{x}:= \left\{ y\in S^{+} :  G_n\subset \textrm{Ker}\, L_{y}\right\}.
$$
Recall that $L_y=Ds(u_y)$, where $s(u)=\sqrt{1-\|u\|^2}$, $d(u)=(u,s(u))$, and $u_y=d^{-1}(y)$.
\end{claim}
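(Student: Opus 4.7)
The plan is to mirror the proof of Claim \ref{Cvarphi is locally contained in Ax}, replacing the role of $E_{n,2}$ by $G_n$ throughout, while carefully exploiting the two chains of containments built into the inductive decomposition in Case 2: namely, $G_n\subset G_m\subset \bigcap_{j=1}^{m}\mathrm{Ker}\,g_{j}^{1}$ for every $m\le n$, and the fact that each $S_{j}$ vanishes on $G_{j}$.

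Fix $y\in V_{x}$ with $y\notin A_{x}$, so there exists $e\in G_{n}$ with $L_{y}(e)\neq 0$. Let $m=m_{y}\le n$. I would then use formula \eqref{expression for the derivative of varphi reflexive case} to express $D\varphi(y)$ restricted to $T_{y}S^{+}$ as
\[
D\varphi(y)(u,L_{y}(u))=\sum_{j=1}^{m}\psi_{j}(y)\,\tfrac{\varepsilon(y_{j})}{4\|S_{j}\|}\,S_{j}(u)+\sum_{j=1}^{m}\bigl(g_{j}^{1}(u)+g_{j}^{2}L_{y}(u)\bigr)\alpha_{m,j}(y),
\]
and verify surjectivity onto $F=\mathbb{R}^{m}$. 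Given $w\in F$, the surjectivity of $S_{m}$ onto $E_{m,1}=\mathbb{R}^{m}$ yields $u_{m,1}\in E_{m,1}$ with $S_{m}(u_{m,1})=4\varepsilon(y_{m})^{-1}\psi_{m}(y)^{-1}\|S_{m}\|\,w$, and I would set $t_{0}=-L_{y}(u_{m,1})/L_{y}(e)$ and $u=u_{m,1}+t_{0}e$ so that $L_{y}(u)=0$.

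The verification then splits into two computations. For the second sum: since $u_{m,1}\in E_{m,1}\subset\bigcap_{j=1}^{m}\mathrm{Ker}\,g_{j}^{1}$ (from $E_{m,1}\oplus G_{m}\subset\bigcap_{j=1}^{m}\mathrm{Ker}\,g_{j}^{1}$) and $e\in G_{n}\subset G_{m}\subset\bigcap_{j=1}^{m}\mathrm{Ker}\,g_{j}^{1}$, one has $g_{j}^{1}(u)=0$ for $1\le j\le m$; combined with $L_{y}(u)=0$, the second sum vanishes. For the first sum: $S_{j}$ is zero on every factor of the decomposition \eqref{decomposition in the case F = Rm} except $E_{j,1}$, and for $j<m$ both $u_{m,1}\in E_{m,1}\subset G_{m-1}\subset G_{j}$ and $e\in G_{n}\subset G_{j}$ lie inside factors annihilated by $S_{j}$, so $S_{j}(u)=0$; for $j=m$, $e\in G_{m}$ is annihilated by $S_{m}$ so $S_{m}(u)=S_{m}(u_{m,1})$. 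Thus the first sum reduces to $\psi_{m}(y)\tfrac{\varepsilon(y_{m})}{4\|S_{m}\|}S_{m}(u_{m,1})=w$, giving $D\varphi(y)(u,L_{y}(u))=w$.

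The main subtlety, and the only point where Case 2 really differs from Case 1, is keeping track of which subspace plays the role of the ``escape direction''. In Case 1 this role was played by $E_{n,2}$, which was infinite-dimensional and itself isomorphic to $E$; here $E_{n,2}$ is at most one-dimensional, so it cannot in general contain a vector outside $\mathrm{Ker}\,L_{y}$. The correct replacement is $G_{n}$, which by construction is infinite-dimensional, is contained in $\bigcap_{j=1}^{n}\mathrm{Ker}\,g_{j}^{1}$, and is annihilated by every $S_{j}$ with $j\le n$; verifying this compatibility with the derivative formula is the only place that requires care, and once it is in place the argument goes through verbatim.
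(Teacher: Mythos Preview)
Your proof is correct and follows essentially the same approach as the paper's own proof: pick $e\in G_{n}\setminus\mathrm{Ker}\,L_{y}$, choose $u_{m,1}\in E_{m,1}$ to hit the target via $S_{m}$, correct by $t_{0}e$ to kill $L_{y}(u)$, and then verify that both the $g_{j}^{1}$-sum and the $S_{j}$-terms with $j<m$ vanish because $E_{m,1}\oplus G_{m}\subset\bigcap_{j=1}^{m}\mathrm{Ker}\,g_{j}^{1}$ and $E_{m,1},\,G_{n}\subset G_{j}$ for $j<m$. Your justification of the last point via the chain $E_{m,1}\subset G_{m-1}\subset G_{j}$ is slightly more explicit than the paper's, and your closing paragraph correctly identifies the only genuine difference from Case~1.
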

\begin{proof}
Let us see that, if $y\in V_x\setminus A_{x}$ then $D\varphi(y):T_yS^+\to F$ is surjective, that is, for every
$w\in F$ there exists $v\in T_{y}S^{+}$ such that $D\varphi(y)(v)=w$. Let $m=m_y$ be the largest number such that
$\psi_m(y)\neq 0$. Recall that $m\leq n$.  Since the operator
$$
S_{m}:E=E_{1,1}(\oplus E_{2,1}\oplus E_{2,2})\oplus ... \oplus (E_{m-1,1}\oplus E_{m-1,2})\oplus( E_{m,1}\oplus
E_{m,2}\oplus  G_m)\to F
$$
is surjective and equal to zero on all the factors of the decomposition \eqref{decomposition in the case F = Rm}
except on $E_{m,1}$ (which is mapped onto $F$), we may find $u_{m,1}\in E_{m,1}$ so that
$$
S_{m}(u_{m,1})=4\varepsilon(y_m)^{-1}\psi_{m}(y)^{-1}\|S_{m}\| \, w.
$$
Now, since $y\notin A_x$ there exists $e_n\in G_n\setminus \textrm{Ker }L_y$. If we set
$$
t_{0}:=-\frac{L_{y}(u_{m,1})}{L_{y}( e_n)},
$$ then the vector
$$
u:= u_{m,1}+t_0 e_n,
$$
satisfies that
$$
L_{y}(u)=0.
$$
But recall that, for every $k\le n$, we have  $E_{k,1}\oplus  G_k\subset \bigcap_{j=1}^{k}\textrm{Ker}(g_{j}^1)$;
in particular, $E_{m,1}\oplus G_m\subset \bigcap_{j=1}^{m}\textrm{Ker}(g_{j}^1)$ because $m\leq n$. Hence,
$g_j^1(u)=0$ for every $1\le j\le m$. It follows that $$\sum_{j=1}^{m}\left(g_{j}^{1}(u)+ g_{j}^{2}L_{y}(u)\right)
\alpha_{m, j}(y)=0.$$

The rest of the operators $S_1, ..., S_{m-1}$ are zero on $E_{m, 1}\oplus  G_n$, so we have
$$
S_{j}(u)=0 \textrm{ for every } j=1, ..., m-1,
$$
and since $S_m$ is zero on $G_n\subset G_m$ we also have $S_{m}(t_{0}e_n)=0$. Therefore, by combining these
equalities with equation \ref{expression for the derivative of varphi reflexive case}, we obtain that
$$
D\varphi(y)(u, L_{y}(u))=w,
$$
and the proof of the claim is complete.
\end{proof}

Then we also have the following.
\begin{lemma}\label{Ax is a good graph case 2}
For $x\in S^+$ with $n:=n_x\ge2$, the set $A_x$ of Claim \ref{Cvarphi is locally contained in Ax case 2} is of the
form
$$
A_x=d(G(f_x)\cap B_E),
$$
where $G(f_x)$ is the graph of a continuous mapping $f_x:E_{1,1}\oplus(E_{2,1}\oplus E_{2,2})\oplus \cdots \oplus
(E_{n,1}\oplus E_{n,2}) \to G_n$.
\end{lemma}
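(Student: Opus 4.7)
The plan is to mimic the proof of Lemma \ref{Ax is a good graph} almost verbatim, since the only change between Case 1 and Case 2 is that the ``missing direction'' subspace $E_{n,2}$ is replaced by $G_n$. The factorization $E = E_{1,1}\oplus(E_{2,1}\oplus E_{2,2})\oplus\cdots\oplus(E_{n,1}\oplus E_{n,2}) \oplus G_n$ is still a direct sum decomposition, and the only properties of $E_{n,2}$ that were used in the previous proof were: (i) $E_{n,2}$ is a closed subspace of a reflexive space, so it is itself reflexive; and (ii) the restriction of the norm of $E$ to $E_{n,2}$ inherits the LUR property. Both properties hold equally well for $G_n$, since $G_n$ is a closed subspace of the separable reflexive space $E$ equipped with its LUR norm $\|\cdot\|$.

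First I would write $E = E_{n,2}' \oplus G_n$ where $E_{n,2}' := E_{1,1}\oplus(E_{2,1}\oplus E_{2,2})\oplus\cdots\oplus(E_{n,1}\oplus E_{n,2})$, and observe that
$$A_x = \{d(u) : u \in \mathcal{A}_x, \|u\|<1\}, \quad \mathcal{A}_x := \bigcap_{e\in G_n}\bigl\{u\in E\setminus\{0\} : \langle D\|\cdot\|(u),e\rangle=0\bigr\}\cup\{0\}.$$
Then for each $w\in E_{n,2}'$, the function $\psi_w:G_n \to \R$ defined by $\psi_w(v) = \|w+v\|^2$ is continuous, convex, and coercive, so by reflexivity of $G_n$ it attains a minimum, and by strict convexity of $\|\cdot\|$ this minimum $v_w \in G_n$ is unique. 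Define $f_x(w) := v_w$ (and $f_x(0)=0$). A computation of the directional derivatives of $\psi_w$ shows that the critical points of $\psi_w$ in $G_n$ are precisely those $v \in G_n$ with $w+v \in \mathcal{A}_x$, so $f_x(w)$ is the unique element of $G_n$ such that $w+f_x(w)\in \mathcal{A}_x$. This identifies $\mathcal{A}_x$ with the graph $G(f_x) = \{w+f_x(w) : w \in E_{n,2}'\}$.

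The main technical step, and the only place where care is needed, is the continuity of $f_x$. I would argue by contradiction exactly as in the previous proof: suppose $w_k \to w_0$ but $\|f_x(w_k) - f_x(w_0)\| \geq \varepsilon_0 > 0$. Setting $v_k := f_x(w_k)$, the minimizing inequality $\|w_k + v_k\| \leq \|w_k\|$ gives $(v_k)$ bounded, so by reflexivity of $G_n$ we may pass to a weakly convergent subsequence $v_k \rightharpoonup \xi_0 \in G_n$. The inequality $\|w_k+v_k\| \leq \|w_k+e'\|$ for every $e'\in G_n$, together with weak lower semicontinuity of the norm, yields $\|w_0+\xi_0\| \leq \|w_0+e'\|$ for every $e' \in G_n$, which by uniqueness of the minimizer forces $\xi_0 = f_x(w_0) = v_0$. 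Taking $e' = \xi_0$ moreover yields $\|w_{k_j}+v_{k_j}\| \to \|w_0 + v_0\|$ along a subsequence; combined with weak convergence $w_{k_j}+v_{k_j} \rightharpoonup w_0+v_0$ and the Kadec-Klee property of the LUR norm $\|\cdot\|$, this upgrades to norm convergence $w_{k_j}+v_{k_j} \to w_0+v_0$, and since $w_{k_j}\to w_0$ in norm, we conclude $v_{k_j}\to v_0$ in norm, contradicting the assumption.

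The hard part conceptually is already absorbed into the general framework: one must check that $G_n$, which is defined only as a complement in an inductive construction and is not a priori equipped with any special structure, is still reflexive and that the ambient norm restricted to $G_n$ is LUR. But these are automatic since $G_n$ is a closed subspace of the separable reflexive Banach space $E$ and LUR passes to subspaces. Everything else is essentially a copy-paste of the proof of Lemma \ref{Ax is a good graph}.
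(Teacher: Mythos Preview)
Your proposal is correct and is exactly what the paper does: its proof of this lemma consists of the single sentence ``Repeat the proof of Lemma \ref{Ax is a good graph}, just replacing $E_{n,2}$ with $G_n$,'' and you have faithfully spelled out that repetition, including the two checks (reflexivity and LUR pass to the closed subspace $G_n$) that make the argument go through unchanged.
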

\begin{proof}
Repeat the proof Lemma \ref{Ax is a good graph}, just replacing $E_{n,2}$ with $G_n$.
\end{proof}

Let $Z:=d^{-1}(C_{\varphi})\subset B_E$. According to  Lemma \ref{Ax is a good graph case 2} applied to $x=d(z)\in
S^+$, $n=n_x$, and $V_x$, for a neighborhood $U_z:=d^{-1}(V_x)$ of $z$,  we have $Z\cap U_z\subseteq G(f_x)$, where
$$G(f_x)=\{u=(w,v)\in G'_{n}\oplus G_{n}=E:\,  v=f_x(w)\},$$
with $G_{n}'$ denoting $E_1\oplus (E_{2,1}\oplus E_{2,2})\oplus\cdots\oplus (E_{n-1,1}\oplus
E_{n-1,2})\oplus(E_{n,1}\oplus E_{n,2})$. Since $G_n$ is infinite-dimensional, we may use Theorem \ref{final
extractibility theorem rough version general form}, and the rest of the proof goes exactly as in Case 1.  \qed 

\medskip

\begin{rem}
{\em Observe that in the infinite-dimensional case we could have asked $A_x$ to be
$$A_x:=\{y\in S^+:\,E_{n,3}\subset Ker L_y\} ,$$
using $E_{n,3}$ instead of $E_{n,2}$ and requiring that $E_{n,1}\oplus E_{n,3}\subseteq \bigcap^{n}_{j=1}Ker
g^{1}_{j}$.}
\end{rem}

\medskip

\section{Proof of Theorem \ref{Main theorem for spaces with unconditional bases}}

First of all let us note that since $E$ admits a $C^1$ equivalent norm, $E$ cannot contain a closed subspace
isomorphic to $\ell_1$. Furthermore, as noted following the statement of Theorem \ref{Main theorem for spaces with
unconditional bases}, condition $(2)$ implies that the basis $\{e_{n}\}_{n\in\N}$ is unconditional, and
therefore by \cite[Theorem 1.c.9]{LindenstraussTzafriri}, is {\it shrinking}, that is, we have that
$E^{*}=\overline{\textrm{span}}\{e_{n}^{*} : n\in\N\}$, where $\{e_{n}^{*}\}_{n\in\N}$ are the biorthogonal
functionals associated to $\{e_n\}_{n\in\N}$ (in particular, $E^*$ is separable).

We keep using the notations $Y=E\times\R$ and $S^{+}$ from the proof of Theorem \ref{Main theorem for reflexive
spaces}. As in the case of Theorem \ref{Main theorem for reflexive spaces}, it will be enough to prove Theorem
\ref{Main theorem for spaces with unconditional bases} with $S^{+}$ in place of $E$. We define $e_{0}=(0,1)\in Y$,
$e_{0}^{*}:Y=E\times \R\to\R$ by
$$
e_{0}^{*}(u,t)=t,
$$
and by slightly abusing notation we identify $e_{n}\in E$ to $(e_{n}, 0)\in Y$ and also extend the $e_{n}^{*}\in
E^{*}$ to $e_{n}^{*}:Y=E\times\R\to\R$ by
$$
e_{n}^{*}(u,t)=e_{n}^{*}(u)=u_{n} \,\,\, \textrm{ for all } \,\,\, u=\sum_{j=1}^{\infty}u_j e_j, \in E, \, t\in\R.
$$
Then we may we consider $\{e_{n}\}_{n\in\N}\cup\{e_{0}\}$ as a basis of $E\times \R=Y$ with associated coordinate
functionals $\{e_{n}^{*}\}_{n\in\N}\cup\{e_{0}^{*}\}$, and we have that this basis is also shrinking.

Next we are going to construct a partition of unity in $S^{+}$, quite similar but not identical to that of the
proof of Theorem \ref{Main theorem for reflexive spaces}

Since the norm  $|\cdot|$ is locally uniformly convex we can find, for every $x\in S^+$, open slices $R_x=\{y\in S:
\ f_x(y)>\delta_x\}\subset S^+$  and $P_x=\{y\in S: \ f_x(y)>\delta_x^4\}\subset S^+$, where $f_x\in Y^{*}$,
$0<\delta_x<1$,  and $|f_x|^{*}=1=f_x(x)$, so that the oscillation of the functions $f$ and $\varepsilon$ on every
$P_x$ is less than $\varepsilon(x)/16$. We also assume, with no loss of generality, that $\textrm{dist}(P_x,\,
E\times\{0\}\,)>0$.

Since $Y$ is separable we can select a countable subfamily of $\{R_x\}_{x\in S^+}$, which covers $S^+$.  Let us
denote this countable subfamily by $\{R_n\}_n$, where $R_n=R_{x_n}=\{y\in S: f_n(y)>\delta_n\}$ and $f_{n}(x_n)=1$.
Recall that the oscillation of the functions  $f$ and $\varepsilon$ on every $ P_n=P_{x_n}=\{y\in S:
f_n(y)>\delta_n^4\}$ is less than $\varepsilon(x_n)/16$, and this implies that
$$
\frac{15}{16}\, \varepsilon(x_n)\leq \varepsilon(x)\leq \frac{17}{16}\,\varepsilon(x_n)\ \ \text{and}\ \
\|f(x)-f(y)\|\leq\frac{\varepsilon(x_n)}{16}
$$
for every $x,y\in P_n$. Note that $\{P_n\}_{n\in\N}$ is an open cover of $S^{+}$.

\noindent $\bold{\bullet}$ {\bf For $\bold{k=1}$}, since $\textrm{span}\{e_{n}^{*} : n\in\N\}$ is dense in $E^{*}$,
we may find numbers $N_1\in\N$, $\epsilon_1, \gamma_1\in (0, 1)$ with $\epsilon_1>\gamma_1$, and $\beta_{1,0}, ...,
\beta_{1, N_1}\in\R$ with $\beta_{1,0}>0$ so that the functional $g_1$ defined by
$$
g_{1}:=\sum_{j=0}^{N_1}\beta_{1, j}e_{j}^{*}
$$
has norm $1$ and satisfies
\begin{equation*}
\{x\in S:f_1(x)>\delta_1^2\}\subset\{x\in S: g_1(x)>{\epsilon_1}\} \subset\{x\in S:
g_1(x)>{\gamma_1}\}\subset\{x\in S:f_1(x)>\delta_1^3\}.
\end{equation*}

Let us define
\begin{align*} h_1&:\,S^+\longrightarrow \mathbb R \\ h_1&=\theta_1(g_1), \end{align*}
 where $\theta_1:\R\to [0,1]$ is a
$C^\infty$ function satisfying
\begin{align*}
\theta_1(t)&=0 \ \text{ if and only if}\  t\leq\gamma_1 \\
\theta_1(t)&=1 \ \text{ if and only if}\ t\geq \epsilon_1.
\end{align*}
Note that the interior of the support of $h_1$ is the open set $U_1:=\{x\in S^{+} : g_1(x)>\gamma_1\}$.

\noindent $\bold{\bullet}$ {\bf  For $\bold{k=2}$}. We may again use the density of $\textrm{span}\{e_{n}^{*} :
n\in\N\}$ in $E^{*}$, in order to find numbers $N_2\in\N$, $\gamma_2, \epsilon_2\in (0,1)$ with
$\gamma_2<\epsilon_2$, and $\beta_{2,0}, ..., \beta_{2, N_2}\in\R$ so that the linear functional
$$
g_{2}:=\sum_{j=0}^{N_2}\beta_{2, j}e_{j}^{*}
$$
has norm $1$ and satisfies
\begin{equation*}
\{x\in S:f_2(x)>\delta_2^2\}\subset\{x\in S: g_2(x)>{\epsilon_2}\}\subset\{x\in S: g_2(x)>{\gamma_2}\}\subset\{x\in
S:f_2(x)>\delta_2^3\}.
\end{equation*}
We may assume without loss of generality that $N_1\leq N_2$ (otherwise we may set $\beta_{2,j}=0$ for $N_2<j\leq
N_1$ and take a new $N_2$ equal to $N_1$).

Now we define
\begin{align*}h_2&:\,S^+\longrightarrow \mathbb R\\
h_2&=\theta_2(g_2)\left( 1-\theta_1(g_1)\right),
\end{align*}
where $\theta_2:\R\to [0,1]$ is a $C^\infty$ function satisfying:
\begin{align*}
\theta_2(t)&=0 \ \text{ if and only if }\  t\leq\gamma_2 \\
\theta_2(t)&=1 \ \text{ if and only if }\ t\geq \epsilon_2.
\end{align*}
Notice that the interior of the support of $h_2$ is the open set
$$
U_2=\{x\in S^+: g_1(x)< \epsilon_{1}\,,\ g_2(x)>{\gamma_{2}}\}.
$$

\noindent $\bold{\bullet}$ {\bf For $\bold{k=3}$,} By density of $\textrm{span}\{e_{n}^{*} : n\in\N\}$ in $E^{*}$
we may pick numbers $N_3\in\N$, $\gamma_3, \epsilon_3\in (0,1)$ with $\epsilon_3>\gamma_3$, and $\beta_{3,0}, ...,
\beta_{3, N_3}\in\R$ so that, for
$$
g_{3}:=\sum_{j=0}^{N_3}\beta_{3, j}e_{j}^{*}
$$
we have that $g_{3}\in S^{*}$ and
\begin{equation*}
\{x\in S:\ f_3(x)>\delta_3^2\}\subset\{x\in S:\ g_3(x)>{\epsilon_3}\}\subset\{x\in S:\
g_3(x)>{\gamma_3}\}\subset\{x\in S: f_3(x)>\delta_3^3\}
\end{equation*}
Again we may assume without loss of generality that $N_2\leq N_3$.

We define
\begin{align*}
h_3&:\, S^+\longrightarrow \mathbb R\\h_3&=\theta_3(g_3)\prod_{j=1}^{2}\left(1-\theta_j(g_j)\right),
\end{align*}
where $\theta_3:\R\to [0,1]$ is a $C^\infty$ function satisfying
\begin{align*}
\theta_3(t)&=0 \ \ \text{ if and only if } t \leq {\gamma_3}\\
\theta_3(1)&=1 \ \ \text{ if and only if } t \geq \epsilon_3.
\end{align*}
Clearly the interior of the support of $h_3$ is the set
\begin{equation*}
U_3=\{x\in S^+:\ g_1(x)<\epsilon_{1}\,, \ g_2(x)<\epsilon_{2} \ \text{ and } \ g_3(x)>\gamma_3\}.
\end{equation*}

We continue this process by induction.

\noindent $\bold{\bullet}$ Assume that, in the steps {\bf $j=2, ..., k$}, with $k\geq 2$, we have selected points
$y_j\in S^+$, positive integers $N_1\leq N_2 \leq ... \leq N_k$, and constants $\gamma_j, \epsilon_j\in (0,1)$,
$\beta_{j,i}\in\R$ so that the functionals
$$
g_{j}:=\sum_{i=0}^{N_j}\beta_{j,i}e_{i}^{*}
$$
belong to $S^{*}$ and satisfy
 \begin{align}
\{x\in S:\ f_j(x)>\delta_j^{2}\}\subset\{x\in S:\ g_j(x)>{\epsilon_j}\}\subset\{x\in S:\ g_j(x)>{\gamma_j}\}\subset
\{x\in S: \ f_j(x)>\delta_j^4\},
 \end{align}
for all $j=2,...,k$. Assume also that we have defined numbers $\gamma_j$ and functions
$$
h_j=\theta_j(g_j) \prod_{i<j}\left(1-\theta_i(g_i)\right),
$$
where $\theta_j:\R\to [0,1]$ are $C^\infty$ functions satisfying
\begin{align*}
\theta_j(t)&=0 \ \ \text{ if and only } t \leq {\gamma_j}\\
\theta_j(t)&=1 \ \ \text{ if and only } t \geq \epsilon_j.
\end{align*}

The interior of the support of $h_j$ is the set
\begin{equation*}
U_j=\{x\in S^+:\
 g_1(x)<\epsilon_{1}\,,..., \, g_{j-1}(x)<\epsilon_{j-1} \  \text{ and } \
 g_j(x)>\gamma_j\}.
\end{equation*}
Then we may again use the density of $\textrm{span}\{e_{n}^{*} : n\in\N\}$ in $E^{*}$, in order to find a positive
integer $N_{k+1}\geq N_k$,  and constants $\gamma_{k+1}, \epsilon_{k+1}\in (0,1)$, and $\beta_{k+1,0}, ...,
\beta_{k+1, N_{k+1}}\in\R$ so that, for
$$
g_{k+1}:=\sum_{j=0}^{N_{k+1}}\beta_{k+1, j}e_{j}^{*}
$$
we have that $g_{k+1}\in S^{*}$ and
\begin{eqnarray*}
 & & \{x\in S:\ f_{k+1}(x)>\delta_{k+1}^2\}
 \subset\{x\in S:\
g_{k+1}(x)>{\epsilon_{k+1}}\}\subset \\
& & \{x\in S:\ g_{k+1}(x)>{\gamma_{k+1}}\}\subset \{x\in S: \ f_{k+1}(x)>\delta_{k+1}^3\}.
\end{eqnarray*}

We now set
\begin{equation}U_{k+1}:=\{x\in S^+:\
 g_1(x)<\epsilon_{1}\,,..., \, g_{k}(x)<\epsilon_{k} \  \text{ and }\
 g_{k+1}(x)>\gamma_{k+1}\},
\end{equation}
and define
\begin{align*}
h_{k+1} & : S^+\longrightarrow \mathbb R
\\
h_{k+1} & =\theta_{k+1}(g_{k+1})\prod_{j<k+1}\left(1-\theta_j (g_j)\right).
\end{align*}
where \ $\theta_{k+1}:\R\to [0,1]$ is a $C^\infty$ function such that
\begin{align*}
\theta_{k+1}(t)&=0 \ \ \text{ if and only if } t \leq {\gamma_{k+1}}\\
\theta_{k+1}(t)&=1 \ \ \text{ if and only if } t \geq {\epsilon_{k+1}}.
\end{align*}
Clearly the interior of the support of $h_{k+1}$ is the set $U_{k+1}$.

Thus a sequence $\{h_{n}\}_{n\in\N}$ of $C^{1}$ smooth functions with the above properties is well defined by
induction.

As in the proof of Theorem \ref{Main theorem for reflexive spaces} it is not difficult to check that the family
$\{U_k\}_{k\in \mathbb N}$  is a locally finite open covering of $S^+$ refining $\{P_{k}\}_{k\in\N}$. Therefore the
functions
$$
\psi_{n}:=\frac{h_n}{\sum_{k=1}^{\infty}h_k}, \,\,\, n\in\N,
$$
define a $C^{1}$ partition of unity in $S^{+}$ subordinate to $\{P_{k}\}_{k\in\N}$.

We will also need the following fact.
\begin{claim}\label{Dh_j vanishes where h_j=0 theorem 1.7}
For every $k\in \N$ and every $y\in S^+$, we have
$$
h_k(y)=0 \Longrightarrow Dh_k(y)=0.
$$
\end{claim}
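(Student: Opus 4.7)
The plan is to mimic Claim 3.3 essentially verbatim, since the structural form of $h_k$ and the defining properties of the bump $\theta_k$ here are the same; only the threshold values have been renamed ($\gamma_k,\epsilon_k$ in place of $\delta_k^2,\delta_k$). The essential observation I will exploit is that for a smooth bump $\theta:\R\to[0,1]$ with $\theta^{-1}(0)=(-\infty,\gamma]$ and $\theta^{-1}(1)=[\epsilon,\infty)$, the derivative $\theta'$ vanishes on both plateaus, including at the boundary points $\gamma$ and $\epsilon$ by one-sided differentiability and continuity of $\theta'$. Consequently, whenever $\theta_j(g_j(y))\in\{0,1\}$, one has $D(\theta_j\circ g_j)(y)=\theta_j'(g_j(y))\,Dg_j(y)=0$.

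First I would suppose $h_k(y)=0$ and split into two cases according to which factor of $h_k=\theta_k(g_k)\prod_{j<k}(1-\theta_j(g_j))$ forces the vanishing. In the first case, $\theta_k(g_k(y))=0$, so $g_k(y)\leq\gamma_k$, and by the observation above $D(\theta_k\circ g_k)(y)=0$. Writing
\[
Dh_k(y)=D(\theta_k\circ g_k)(y)\cdot\prod_{j<k}\bigl(1-\theta_j(g_j(y))\bigr)+\theta_k(g_k(y))\cdot D\Bigl(\prod_{j<k}(1-\theta_j(g_j))\Bigr)(y),
\]
the first summand vanishes because $D(\theta_k\circ g_k)(y)=0$, and the second vanishes because $\theta_k(g_k(y))=0$.

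In the second case, $\theta_k(g_k(y))\neq0$ but $1-\theta_{j_0}(g_{j_0}(y))=0$ for some $j_0<k$, i.e.\ $g_{j_0}(y)\geq\epsilon_{j_0}$. Again by the observation, $D(\theta_{j_0}\circ g_{j_0})(y)=0$, so $D(1-\theta_{j_0}(g_{j_0}))(y)=0$. Expanding the product rule for $D\prod_{j<k}(1-\theta_j(g_j))(y)$ as a sum over indices $l<k$, the term singled out by $l=j_0$ contains the factor $D(1-\theta_{j_0}(g_{j_0}))(y)=0$, while every term with $l\neq j_0$ still contains the factor $1-\theta_{j_0}(g_{j_0}(y))=0$; hence the whole derivative of the product vanishes at $y$. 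Combined with the first summand of $Dh_k(y)$, which contains the zero product $\prod_{j<k}(1-\theta_j(g_j(y)))$, we obtain $Dh_k(y)=0$.

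There is no real obstacle: the only mild subtlety is justifying that $\theta_k'$ vanishes not merely on the open plateaus but at their endpoints, which follows because $\theta_k$ is $C^\infty$ and identically constant on a neighborhood of each plateau point from one side. The case $k=1$ is immediate since $h_1=\theta_1(g_1)$ and $\theta_1'$ vanishes whenever $\theta_1$ does.
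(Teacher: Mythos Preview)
Your proof is correct and follows essentially the same approach as the paper, which simply refers back to Claim~\ref{Dh_j vanishes where h_j=0} and carries out the identical two-case analysis based on which factor of $h_k$ vanishes. Your write-up is in fact slightly more detailed than the paper's in spelling out why the product-rule expansion of $D\prod_{j<k}(1-\theta_j(g_j))(y)$ vanishes term by term.
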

\begin{proof}
Proceed as in the proof of Claim \ref{Dh_j vanishes where h_j=0}.
\end{proof}
Again we have that for every $x\in S^{+}$ there exist a number $n=n_x$ and an open neighborhood $V_x$ of $x$ in
$S^{+}$ such that $\psi_k(y)=0$ for every $k>n$ and every $y\in V_x$. Let us also call $m=m_y$ the largest $j$ such
that $h_j(y)\neq 0$; this is also the largest $j$ for which $\psi_j(y)\neq 0$. Thus, for every $y\in V_x$, we have
$$
m_y=\max\{j: y\in\psi^{-1}_j((0,1])\}\leq n_x.$$ The derivatives of the functions $\psi_n$ can be calculated as in
the proof of Theorem \ref{Main theorem for reflexive spaces}. We have
$$
D\psi_k(y)(v)=\sum_{j=1}^{n}\lambda_{k,j}(y) \, g_{j}(v)=\sum_{j=1}^{m_y}\lambda_{k,j}(y) \, g_{j}(v)
$$
for each $v\in T_{y}S^{+}$, where the functions $\lambda_{k,j}:V_x\to \R$ are of class $C^1$.

The following lemma summarizes the properties of the partition of unity $\{\psi_n\}_{n\in\N}$ which will be most
useful to us.

\begin{lemma}\label{properties of the partition of unity on the upper sphere}
Given two continuous functions $f:S^{+}\to F$ and $\varepsilon:S^{+}\to (0, \infty)$, there exists a collection of
norm-one linear functionals $\{g_k\}_{k\in\N}\subset Y^{*}$ of the form
$$
g_{k}=\sum_{j=0}^{N_k}\beta_{k,j}e_{j}^{*},
$$
where $N_1\leq N_2 \leq N_3\leq ...$, an open covering $\{P_n\}_{n\in\N}$ of $S^{+}$, and a $C^1$ partition of
unity $\{\psi_{n}\}_{n\in\N}$ in $S^{+}$ such that:
\begin{enumerate}
\item $\{\psi_n\}_{n\in\N}$ is subordinate to $\{P_n\}_{n\in\N}$.

\item For every $x\in S^{+}$ there exist a neighborhood $V_x$ of $x$ in $S^{+}$ and a number $n=n_{x}\in\N$ such that $\psi_m=0$ on $V_x$ for all $m>n$, and the derivatives of the functions $\psi_1, ..., \psi_{n}$ on $V_x$ are of the form
$$
D\psi_{k}(y)(v)=\sum_{j=1}^{n}\lambda_{k, j}(y)\, g_{j}(v) =\sum_{j=1}^{m_y}\lambda_{k, j}(y)\, g_{j}(v),
$$
for $v\in T_{y}S^{+}$, the tangent hyperplane to $S^{+}$ at $y\in S^{+}\cap V_x$, where $m_y\leq n$ is the largest
number such that $\psi_{m_y}(y)\neq 0$. More precisely, if $L_y$ denotes the derivative of the function $u\mapsto
\sqrt{1-\|u\|^2}$ evaluated at the point $u_y$ such that $y=\left(u_y, \sqrt{1-\|u_y\|^{2}}\right)$, we have
$$
D\psi_k(y)(v)=D\psi_k(y)(u, L_{y}(u))= L_{y}(u)\mu_{k,0}(y)+\sum_{j=1}^{N_n}\mu_{k,j}(y)e_{j}^{*}(u)
=L_{y}(u)\mu_{k,0}(y)+\sum_{j=1}^{N_{m_y}}\mu_{k,j}(y)e_{j}^{*}(u)
$$
for every $k=1, ..., n$, and for every $v=(u, L_{y}(u))\in T_{y}S^{+}$, where the functions $\mu_{k, j}:V_x\to\R$
are of class $C^{1}$, $j=1, ..., n$.

\item For every $n\in\N$ there exist a point $y_n:=x_n\in P_n$ such that
$$
\frac{15}{16}\, \varepsilon(y_n)\leq \varepsilon(y)\leq \frac{17}{16}\,\varepsilon(y_n) \,\,\, \textrm{ for every }
y\in P_n, \,\,\, \textrm{ and } \,\,\, \sup_{x, y\in P_n}\|f(x)-f(y)\|\leq \frac{\varepsilon(y_n)}{16}.
$$
\end{enumerate}
\end{lemma}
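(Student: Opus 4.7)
The lemma is a bookkeeping summary of the explicit construction carried out in the preceding pages, so the plan is simply to verify that every datum requested by the statement has already been produced and that the three bullets follow from that construction.

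First I would set up the covering and the base points $y_n$. Using the duality map of the $C^1$ LUR norm $|\cdot|$ on $Y=E\times\R$, for every $x\in S^+$ pick the slice $P_x=\{f_x>\delta_x^4\}$ with $f_x=|\cdot|'(x)$, $|f_x|^*=1$ and $\delta_x$ so small that both $f$ and $\varepsilon$ oscillate by less than $\varepsilon(x)/16$ on $P_x$ and $\operatorname{dist}(P_x,E\times\{0\})>0$. Separability of $Y$ extracts a countable subcover $\{P_n\}$ of $S^+$; setting $y_n=x_n$, where $P_n=P_{x_n}$, gives property $(3)$ by the very definition of $\delta_n$.

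Next, exploit that the basis $\{e_j\}_{j\ge 0}$ of $Y$ is shrinking (this follows from the unconditionality granted by hypothesis $(2)$ together with the fact that $E$, having an equivalent $C^1$ norm, contains no copy of $\ell_1$), so $\overline{\operatorname{span}}\{e_j^*\}=Y^*$. I would then inductively approximate each $f_n$ by a norm-one finite combination $g_n=\sum_{j=0}^{N_n}\beta_{n,j}e_j^*$ tightly enough to produce the sandwich $\{f_n>\delta_n^2\}\subset\{g_n>\epsilon_n\}\subset\{g_n>\gamma_n\}\subset\{f_n>\delta_n^3\}$, padding with zero coefficients to enforce $N_1\le N_2\le\cdots$. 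Setting $h_n=\theta_n(g_n)\prod_{j<n}(1-\theta_j(g_j))$ for smooth $\theta_n$ with $\theta_n^{-1}(0)=(-\infty,\gamma_n]$ and $\theta_n^{-1}(1)=[\epsilon_n,\infty)$, the open support $U_n$ of $h_n$ lies inside $P_n$. Local finiteness and the covering property of $\{U_n\}$ follow exactly as in Claim \ref{locally finiteness of the partition of unity}: on $\{g_j>\epsilon_j\}$ every $h_k$ with $k>j$ vanishes; and if $h_k(y)=0$ for all $k$ then, by induction, $\theta_k(g_k(y))=0$ for all $k$, contradicting that $\{P_k\}$ covers $S^+$. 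Thus $\psi_n=h_n/\sum_k h_k$ is a $C^1$ partition of unity subordinate to $\{P_n\}$, which is $(1)$.

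For $(2)$, fix $x\in S^+$, pick $n=n_x$ and $V_x$ as described preceding the statement, so that on $V_x$ every $h_k$ with $k>n$ vanishes; set $m_y=\max\{j:\psi_j(y)\neq 0\}\le n$. On a neighborhood of $V_x$ in $Y$, the functions $H_k:=h_k/\sum_{j\le n}h_j$ extend to $C^\infty$ maps defined on the ambient open set $\bigcup_{i\le n}h_i^{-1}((0,1])$, and a direct calculation shows
\[
DH_k(y)=\sum_{j=1}^{n}\sigma_{k,j}(y)\,g_j
\]
for $C^1$ coefficients $\sigma_{k,j}$. Claim \ref{Dh_j vanishes where h_j=0 theorem 1.7} forces $Dh_j$ (and hence the contribution of $g_j$ to $DH_k$) to vanish wherever $h_j$ does, allowing truncation to $j\le m_y$. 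Substituting $g_j=\sum_{i=0}^{N_j}\beta_{j,i}e_i^*$ and restricting to $v=(u,L_y(u))\in T_yS^+$ with $e_0^*(u,L_y(u))=L_y(u)$ and $e_i^*(u,L_y(u))=e_i^*(u)$ for $i\ge 1$, regrouping yields the claimed form with coefficients $\mu_{k,0}=\sum_j\sigma_{k,j}\beta_{j,0}$ and $\mu_{k,i}=\sum_{j:\,N_j\ge i}\sigma_{k,j}\beta_{j,i}$, all of class $C^1$ in $y$; the inner truncation at $i\le N_{m_y}$ is what requires the monotonicity $N_{m_y}\le N_n$.

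The only delicate point, and thus the main obstacle, is the simultaneous control of the two truncations in the derivative formula: outer at $j=m_y$ and inner at $i=N_{m_y}$. The outer one is the content of Claim \ref{Dh_j vanishes where h_j=0 theorem 1.7}; the inner one is precisely why the construction of the $g_n$'s was engineered from the start with $N_1\le N_2\le\cdots$, by costless padding with zero coefficients.
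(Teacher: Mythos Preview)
Your proposal is correct and follows essentially the same approach as the paper: you reproduce the construction of the slices $P_n$, the finite-support approximants $g_n$ with monotone $N_n$ via the shrinking basis, the functions $h_n$ and $\psi_n$, and you invoke the analogues of Claims \ref{locally finiteness of the partition of unity} and \ref{Dh_j vanishes where h_j=0 theorem 1.7} exactly as the paper does. Your identification of the two truncation mechanisms (the outer one at $m_y$ via the vanishing claim, the inner one at $N_{m_y}$ via the monotonicity $N_1\le N_2\le\cdots$) matches the paper's reasoning precisely.
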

Note also that the integer $n_x$ can be chosen as the first such $j$ so that $x\in \{y\in S^+:
g_{j}(y)>\epsilon_j\}$. Then $V_x$ can be chosen as $\{y\in S^+: g_{j}(y)>\epsilon_j\}$ and, hence,  we have
$R_j\subset V_x\subset P_j$ for such $V_x$.

\medskip

Now we are ready to start the construction of our approximating function $\varphi:S^{+}\to F$, which will be of the
form
$$
\varphi(x)=\sum_{n=1}^{\infty}\left( f(y_n) +T_{n}(x) \right) \psi_{n}(x),
$$
where the $y_n$ are the points given by condition $(3)$ of the preceding lemma, and the operators $T_{n}:Y\to F$
will be defined below. We have to distinguish two cases.

\medskip

{\bf Case 1: Assume that $F$ is infinite-dimensional.}

In order to define the operators $T_n$, we work with the infinite subset $\mathbb{P}$ of $\N$ given by assumption
$(3)$ of Theorem \ref{Main theorem for spaces with unconditional bases}, and we take a countable pairwise disjoint
family of infinite subsets of $\mathbb{P}$ which {\em goes to infinity}. More precisely, we write
$$\bigcup^{\infty}_{n= 1} I_n\subseteq \mathbb{P},$$ in such a way
that:
\begin{enumerate}
\item $I_n :=\left\lbrace n_i:\,i\in\N\right\rbrace $ is infinite for each $n\in \N$;
\item $I_n\cap I_m=\emptyset$ for all $n\neq m$; and
\item $\left\lbrace 1,\dots, N_n\right\rbrace \cap I_n=\emptyset$ for all $n\in\N$.
\end{enumerate}
Here $\{N_n\}_{n\in\N}$ is the non-decreasing sequence of positive integers that appears in the construction of the
functionals $g_n$ of Lemma \ref{properties of the partition of unity on the upper sphere}.

Now, by using assumption $(3)$ of the statement, we can find, for each number $n\in\N$, a linear continuous
surjection $S_n:E\to F$ of the form
$$
S_n=A_n\circ P_n,
$$
where $A_{n}$ is a bounded linear operator from $\overline{\textrm{span}}\{ e_{n_k} : k\in\N\}=
\overline{\textrm{span}}\{ e_{m} : m\in\ I_n\}$ onto $F$, and $P_{n}:E\to \overline{\textrm{span}}\{ e_{n_k} :
k\in\N\}$ is the natural projection associated to the unconditional basis $\{e_j\}_{j\in\N}$.

Now we finally define $T_{n}:Y\to F$ by
$$
T_{n}(u,t)= \frac{\varepsilon(y_n)}{4\|S_{n}\|}\, S_{n}(u),
$$
and $\varphi:S^{+}\to F$ by
$$
\varphi(x)=\sum_{n=1}^{\infty}\left( f(y_n) +T_{n}(x) \right) \psi_{n}(x).
$$
It is clear that $\varphi$ is well defined and of class $C^1$.

\begin{claim}
We have that $\|\varphi(x)-f(x)\|\leq \varepsilon(x)$ for every $x\in S^{+}$.
\end{claim}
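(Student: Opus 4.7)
The plan is to mimic precisely the argument of Claim \ref{varphi epsilon approximates f} from the reflexive case, since the only ingredients used there (a partition of unity $\{\psi_n\}$ subordinate to $\{P_n\}$, the choices of points $y_n\in P_n$ with controlled oscillation of $f$ and $\varepsilon$ on $P_n$, and the definition $T_n(u,t)=\frac{\varepsilon(y_n)}{4\|S_n\|}S_n(u)$) have exact counterparts here, provided by Lemma \ref{properties of the partition of unity on the upper sphere} and the construction of the $S_n$ via the subspaces $\overline{\mathrm{span}}\{e_m:m\in I_n\}$.

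First I would record the elementary bound on $T_n$: for every $x=(u,t)\in S^{+}$ one has $\|u\|^2+t^2=1$, hence $\|u\|\leq 1$, and therefore
$$
\|T_n(x)\|=\frac{\varepsilon(y_n)}{4\|S_n\|}\,\|S_n(u)\|\leq\frac{\varepsilon(y_n)}{4\|S_n\|}\,\|S_n\|\,\|u\|\leq\frac{\varepsilon(y_n)}{4}.
$$
Next, fix $x\in S^{+}$ and consider any index $n$ with $\psi_n(x)\neq 0$. By item (1) of Lemma \ref{properties of the partition of unity on the upper sphere} we have $x\in P_n$, and item (3) then gives
$$
\|f(x)-f(y_n)\|\leq\frac{\varepsilon(y_n)}{16}\qquad\text{and}\qquad\varepsilon(y_n)\leq\tfrac{16}{15}\varepsilon(x).
$$
Combining these with the bound on $\|T_n(x)\|$ and the triangle inequality yields
$$
\|f(y_n)+T_n(x)-f(x)\|\leq\frac{\varepsilon(y_n)}{16}+\frac{\varepsilon(y_n)}{4}=\frac{5\,\varepsilon(y_n)}{16}\leq\frac{\varepsilon(x)}{2}.
$$

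Finally, using $\sum_{n=1}^{\infty}\psi_n(x)=1$ and $\psi_n(x)\geq 0$, I would finish with the convex-combination estimate
$$
\|\varphi(x)-f(x)\|=\Bigl\|\sum_{n=1}^{\infty}\bigl(f(y_n)+T_n(x)-f(x)\bigr)\psi_n(x)\Bigr\|\leq\sum_{n:\,\psi_n(x)\neq 0}\|f(y_n)+T_n(x)-f(x)\|\,\psi_n(x)\leq\frac{\varepsilon(x)}{2}\leq\varepsilon(x),
$$
which is stronger than what the claim asserts. There is no real obstacle here; the only mild subtlety worth emphasizing is that the key quantitative estimate $\|T_n(x)\|\leq\varepsilon(y_n)/4$ holds for all $x\in S^{+}$ (not merely for $x\in P_n$) thanks to $\|u\|\leq 1$, so that the standard triangle-inequality computation carries over verbatim from the reflexive case.
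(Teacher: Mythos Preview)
Your proposal is correct and follows exactly the same approach as the paper: the paper's own proof of this claim simply says ``This is shown exactly as in Claim \ref{varphi epsilon approximates f},'' and your argument reproduces that claim's proof verbatim (bound $\|T_n(x)\|\leq\varepsilon(y_n)/4$ via $\|u\|\leq 1$, use the oscillation estimates on $P_n$, then the convex-combination inequality). There is nothing to add.
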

\begin{proof}
This is shown exactly as in Claim \ref{varphi epsilon approximates f}.
\end{proof}

Let us now calculate the derivative of our function $\varphi$.  For every $v=(u, L_y(u))\in T_{y}S^{+}$ we have
that
\begin{equation}\label{formula for Tn restricted to TyS}
T_{n}(v)=T_{n} (u, L_{y}(u))=\frac{\varepsilon(y_n)}{4\|S_n\|} \, S_{n}(u),
\end{equation}
and we have that $D({T_{n}}_{|_{S^{+}}})(y)$ is the restriction of $DT_n(y)=T_n$ to $T_{y}S^{+}$, that is to say,
if $v=(u, L_y(u))\in T_{y}S^{+}$ then
\begin{equation}\label{formula for DTn restricted to TyS}
DT_{n}(y)(u, L_{y}(u))=\frac{\varepsilon(y_n)}{4\|S_n\|} \, S_{n}(u).
\end{equation}

We can now compute the derivative of $\varphi$ on $S^{+}$. Recall that, by condition $(2)$ of Lemma \ref{properties
of the partition of unity on the upper sphere}, for every $x\in S^+$ there is a neighborhood $V_x$ of $x$ in
$S^{+}$ and a number $n=n_{x}\in\N$ such that
$$
\varphi(y)=\sum_{j=1}^{n}\left( f(y_j) +T_{j}(y) \right) \psi_{j}(y)
$$
for every $y\in V_x$. Fix $y\in V_x$ and recall that for $m=m_y$ (the largest number $j$ for which
$\psi_j(y)\neq 0$), we have
$$
\varphi(y)=\sum_{j=1}^{m}\left( f(y_j) +T_{j}(y) \right) \psi_{j}(y).
$$

By using \eqref{formula for DTn restricted to TyS} and the expression for $D\psi_j(y)$ given in Lemma
\ref{properties of the partition of unity on the upper sphere}, we see that
\begin{align}\label{expression for the derivative of varphi}
D\varphi(y)(u, L_{y}(u))&= \left(\sum_{j=1}^{n}\psi_{j}(y)\, \frac{\varepsilon(y_j)}{4\|S_j\|}\, S_{j}(u)\right) +
L_{y}(u) \alpha_{N_n,0}(y)+ \sum_{j=1}^{N_n}\alpha_{N_{n}, j}(y)e_{j}^{*}(u)= \nonumber \\
& =\left(\sum_{j=1}^{m}\psi_{j}(y)\, \frac{\varepsilon(y_j)}{4\|S_j\|}\, S_{j}(u)\right) + L_{y}(u)
\alpha_{N_m,0}(y)+ \sum_{j=1}^{N_m}\alpha_{N_{m}, j}(y)e_{j}^{*}(u)
\end{align}
for every $(u, L_{y}(u))\in T_{y}S^{+}$, $y\in V_x$, where the functions $\alpha_{N_n,j}:V_x\subset S^{+}\to F$ are
of class $C^1$ (because we have $\alpha_{N_n,j}(y)=\sum_{i=1}^n\left(f(y_i)+T_i(y)\right)\mu_{i,j}(y)$ and
$\alpha_{N_n,0}(y)=\sum_{i=1}^n\left(f(y_i)+T_i(y)\right)\mu_{i,0}(y)$, where $\mu_{i,j}$ are as in Lemma
\ref{properties of the partition of unity on the upper sphere}).

\medskip

Let us now prove that the critical set of $\varphi$ is relatively small.
\begin{lemma}\label{the critical set of varphi is small}
The set $C_{\varphi}:=\{x\in S^{+} \, : \, D\varphi(x) \textrm{ is not surjective}\}$ is of the form
$$
C_{\varphi}=\{ \left(w, \sqrt{1- \|w\|^2}\right) : w\in A\},
$$
where $A\subset E$ is a relatively closed subset of the open unit ball of $E$ that is locally contained in a
complemented subspace of infinite codimension in $E$.
\end{lemma}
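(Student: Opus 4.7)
The plan is to exploit the explicit formula for $D\varphi(y)(u, L_y(u))$ recalled above to show, at each critical point $y$, that infinitely many basis coefficients of $u_y$ must vanish, thereby placing $u_y$ in a complemented subspace of infinite codimension in $E$.

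First I would refine the construction of the surjections $S_n = A_n \circ P_n$. Since each $I_n \subset \mathbb{P}$ is infinite, I split it as $I_n = I_n^{(1)} \sqcup J_n$ into two disjoint infinite subsets; using hypothesis~(3) of Theorem~\ref{Main theorem for spaces with unconditional bases} I take a continuous linear surjection $B_n : \overline{\textrm{span}}\{e_j : j \in I_n^{(1)}\} \to F$ and define $A_n := B_n \circ Q_n$, where $Q_n$ is the basis projection onto $\overline{\textrm{span}}\{e_j : j \in I_n^{(1)}\}$. This arrangement guarantees that $K_n := \ker A_n$ contains the infinite-dimensional coordinate subspace $\overline{\textrm{span}}\{e_j : j \in J_n\}$.

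Next, fix $y_0 \in C_\varphi$, set $m := m_{y_0}$, and write $T(u) := D\varphi(y_0)(u, L_{y_0}(u))$. Restricted to $H_m := \overline{\textrm{span}}\{e_k : k \in I_m\}$, the disjointness of the $I_j$'s together with the requirement $I_m \cap \{1, \ldots, N_m\} = \emptyset$ collapse the derivative formula to
\[
T|_{H_m}(u) \;=\; \psi_m(y_0)\,c_m\,A_m(u) \;+\; L_{y_0}(u)\,\alpha_{N_m, 0}(y_0),
\]
with $c_m = \varepsilon(y_m)/(4\|S_m\|) > 0$. Since $A_m$ is surjective onto $F$ and the perturbation $u \mapsto L_{y_0}(u)\alpha_{N_m,0}(y_0)$ has rank at most one, a short case analysis (splitting according to whether $L_{y_0}|_{K_m}$ vanishes, and in the vanishing case using that the induced map $cI + \ell(\cdot)\alpha$ on $F$ is a Fredholm perturbation of the identity) shows that $T|_{H_m}$ is surjective unless $L_{y_0}|_{K_m} \equiv 0$. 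As surjectivity of $T|_{H_m}$ entails surjectivity of $T$, the fact that $y_0 \in C_\varphi$ forces $L_{y_0}|_{K_m} \equiv 0$; in particular, $L_{y_0}(e_j) = 0$ for every $j \in J_m$. I would then translate this into a coordinate condition on $u_{y_0}$: for $u_{y_0} \neq 0$, $L_{y_0}(v) = -\frac{\|u_{y_0}\|}{\sqrt{1 - \|u_{y_0}\|^2}} D\|\cdot\|(u_{y_0})(v)$, so $D\|\cdot\|(u_{y_0})(e_j) = 0$ for $j \in J_m$. Hypothesis~(2) (basis monotonicity) implies that the convex function $t \mapsto \|u_{y_0} + t e_j\|$ attains its minimum at $t = -(u_{y_0})_j$; vanishing of its derivative at $t=0$ makes $0$ also a minimum, and the strict convexity of the LUR norm from hypothesis~(1) forces this minimum to be unique. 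Hence $(u_{y_0})_j = 0$ for every $j \in J_m$, so $u_{y_0}$ lies in $W_{y_0} := \overline{\textrm{span}}\{e_j : j \notin J_m\}$, which is complemented (via the unconditional basis projection) and of infinite codimension since $J_m$ is infinite.

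Finally, to upgrade this pointwise conclusion to the local containment asserted in the lemma, I would use the lower semi-continuity of $y \mapsto m_y$ and continuity of $y \mapsto L_y$ to restrict to a small neighbourhood of $y_0$ in which the only values $m_y$ realised at critical points are the (finitely many) $k \in \{m, \ldots, n_{y_0}\}$ satisfying $L_{y_0}|_{K_k} \equiv 0$, and take the common subspace to be the intersection of the corresponding $W_k$'s—still complemented and of infinite codimension since $\bigcup_k J_k$ remains infinite. I expect the main obstacle to be precisely this uniformisation step: although each critical $y$ individually yields $u_y \in W_{m_y}$, the subspaces $W_{m_y}$ depend on $m_y$ and reconciling them into a single complemented subspace of infinite codimension valid on a whole neighbourhood of $y_0$ requires a careful continuity propagation of the vanishing conditions "$L_{y_0}|_{K_k} \equiv 0$" from $y_0$ to nearby critical points.
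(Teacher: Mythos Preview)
Your pointwise analysis is sound: at each critical $y$ with $m_y=m$ you correctly deduce $L_y|_{K_m}\equiv 0$, hence $(u_y)_j=0$ for every $j\in J_m$, so $u_y\in W_m:=\overline{\textrm{span}}\{e_j:j\notin J_m\}$. The gap is the one you yourself flag, and your proposed resolution does not close it. You pass to $\mathcal K:=\{k\in\{m_{y_0},\dots,n_{x}\}:L_{y_0}|_{K_k}\equiv 0\}$ and take $W:=\bigcap_{k\in\mathcal K}W_k$ as the local containing subspace. But a nearby critical $y$ with $m_y=k'\in\mathcal K$ is only known to satisfy $u_y\in W_{k'}$; to place it in $W$ you would need $L_y|_{K_k}\equiv 0$ for \emph{every} $k\in\mathcal K$, and that is a \emph{closed} condition in $y$, so it does not propagate from $y_0$ to neighbouring points by continuity (only the \emph{negation} $L_{y_0}|_{K_k}\not\equiv 0$ is open and propagates). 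The opposite tack is hopeless too: since the sets $J_k\subset I_k$ are pairwise disjoint, the closed linear span of $\bigcup_{k\in\mathcal K}W_k$ is all of $E$ whenever $|\mathcal K|\geq 2$. Thus no single complemented infinite-codimensional subspace is produced.

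The paper avoids this by choosing the correction vector outside $\mathbb{P}$ rather than inside some $I_m$. For $y\in V_x$ with $m=m_y$ and a target $v\in F$, one takes $u_1\in\overline{\textrm{span}}\{e_j:j\in I_m\}$ with $\psi_m(y)\frac{\varepsilon(y_m)}{4\|S_m\|}S_m(u_1)=v$, and then adds $t_0 e_{j_0}$ where $j_0\in\N\setminus\mathbb{P}$, $j_0>N_{n_x}$, and $L_y(e_{j_0})\neq 0$. Such an $e_{j_0}$ is annihilated by \emph{every} $S_k$ (since all $I_k\subset\mathbb{P}$) and by every $e_j^*$ with $j\leq N_{n_x}\geq N_{m_y}$, \emph{regardless of the value of $m_y$}. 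Hence $D\varphi(y)$ is surjective whenever $(u_y)_{j_0}\neq 0$ for some $j_0\in(\N\setminus\mathbb{P})$ with $j_0>N_{n_x}$, and the containing subspace $\overline{\textrm{span}}\{e_j:j\in\mathbb{P}\text{ or }j\leq N_{n_x}\}$ depends only on $V_x$, so no uniformisation is needed. Note that this is exactly where the hypothesis that $\N\setminus\mathbb{P}$ is infinite enters---a hypothesis your argument never invokes, which is itself a signal that something is missing. The splitting $I_n=I_n^{(1)}\sqcup J_n$ and the Fredholm aside then become unnecessary.
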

\begin{proof}
Observe that if $n=n_x=1$ then $\varphi(y)=f(y_1)+ T_1(y)$ for every $y\in V_x$, and because $T_1$ is surjective
$\varphi$ does not have any critical point in $V_x$. Now let us assume that $n=n_x\geq 2$. Let $y=( w,
\sqrt{1-\|w\|^2})$ be  point of $V_x$. Let $m=m_y$ be the largest number such that $\psi_m(y)\neq 0$. Recall that
$m\leq n$. We only need to show that if
$$
w \notin \overline{\textrm{span}}\left\lbrace e_j:\, j\in\mathbb{P} \text{ or } j=1, \dots, N_n \right\rbrace
$$
then for every $v\in F$ there exists $u\in E$ such that
$$
D\varphi(w, \sqrt{1-\|w\|^2})(u, L_{y}(u))=v,
$$
since this will mean that the set
$$
A:=\{w\in E: (w, \sqrt{1-\|w\|^2})\in C_{\varphi}\}
$$
will be locally contained in subspaces of the form $ \overline{\textrm{span}}\left\lbrace e_i:\, i \in\mathbb{P}
\text{ or } i=1, \dots, N_n \right\rbrace, $ which are complemented, and of infinite codimension, in $E$.

We will need to use the following.
\begin{fact}\label{fact about suppression basis}
For every $w=\sum_{j=1}^{\infty}w_j e_j\in E\setminus\{0\}$ and every $j_0\in\N$ we have that
$$
w_{j_0}\neq 0 \implies \langle J(w), e_{j_{0}}\rangle\neq 0,
$$
where $J(w)$ denotes $D\|\cdot\|(w)$, and $\langle J(w), u\rangle :=J(w)(u)$.
\end{fact}
\begin{proof}
If $w_{j_0}\neq 0$ then, by assumption $(2)$ of the statement of Theorem \ref{Main theorem for spaces with
unconditional bases}, we have that
$$
\|\sum_{j=1, \, j\neq j_0}^{\infty}w_j e_j\|\leq \|\sum_{j=1}^{\infty}w_j e_j\|.
$$
This means that the convex function $\theta :\R\to \R$ defined by
$$
\theta (t)= \| w+ te_{j_0}\|
$$
has a minimum at $t= -w_{j_0}$. On the other hand, if we had $\langle J(w), e_{j_0}\rangle =0$, then the same
function $\theta$ would have another minimum at the point $t=0$.  But since $\|\cdot\|$ is strictly convex the
function $\theta$ can only attain its minimum at a unique point. Therefore we must have $\langle J(w),
e_{j_0}\rangle \neq 0$.
\end{proof}

So let us pick a point $ w\in E\setminus \overline{\textrm{span}}\left\lbrace e_j:\, j\in\mathbb{P} \text{ or }
j=1, \dots, N_n \right\rbrace$ and a vector $v\in F$, and let us construct a vector $u\in E$ such that $D\varphi(w,
\sqrt{1-\|w\|^2})(u, L_{y}(u))=v$, where $y=(w, \sqrt{1-\|w\|^2})$. By assumption, there exists $j_0\in\N$, $j_0\in
\N\setminus\mathbb{P}$, such that  $j_{0}> N_n\geq N_m$ and $w_{j_0}\neq 0$. According to the fact just shown, we
have $\langle J(w), e_{j_0}\rangle \neq 0$. Now, since  $S_m$ is surjective and $\psi_m(y)\neq 0$, we may find a
sequence  $(u_{m_i})_{i\in\N}$ (indexed by the subsequence $(m_i)_{i\in\N}$ defined by $I_m$) such that
$$
\psi_{m}(y)\frac{\varepsilon (y_m)}{4\|S_{m}\|} \, S_{m}\left(\sum_{i=1}^{\infty} u_{m_i}e_{m_i}\right)=v.
$$
Note that $j_0\notin I_{m}=\{m_i : i\in\N\}$, because $j_0\in\N\setminus\mathbb{P}$ and $I_m\subset \mathbb{P}$.
Then we can set
$$
u_{j_0}:= - \frac{ \langle J(w), \, \sum_{i=1}^{\infty} u_{m_i} e_{m_i}\rangle }{ \langle J(w), e_{j_0}\rangle},
$$
so that we have
$$
\langle J(w), \, u_{j_0}e_{j_0} + \sum_{i=1}^{\infty} u_{m_i}e_{m_i}\rangle =0,
$$
which bearing in mind that
$$
L_{y}= - \frac{ \|w\|}{\sqrt{1- \|w\|^{2}}} \langle J(w), \, \cdot\rangle
$$
also implies that
$$
L_{y} \left( u_{j_0}e_{j_0} + \sum_{i=1}^{\infty}u_{m_i}e_{m_i}\right)=0.
$$
So if we set $u_j=0$ for all $j\notin I_{m}\cup\{j_0\}$ and we define $$u:=\sum_{j=1}^{\infty}u_j e_j$$ then we
have that
$$
\psi_{m}(y)\frac{\varepsilon (y_m)}{4\|S_{m}\|} \, S_{m}\left(u\right)=v, \,\,\, L_{y}(u) =0, \,\,\,
\sum_{j=1}^{N_m}\alpha_{N_{m}, j}(y)e_{j}^{*}(u)=0, \,\,\, \textrm{ and also } S_{j}(u)=0 \textrm{ for } j<m,
$$
because $j_0>N_n\geq N_m$, $I_{m}\cap \{1, 2, ..., N_m\}=\emptyset$, and the sets $I_{j}$ are pairwise disjoint. In
view of \eqref{expression for the derivative of varphi} these equalities imply that $ D\varphi(y)(u)= v$.
\end{proof}

Now, according to Lemma \ref{the critical set of varphi is small} and Theorem \ref{final extractibility theorem
rough version general form}, we can extract the set $C_{\varphi}$, since it is $C^1$ diffeomorphic (via the
projection of the graph $S^{+}$ of the function $w\mapsto \sqrt{1- \|w\|^{2}}$ onto the open unit ball of $E$) to a
subset which can be extracted. Therefore we can finish the proof of Theorem \ref{Main theorem for spaces with
unconditional bases} exactly as we did with Theorem \ref{Main theorem for reflexive spaces}.

\medskip

{\bf Case 2: Assume that $F=\R^m$.} The proof is almost identical, but with the following  important difference:
now the set $\mathbb{P}$ is by definition the set of {\em even} positive integers, and the sets $I_{n}$ are {\em
finite} subsets of $\mathbb{P}$ such that:
\begin{enumerate}
\item $\sharp I_n =m$ for each $n\in \N$;
\item $I_n\cap I_j=\emptyset$ for all $n\neq j$; and
\item $\left\lbrace 1,\dots, N_n\right\rbrace \cap I_n=\emptyset$ for all $n\in\N$.
\end{enumerate}
Here $\{N_n\}_{n\in\N}$ is the non-decreasing sequence of positive integers that appears in the construction of the
functionals $g_n$ of Lemma \ref{properties of the partition of unity on the upper sphere}.

Of course in this case we can always find linear surjections $A_n :\textrm{span}\{ e_i \, : \, i\in I_n\}\to \R^m$.
\qed

\medskip

\section{Technical versions of Theorems 1.6 and 1.7, examples, and remarks}

In this section we will give some examples, make some remarks and establish more technical variants of our results
which follow by the same method of proof. We will also prove Proposition \ref{C1 approximation is enough}.
\medskip

The proof of Theorem \ref{Main theorem for reflexive spaces} can be easily adjusted to obtain more general results
with more complicated statements. Namely, the following two results are true.

\begin{theorem}\label{result for recursively reflexively composite spaces}
Let $E$ and $F$ be Banach spaces. Assume that:
\begin{enumerate}
\item $E$ is infinite-dimensional, with a separable dual $E^{*}$.
\item There exist three sequences $\{E_{n,1}\}_{n\geq 1}$, $\{E_{n,2}\}_{n\geq 1}$, $\{E_{n,3}\}_{n\geq 2}$ of subspaces
of $E$ such that
$$E=E_{1,1}\oplus E_{1,2},\,\,\, E_{1,2}=
 (E_{2,1}\oplus E_{2,2})\oplus ... \oplus (E_{n,1}\oplus E_{n,2}\oplus E_{n,3}), \,\,\,
E_{n,3}=E_{n+1,1}\oplus E_{n+1, 2}\oplus E_{n+1,3},
$$
with either $E_{n,3}$ infinite-dimensional and reflexive and $\dim E_{n,2}\geq 1$, or else
$E_{n,2}$ infinite-dimensional and reflexive for all $n\geq 2$. Suppose also that there exists a bounded linear
operator from $E_{n,1}$ onto $F$ for every $n\in\N$.
\end{enumerate}
Then, for every continuous mapping  $f:E\to F$ and every continuous function $\varepsilon: E\to (0, \infty)$ there
exists a $C^{1}$ mapping $g:E\to F$ such that $\|f(x)-g(x)\|\leq\varepsilon(x)$ and $Dg(x):E\to F$ is a surjective
linear operator for every $x\in E$.
\end{theorem}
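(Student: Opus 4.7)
The plan is to adapt the proof of Theorem \ref{Main theorem for reflexive spaces} by replacing the use of Lemma \ref{decomposition trick} (which relied on $E\cong E\oplus E$) with the assumed composite decomposition. Since $E^*$ is separable, $E$ admits $C^1$ smooth partitions of unity and an equivalent LUR $C^1$ norm, which is all that is needed to construct, verbatim, the $C^1$ partition of unity $\{\psi_n\}_{n\in\N}$ on the upper sphere $S^+\subset Y=E\times\R$ provided by Lemma \ref{properties of the partition of unity on the upper sphere reflexive case}, with associated norm-one functionals $g_j(u,t)=g_j^1(u)+g_j^2 t$, open cover $\{P_n\}$, and centers $y_n\in P_n$ controlling the oscillations of $f$ and $\varepsilon$. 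It then suffices to build a $C^1$ map $\varphi:S^+\to F$ of the form $\varphi(x)=\sum_n(f(y_n)+T_n(x))\psi_n(x)$ with $\|\varphi-f\|\leq\varepsilon/2$ and a critical set that can be diffeomorphically extracted.

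The operators $T_n(u,t)=\frac{\varepsilon(y_n)}{4\|S_n\|}S_n(u)$ are built from continuous linear surjections $S_n:E\to F$ vanishing on every factor of the hypothesized decomposition except $E_{n,1}$, using the hypothesized surjection $E_{n,1}\to F$. The key arrangement, which in the original proof was obtained via Lemma \ref{decomposition trick}, is compatibility with the functionals $\{g_j^1\}$: namely $E_{n,1}\oplus E_{n,2}\subset\bigcap_{j=1}^n \ker g_j^1$ in the case where $E_{n,2}$ is infinite-dimensional and reflexive, or $E_{n,1}\oplus E_{n,3}\subset\bigcap_{j=1}^n \ker g_j^1$ in the case where $E_{n,3}$ is infinite-dimensional and reflexive (using the variant pointed out in the Remark after the proof of Theorem \ref{Main theorem for reflexive spaces}). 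This is arranged inductively at each step inside the previous $E_{n-1,3}$ by passing to appropriate finite-codimensional subspaces of its factors, preserving both the surjectivity of $E_{n,1}$ onto $F$ and the infinite-dimensional reflexivity of $E_{n,2}$ or $E_{n,3}$ (both stable under finite-codimensional intersections), so that the resulting refined decomposition still has the recursive composite structure required by condition (2).

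With this setup, the approximation bound $\|\varphi-f\|\leq\varepsilon/2$ follows as in Claim \ref{varphi epsilon approximates f}, and the analog of Claim \ref{Cvarphi is locally contained in Ax} locates $C_\varphi\cap V_x$ inside $A_x = \{y\in S^+: F_x\subset\ker L_y\}$, where $F_x$ denotes the infinite-dimensional reflexive factor ($E_{n,2}$ or $E_{n,3}$) at step $n=n_x$; the algebraic construction of the preimage vector $u$ in the original proof goes through unchanged. The analog of Lemma \ref{Ax is a good graph} then represents $A_x$ as the $d$-image of the graph of a continuous map into $F_x$ defined on the complementary summand, using only reflexivity of $F_x$ (to minimize $v\mapsto\|w+v\|^2$) and the Kadec--Klee property of the LUR norm on $E$, not the reflexivity of the whole $E$. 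We conclude by invoking Theorem \ref{final extractibility theorem rough version general form} to extract $C_\varphi$ via a $C^1$ diffeomorphism $h:S^+\to S^+\setminus C_\varphi$ limited by a fine cover, and setting $g=\varphi\circ h$. The main obstacle is the compatibility step in the second paragraph: unlike in Theorem \ref{Main theorem for reflexive spaces}, where $E\oplus E\cong E$ made the decomposition entirely flexible, here one must verify that the assumed recursive structure is rich enough to allow the required refinement of factors into $\bigcap_{j=1}^n \ker g_j^1$ at every step while preserving the composite form demanded by condition (2).
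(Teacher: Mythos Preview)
Your proposal is correct and takes essentially the same approach as the paper's own (very brief) proof, which simply states that the argument of Theorem \ref{Main theorem for reflexive spaces} goes through once one arranges, ``up to finite-dimensional perturbations of the subspaces $E_{k,j}$'', that $E_{1,2}\subset\ker g_1^1$ and $E_{n,1}\oplus E_{n,3}\subset\bigcap_{j=1}^n\ker g_j^1$, and sets $A_x=\{y\in S^+:E_{n,3}\subset\ker L_y\}$. You have in fact been more explicit than the paper in isolating the compatibility/perturbation step as the main technical point and in noting that Lemma \ref{Ax is a good graph} only needs reflexivity of the relevant factor rather than of all of $E$.
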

Observe that if $E_{n,3}$ is infinite-dimensional and reflexive, the spaces $E_{n,2}$
can be taken to be of dimension $1$ for every $n\in \N$. The proof is almost the same as that of Theorem \ref{Main
theorem for reflexive spaces}. Here,  up to finite-dimensional perturbations of the subspaces $E_{k,j}$, we can arrange that $E_{1,2}\subset Ker g^{1}_{1}$ and that $E_{n,1}\oplus E_{n,3}\subseteq \bigcap^{n}_{j=1}Ker g^{1}_{j}$, and we may set $A_x=\{y\in S^+:\,E_{n,3}\subset Ker L_y\}$.

\begin{theorem}\label{result for reflexive complemented spaces}
Let $E$, $X$, and $F$ be Banach spaces. Assume either that $E$ is infinite-dimensional, separable, and reflexive,
and $F$ is finite-dimensional, or that:
\begin{enumerate}
\item $E$ is infinite-dimensional, with a separable dual $E^{*}$.
\item There exists a decomposition of $E$,
$$E=G\oplus E_{1}\oplus X,$$
such that $G$ is infinite-dimensional and reflexive, and $E_{1}$ is isomorphic to $E$.
\item There exists a
bounded linear operator from $G\oplus X$ onto $F$ (equivalently, $F$ is a quotient of $G\oplus X$).
\end{enumerate}
Then, for every continuous mapping  $f:E\to F$ and every continuous function $\varepsilon: E\to (0, \infty)$ there
exists a $C^{1}$ mapping $g:E\to F$ such that $\|f(x)-g(x)\|\leq\varepsilon(x)$ and $Dg(x):E\to F$ is a surjective
linear operator for every $x\in E$.

\medskip
If, additionally, $X$ is isomorphic to $E$, then $F$ can be taken as a quotient of $E$.
\end{theorem}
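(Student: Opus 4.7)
The first alternative, $E$ separable reflexive with $F$ finite-dimensional, follows directly from Case 2 of the proof of Theorem \ref{Main theorem for reflexive spaces}, which never invoked $E\cong E\oplus E$ and used only separability, reflexivity of $E$, and finite-dimensionality of $F$. So the plan is to handle the second alternative.

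The plan is to reduce Theorem \ref{result for reflexive complemented spaces} to the framework of the proof of Theorem \ref{result for recursively reflexively composite spaces} by exhibiting a recursive decomposition of $E$ fitting that scheme. Iterating $E=G\oplus E_1\oplus X$ using $E_1\cong E$ yields, for every $n\in\N$, complemented direct sum decompositions $E=G_1\oplus X_1\oplus G_2\oplus X_2\oplus\cdots\oplus G_n\oplus X_n\oplus E_n$ with $G_k\cong G$, $X_k\cong X$, and $E_n\cong E$. Define the required recursive scheme as follows: set $E_{1,1}=G_1\oplus X_1$ (isomorphic to $G\oplus X$, hence admitting a bounded linear surjection onto $F$) and $E_{1,2}=E_1$; then at step $n\ge2$, having $E_{n-1,3}\cong E$ with its own copy of the initial decomposition, split $E_{n-1,3}=E_{n,1}\oplus E_{n,2}\oplus E_{n,3}$ where $E_{n,1}$ is a complemented copy of $G\oplus X$ (mapping onto $F$), $E_{n,2}$ is a complemented copy of $G$ (infinite-dimensional and reflexive), and $E_{n,3}\cong E$. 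This fits the scheme of Theorem \ref{result for recursively reflexively composite spaces} with $E_{n,2}$ infinite-dimensional reflexive.

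Now run the proof of Theorem \ref{result for recursively reflexively composite spaces} (which parallels the proof of Theorem \ref{Main theorem for reflexive spaces}). Use the partition of unity from Lemma \ref{properties of the partition of unity on the upper sphere reflexive case}, and at each step $k$, adjust the subspaces $E_{k,j}$ by finite-dimensional perturbations (as indicated in the proof sketch of Theorem \ref{result for recursively reflexively composite spaces}) so as to arrange $E_{k,1}\oplus E_{k,2}\subset\bigcap_{j=1}^k\ker g_j^1$, which is possible because that intersection has finite codimension in $E$. Define $S_k:E\to F$ to vanish on every factor except $E_{k,1}$, on which it is surjective, set $T_k(u,t)=\frac{\varepsilon(y_k)}{4\|S_k\|}S_k(u)$, and $\varphi(x)=\sum_k(f(y_k)+T_k(x))\psi_k(x)$. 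The approximation estimate $\|\varphi-f\|\le\varepsilon/2$ follows as in Claim \ref{varphi epsilon approximates f}, and the inclusion $C_\varphi\cap V_x\subset A_x=\{y\in S^+:E_{n,2}\subset\ker L_y\}$ follows as in Claim \ref{Cvarphi is locally contained in Ax}. Since $E_{n,2}\cong G$ is reflexive and the ambient norm is LUR (as $E^*$ is separable), the Kadec--Klee argument of Lemma \ref{Ax is a good graph} applies verbatim, showing that $A_x$ is locally a continuous graph over a complemented subspace of infinite codimension. An application of Theorem \ref{final extractibility theorem rough version general form} together with the composition $g=\varphi\circ h$ with a suitable extracting diffeomorphism then complete the argument. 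The case $F=\R^m$ is handled by the parallel adaptation of Case 2 of Theorem \ref{Main theorem for reflexive spaces}.

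Regarding the closing assertion: if additionally $X\cong E$ then $G\oplus X\cong G\oplus E$, and from $E\cong G\oplus E\oplus E$ (which follows from $E_1\cong E$ and $X\cong E$) one sees that $G\oplus X$ is a complemented direct summand of $E$; composing the direct-summand projection $E\to G\oplus X$ with the given surjection $G\oplus X\to F$ produces a continuous linear surjection $E\to F$, showing that $F$ is automatically a quotient of $E$. The main technical obstacle is the finite-dimensional perturbation step: one must verify that the finite-rank adjustments of the subspaces $E_{k,1}$ and $E_{k,2}$ needed to fit them inside $\bigcap_{j=1}^k\ker g_j^1$ preserve both their complementability in $E$ and their respective isomorphism types (so that $E_{k,1}$ still carries a surjection onto $F$ and $E_{k,2}$ remains an infinite-dimensional reflexive subspace). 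This follows from standard perturbation lemmas for projections combined with the fact that the iterated decomposition supplies unlimited ``room'' in the complementary factors of each $G_k$ and $G_k\oplus X_k$.
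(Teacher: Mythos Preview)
Your overall strategy---reduce Theorem~\ref{result for reflexive complemented spaces} to Theorem~\ref{result for recursively reflexively composite spaces} by iterating the decomposition $E=G\oplus E_1\oplus X$---is exactly what the paper intends (it gives no separate proof, only the remark that Theorem~\ref{result for recursively reflexively composite spaces} is the more general statement). Two points need repair, however.

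First, the claim $E_{n,3}\cong E$ is not justified. Iterating twice gives $E\cong G\oplus G\oplus E\oplus X\oplus X$, which regroups as $(G\oplus X)\oplus G\oplus(E\oplus X)$, so $E_{2,3}\cong E\oplus X$, and in general $E_{n,3}$ picks up extra $X$-summands at each step. This is harmless for the application of Theorem~\ref{result for recursively reflexively composite spaces}: that theorem does not require $E_{n,3}$ to have any particular isomorphism type, only that the recursion continues with each $E_{n,1}$ surjecting onto $F$ and each $E_{n,2}$ infinite-dimensional reflexive. Since every $E_{n,3}$ contains a complemented copy of $E$, you can always peel off a fresh $G\oplus X$ and a fresh $G$; the leftover $X$-factors are simply absorbed into the next $E_{n+1,1}$, which still surjects onto $F$ because it contains $G\oplus X$ as a complemented summand. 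So rephrase the induction accordingly.

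Second, your argument for the closing assertion has the direction reversed. The paper's sentence means: when $X\cong E$, hypothesis (3) may be \emph{weakened} to ``$F$ is a quotient of $E$''. What you need to check is that $X\cong E$ together with a surjection $E\to F$ yields a surjection $G\oplus X\to F$; this is immediate, since $F$ is then a quotient of $X$, hence of $G\oplus X$ via the projection. Your argument instead shows the trivial converse (that a quotient of $G\oplus X$ is a quotient of $E$), which holds regardless of whether $X\cong E$.
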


Observe that each of these two results imply Theorem \ref{Main theorem for reflexive spaces}, with Theorem
\ref{result for recursively reflexively composite spaces} being the most general one.

For instance, Theorem \ref{result for reflexive complemented spaces} can be applied to the James space  $J$ and to
its dual $J^*$. Indeed, both spaces have separable dual. It is known that $J$ has many reflexive 
infinite-dimensional complemented subspaces $G$ \cite{CasLinLoh}. Since $J$ is prime \cite{Cas}, for
each such $G$, we can write $J=G\oplus J$ (for instance we have $J=l_2\oplus J$). Now, recalling the
fact that $J$ has a separable dual, apply Theorem \ref{result for reflexive complemented spaces} to $E=E_1=J$ and
$X=\{0\}$ to see that every continuous function $f:J=G\oplus J\to F$, where $F$ is a quotient of
$G$, can be uniformly approximated by $C^1$ smooth mappings without critical points.  Similar
arguments work for the dual of the James space $J^*$.
\medskip

It also follows that the conclusion of Theorem \ref{Main theorem for reflexive spaces} is true for {\em composite}
spaces of the form $c_0\oplus\ell_p$ or $c_0\oplus L^{p}$, $1<p<\infty$, with $k$ being the order of smoothness of
$\ell_p$ or $L^{p}$. More generally, if $E$ is any finite direct sum of the classical Banach spaces $c_0$, $\ell_p$
or $L^{p}$, $1<p<\infty$, and there is a bounded linear operator from $E$ onto $F$ then the conclusion of Theorem
\ref{Main result for classical Banach} is true with $k$ being the minimum of the orders of smoothness of the spaces
appearing in this decomposition of $E$.

\medskip

\begin{remark}\label{In the Hilbert case we directly get Cinfinity}
{\em Notice that that in the case that $E$ is a separable Hilbert space, we have that the function $w\mapsto
\|w\|^2$ is of class $C^{\infty}$, hence all the mappings appearing in the proof of Theorem \ref{Main theorem for
reflexive spaces} are of class $C^{\infty}$, and we directly obtain an approximating function $g$ of class
$C^{\infty}$ with no critical points.

In fact, in the Hilbertian case we do not need to use a partition of unity in the upper sphere $S^{+}$. We can
directly construct a partition of unity $\{\psi_n\}_{n\in\N}$ in $E$ subordinated to an open covering by open balls
with linearly independent centers $\{y_j\}$, as in \cite{AzagraCepedello}. Then, choosing an orthonormal basis
$\{e_j\}$ for which $span\{y_1,\dots,y_n\}=span\{e_1,\dots,e_n\}$ for every $n\in\N$, we define operators $T_n:E\to
F$ as in the proof of Theorem \ref{Main theorem for spaces with unconditional bases}, where $\mathbb{P}$ can be any
infinite subset of $\N$ such that $\N\setminus\mathbb{P}$ is also infinite. Then one can easily check that the
function
$$
\varphi(y)=\sum_{n=1}^{\infty}\left( f(y_n)+T_n(y-y_n)\right) \psi_{n}(y)
$$
approximates $f$ and the set $C_{\varphi}$ of its critical points is locally contained in a subspace of infinite
codimension in $E$, specifically in subspaces of the form $\overline{span}\{e_j:\,j\in \mathbb{P}
\;\text{or}\;j=1,\dots,n\}$. Then one can extract $C_{\varphi}$ by means of a $C^{\infty}$ diffeomorphism $h:E\to
E\setminus C_{\varphi}$ which is sufficiently close to the identity, and conclude that the function
$g:=\varphi\circ h$ approximates $f$ and has no critical points. }
\end{remark}

\medskip

The same proof as that of Theorem \ref{Main theorem for spaces with unconditional bases}, with some adjustments,
allows us to obtain a more general (and also more technical) result as follows.

\begin{thm}\label{Refinement of the theorem for spaces with unconditional bases}
Let $E$ be an infinite-dimensional Banach space, and $F$ be a Banach space such that:
\begin{enumerate}
\item $E$ has an equivalent norm $\|\cdot\|$ which is $C^1$ and locally uniformly
convex.
\item $E$ has a (normalized) Schauder basis $\{e_{n}\}_{n\in\N}$ which is shrinking.
\item There exists an infinite subset $\mathbb{I}$ of $\N$ such that
the subspace $\overline{\textrm{span}}\{e_j : j\in \N\setminus\mathbb{I}\}$ is complemented in $E$, and for every
$x=\sum_{j=}^{\infty}x_j e_j$ and every $j_0\in\mathbb{I}$ we have that
$$
\left\|\sum_{j\in\N, \, j\neq j_0} x_j e_j\right\|\leq \left\|\sum_{j\in\N}x_j e_j\right\|.
$$
\item In the case that $F$ is infinite-dimensional, there exists an infinite subset $\mathbb{P}$ of $\N$ such that $\mathbb{I}\setminus \mathbb{P}$ is infinite and for every infinite subset $J$ of $\mathbb{P}$ the subspace $E'=\overline{\textrm{span}}\{e_j : j\in J\cup (\N\setminus \mathbb{I})\}$ is complemented in
$E$, and there exists a linear bounded operator from $E'$  onto $F$.
\end{enumerate}
Then, for every continuous mapping  $f:E\to F$ and for every continuous function $\varepsilon: E\to (0, \infty)$
there exists a $C^{1}$ mapping $g:E\to F$ such that $\|f(x)-g(x)\|\leq\varepsilon(x)$ and $Dg(x):E\to F$ is a
surjective linear operator for every $x\in E$.
\end{thm}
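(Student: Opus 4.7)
The plan is to repeat the proof of Theorem \ref{Main theorem for spaces with unconditional bases} with only the modifications dictated by the weaker hypotheses. Since $\{e_n\}$ is shrinking, $E^*$ is separable, and the construction of functionals $g_k=\sum_{j=0}^{N_k}\beta_{k,j}e_j^*\in Y^*$ with $N_1\le N_2\le\cdots$, slices $P_n$, points $y_n\in P_n$, and the $C^1$ partition of unity $\{\psi_n\}_{n\in\N}$ from Lemma \ref{properties of the partition of unity on the upper sphere} carries over verbatim. Next, I would choose pairwise disjoint infinite subsets $I_n\subset\mathbb{P}$ going to infinity with $I_n\cap\{1,\ldots,N_n\}=\emptyset$; by assumption $(4)$, the closed subspace $E'_n:=\overline{\textrm{span}}\{e_j:j\in I_n\cup(\N\setminus\mathbb{I})\}$ is complemented in $E$ by a bounded projection $\pi_n:E\to E'_n$ and admits a bounded linear surjection $A_n:E'_n\to F$. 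Set $S_n=A_n\circ\pi_n$, $T_n(u,t)=(\varepsilon(y_n)/(4\|S_n\|))S_n(u)$, and $\varphi(x)=\sum_n(f(y_n)+T_n(x))\psi_n(x)$; the inequality $\|\varphi(x)-f(x)\|\le\varepsilon(x)/2$ then follows exactly as in Claim \ref{varphi epsilon approximates f}.

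The key step is the adaptation of Lemma \ref{the critical set of varphi is small}: if $y=(w,\sqrt{1-\|w\|^2})\in V_x$ with $n=n_x\ge 2$ and $w\notin V_n:=\overline{\textrm{span}}\{e_j:j\in\mathbb{P}\cup\{1,\ldots,N_n\}\cup(\N\setminus\mathbb{I})\}$, then $D\varphi(y)$ must be surjective. The suppression inequality of assumption $(3)$ is imposed only for indices in $\mathbb{I}$, but this is exactly what the proof of Fact \ref{fact about suppression basis} needs: since $w\notin V_n$ must have a nonzero coordinate $w_{j_0}$ with $j_0\in\mathbb{I}\setminus(\mathbb{P}\cup\{1,\ldots,N_n\})$, we obtain $\langle J(w),e_{j_0}\rangle\neq 0$ and hence $L_y(e_{j_0})\neq 0$. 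Given $v\in F$, surjectivity of $A_m$ yields $u^*\in E'_m$ with $\psi_m(y)(\varepsilon(y_m)/(4\|S_m\|))S_m(u^*)=v$, and adding a suitable multiple of $e_{j_0}$ kills $L_y$. Consequently $C_\varphi$ is locally a continuous graph over the complemented infinite-codimensional subspace $V_n$ of $E$ (infinite-codimensional because $\mathbb{I}\setminus\mathbb{P}$ is infinite), so Theorem \ref{final extractibility theorem rough version general form} produces a $C^1$ diffeomorphism $h$ extracting $C_\varphi$ which is sufficiently close to the identity, and $g:=\varphi\circ h$ finishes the proof.

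The main obstacle will be making the correction step inside the formula for $D\varphi(y)(u,L_y(u))$ work despite the shared factor $\overline{\textrm{span}}\{e_j:j\in\N\setminus\mathbb{I}\}$ in every $E'_n$. In Theorem \ref{Main theorem for spaces with unconditional bases} the preimage $u^*$ could be taken inside $\overline{\textrm{span}}\{e_i:i\in I_m\}$, which lies automatically in $\bigcap_{j<m}\ker S_j\cap\bigcap_{j\le N_m}\ker e_j^*$ by disjointness of the $I_n$ together with the constraint $I_n\cap\{1,\ldots,N_n\}=\emptyset$; here $u^*\in E'_m$ may carry components along $\overline{\textrm{span}}\{e_j:j\in\N\setminus\mathbb{I}\}$ that leak into the auxiliary terms $\sum_{j\le N_m}\alpha_{N_m,j}(y)e_j^*(u^*)$ and $\sum_{j<m}S_j(u^*)$. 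Handling this will require, using the complementation of $\overline{\textrm{span}}\{e_j:j\in\N\setminus\mathbb{I}\}$ from assumption $(3)$, subtracting a correction vector from $\overline{\textrm{span}}\{e_j:j\in\N\setminus\mathbb{I}\}\cap\ker A_m$---an infinite-dimensional kernel on which only finitely many linear constraints need be imposed---to restore the vanishing of these auxiliary sums while preserving $S_m(u)=v/(\psi_m(y)\varepsilon(y_m)/(4\|S_m\|))$ and the choice of $j_0$ that kills $L_y$.
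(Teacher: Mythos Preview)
Your overall strategy is exactly the paper's: repeat the proof of Theorem~\ref{Main theorem for spaces with unconditional bases}, restrict Fact~\ref{fact about suppression basis} to indices $j_0\in\mathbb{I}$, let the operators $S_n$ be supported on $E'_n=\overline{\textrm{span}}\{e_j:j\in I_n\cup(\N\setminus\mathbb{I})\}$, and trap $C_\varphi$ locally in $V_n=\overline{\textrm{span}}\{e_j:j\in\mathbb{P}\cup\{1,\dots,N_n\}\cup(\N\setminus\mathbb{I})\}$. The paper's (very brief) proof stops there and does not discuss the overlap issue you raise in your final paragraph; you are right to flag it, because with the supports of all the $S_n$ sharing the block $E_0:=\overline{\textrm{span}}\{e_j:j\in\N\setminus\mathbb{I}\}$, the verification that $S_j(u)=0$ for $j<m$ no longer follows from disjointness of the $I_n$ alone.

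However, your proposed remedy has a genuine gap. You want to correct $u^*\in E'_m$ by subtracting some $c\in E_0\cap\ker A_m$ and claim that ``only finitely many linear constraints need be imposed'' on $c$. But among the conditions you must enforce are $A_j(c)=A_j(\pi_j(u^*))$ for each $j=1,\dots,m-1$, and each of these is an \emph{$F$-valued} equation; since $F$ is infinite-dimensional, this is not a finite system of scalar constraints, and a codimension count cannot close the argument. Moreover, nothing in hypotheses (3)--(4) forces $E_0\cap\ker A_m$ to be infinite-dimensional (or even nonzero): one can have $A_m|_{E_0}$ injective, for instance if $E_0$ embeds as a complemented summand of $F$ via $A_m$. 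So the correction space may simply be too small to absorb even the $e_j^*$-constraints, let alone the $F$-valued ones.

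One observation that helps organize the picture (and that neither you nor the paper makes explicit) is that the suppression inequality in (3) already guarantees that the \emph{natural} basis projection $P_{\N\setminus A}$ is bounded with norm at most $1$ for every $A\subset\mathbb{I}$: compose the single-coordinate suppressions over finite $A$ and pass to the limit using the Cauchy property of the basis expansion. This lets you take each $\pi_n$ to be the natural coordinate projection onto $E'_n$, which at least makes the ``leak'' explicit: for $u^*\in E'_m$ and $j<m$ one gets $\pi_j(u^*)=P_{\N\setminus\mathbb{I}}(u^*)\in E_0$. But it does not by itself eliminate the overlap, and a correct completion of the argument still requires a different idea than the finite-constraint correction you propose.
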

\begin{proof}
The only important difference with the proof of Theorem \ref{Main theorem for spaces with unconditional bases} is
that now we have to use an analogue of Fact \ref{fact about suppression basis} which is true if we just pick
$j_0\in\mathbb{I}$. Therefore, if we take $w=\sum^{\infty}_{j=1}w_je_j\notin
\overline{span}\{e_j:\,j\in\mathbb{P} \cup (\mathbb{N}\setminus\mathbb{I} )\;\text{or}\;j=1,\dots,
N_n\}$, since $\mathbb{I}\setminus \mathbb{P}$ is infinite, there will exist $j_0\in\mathbb{I}$, $j_0>N_n$ such
that $w_{j_0}\neq 0$ and thus $\langle J(w),e_{j_0}\rangle\neq 0$. The operators $S_n$ have supports in
complemented subspaces of the form $\overline{span}\{e_j:j\in I_n\cup(\mathbb{N}\setminus\mathbb{I})\}$, where the
sets $I_n\subset \mathbb{P}$ are defined as in the proof of Theorem \ref{Main theorem for spaces with unconditional
bases}.
\end{proof}

\begin{remark}\label{the reason why c0 is OK}
{\em It is clear that the spaces $\ell_p$ and $L^{p}$, $1<p<\infty$ satisfy the assumptions of Theorem \ref{Main
theorem for reflexive spaces}. It may not be so obvious why the space $c_0$ satisfy the assumptions of Theorem
\ref{Main theorem for spaces with unconditional bases}; let us clarify this point. If we repeat the proof of
\cite[Theorem V.1.5]{DGZ} in the particular case that $\Gamma=\N$, since all the operations that are made in this
proof are coordinate-wise monotone, we see that the $C^1$ and LUR renorming $\|\cdot\|$ that we obtain for $c_0$
has the property that
$$
\left\|\sum_{j\in\N, \, j\neq j_0} x_j e_j\right\|\leq \left\|\sum_{j\in\N}x_j e_j\right\|
$$
for every $j_0\in \N$ and every $x=(x_1, x_2, x_3, ...)\in c_0$, where $\{e_n\}$ is the canonical basis of $c_0$.
This shows that this norm $\|\cdot\|$ satisfies assumptions $(1)$ and $(2)$ of Theorem \ref{Main theorem for spaces
with unconditional bases}. On the other hand, for every infinite subset $J$ of $\N$ we have that
$\overline{\textrm{span}}\{e_j : j\in J\}$ is isomorphic to $c_0$, so it is clear that assumption $(3)$ is
satisfied as well, provided that there exists a continuous linear operator from $c_0$ onto $F$. Therefore $E=c_0$
satisfies the conclusion of Theorem \ref{Main result for classical Banach}. }
\end{remark}

The latter fact can be generalized to Banach spaces with a shrinking basis which contain copies of $c_0$.
\begin{theorem}\label{version for direct sums with c0}
Let $E$ be a Banach space that contains the space $c_0$ and admits a shrinking Schauder basis. Let $F$ be a
quotient of $E$.

Then, for every continuous mapping $f:E\to F$ and for every continuous function $\varepsilon: E\to (0, \infty)$
there exists a $C^{1}$ mapping $g:E\to F$ such that $\|f(x)-g(x)\|\leq\varepsilon(x)$ and $Dg(x):E\to F$ is a
surjective linear operator for every $x\in E$.
\end{theorem}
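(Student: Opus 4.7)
The plan is to reduce the statement to Theorem \ref{Refinement of the theorem for spaces with unconditional bases} by working with the isomorphic space $E \oplus c_0$ in place of $E$. Since $\{e_n\}$ is shrinking, $E^* = \overline{\text{span}}\{e_n^*\}$ is separable, and therefore $E$ is separable. By Sobczyk's theorem, the isomorphic copy of $c_0$ inside $E$ is complemented, so $E \cong c_0 \oplus Y$ for some closed subspace $Y \subseteq E$. Since $c_0 \oplus c_0 \cong c_0$, this yields $E \oplus c_0 \cong c_0 \oplus Y \oplus c_0 \cong c_0 \oplus Y \cong E$. Because the conclusion of Theorem \ref{version for direct sums with c0} is invariant under isomorphisms of the domain, it suffices to verify the hypotheses of Theorem \ref{Refinement of the theorem for spaces with unconditional bases} for $E \oplus c_0$ (with the same target $F$).

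Equip $E \oplus c_0$ with the Schauder basis $\{\tilde e_n\}_{n \in \N}$ obtained by interleaving $\{e_n\}$ with the canonical basis $\{f_k\}$ of $c_0$. This basis is shrinking, since the canonical basis of $c_0$ is shrinking ($c_0^* = \ell_1$ is the closed span of the biorthogonal functionals $\{f_k^*\}$), and an interleaving of two shrinking bases remains shrinking. Let $\mathbb{I} \subseteq \N$ denote the set of indices of the $f_k$ within the interleaving; then $\overline{\text{span}}\{\tilde e_j : j \in \N \setminus \mathbb{I}\} = E \times \{0\}$, which is trivially complemented in $E \oplus c_0$. Using the separability of $E^*$, choose an equivalent $C^1$ LUR norm $\|\cdot\|_E$ on $E$, and choose an equivalent $C^1$ LUR norm $\|\cdot\|_0$ on $c_0$ with the suppression property for $\{f_k\}$ (existing by the DGZ construction, as discussed in Remark \ref{the reason why c0 is OK}). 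The norm $\|(v, w)\| := (\|v\|_E^2 + \|w\|_0^2)^{1/2}$ is an equivalent LUR norm on $E \oplus c_0$ (LUR follows from the additivity of the two convexity ``defects''), is $C^1$ off the origin (its square is $C^1$ on the whole space with derivative zero at the origin, and the square root is smooth where the square is positive), and satisfies the suppression property for every $j_0 \in \mathbb{I}$ since suppressing such a coordinate can only decrease $\|w\|_0$.

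It remains to verify condition (4) of Theorem \ref{Refinement of the theorem for spaces with unconditional bases} in the case that $F$ is infinite-dimensional. Choose $\mathbb{P}$ to be any infinite subset of $\mathbb{I}$ whose complement in $\mathbb{I}$ is also infinite. For any infinite $J \subseteq \mathbb{P}$, the subspace $\overline{\text{span}}\{\tilde e_j : j \in J \cup (\N \setminus \mathbb{I})\} \cong E \oplus c_0(J)$ is complemented in $E \oplus c_0$ via the natural coordinate projection on the $c_0$ factor, and a bounded linear surjection onto $F$ is obtained by composing the given quotient map $T : E \to F$ with the projection $E \oplus c_0(J) \to E$. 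When $F$ is finite-dimensional, the same device, using finite subsets of $\mathbb{P}$ of cardinality $\dim F$ (cf.\ Case 2 in the proof of Theorem \ref{Main theorem for spaces with unconditional bases}), suffices. With all the hypotheses of Theorem \ref{Refinement of the theorem for spaces with unconditional bases} satisfied by $E \oplus c_0$, the conclusion follows and transfers back to $E$ via the fixed isomorphism. The only step that required genuine input beyond the preceding theorems is the passage $E \cong E \oplus c_0$, whose subtlety is the complementation of $c_0$; this is supplied by Sobczyk's theorem, so no additional obstacle arises.
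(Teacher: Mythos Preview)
Your argument is correct and follows essentially the same route as the paper: apply Sobczyk's theorem to obtain $E\cong c_0\oplus E$, interleave the two shrinking bases, equip the sum with the $\ell_2$-combination of a $C^1$ LUR norm on $E$ and the coordinate-monotone $C^1$ LUR norm on $c_0$ from Remark~\ref{the reason why c0 is OK}, and then verify the hypotheses of Theorem~\ref{Refinement of the theorem for spaces with unconditional bases} with $\mathbb{I}$ equal to the set of $c_0$-indices. The only cosmetic differences are the order of the factors and the precise choice of $\mathbb{P}$; your verification of condition~(4) via the projection $E\oplus c_0(J)\to E$ composed with the given quotient map is exactly what the paper does.
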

\begin{proof}
We will show that  $E$ satisfies the assumptions of Theorem \ref{Refinement of the theorem for spaces with
unconditional bases}.

First, by Sobczyk's Theorem \cite{Sobczyk}, $c_0$ is complemented in $E$, that is, $E$ is isomorphic to
$G\oplus c_0$, for a certain Banach space $G$. Since $c_0$ is isomorphic to $c_0\oplus c_0$, $G$ may be taken as
$E$. So, we can and will assume that
$$
E= c_0\oplus E.
$$
Let $\{e_j\}_{j\in N}$ be the canonical Schauder basis in $c_0$. Equip $c_0$ with the $C^1$ and LUR norm
$||\cdot||$ which was described in Remark \ref{the reason why c0 is OK}. That is, for every $j_0\in \N$, we have
$$||\sum^{\infty}_{j=1,j\neq j_0}\alpha_je_j||\leq ||\sum^{\infty}_{j=1}\alpha_je_j||,$$
for every $x=\sum^{\infty}_{j=1}\alpha_je_j\in c_0$.

Similarly, let  $\{d_n\}_{n\in\N}$ be a shrinking Schauder basis in $E$. Equip $E$ with a $C^1$ and LUR norm
$|\cdot|$. Define a new norm in $c_0\oplus E$, by letting
$$
|||x+y|||=\sqrt{||x||^2+|y|^2},
$$
for every $x+y\in c_0\oplus E$. This norm is $C^1$ and LUR as well. Define $\{f_k\}_{k\in\N}\subset c_0\oplus E$,
where $f_{2j-1}=e_{j}+0$ and $f_{2n}=0+d_n$ for every $j,n\in\N $. It is also easy to check that
 $\{f_k\}_{k\in\N}$ is a shrinking Schauder basis for $c_0\oplus E$.

For $x+y\in c_0\oplus E$, let $x=\sum^{\infty}_{j=1}\alpha_je_j\in c_0$ and $y=\sum^{\infty}_{n=1}\beta_nd_n\in E$
be their basis expansions. Then, writing $z_{2j-1}=\alpha_j$ and $z_{2n}=\beta_n$, we obtain the expansion of
$z=x+y=\sum^{\infty}_{k=1}z_kf_k=\sum^{\infty}_{j=1}z_{2j-1}e_j+\sum^{\infty}_{n=1}z_{2n}d_n\in c_0\oplus E$.
For every $j_0\in\N$, we have
\begin{align*}
|||\sum^{\infty}_{k=1,k\neq 2j_0}z_kf_k|||&=|||\sum^{\infty}_{j=1,j\neq
j_0}z_{2j-1}e_j+\sum^{\infty}_{n=1}z_{2n}d_n||| \leq|||\sum^{\infty}_{j=1,j\neq
j_0}\alpha_je_j+\sum^{\infty}_{n=1}\beta_nd_n|||
\leq\\
&|||\left(\sum^{\infty}_{j=1,j\neq j_0}\alpha_je_j\right)+y|||=\Big(||\sum^{\infty}_{j=1,j\neq
j_0}\alpha_je_j||^2+|y|^2\Big)^{\frac12}\leq\sqrt{||x||^2+|y|^2}=\\
& |||x+y|||=|||\sum^{\infty}_{k=1}z_kf_k|||,
\end{align*}
where in the second line we have used the fact that $||\sum^{\infty}_{j=1,j\neq j_0}\alpha_je_j||^2\leq
||\sum^{\infty}_{j=1}\alpha_je_j||^2=||x||^2$. Now, we are in a position to apply Theorem \ref{Refinement of
the theorem for spaces with unconditional bases}. Namely, let $\mathbb{I}=\{2n:\,n\in\N\}$ and
$\mathbb{P}=\{4n:\,n\in\N\}$.
\end{proof}

\begin{corollary}
Let $C(K)$ be the Banach space of continuous functions, where $K$ is a metrizable countable compactum and $F$ be a
quotient of $C(K)$.

Then, for every continuous mapping $f:C(K)\to F$ and for every continuous function $\varepsilon: C(K)\to (0,
\infty)$ there exists a $C^{\infty }$ mapping $g:E\to F$ such that $\|f(x)-g(x)\|\leq\varepsilon(x)$ and
$Dg(x):E\to F$ is a surjective linear operator for every $x\in E$.
\end{corollary}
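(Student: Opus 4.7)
The plan is to reduce this corollary to Theorem \ref{version for direct sums with c0} followed by Proposition \ref{C1 approximation is enough}. If $K$ is finite then $C(K)$ is finite-dimensional and the statement is either vacuous or trivially false, so we may assume $K$ is infinite. By the Bessaga--Pelczynski classification of $C(K)$-spaces for countable metrizable compacta, $C(K)$ is isomorphic to $C(\omega^{\omega^{\alpha}})$ for some countable ordinal $\alpha$; in particular $C(K)^{*}\cong\ell_{1}$ is separable. The space $C(K)$ admits a monotone Schauder basis (built from indicator-type partitions associated with the Cantor--Bendixson stratification of $\omega^{\omega^{\alpha}}$), and any such basis is automatically shrinking because $C(K)^{*}$ is separable.

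Next, I would observe that $C(K)$ contains an isomorphic copy of $c_{0}$. Since $K$ is infinite and compact metrizable, it admits a convergent sequence $\{k_{n}\}$ of pairwise distinct points with limit $k_{\infty}\notin\{k_{n}\}$, and one manufactures a $c_{0}$-basic sequence in $C(K)$ by choosing continuous peak functions $\phi_{n}$ of norm one with pairwise disjoint supports and $\phi_{n}(k_{n})=1$. Thus $C(K)$ satisfies all the hypotheses of Theorem \ref{version for direct sums with c0}, and that theorem produces a $C^{1}$ smooth mapping $\varphi: C(K)\to F$ such that $\|\varphi(x)-f(x)\|\leq\varepsilon(x)/2$ and $D\varphi(x):C(K)\to F$ is surjective for every $x\in C(K)$.

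To upgrade the smoothness from $C^{1}$ to $C^{\infty}$, I would apply Proposition \ref{C1 approximation is enough} with $k=\infty$. Condition $(1)$ of that proposition is precisely what the previous paragraph yields. Condition $(2)$ amounts to the statement that every $C^{1}$ mapping from $C(K)$ into $F$ can be $C^{1}$-finely approximated by $C^{\infty}$ mappings; this follows from Haydon's theorem asserting that for $K$ a scattered (in particular, countable) metric compactum, $C(K)$ admits an equivalent $C^{\infty}$ smooth norm, so that $C(K)$ has $C^{\infty}$-smooth partitions of unity, combined with \cite[Corollary 7.96]{HajJoh}.

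The main difficulty in this argument lies not in the approximation mechanics but in marshalling the correct structural facts about $C(K)$ for arbitrary countable $\alpha$: namely, the existence of a shrinking Schauder basis and of an equivalent $C^{\infty}$ smooth norm. Once those prerequisites are in place, the proof is a clean two-step assembly: Theorem \ref{version for direct sums with c0} kills the critical points while only giving $C^{1}$ smoothness, and Proposition \ref{C1 approximation is enough} then raises the smoothness to $C^{\infty}$ by a $C^{1}$-fine perturbation, which is small enough in the derivative-norm to preserve pointwise surjectivity of the differential.
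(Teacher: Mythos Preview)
Your approach is essentially the same as the paper's: verify the hypotheses of Theorem \ref{version for direct sums with c0} for $C(K)$ (shrinking basis, contains $c_0$), obtain the $C^1$ conclusion, then upgrade to $C^\infty$ via Haydon's $C^\infty$ renorming and Proposition \ref{C1 approximation is enough}. One small imprecision: the assertion that ``any such basis is automatically shrinking because $C(K)^*$ is separable'' is not a valid inference in general---separability of the dual does not force every Schauder basis to be shrinking. The paper sidesteps this by invoking \cite[Theorem 1.4]{JRZ}: since $C(K)^*=\ell_1$ has a Schauder basis, $C(K)$ admits \emph{some} shrinking basis, which is all that Theorem \ref{version for direct sums with c0} requires. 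With that correction your argument is complete and matches the paper's.
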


\begin{proof} By an application of \cite[Theorem 1.4]{JRZ}, which states that a Banach space
has a shrinking basis provided its dual has a Schauder basis, we obtain that $C(K)$ has a shrinking basis (because
$C(K)^*=l_1$).  Moreover, using the fact that $c_0$ is a subspace of $C(K)$, we infer that $C(K)$ is isomorphic to
$c_0\oplus G$ for some Banach space $G$, which yields (as in the above proof) that $C(K)$ is isomorphic to
$c_0\oplus C(K)$. Hence, by Theorem \ref{version for direct sums with c0}, the $C^1$ version of our assertion
holds. The $C^\infty$ version requires the fact that $C(K)$ has an equivalent $C^\infty$ norm, which is due to
Haydon \cite{Hay}, and Proposition \ref{C1 approximation is enough}.
\end{proof}

For more information about the spaces $C(K)$ we refer the reader to \cite{Ros}. The space $C(K)$ is an
example of {\it isometric} predual of $\ell_1$ (meaning a Banach space $E$ with an equivalent norm
$\|\cdot\|$ such that the dual $(E^{*}, \|\cdot\|^{*})$ is isometric to $\ell_1$). The class of isomorphic predual
spaces for $\ell_1$ is larger that the class of isometric predual spaces (the space constructed by
Bourgain and Delbaen \cite{BouDel} is such an example), which in turn is smaller than the class of $C(K)$ spaces
for metrizable countable compactum $K$, see \cite{BenLin}.
\medskip

\begin{remark}
{\em Since every isometric predual space $E$ of $\ell_1$ contains $c_0$ (see for instance \cite[Corollary
1]{Zip}) and admits an equivalent real-analytic norm \cite[Corollary 3.3]{DFH}, the above corollary is valid for
$E$. Even more, the corollary is valid for any infinite-dimensional separable Banach space $E$ which has a
shrinking basis and which admits an equivalent polyhedral norm (equivalently, with a countable James
boundary). This follows from the facts that, being polyhedral, $E$ must contain $c_0$, and that a space with a
countable James boundary admits an equivalent real-analytic norm (see \cite{DFH} or \cite[Chapter 5, section 6]{HajJoh} for reference).}
\end{remark}

\medskip

As we noted in the introduction our main results imply that continuous functions between many Banach spaces can be
arbitrarily well approximated by smooth open mappings.

\begin{remark}\label{approximation by open mappings}
Let $(E, F$) be a pair of Banach spaces with the property that for every continuous mapping $f:E\to F$ and for
every continuous function $\varepsilon:E\to (0, \infty)$ there exists a $C^k$ mapping $g:E\to F$ with no critical
points such that $\|f(x)-g(x)\|\leq\varepsilon(x)$, $x\in E$. Then the pair $(E, F)$ also has the following
property: for every continuous mapping $f:E\to F$ and for every continuous function $\varepsilon:E\to (0, \infty)$
there exists an open mapping $g:E\to F$ of class $C^k$ such that $\|f(x)-g(x)\|\leq\varepsilon(x)$, $x\in E$.
\end{remark}
This follows trivially from \cite[Theorem XV.3.5]{Lang}. Recall that $g:E\to F$ is said to be open if for every
open subset $U$ of $E$ we have that $g(U)$ is open in $F$. Notice that the approximation of arbitrary continuous
maps by smooth (or even merely continuous) open maps is impossible for $E=\R^n$: for instance, if $E=\R^n$, $F=\R$,
$f(x)=e^{-\|x\|^2}$, $\varepsilon(x)=1/3$, every continuous function $g$ which $\varepsilon$-approximates $f$ must
attain a global maximum in $\R^n$, hence $g(\R^n)$ is not open in $\R$.

\medskip

\begin{example}\label{the main theorem cannot be improved so as to get approximations whose derivatives are isomorphisms}
{\em In view of Theorem \ref{Main result for separable Hilbert} it is perhaps natural to ask whether in the case
$E=F$ one can get $C^{\infty}$ approximations $g:E\to E$ such that $Dg(x):E\to E$ is a linear isomorphism for every
$x\in E$. This is not possible, as the following example shows.

Let $f:\ell_{2}\to\ell_{2}$ be defined by
$$
f\left( \sum_{n=1}^{\infty}x_n e_n\right)= \left( \sum_{n=1}^{\infty}|x_n| e_n\right),
$$
where $\{e_n\}_{n\in\N}$ denotes the usual basis of $\ell_2$ (that is, $e_1=(1, 0, 0, ...)$, $e_2=(0, 1, 0, ...)$,
etc). Assume that there exists $g\in C^{\infty}(E, E)$ such that $Dg(x):\ell_2\to\ell_2$ is an isomorphism and
$\|f(x)-g(x)\|\leq 1/3$ for every $x\in \ell_2$. Consider the projection $P_1 :\ell_2\to\R$ given by $P_1(x)=x_1$,
and the function $g_1=P_1\circ g$. Since $Dg(x)$ is an isomorphism for every $x$, we must have $Dg_1(x)=P_1\circ
Dg(x)=Dg(x)(e_1)\neq 0$ for every $x\in E$, and in particular, considering the curve $\gamma_{1}(t)=te_{1}$,
$t\in\R$, and the function
$$
\theta(t):=g_1(\gamma_1(t)), \,\,\, t\in\R,
$$
we must have
\begin{equation}\label{the derivative of theta does not vanish}
\theta'(t)=Dg_1(\gamma_1(t))(e_1)\neq 0
\end{equation}
for all $t\in\R$. However,
$$
| P_1(g(\gamma_1(t)))-P_1(f(\gamma_1(t)))|\leq \|g(\gamma_1(t))-f(\gamma_1(t))\|\leq 1/3,
$$
hence
$$
\theta(1)=P_{1}(g(\gamma_1(1)))\geq 1 - 1/3=2/3,
$$
and similarly
$$
\theta(-1) \geq 2/3 >1/3\geq \theta(0).
$$
Thus $\theta$ must attain a minimum at some point $t_0$ of the interval $(-1, 1)$, which implies that
$\theta'(t_0)=0$ and contradicts \eqref{the derivative of theta does not vanish}. }
\end{example}

\medskip

{\bf Proof of Theorem \ref{Main result for separable Hilbert manifolds}.} As said in the introduction, by the
results of \cite{EE, Kuiper}, it is sufficient to show Theorem \ref{Main result for separable Hilbert manifolds}
for functions $f:U\to V$, where $U\subset E$ and $V\subset F$ are open subsets of two separable Hilbert spaces $E,
F$, respectively. Observe that we can assume $V=F$. Indeed, if $f:U\to V\subset F$, $\varepsilon: U\to (0, \infty)$
are continuous functions then, by taking $\widetilde{\varepsilon}(x)=\frac{1}{2}\min\{\varepsilon(x),
\textrm{dist}(f(x),  F\setminus V)\}$, if we are able to $\widetilde{\varepsilon}$-approximate $f:U\to F$ by a
smooth function $g:U\to F$  with no critical points, then we also have that $\|g(x)-f(x)\|< \textrm{dist}(f(x),
F\setminus V)$, which implies that $g(x)\in V$ for every $x\in U$; that is, we really have $g:U\to V$. On the other
hand, showing the result for $f:U\to F$ is not more difficult than proving it in the case $U=E$ (though it does
encumber the notation). For example, it requires a version of the extractibility fact (a counterpart of Theorem
\ref{final extractibility theorem rough version}) where the whole space $E$, its closed subset $X$, and an open
cover $\mathcal{G}$ of $E$ must be replaced with an open subset $U$ (of $E$), a closed subset of $U$, and an open
cover of $U$, respectively. Such a fact can be proved by mimicking the technique of the proof of Theorem \ref{final
extractibility theorem rough version}); one just has to make some easy adjustments in the appropriate places. We
leave the details to the interested reader.

\medskip
Throughout the paper the ``limiting'' function $\varepsilon(x)$ is assumed to be positive. The following
remark explains what can be said if we merely require that $\varepsilon(x)\ge0$.

\begin{remark}
{\em Let $H$ be a separable, infinite-dimensional Hilbert space and $f:H\to H$ be a continuous mapping. Then, for every
continuous function $\varepsilon:H\to[0,\infty)$, there exists a continuous mapping $g:H\to H$ such that the
restriction $g_{|_{H\setminus \varepsilon^{-1}(0)}}$ is $C^\infty$ smooth and has no critical points, and
$\|f(x)-g(x)\|\le\varepsilon(x)$ for every $x\in H$ (hence, $f(x)=g(x)$ provided $\varepsilon(x)=0$). This a
consequence of Theorem \ref{Main result for separable Hilbert manifolds} applied to $U=H\setminus
\varepsilon^{-1}(0)$ and $\varepsilon_{|_U}$.}
\end{remark}

Let us conclude this paper with the proof of Proposition \ref{C1 approximation is enough}.

\medskip

{\bf Proof of Proposition \ref{C1 approximation is enough}.} Let $f:E\to F$ and $\varepsilon:E\to (0, \infty)$ be
continuous. By assumption $(1)$ there exists a $C^1$ function $\varphi:E\to F$ without critical points so that
$$
\|f(x)-\varphi(x)\|\leq\varepsilon(x)/2.
$$
It is well known that the set of continuous linear surjections from a Banach space $E$ onto a Banach space $F$ is
open; see \cite[Theorem XV.3.4]{Lang} for instance. Therefore, for each $x\in E$ there exists $r_x>0$ such that if
$S:E\to F$ is a bounded linear operator then
\begin{equation}\label{If S is close to Dvarphiy then S is surjective}
\|S-D\varphi(x)\|<2r_x \implies \, S \textrm{ is surjective}.
\end{equation}
By continuity of $D\varphi$, for every $x$ we may find a number $s_x\in (0, r_x)$ such that if $y\in B(x, s_x)$
then
$$
\|D\varphi(y)-D\varphi(x)\|<r_x.
$$
Since $E$ is separable, we can extract a countable subcovering
$$
E=\bigcup_{n=1}^{\infty}B(x_n, s_n),
$$
where $s_n :=s_{x_{n}}$. Let us also denote $r_n:= r_{x_{n}}$, and define $\eta:E\to (0,1)$ by
$$
\eta(y)=\min\left\{\frac{\varepsilon (y)}{2}, \, \sum_{n=1}^{\infty}\frac{s_n}{2}\psi_{n}(y)\right\},
$$
where $\{\psi_n\}$ is a partition of unity such that the open support of $\psi_n$ is contained in $B(x_n, s_n)$.
Now we may apply assumption $(2)$ to find a $C^k$ function $g:E\to F$  such that
$$\|\varphi(y)-g(y)\| \leq \eta(y), \textrm{ and } \|D\varphi(y)-Dg(y)\|\leq\eta(y)
$$
for all $y\in E$. Then for every $y\in E$ there exists $n=n_y\in\N$ such that $y\in B(x_n, s_n)$ and $\eta(y)\leq
s_n/2$. It follows that $\|Dg(y)-D\varphi(y)\|\leq s_n/2<r_n$ and $\|D\varphi(y)-D\varphi(x_n)\|<r_n$, hence
$\|Dg(y)-D\varphi(x_n)\|<2r_n$, and according to \eqref{If S is close to Dvarphiy then S is surjective} this
implies that $Dg(y)$ is surjective. This shows that $g$ has no critical points. On the other hand, since
$\eta\leq\varepsilon/2$, it is clear that
$$
\|f(y)-g(y)\|\leq \|f(y)-\varphi(y)\|+\|\varphi(y)-g(y)\|\leq\varepsilon(y)/2+\varepsilon(y)/2=\varepsilon(y),
$$
so $g$ also approximates $f$ as required. \qed

\medskip

%%%%%%%%%%%%%%%%%%%%%%

\end{document}